\documentclass[11pt]{article}
\usepackage[ansinew]{inputenc}
\usepackage{graphicx}
\usepackage{epstopdf}
\usepackage{color}

\usepackage{enumerate,latexsym}
\usepackage{latexsym}
\usepackage{amsmath,amssymb}
\usepackage{graphicx}
\usepackage{times}

\newfont{\bb}{msbm10 at 11pt}
\newfont{\bbsmall}{msbm8 at 8pt}
\textwidth 15cm \oddsidemargin .8cm \evensidemargin .8cm
\textheight 22cm \topmargin -.8cm

\def\rth{\mathbb{R}^3}
\def\R{\mathbb{R}}
\def\B{\mathbb{B}}
\def\N{\mathbb{N}}
\def\Z{\mathbb{Z}}
\def\C{\mathbb{C}}

\def\D{\mathbb{D}}

\newcommand{\re}{\mbox{\bb R}}

\newcommand{\ben}{\begin{enumerate}}
\newcommand{\bit}{\begin{itemize}}
\newcommand{\een}{\end{enumerate}}
\newcommand{\eit}{\end{itemize}}
\newcommand{\wh}{\widehat}
\newcommand{\ov}{\overline}
\newcommand{\Int}{\mbox{Int}}
\renewcommand{\S}{\Sigma}

\newcommand{\wt}{\widetilde}

\def\a{{\alpha}}
\def\cL{\mathcal{L}}

\def\g{{\gamma}}
\def\G{{\Gamma}}
\def\L{{\Lambda}}
\def\de{{\delta}}
\def\be{{\beta}}
\def\ve{{\varepsilon}}

\def\centerbmp#1#2#3{\vskip#2\relax\centerline{\hbox to#1{\special
    {bmp:#3 x=#1, y=#2}\hfil}}}

\newtheorem{theorem}{Theorem}[section]
\newtheorem{lemma}[theorem]{Lemma}
\newtheorem{proposition}[theorem]{Proposition}

\newtheorem{remark}[theorem]{Remark}
\newtheorem{corollary}[theorem]{Corollary}
\newtheorem{definition}[theorem]{Definition}

\newtheorem{assertion}[theorem]{Assertion}

\newtheorem{claim}[theorem]{Claim}

\newenvironment{proof}{\smallskip\noindent{\it Proof.}\hskip \labelsep}
{\hfill\penalty10000\raisebox{-.09em}{$\Box$}\par\medskip}

\begin{document}
\begin{title}
{The local picture theorem on the scale of topology}
\end{title}
\vskip .2in

\begin{author}
{William H. Meeks III
   \and Joaqu\'\i n P\' erez
\and Antonio Ros
}
\end{author}
\maketitle
\begin{abstract}
We prove a descriptive
theorem on the extrinsic geometry of an embedded minimal surface of
injectivity radius zero in
a homogeneously regular Riemannian three-manifold, in a certain small intrinsic neighborhood of a
point of {\it almost-minimal injectivity radius.}
This structure theorem includes a limit object which we call a
{\it minimal parking garage structure} on $\rth$, whose theory we also develop.

\vspace{.1cm}

\noindent{\it Mathematics Subject Classification:} Primary 53A10,
   Secondary 49Q05, 53C42.

\noindent{\it Key words and phrases:} Minimal surface, stability,
curvature estimates, finite total curvature, minimal lamination,
removable singularity, limit tangent cone, minimal parking garage
structure, injectivity radius, locally simply connected.
\end{abstract}

\section{Introduction.}
\footnotetext{First author's financial support: This material is based upon
work for the NSF under Award No. DMS-1309236.  Any opinions, findings,
and conclusions or recommendations expressed in this publication are those
of the authors and do not necessarily reflect the views of the NSF.
Second and third author's financial support: Research partially supported by
the MINECO/FEDER grant no. MTM2014-52368-P.}
This paper is devoted to an analysis of the extrinsic geometry of
any embedded minimal surface $M$ in small intrinsic balls in a homogeneously regular Riemannian
three-manifold\footnote{A Riemannian three-manifold $N$ is {\it
homogeneously regular} if there exists an $\ve > 0$ such that
the image by the exponential map of any $\ve$-ball in a tangent space
$T_xN$, $x\in N$, is uniformly close to an $\ve$-ball in $\rth$ in
the $C^2$-norm. In particular, $N$ has positive injectivity radius. Note that
if $N$ is compact, then $N$ is
homogeneously regular.}, such that the injectivity radius function of $M$ is
sufficiently small in terms of the ambient geometry of the balls. We
carry out this analysis by blowing-up such an $M$ at a sequence of
points with {\it almost-minimal injectivity radius} (we will define this
notion precisely in items~{1, 2, 3} of the next theorem), which
produces a new sequence of minimal surfaces, a subsequence of which
has a natural limit object being either a properly embedded minimal
surface in $\rth$, a minimal parking garage structure on $\rth$ (we
will study this notion in Section~\ref{seclt2}) or possibly, a
particular case of a singular minimal lamination of $\R^3$ with
restricted geometry, as stated in item~6 of the next result.

In the sequel, we will denote by $B_M(p,r)$ (resp. $\overline{B}_M(p,r)$)
the open (resp. closed) metric ball centered at a point $p$
in a Riemannian manifold $N$, with radius $r>0$. In the case $M$ is complete,
we will let $I_M\colon M\to (0,\infty ]$ be
the injectivity radius function of $M$, and given a subdomain $\Omega \subset
M$, $I_{\Omega }=(I_M)|_{\Omega }$ will stand for the restriction of $I_M$ to $\Omega $.
The infimum of $I_M$ is called the {\it injectivity radius} of $M$.

\begin{theorem}[Local Picture on the Scale of Topology]
\label{tthm3introd}
There exists a smooth decreasing function $\de\colon (0,\infty) \to (0,1/2)$
with $\lim_{r\to \infty} r\, \de(r)=\infty$
such that the following statements hold.
Suppose $M$ is a complete, embedded minimal
surface with injectivity radius zero in a homogeneously
regular
 three-manifold~$N$. Then, there exists a sequence
of points $p_n\in M$ (called ``points of almost-minimal injectivity radius'')
and positive numbers $\ve _n= n\, I_{M}(p_n)\to 0$ such that:
\begin{description}
\item[{\it 1}]  For all $n$, the closure $M_n$ of the component
of $M\cap B_N(p_n,\ve _n)$  that contains $p_n$ is  a compact surface with boundary in
$\partial B_N(p_n,\ve _n)$. Furthermore, $M_n$ is contained in the intrinsic open ball
$B_M(p_n,\frac{r_n}{2}I_M(p_n))$, where
$r_n>0$ satisfies $r_n\, \de (r_n)=n$.

\item[{\it 2}] Let $\lambda _n=1/I_{M}(p_n)$.
Then, $\lambda_nI_{M_n}\geq 1-\frac{1}{n}$ on $M_n$.

\item[{\it 3}] The metric balls $\lambda_nB_N(p_n,\ve_n)$
of radius $n=\lambda_n\ve_n$
converge uniformly as $n\to \infty $ to $\rth$ with
its usual metric (so that we identify $p_n$ with
$\vec{0}$ for all $n$).
\end{description}
Furthermore, exactly one of the following three possibilities occurs.
\begin{description}
\item[{\it 4}] The surfaces $\lambda_nM_n$ have
uniformly bounded Gaussian curvature
on compact subsets\footnote{As $M_n\subset B_N(p_n,\ve _n)$, the convergence
$\{ \lambda_nB_N(p_n,\ve _n)\} _n\to \R^3$ explained in item~{3} allows us to view
the rescaled surface $\lambda_nM_n$ as a subset of $\R^3$. The uniformly bounded
property for the Gaussian curvature of the induced metric on $M_n\subset N$ rescaled by
$\lambda_n$ on compact subsets of $\R^3$ now makes sense.} of
$\rth$ and there exists a connected, properly
embedded minimal surface $M_{\infty}\subset \R^3$
with $\vec{0}\in M_{\infty }$, $I_{M_{\infty}}\geq 1$
and $I_{M_{\infty}}(\vec{0})=1$,
 such that for any
$k \in \N$, the surfaces $\lambda_nM_n$ converge $C^k$
on compact subsets of $\rth$  to $M_{\infty}$ with
multiplicity one as $n \to \infty$.
\item[{\it 5}] After a rotation in $\rth$, the surfaces $\lambda_nM_n$ converge to
a minimal parking garage structure\footnote{For a description
of a minimal parking garage structure, see Section~\ref{seclt2}.} on $\rth$,
consisting of a foliation $\mathcal{L}$ of $\R^3$ by
horizontal planes, with columns forming a locally finite
 set $S(\mathcal{L})$ of vertical straight lines (at least two lines).
Moreover, if there exists a bound on the genus of the surfaces $\lambda_nM_n$, then
$S(\mathcal{L})$ consists  of just two lines $l_1, \,l_2$,
the associated limiting pair of multivalued graphs\footnote{This
means that for $i=1,2$
and $k\in \N$ fixed ($k\geq 2$), and for $n$ large
enough depending on $k$, $\lambda_nM_n$ contains around $l_i$ a pair of $k$-valued
graphs (See Definition~\ref{kgraph} for this concept)
with opposite orientations, both spiraling together, and the handedness of these
$k$-graphs nearby $l_1$ is opposite to the related one around $l_2$.}
in $\lambda_nM_n$ nearby $l_1,l_2$ are oppositely handed
and given $R>0$, for $n\in \N$ large depending on $R$, the surface
$(\lambda_nM_n)\cap B_{\lambda_nN}(p,\frac{R}{\lambda_n})$ has genus zero.
\item[{\it 6}]
\label{i6}
There exists a nonempty, closed set $\mathcal{S}\subset
\R^3$, a minimal lamination $\mathcal{L}$ of \mbox{$\R^3-\mathcal{S}$} and a
subset $S(\mathcal{L})\subset \mathcal{L}$ which is closed in the subspace topology,
such that the surfaces $(\lambda_nM_n)-\mathcal{S}$ converge to $\mathcal{L}$
outside of $S(\mathcal{L})$ and $\mathcal{L}$ has at least one nonflat leaf. Furthermore,
if we let $\Delta (\mathcal{L})=\mathcal{S} \cup S(\mathcal{L})$ and let
$\mathcal{P}$ be the sublamination of flat leaves in $\mathcal{L}$, then the following holds.
$\mathcal{P}\neq \mbox{\rm \O}$, the closure of every such flat leaf is a horizontal plane, and
if $L\in \mathcal{P}$ then the plane
$\overline{L}$ intersects $\Delta (\mathcal{L})$ in a set containing at least two points,
each of which are at least distance 1 from each other in $\overline{L}$, and either
$\overline{L}\cap \Delta (\mathcal{L})\subset \mathcal{S}$
or $\overline{L}\cap \Delta (\mathcal{L})\subset S(\mathcal{L})$.
\end{description}
\end{theorem}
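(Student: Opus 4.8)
The plan is to prove the theorem in three stages: a point-selection argument producing the points $p_n$ and the function $\delta$ (and with it items~1--3), passage to a blow-up limit of the rescaled surfaces, and extraction of the limit object in each of the resulting cases (items~4, 5 or~6). For the first stage, since $\inf_M I_M=0$ there are points where $I_M$ is arbitrarily small; applying a local maximum-principle argument to $I_M$ weighted by the distance to the boundary of a large intrinsic ball, one extracts, for each large $r$, a point $p=p(r)$ and a radius $\rho(r)\to\infty$ with $I_M(p)$ within a factor $1-o(1)$ of $\inf_{B_M(p,\rho(r)I_M(p))}I_M$. One then \emph{defines} $\delta\colon(0,\infty)\to(0,1/2)$ so that this trade-off between $\rho$ and the quality of near-minimality is recorded by $r\,\delta(r)=n$, fixes $r_n$ by that equation, sets $p_n=p(r_n)$, $\lambda_n=1/I_M(p_n)$, $\varepsilon_n=nI_M(p_n)$, and reads items~1--2 off the construction (that $M_n$ lies in $B_M(p_n,\tfrac{r_n}{2}I_M(p_n))$ and that $\lambda_nI_{M_n}\ge 1-\tfrac1n$). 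Item~3 is then immediate from homogeneous regularity: since $\varepsilon_n\to 0$, the metrics on $B_N(p_n,\varepsilon_n)$ rescaled by $\lambda_n$ converge in $C^2$ on compact sets to the flat metric, and the balls $\lambda_nB_N(p_n,\varepsilon_n)$, of radius $n\to\infty$, exhaust $\R^3$ with $p_n\mapsto\vec 0$.

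Write $\Sigma_n=\lambda_nM_n$; these are minimal in the rescaled ambient balls, contain $\vec 0$, and satisfy $I_{\Sigma_n}\ge 1-\tfrac1n$ with $I_{\Sigma_n}(\vec 0)\to 1$. The proof then splits according to whether $\sup_{\Sigma_n\cap K}|K_{\Sigma_n}|$ remains bounded as $n\to\infty$ for every compact $K\subset\R^3$. If it does, elliptic estimates for minimal surfaces together with the $C^2$ ambient convergence yield uniform local area and higher-derivative bounds, so a subsequence converges $C^k$ on compact sets to a connected, embedded minimal surface $M_\infty\subset\R^3$ with $\vec 0\in M_\infty$, $I_{M_\infty}\ge 1$, $I_{M_\infty}(\vec 0)=1$; connectedness is inherited from $M_n$ via a chord-arc/diameter control. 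Multiplicity one holds because a second sheet collapsing onto $M_\infty$ would create arbitrarily short geodesic loops in $\Sigma_n$, contradicting $I_{\Sigma_n}\ge 1-\tfrac1n$, and $M_\infty$ is proper because a complete embedded minimal surface in $\R^3$ with bounded curvature is properly embedded. This is item~4.

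If instead the curvatures blow up, let $\mathcal{S}\subset\R^3$ be the nonempty closed set near which $|K_{\Sigma_n}|$ is unbounded. Off $\mathcal{S}$, Colding--Minicozzi lamination theory, the one-sided curvature estimate, and the removable-singularity results for minimal laminations established earlier in the paper produce a minimal lamination $\mathcal{L}$ of $\R^3-\mathcal{S}$ to which $\Sigma_n-\mathcal{S}$ subconverges outside a closed column set $S(\mathcal{L})\subset\mathcal{L}$; there is a nonflat leaf, since otherwise the $\Sigma_n$ would have locally bounded curvature. The one-sided curvature estimate applied near a blow-up point forces a foliation by horizontal planes on one side, so $\mathcal{P}\neq\mbox{\rm \O}$ and the closure of each flat leaf is a horizontal plane. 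The crux — that such a plane $\overline{L}$ meets $\Delta(\mathcal{L})=\mathcal{S}\cup S(\mathcal{L})$ in at least two points at distance $\ge 1$, with $\overline{L}\cap\Delta(\mathcal{L})$ contained in $\mathcal{S}$ or in $S(\mathcal{L})$ — is exactly where the normalization $I_{\Sigma_n}(\vec 0)\to 1$ enters: a single singular point on $\overline{L}$, or two closer than distance $1$, would let one manufacture geodesic loops in $\Sigma_n$ of length $<1-\tfrac1n$ for $n$ large, or contradict the near-minimality of $I_M(p_n)$ built into the point selection. That is item~6. Under a genus bound, the chord-arc estimates and the structure theorem for embedded minimal disks upgrade the picture to item~5: $\mathcal{S}=\mbox{\rm \O}$, $\mathcal{L}$ is the foliation of $\R^3$ by horizontal planes, $S(\mathcal{L})$ consists of exactly two vertical lines $l_1,l_2$ around each of which $\Sigma_n$ carries a double multivalued graph, these two are oppositely handed (equal handedness would force a handle between the columns, contradicting genus zero in small intrinsic balls), and $(\lambda_nM_n)\cap B_{\lambda_nN}(p,\tfrac{R}{\lambda_n})$ has genus zero for $n$ large.

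The main obstacle is the curvature-blow-up case: producing the lamination limit with the precise structure stated, identifying the column set $S(\mathcal{L})$, and proving the distance-$1$ separation for flat leaves. This needs the full Colding--Minicozzi regularity machinery, the classification of the admissible local models near a blow-up point (helicoid / double multigraph), a careful removable-singularities analysis for minimal laminations of $\R^3-\mathcal{S}$, and a delicate argument propagating the normalization at $\vec 0$ into a constraint on \emph{every} flat leaf of the limit. Separating the parking-garage alternative from the general singular-lamination alternative, and pinning down the exact count of two columns under a genus bound, is the other technically demanding piece.
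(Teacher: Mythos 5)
Your overall architecture (point selection, rescaling, dichotomy on curvature blow-up) matches the paper's, but there are concrete gaps at three of the load-bearing steps.

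\textbf{Item~1 is not read off the construction.} You claim that after defining $\delta$ and picking $p_n$ one simply ``reads items~1--2 off the construction,'' but the inclusion $M_n\subset B_M(p_n,\tfrac{r_n}{2}I_M(p_n))$ is a genuine chord-arc estimate: it says the extrinsic ball-component through $p_n$ is intrinsically contained. In the paper this is Proposition~\ref{lemma4.2}, proved by a lengthy contradiction argument that pulls in the intrinsic-to-extrinsic chord-arc theorem for embedded minimal disks (Theorem~\ref{thm2.2}, i.e.\ Theorem~13 of~\cite{mr13}), the one-sided curvature estimate, and a rescaling that forces two disjoint disks to accumulate at a common limit. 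Nothing in the point-selection process alone gives this; your write-up treats the hardest part of items~1--3 as bookkeeping.

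\textbf{The ``at least two points'' claim in item~6 is not provable by your loop argument.} You propose that ``a single singular point on $\overline{L}$ \ldots would let one manufacture geodesic loops in $\Sigma_n$ of length $<1-\tfrac1n$.'' For two singular points at distance $<1$ this is exactly the connection-loop argument the paper uses (Lemma~\ref{lemma4.7}), and it is correct. But if $\overline{L}\cap\Delta(\mathcal L)$ is a \emph{single} point, there is no second column to connect to, and you cannot manufacture any short loop. Ruling this case out is precisely the content of Proposition~\ref{propos4.22} and is by far the hardest part of the proof: the paper has to build the nested sweep $\Sigma(t_2)$ by slices $\alpha_{\gamma(t)}$, compare it with the ruled surface $R(t_2)$, prove via a quasiconformal map $\phi$ (Lemma~\ref{lemma4.23}) that $R(t_2)$ is a quasiconformal half-plane, upgrade to conformal via a Schwarz-reflection argument (Lemma~\ref{lemma4.24}), and then get a contradiction from parabolicity of $\Sigma(t_2)$ against the unboundedness from below of $x_3$. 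This entire chain (plus the preparatory flux computation in Proposition~\ref{propos4.11}, handling $I(P)=0$, $|I|(P)<\infty$) is absent from your sketch and cannot be replaced by the injectivity-radius bound.

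\textbf{The dichotomy between items~5 and~6 is misstated.} In the paper, the split is on whether the singular set $\mathcal S$ is empty: $\mathcal S=\mbox{\O}$ gives item~5 (Proposition~\ref{ass4.17}), $\mathcal S\neq\mbox{\O}$ gives item~6. A genus bound does not produce this split; it only yields the refinement within item~5 that $S(\mathcal L)$ has exactly two columns (and, as an \emph{application} in Section~\ref{secap}, rules out item~6 entirely). Your phrasing ``Under a genus bound \ldots upgrade the picture to item~5'' inverts cause and effect. Finally, your multiplicity-one argument for item~4 (``a second sheet would create short geodesic loops'') is heuristic at best and not obviously correct when $M_\infty$ is not simply connected; the paper instead uses the standard positive-Jacobi-function $\Rightarrow$ stable $\Rightarrow$ plane argument, anchored on the assertion that $M_\infty$ is nonflat (which itself uses the homotopically nontrivial loop from Assertion~\ref{beta-n}).
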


For a more detailed description of cases 5 and 6 of Theorem~\ref{tthm3introd},
see Propositions~\ref{ass4.17} and~\ref{proposnew} below.

The results in the series of papers
\cite{cm21,cm22,cm24,cm23,cm35,cm25} by Colding and Minicozzi
and the minimal lamination closure theorem by Meeks and
Rosenberg~\cite{mr13}  play important roles in
deriving the above compactness result. We conjecture that
item~{6} in Theorem~\ref{tthm3introd} does not actually occur.

A short explanation of the organization of the paper is as follows.
In Section~\ref{sec2} we introduce some notation and
recall the notion and language of laminations, as well as a chord-arc
property for embedded minimal disks previously proven by Meeks and Rosenberg
and based on a similar one by Colding and Minicozzi. In Section~\ref{seclt2}
we develop the theory of parking garage surfaces and limit
parking garage structures, a notion that appears in item~5 of the
main Theorem~\ref{tthm3introd}. Section~\ref{sec4}, the bulk of this paper,
is devoted to the proof of Theorem~\ref{tthm3introd}. Section~\ref{secap}
includes some applications of Theorem~\ref{tthm3introd}.
We refer the reader to~\cite{mpe3,mpe17,mpr8,mpr9,mpr11,mr13,mt13,mt9} for further
applications of Theorem~\ref{tthm3introd}.

\section{Preliminaries.}
\label{sec2}
Let $M$ be a Riemannian manifold. Let
$B_M(p,r)$ be the open ball centered at a point $p\in M$ with radius
$r>0$, for the underlying metric space structure of $M$ associated to its
Riemannian metric. When $M$ is complete, the injectivity radius $I_M(p)$ at a point $p\in M$
 is the supremum of the radii $r>0$ of the open balls $B_M(p,r)$ for which
the exponential map at $p$ is a diffeomorphism. This defines
the {\it injectivity radius function,} $I_M\colon M\to (0,\infty ]$, which is continuous
on $M$ (see e.g., Proposition~88 in Berger~\cite{ber1}). The infimum of
$I_M$ is called the {\it injectivity radius} of $M$.

\begin{definition}
\label{deflamination}
{\rm
 A {\it codimension-one
lamination} of a Riemannian three-manifold $N$ is the union of a
collection of pairwise disjoint, connected, injectively immersed
surfaces, with a certain local product structure. More precisely, it
is a pair $(\mathcal{L},\mathcal{A})$ satisfying:
\begin{enumerate}[1.]
\item $\mathcal{L}$ is a closed subset of $N$;
\item $\mathcal{A}=\{ \varphi _{\be }\colon \D \times (0,1)\to
U_{\be }\} _{\be }$ is an atlas of coordinate charts of $N$ (here
$\D $ is the open unit disk in $\R^2$, $(0,1)$ is the open unit
interval and $U_{\be }$ is an open subset of $N$); note that
although $N$ is assumed to be smooth, we
only require that the regularity of the atlas (i.e., that of
its change of coordinates) is of class $C^0$, i.e., $\mathcal{A}$
is an atlas for the topological structure of $N$.
\item For each $\be $, there exists a closed subset $C_{\be }$ of
$(0,1)$ such that $\varphi _{\be }^{-1}(U_{\be }\cap \mathcal{L})=\D \times
C_{\be}$.
\end{enumerate}
}
\end{definition}

We will simply denote laminations by $\mathcal{L}$, omitting the
charts $\varphi _{\be }$ in $\mathcal{A}$.
A lamination $\mathcal{L}$ is said to be a {\it foliation of $N$} if $\mathcal{L}=N$.
Every lamination $\mathcal{L}$ naturally decomposes into a
collection of disjoint, connected topological surfaces (locally given by $\varphi
_{\be }(\D \times \{ t\} )$, $t\in C_{\be }$, with the notation
above), called the {\it leaves} of $\mathcal{L}$. As usual, the
regularity of $\mathcal{L}$ requires the corresponding
regularity on the change of coordinate charts $\varphi _{\be }$.
A lamination $\mathcal{L}$ of $N$ is said to be a {\it minimal lamination}
if all its leaves are (smooth) minimal surfaces. Since the leaves of $\mathcal{L}$ are pairwise
disjoint, we can consider the norm of the second fundamental form
$|\sigma _\mathcal{L}|$ of $\mathcal{L}$, which is the function defined at every point $p$ in
$\mathcal{L}$ as $|\sigma _L|(p)$, where $L$ is the
unique leaf of $\mathcal{L}$ passing through $p$ and $|\sigma _L|$ is the norm of the
second fundamental form of $L$.

\begin{definition}
\label{def2.2}
{\rm
If $\{ \S_n\} _n$ is a sequence of complete embedded minimal surfaces in a Riemannian three-manifold
$N$, consider the closed set
$A\subset N$ of points $p\in N$ such that for every neighborhood $U_p$ of $p$ and every subsequence
of $\{ \S_{n(k)}\} _k$, the sequence of norms of the second fundamental forms of $\S_{n(k)}\cap U_p$
is not uniformly bounded.
By the arguments in Lemma~1.1 of Meeks and Rosenberg~\cite{mr8} (see also
 Proposition~B.1 in~\cite{cm23}),
after extracting a subsequence, the $\S_n$ converge
on compact subsets of $N-A$ to a minimal lamination $\mathcal{L}'$ of $N-A$ that extends to a minimal lamination
$\mathcal{L}$ of $N-\mathcal{S}$,
where $\mathcal{S}\subset A$ is the (possibly empty) {\it singular set} of $\mathcal{L}$, i.e., $\mathcal{S}$ is
the closed subset of $N$
such that $\mathcal{L}$ does not admit a local lamination structure around any point of $\mathcal{S}$.
We will denote by $S(\mathcal{L})=A-\mathcal{S}$ the {\it singular set of convergence} of the $\S_n$
to $\mathcal{L}$, i.e., those points of $N$ around which $\mathcal{L}$ admits a lamination structure
but where the second fundamental forms of the $\S_n$ still blow-up.
}
\end{definition}

In this paper we will apply the {\it Minimal Lamination Closure Theorem} in~\cite{mr13},
which insures that if  $M$ is a complete, embedded minimal surface of positive injectivity radius
in a Riemannian three-manifold $N$ (not necessarily complete), then the closure $\overline{M}$ of $M$ in
$N$ has the structure of a $C^{0,1}$-minimal lamination $\mathcal{L}$ with the components of $M$ being
leaves of $\mathcal{L}$. We will also use the following technical result from~\cite{mr13}, which generalizes
to the manifold setting some of the results in~\cite{cm35}.
\begin{definition}
\label{def2.3}
{\em
Given a surface $\Sigma $ embedded in a Riemannian three-manifold $N$, a point $p\in \Sigma $ and
$R>0$, we denote by $\Sigma (p,R)$ the closure of the component of $\Sigma \cap B_N(p,R)$
that passes through $p$.
}
\end{definition}

\begin{theorem}[Theorem 13 in~\cite{mr13}]
\label{thm2.2}
Suppose that $\Sigma $ is a compact, embedded minimal disk in a homogeneously regular
three-manifold $N$ whose injectivity radius function
$I_{\Sigma }\colon \Sigma \to [0,\infty )$
equals the distance to the boundary function
$d_{\Sigma }(\cdot ,\partial \Sigma )$
($d_{\Sigma}$ denotes intrinsic distance in $\Sigma$).
Then, there exist numbers $\de '\in (0,1/2)$ and $R_0>0$, both depending only on $N$, such that
if $\overline{B}_{\Sigma }(x,R)\subset \Sigma -\partial \Sigma $ and $R\leq R_0$, then
\[
\Sigma (x,\de' R)\subset \overline{B}_{\Sigma }(x,R/2).
\]
Furthermore, $\Sigma (x,\de' R)$ is a compact,
embedded minimal disk in $\overline{B}_N(x,\de' R)$ with
$\partial \Sigma (x,\de' R)\subset \partial B_N(x,\de' R)$.
\end{theorem}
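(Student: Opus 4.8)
\noindent{\it Proof idea.} The plan is to deduce Theorem~\ref{thm2.2} from the Colding--Minicozzi chord-arc bound for embedded minimal disks in $\rth$ (see~\cite{cm35}), transferred to the homogeneously regular manifold $N$. Recall the Euclidean ingredient: there is a universal constant $C\geq 1$ so that if $D$ is an embedded minimal disk in $\rth$, $x\in D$ and $\ov{B}_D(x,\rho )\subset D-\partial D$, then for every $s$ with $0<s\leq \rho /C$ the closure $D(x,s)$ of the component of $D\cap B_{\rth}(x,s)$ through $x$ is a compact embedded minimal disk with $\partial D(x,s)\subset \partial B_{\rth}(x,s)$ and $D(x,s)\subset B_D(x,Cs)$. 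Granting a version of this in $N$ --- with a constant $C_N\geq 1$ and valid for radii $\leq R_0$, both depending only on $N$ --- the theorem is immediate: set $\de '=\min\{ 1/(2C_N),1/4\} \in (0,1/2)$, apply the bound with $\rho =R\leq R_0$ and $s=\de 'R\leq R/C_N$, and conclude that $\Sigma (x,\de 'R)$ is a compact embedded minimal disk in $\ov{B}_N(x,\de 'R)$ with $\partial \Sigma (x,\de 'R)\subset \partial B_N(x,\de 'R)$ and $\Sigma (x,\de 'R)\subset B_\Sigma (x,C_N\de 'R)\subset \ov{B}_\Sigma (x,R/2)$. Here the hypothesis $I_\Sigma =d_\Sigma (\cdot ,\partial \Sigma )$ is used precisely to guarantee that the intrinsic metric balls $B_\Sigma (x,t)$ with $t\leq d_\Sigma (x,\partial \Sigma )$ are genuine embedded minimal disks (the exponential map of $\Sigma $ at $x$ being a diffeomorphism there), which is the setting in which the chord-arc bound is formulated.

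The substance of the theorem is thus the passage from $\rth$ to $N$, which I would carry out in two compactness steps. First, a uniform chord-arc bound for minimal surfaces in ambient metrics close to flat: if it failed there would be embedded minimal disks $D_n\subset B_{\rth}(\vec 0,2)$, minimal for metrics $g_n$ converging to the flat metric, with intrinsic balls $\ov{B}_{D_n}(x_n,\rho _n)$ compactly contained and $\rho _n\to \infty $, yet requiring a chord-arc constant tending to $\infty $ at unit extrinsic scale; passing to a subsequential limit, the $D_n$ converge to a minimal lamination of $B_{\rth}(\vec 0,2)$ for the flat metric which, by the Colding--Minicozzi structure theory for limits of embedded minimal disks, is near $\vec 0$ either a plane, a helicoid, or a foliation by parallel planes with a helicoidal singular curve, and in each of these cases the chord-arc inequality for the approximating $D_n$ holds with a bounded constant, a contradiction. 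Second, a rescaling: given $\Sigma $ in $N$, a point $x$ and $\rho =R$ with $\ov{B}_\Sigma (x,R)\subset \Sigma -\partial \Sigma $ and $R$ small, rescale the ambient metric near $x$ by $R^{-2}$; by homogeneous regularity the rescaled metric ball $R^{-1}B_N(x,2R)$ is as close as we wish to the Euclidean ball $B_{\rth}(\vec 0,2)$ once $R\leq R_0$ with $R_0$ depending only on $N$, so the first step applies. The same choice of $R_0$ makes the small metric balls $B_N(\cdot ,r)$, $r\leq R_0$, topological balls with mean-convex boundary, which is the input needed to run the maximum-principle arguments identifying $\Sigma (x,\de 'R)$ as a disk with boundary on $\partial B_N(x,\de 'R)$.

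The hard part is the first step: controlling the structure of limits of embedded minimal disks --- such limits are laminations, with singular curves along which the curvature of the approximating surfaces blows up in a helicoidal fashion --- and verifying that near every such limit the chord-arc constants of the approximating disks remain bounded. This requires the full Colding--Minicozzi machinery (the one-sided curvature estimate, the description of embedded minimal disks as built from multivalued graphs, and the compactness theory for such disks, see~\cite{cm35}), adapted to ambient metrics merely $C^{1,\alpha}$-close to flat and with all constants uniform in that closeness. Keeping this uniformity, and checking that the convexity of small metric balls in $N$ degrades controllably as the radius shrinks so as not to spoil the barrier and maximum-principle steps, are exactly the technical points handled in~\cite{mr13}, on which this theorem rests.
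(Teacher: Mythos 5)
This statement is quoted verbatim as Theorem~13 of Meeks--Rosenberg~\cite{mr13}; the present paper does not reprove it, so there is no ``paper's own proof'' to compare against. That said, your sketch is a faithful outline of the strategy actually followed in~\cite{mr13}: reduce to the Euclidean chord-arc bound of Colding--Minicozzi for embedded minimal disks (as in~\cite{cm35}), transfer it to a homogeneously regular $N$ by rescaling exponential coordinates so that the ambient metric is $C^2$-close to flat, and absorb the degradation of that closeness into a uniform constant $R_0>0$ depending only on $N$; the hypothesis $I_\Sigma=d_\Sigma(\cdot,\partial\Sigma)$ plays exactly the role you assign it, guaranteeing that intrinsic balls away from $\partial\Sigma$ are embedded disks so the Colding--Minicozzi disk machinery applies. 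Your arithmetic $\delta'=\min\{1/(2C_N),1/4\}$, giving $\Sigma(x,\delta'R)\subset B_\Sigma(x,C_N\delta'R)\subset\overline B_\Sigma(x,R/2)$, is correct, and the compactness argument you outline for step one (a failing sequence of disks in metrics converging to flat would limit onto a lamination whose local structure --- plane, helicoid, or foliation with a singular curve --- forces a bounded chord-arc constant, contradiction) is indeed the substance of the Meeks--Rosenberg argument. The one place your sketch is slightly loose is in asserting the $\rho_n\to\infty$ normalization in the contradiction step; what one actually normalizes is the extrinsic radius to unit scale and tracks the ratio of intrinsic to extrinsic radius, but this is a presentational point, not a gap.
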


\section{Parking garage structures in $\R^3$.}
\label{seclt2}
For a Riemannian surface $M$, $K_M$ will stand for  its
Gaussian curvature function.
In our previous paper~\cite{mpr20} we proved the Local Picture
Theorem on the Scale of Curvature, which is a tool that applies to
any complete, embedded minimal surface $M$ of unbounded absolute
Gaussian curvature in a homogeneously
regular three-manifold $N$, and produces via a blowing-up process a
nonflat, properly embedded minimal surface $M_{\infty } \subset
\R^3$ with normalized curvature (in the sense that $|K_{M_{\infty
}}|\leq 1$ on $M_{\infty }$ and $\vec{0}\in M_{\infty }$,
$|K_{M_{\infty }}|(\vec{0})=1$). The key ingredient to do this is to
find points $p_n\in M$ of {\it almost-maximal curvature} and then
rescale exponential coordinates in $N$ around these points $p_n$ by
$\sqrt{|K_M|(p_n)}\to \infty $ as $n\to \infty $. We will devote
the next section to obtain a somehow similar result for an $M$ whose
injectivity radius is zero, by exchanging the role of $\sqrt{|K_M|}$
by $1/I_M$, where $I_M\colon M\to (0,\infty ]$ denotes the injectivity radius
function on $M$. We will consider this rescaling ratio after
evaluation at points $p_n\in M$ of {\it almost-minimal injectivity radius},
in a sense to be made precise in the first paragraph of Section~\ref{sec4}. One of the
difficulties of this generalization is that the limit objects that
we can find after blowing-up might be not only properly embedded
minimal surfaces in $\R^3$, but also new objects, namely limit
minimal parking garage structures which we study below, and certain
kinds of singular minimal laminations of $\R^3$.

Roughly speaking, a
{\it minimal parking garage structure} is
a limit object for a
sequence of embedded minimal surfaces which converges $C^\a$, $\a \in (0,1)$,
to a minimal foliation $\mathcal{L}$ of $\rth$ by parallel
planes, with singular set of convergence $S(\mathcal{L})$ being a
locally finite set of lines orthogonal to
$\mathcal{L}$, called the {\it columns} of the limit parking garage
structure, along which the limiting surfaces have
the local appearance of a highly-sheeted double
staircase. For example, the sequence of homothetic
shrinkings $\frac{1}{n} \, H$ of a vertical helicoid
$H$ converges to a minimal parking garage structure
that consists of the minimal foliation $\mathcal{L}$ of $\rth$
 by horizontal planes with singular set of convergence
$S(\mathcal{L})$ being the $x_3$-axis.

We remark that some of the language associated to
minimal parking garage structures, such as columns,
 appeared first in a paper of Traizet and
Weber~\cite{tw1}, and the first important application of this
type of structure  appeared in~\cite{mpr1} where we applied it to derive
curvature estimates for certain complete embedded minimal
planar domains in $\rth$. In~\cite{tw1}, Traizet and
Weber produced an
analytic method for constructing a one-parameter
family of properly embedded, periodic minimal surfaces in
$\rth$, by analytically untwisting via the implicit
function theorem a limit configuration given by a
finite number of regions on vertical helicoids in
$\rth$ that have been glued together in a consistent
way.  They referred to the limiting configuration as a
parking garage structure on $\rth$ with columns
corresponding to the axes of the helicoids that they
glued together. Most of the area of these surfaces,
just before the limit, consists of very flat horizontal
levels (almost-horizontal densely packed horizontal
planes) joined by the vertical helicoidal columns.
One can travel quickly up and down the horizontal
levels of the limiting surfaces only along the
helicoidal columns in much the same way that some
parking garages are configured for traffic flow;
hence, the name parking garage structure.
Parking garage structures also appear as natural objects
in the main results of the papers~\cite{cm23,cm25,mt14}.

We now describe in more detail the notion of a
{\it parking garage surface.} Consider a possibly infinite,
nonempty, locally finite set of points $P \subset
\re^2$ and a collection $\mathcal{D}$ of open
round disks centered at the points of $P$ such that the
closures of these disks form a pairwise disjoint
collection.  Let $\mu \colon H_1 (\re^2 - \mathcal{D})
\rightarrow \Z$ be a group homomorphism such that
 $\mu$ takes the values $\pm 1$ on the
homology classes represented by the boundary circles
of the disks in $\mathcal{D}$.  Let $\Pi \colon M
\rightarrow \re^2- \mathcal{D}$ be the infinite
cyclic covering space associated to the kernel
of the composition of the
natural map from $\pi_1(\re^2-\mathcal{D})$ to
$H_1(\re^2- \mathcal{D})$ with $\mu$.  It is
straightforward to embed $M$ into $\rth$ so
that under the natural identification of $\re^2$
with $\R^2
\times \{0\}$, the map $ \Pi$ is the restriction to
$M$ of the orthogonal projection of $\rth$ to
$\re^2 \times \{0\}$. Furthermore, in this embedding,
we may assume that the covering transformation of $M$
corresponding to an $n \in \Z$ is given geometrically
by translating $M$ vertically by $(0,0,n)$.  In
particular, $M$ is a singly-periodic surface with boundary
in $\partial \mathcal{D} \times \re$. $M$
has exactly one boundary curve $\G $ on each vertical cylinder
over the boundary circle of each disk in $\mathcal{D}$.
We may assume that every such curve $\G $ is a
helix, see Figure~\ref{parkinggarage6}.
\begin{figure}
\begin{center}
\includegraphics[width=8cm]{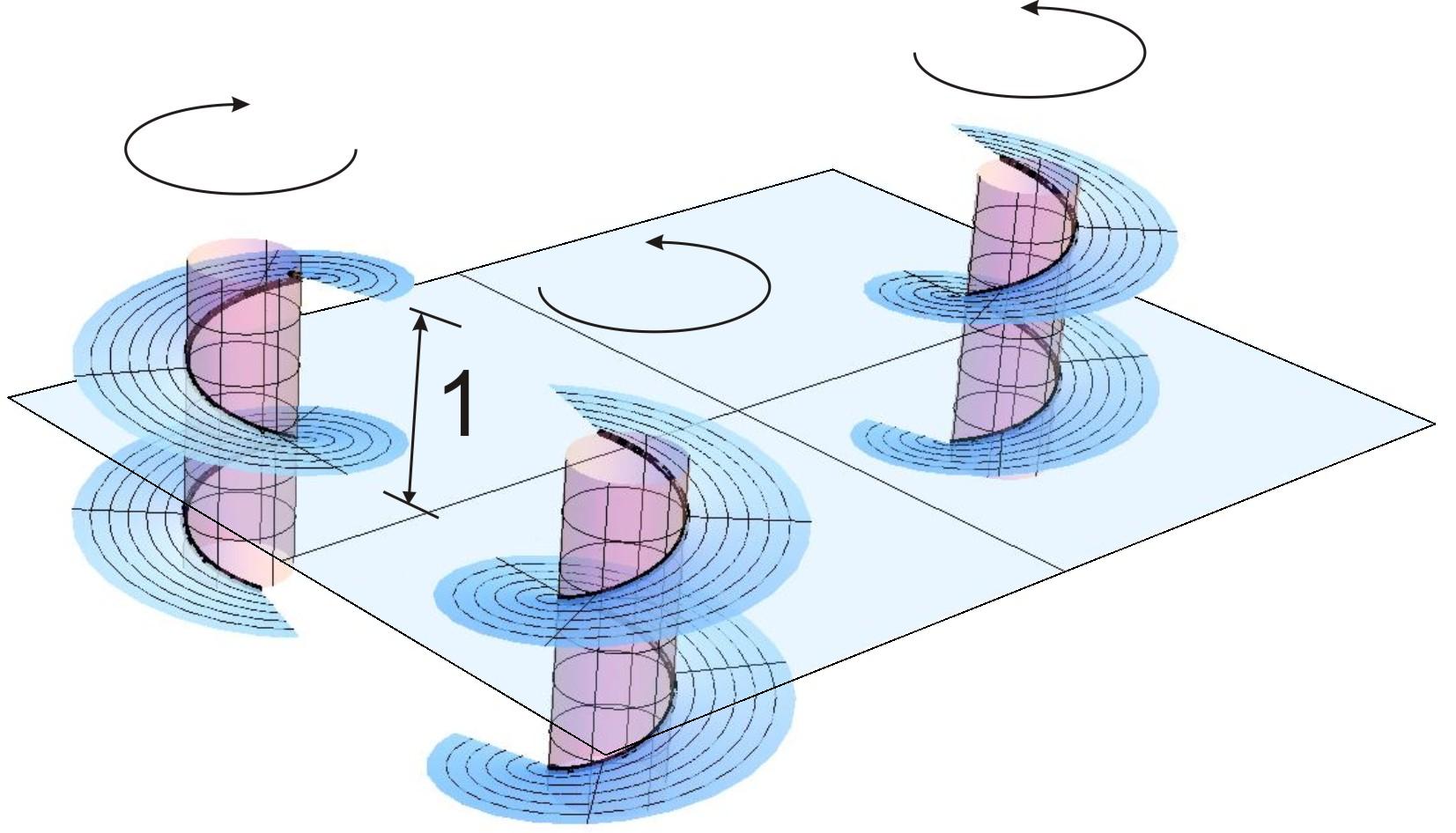}
\caption{Schematic representation of one of the
``two halves'' $M$ of a parking
garage surface $G$ with three columns, two right-handed and one
left-handed. The entire surface $G$, not represented in
the figure, is obtained after gluing $M$ with $M+(0,0,\frac{1}{2})$
and with an infinite helicoidal strip inside each of the columns.}
\label{parkinggarage6}
\end{center}
\end{figure}
Let $M(\frac{1}{2})$ be the vertical translation
of $M$ by $(0, 0,\frac{1}{2})$. $M \cup
M(\frac{1}{2})$ is an embedded, disconnected periodic
surface in $(\R^2-\mathcal{D})\times \R $ with a double
 helix on each boundary cylinder in $\partial \mathcal{D}
 \times \re$.

 \begin{definition}
{\rm
In the above situation, we will call a (periodic) {\it parking garage surface}
 corresponding to the surjective homomorphism $\mu$ to
 the connected topological surface $G\subset \R^3$ obtained after
attaching to $M \cup M(\frac{1}{2})$ an
infinite helicoidal strip in each of the solid
cylinders in $\mathcal{D} \times \re$. Note that by choosing $M$
appropriately, the resulting surface $G$ can be made smooth.
}
\end{definition}

Since in minimal surface theory we only see the
parking garage structure in the limit of a sequence of minimal surfaces, when the
helicoidal strips in the cylinders of $\mathcal{D}\times
 \R $ become arbitrarily densely packed, it is
useful in our construction of $G$ to consider parking
 garages $G(t)$ invariant under translation by
 $(0,0,t)$ with $t \in (0, 1]$ tending to zero. For
$t \in (0, 1]$, consider the affine transformation
$F_t(x_1, x_2, x_3) = (x_1, x_2, t x_3).$  Then
$G(t)=F_t(G)$. Note that our previously defined
surface $G$ is $G(1)$ in this new setup.
As $t \rightarrow 0$, the $G(t)$ converge to the
foliation $\mathcal{L}$ of $\rth$ by horizontal planes
 with singular set of convergence $S(\mathcal{L})$
consisting of the vertical lines in $P \times \re$.
Also, note that $M$ depends on the epimorphism
$\mu$, so to be more specific, we could also denote
$G(t)$ by $G(t, \mu)$.

\begin{definition}
  {\rm In the sequel, we will call the above limit object
$\lim _{t\to 0}G(t, \mu )$ a {\it limit parking garage
  structure} of $\R^3$ associated to the surjective homomorphism
  $\mu $.
}
\end{definition}

Next  we remark on the topology of the ends of the
periodic parking garage surface $G$ in the case that
$P$ is a finite set, where $G=G(t,\mu)$ for some
$t$ and $\mu$. Suppose $\mathcal{D}=\{D_1, \ldots, D_n\}$.
Then we associate to $G$ an integer index:
\[
I(G) = \sum^n_{i=1} \mu([\partial D_i])\in \Z .
\]
Note that the index $I(G)=I(G(t))$ does not depend on the parameter
$t$, and thus, it makes sense to speak about this index for a limit parking
garage structure $\lim _{t\to 0}G(t)$.

Consider the quotient orientable surface
$G/\Z$ in $\rth/\Z$, where $\Z$ is
generated by translation by $(0,0,t)$.  The ends of
$G/\Z$ are annuli and there are exactly two of them.
If $I(G) =0$, then these annular ends of
$G/\Z$ lift to graphical annular ends of $G$.
If $I(G)  \neq 0$, then the universal cover of an end
of $G/\Z$ has $|I(G)|$ orientation preserving
lifts to $G$, each of which gives rise to an
infinite-valued graph over its projection to the end of
$\re^2 \times \{0\}$.
\begin{lemma}
\label{ass2.1}
Suppose that $G$ is a periodic parking garage surface in $\R^3$ with a finite set
$P\times \R $ of $n$ columns. Then, the following properties are
equivalent.
\begin{enumerate}
\item $G$ has genus zero.
\item $G$ has finite genus.
\item $n=1$ {\rm (}in which case $G$ is
simply connected and $I(G) =\pm 1${\rm )},
or $n=2$ and
$I(G) =0$ {\rm (}in which case $G$ has an
infinite number of annular ends with two
limit ends{\rm )}.
\end{enumerate}
\end{lemma}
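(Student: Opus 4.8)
The plan is to analyze the topology of $G$ by working in the quotient $G/\Z \subset (\R^2 - \mathcal{D}) \times \R / \Z$, where the $\Z$-action is generated by translation by $(0,0,t)$, and then lifting topological information back up to $G$. The structural description already in the excerpt does most of the work: $M \cup M(\frac12)$ is a two-sheeted object whose quotient sits over $\R^2 - \mathcal{D}$, and $G$ is obtained by filling in $n$ solid cylinders with helicoidal strips. The implications $(3)\Rightarrow(1)$ and $(1)\Rightarrow(2)$ are the easy directions: if $n=1$ then the parking garage surface is (a translate of) a helicoid, hence simply connected with $I(G) = \pm 1$; if $n=2$ with $I(G)=0$, one checks directly from the covering-space construction that $G/\Z$ is a genus-zero surface with two annular ends that are graphical (by the end analysis recalled just before the lemma), and $G$ is its infinite cyclic cover, so $G$ has genus zero and two limit ends with infinitely many annular ends. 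And $(1)\Rightarrow(2)$ is trivial.

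The substance is $(2)\Rightarrow(3)$, which I would prove by contraposition: assuming we are not in case (3), I must exhibit infinite genus. First I would compute the Euler characteristic of the compact surface $\ov{G/\Z}$ obtained by a suitable compactification. The underlying base $\R^2 - \mathcal{D}$ with $n$ disks removed from $\R^2$ is homotopy equivalent to a wedge of $n$ circles, so $M$ (an infinite cyclic cover of it) and hence $M \cup M(\tfrac12)$ and $G/\Z$ have first homology of rank growing with $n$; concretely, $G/\Z$ deformation retracts onto a graph, and attaching the $n$ helicoidal strips identifies the two sheets along $n$ core curves. A careful bookkeeping of how the two sheets $M$, $M(\tfrac12)$ are glued along the $n$ boundary double-helices — using that each helical boundary curve wraps with multiplicity determined by $\mu([\partial D_i]) = \pm 1$ — shows that the genus of $G/\Z$ is $0$ exactly when $n \le 2$ and, in the case $n=2$, exactly when the two columns have cancelling indices, i.e. $I(G)=0$ (opposite handedness); otherwise the genus of $G/\Z$ is at least $1$. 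Since $G \to G/\Z$ is an infinite cyclic covering, a single handle in a compact piece of $G/\Z$ pulls back to infinitely many disjoint handles in $G$, so $G$ has infinite genus. This forces $(2)\Rightarrow(3)$.

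The main obstacle will be the careful combinatorial/topological computation of the genus of $G/\Z$ from the gluing data — keeping track of orientations of the double-helix boundary curves, the number of boundary components of $M/\Z$ versus $(M \cup M(\tfrac12))/\Z$, and how attaching each helicoidal annulus changes the Euler characteristic and the number of ends. In particular one must verify the parenthetical claims: that $n=1$ forces simple connectivity with $I(G) = \pm 1$ (the only possible values of $\mu$ on the single boundary circle), and that $n=2$ with $I(G)=0$ yields two limit ends each accumulated by infinitely many annular ends (using the lift count $|I(\text{end})|$ from the end discussion preceding the lemma, which gives graphical — hence annular — ends precisely when the local index vanishes). I would organize this as: (i) reduce to computing $\chi$ and ends of $G/\Z$; (ii) do the case $n=1$ by hand; (iii) do the case $n=2$ splitting on $I(G)=0$ vs.\ $I(G)\ne 0$; (iv) for $n \ge 3$, or $n=2$ with $I(G)\ne0$, locate one handle in $G/\Z$ and invoke the infinite cyclic cover to conclude infinite genus of $G$; (v) assemble the equivalences.
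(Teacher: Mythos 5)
Your proposal is essentially correct but takes a genuinely different route from the paper on the key direction. For $\neg(3)\Rightarrow\neg(1)$, the paper does not pass through an Euler characteristic computation on $G/\Z$ at all. Instead, it observes that if (3) fails, then (by pigeonhole on the values $\mu([\partial D_i])=\pm 1$) there must be two points $x_1,x_2\in P$ with the \emph{same} orientation number. It then builds a simple closed curve $\widetilde{\gamma}$ on $G$ directly: lift an embedded arc $\gamma\subset\R^2-P$ joining $x_1$ to $x_2$ to two consecutive levels of $G$, joined by short vertical segments on the two columns, and let $\widetilde{\gamma}'$ be the vertical translate of $\widetilde{\gamma}$ by one level. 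The same-handedness of the two columns forces $\widetilde{\gamma}$ and $\widetilde{\gamma}'$ to have intersection number one on $G$, so a regular neighborhood of $\widetilde{\gamma}\cup\widetilde{\gamma}'$ has genus one. Together with the periodicity argument for $(1)\Leftrightarrow(2)$, this finishes. Your approach, computing $\chi(G/\Z)$ and the ends structure from the gluing data and then pulling back handles through the infinite cyclic cover, is sound in principle and would give the same conclusion, but it requires the careful bookkeeping you flag as your ``main obstacle'' (in particular, for open surfaces, genus is not read off from $\chi$ alone; you also need to control the number of ends, so the accounting of boundary components of $M/\Z$ versus $(M\cup M(\tfrac12))/\Z$ is genuinely needed). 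The paper's curve-pair construction sidesteps all of that and is both shorter and more geometrically transparent, though less systematic: your route, if carried out, would simultaneously recover the full genus of $G/\Z$ as a function of $n$ and $\mu$, which the paper's argument does not attempt.
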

\begin{proof}
The equivalence between items~{1} and {2} holds
since $G$ is periodic. Clearly item~{3} implies
item~{1}. Finally, if item~{3} does not hold then
either $n\geq 3$ or $G$ has two columns with the
same handedness. In any of these cases there exist at
least two points $x_1,x_2\in P$ with associated values
$\mu ([\partial D_1])= \mu ([\partial D_2])$ for
the corresponding disks $D_1,D_2$ in $\mathcal{D}$ around
$x_1,x_2$ (up to reindexing). Consider an embedded
arc $\g $ in $\R^2-P$ joining $x_1$ to $x_2$. Then one can
lift $\g $ to two arcs in consecutive levels of the
parking garage $G$ joined by short vertical segments
on the columns over $x_1$ and $x_2$. Let
$\widetilde{\g }$ denote this associated simple closed
curve on $G$. Observe that if $\widetilde{\g }'$ is
the related simple closed curve obtained by
translating $\widetilde{\g }$ up exactly one level in
$G$ (this means that $\widetilde{\g}'=\widetilde{\g }+(0,0,t/2)$ if $G=G(t,\mu )$),
then $\widetilde{\g }$ and
$\widetilde{\g }'$ have intersection number one.
Thus, a small regular neighborhood of
$\widetilde{\g }\cup \widetilde{\g }'$ on $G$ has
genus one, which implies that item~{1} does not hold.
\end{proof}
\subsection{Examples of parking garage structures.}

\begin{enumerate}[(B1)]
\item Consider the limit of homothetic
shrinkings of a vertical helicoid. One obtains in
this way the foliation $\mathcal{L}$ of $\rth$ by
horizontal planes with a single column, or
singular curve of convergence $S(\mathcal{L})$, being
the $x_3$-axis. The related limit  {\it minimal}
parking garage surface $G$ has invariant
$I(G)=\pm 1$; the word ``minimal'' is used because
in this case, the surface $G$ is a minimal surface.

\item Let $R_t$, $t>0$, be the classical Riemann minimal examples. These
are properly embedded, singly-periodic minimal surfaces with genus zero and
infinitely many planar ends asymptotic to horizontal planes. Consider
the limit of the $R_t$ when the flux vector of $R_t$ along a compact
horizontal section converges to $(2,0,0)$. Note that the surfaces $R_t$ are invariant under
a translation that only becomes vertical in the limit; in spite of this slight difference with
the theoretical framework explained above, where the surface $G(t,\mu )$ is invariant
under a vertical translation, we still consider the limit minimal parking structure
in this case. This limit minimal parking garage structure $G$ has two columns
with opposite handedness (see~\cite{mpr1} for a proof of these
properties) and so, $G$ has invariant $I(G)=0$, see
Figure~\ref{figurepg}. These examples (B1), (B2)
correspond to Case~{3} of Lemma~\ref{ass2.1}.
\begin{figure}
\begin{center}
\includegraphics[width=9cm]{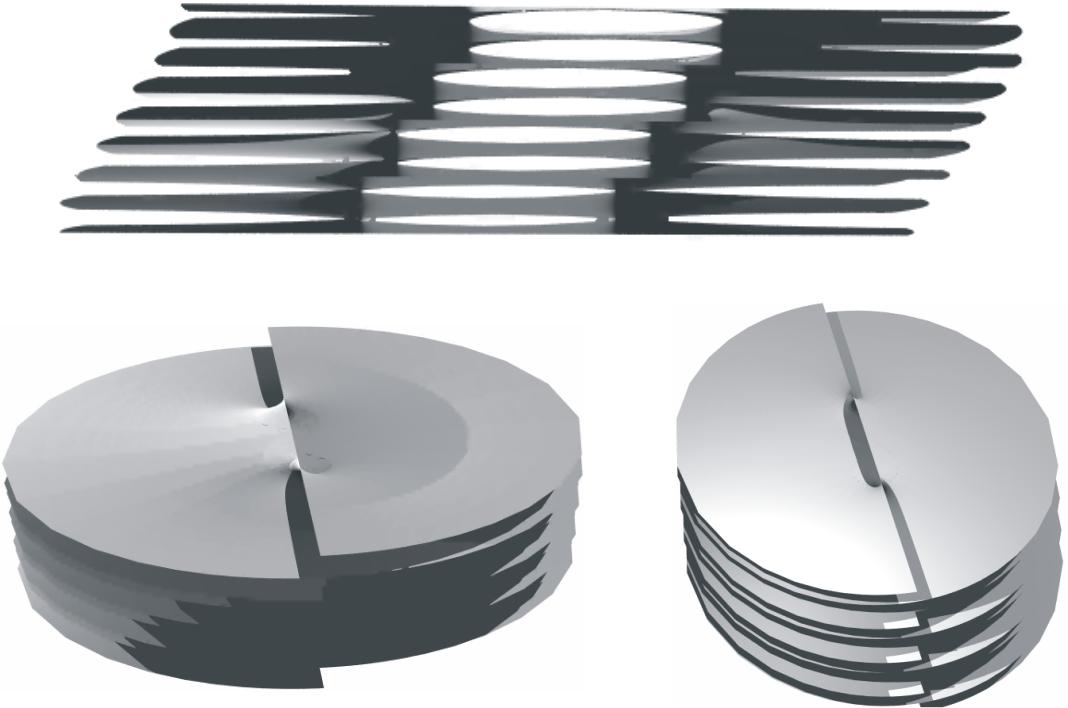}
\caption{Three views of a minimal parking
garage surface, constructed on
a Riemann minimal example.}
\label{figurepg}
\end{center}
\end{figure}

\item Consider the Scherk
doubly-periodic minimal surfaces $S_{\theta}$,
$\theta \in (0,\frac{\pi}{2}]$, with horizontal
lattice of periods $\{((m+n)\cos \theta, (m-n)\sin
\theta, 0) \mid m,n \in \Z \}$. The limit as $\theta
\rightarrow 0$ of the surfaces $S_{\theta }$
is a foliation of $\rth$ by planes
parallel to the $(x_1,x_3)$-plane, with columns
of the same orientation being the horizontal
lines parallel to the $x_2$-axis
and passing through $\Z \times \{0\} \times \{0\}$.
The related minimal parking garage structure
of $\rth$ has an infinite number of columns,
all of which are oriented the same way.
\end{enumerate}

We refer the interested reader to~\cite{tw1} for
 further details and more examples of parking garage
structures that occur in minimal surface theory.

\begin{lemma}
Every parking garage surface in $\R^3$ with a
finite number of columns is recurrent for Brownian motion.
\end{lemma}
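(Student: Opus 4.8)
The plan is to exhibit $G$ as an infinite cyclic Riemannian cover of a recurrent (parabolic) Riemann surface, and then to invoke the invariance of recurrence under amenable coverings.

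First, I would let $\Gamma\cong\Z$ be the group of vertical translations leaving $G$ invariant (if $G=G(t,\mu)$, then $\Gamma$ is generated by translation by $(0,0,t/2)$) and set $\overline G=G/\Gamma$. Since $\Gamma$ acts by ambient isometries preserving $G$, the quotient map $G\to\overline G$ is a Riemannian covering with deck group $\Z$. Because $G$ is assembled from finitely many pieces of finite topology (copies of the cyclic cover $M$ and the $n$ helicoidal strips) glued along their boundary curves, $\overline G$ is a Riemann surface of finite topological type; moreover, as observed in the discussion preceding Lemma~\ref{ass2.1}, $\overline G$ has exactly two annular ends, each of which, under the orthogonal projection $\R^3\to\R^2\times\{0\}$, is a graph with a finite number of sheets over the unique annular end $\{|x|\ge R\}$ of $\R^2-\mathcal D$ (one sheet when $I(G)=0$, and $|I(G)|$ sheets otherwise).

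Second, I would check that each of the two ends of $\overline G$ is a parabolic end. Outside a compact set such an end is a finite-valued graph $x_3=u(x)$ over $\{|x|\ge R\}$, and in the normalized parking-garage model the levels are asymptotically horizontal, so that $|\nabla u|\to 0$ and the induced metric on the end is uniformly comparable to the flat metric $|dx|^2$. In particular each end has at most quadratic area growth, and any annular end over which the area of intrinsic balls grows at most quadratically is a parabolic end (a standard consequence of the area-growth criteria for parabolicity). Since a Riemann surface of finite topological type all of whose ends are parabolic is recurrent for Brownian motion, $\overline G$ is recurrent. Finally, I would invoke the fact that a normal Riemannian covering of a recurrent Riemannian manifold whose group of deck transformations is amenable is again recurrent (for abelian, in particular $\Z$, coverings this is classical; cf. the work of Lyons and of Lyons and Sullivan). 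The deck group of $G\to\overline G$ is $\Z$, which is amenable, so $G$ is recurrent, as claimed.

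The main obstacle is the middle step: pinning down the coarse metric (equivalently conformal) type of the two ends of $\overline G$, i.e. confirming that they really are finite-sheeted graphs over a planar annular end with asymptotically vanishing gradient, so that the quadratic-area-growth criterion applies. This is precisely where the fine structure of the parking-garage model enters — the helices on the boundary cylinders, the asymptotically flat levels, and the index identity $I(G)=\sum_i\mu([\partial D_i])$, which governs the number of sheets. Note that one cannot shortcut by applying a volume-growth criterion to $G$ itself: $G$ has cubic area growth in $\R^3$, so passing to the $\Z$-quotient is essential.
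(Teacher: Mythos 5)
Your structural setup is essentially the same as the paper's: you form the quotient $\overline{G}=G/\Z$, observe that it has finite topology, exactly two annular ends, and quadratic area growth (hence is recurrent), and then try to lift recurrence back to the $\Z$-cover $G$. The gap is in the lifting step. The assertion that a normal Riemannian cover of a recurrent manifold with amenable deck group is again recurrent is false in the generality you state: $\R^3 \to \Te^3$ is a normal cover with amenable deck group $\Z^3$ of a compact (hence recurrent) manifold, yet $\R^3$ is transient. What Varopoulos's theorem gives, for \emph{cocompact} covers, is that the cover is recurrent if and only if the deck group has polynomial growth of degree at most $2$; for a deck group $\Z$ this would settle the matter if $\overline{G}$ were compact, but $\overline{G}$ is not compact, and the results of Lyons and of Lyons--Sullivan that you cite either assume a compact base or concern the Liouville property (nonexistence of bounded harmonic functions), which is weaker than and different from recurrence. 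Neither establishes that a $\Z$-cover of a noncompact recurrent surface is recurrent without further hypotheses.

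The paper avoids this by invoking a theorem of Epstein tailored to exactly this situation, which requires an additional geometric hypothesis you never verify: that the homomorphism $\widetilde{\tau}\colon H_1(\overline{G})\to \Z$ classifying the cover sends the homology class of a loop around \emph{each} of the two ends of $\overline{G}$ to a \emph{generator} of $\Z$. This hypothesis is doing real work (if, say, one end-loop mapped to $0$, that end of $\overline{G}$ would lift to $\Z$-many ends of $G$, and the type problem changes character), and it is a consequence of the specific way $G$ is built as the $\Z$-cover of $\R^2-\mathcal{D}$ associated to $\mu$. To repair your argument you should replace the appeal to amenability with Epstein's theorem (or an equivalent result about $\Z$-covers of finite-type parabolic Riemann surfaces) and explicitly check the end-loop condition.
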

\begin{proof}
Note that if $G\subset \R^3$ is a parking garage surface and
we consider the natural action of $\Z $ over $G$ by vertical
orientation preserving translations, then the quotient surface $G/\Z
$ has finite topology, exactly two annular ends and quadratic area
growth. In particular, $G/\Z $ is conformally a twice punctured
compact Riemann surface. On the other hand, since the covering $\Pi
\colon G\to G/\Z $ is a normal covering, then there is a natural
homomorphism $\tau $ from the fundamental group of $G/\Z $ onto the
group of automorphisms Aut$(\Pi )$ of this covering. Since Aut$(\Pi
)$ is abelian, then $\tau $ factorizes through the first homology group
$H_1(G/\Z )$ to a surjective
homomorphism $\widetilde{\tau }\colon H_1(G/\Z )\to \mbox{Aut}(\Pi )$.
Furthermore, each one of the two homology classes in $H_1(G/\Z )$
given by loops on $G/\Z $ around the ends applies via
$\widetilde{\tau }$ on a generator of Aut$(\Pi )$. In this setting,
Theorem~2 in Epstein~\cite{ep2} implies that $G$ is recurrent; see
the first paragraph in Section~4 of~\cite{mpr13} for details on this application
of the result by Epstein.
\end{proof}

\begin{remark}
{\rm
In Example (B3) above, the surface $S_{\theta }$ for any $\theta $
is not recurrent for Brownian motion, but it
is close to that condition, in the sense that
it does not admit positive nonconstant
harmonic functions, see~\cite{mpr13}.
}
\end{remark}

We have already introduced the notation $B_N(p,r)$
for the open metric ball centered at the
point $p$ with radius $r>0$ in a Riemannian three-manifold $N$.
In the case $N=\R^3$, we will simplify $\B (p,r)=B_{\R^3}(p,r)$
and $\B (r)=\B (\vec{0},r)$.

Note that it also makes sense for a sequence
of compact, embedded minimal surfaces
$M_n\subset \B(R_n)$ with boundaries
$\partial M_n\subset \partial \B(R_n)$ such that
$R_n\to \infty $ as $n\to \infty $, to converge
on compact subsets of $\rth$ to a minimal parking garage
structure on $\rth$
consisting of a foliation $\mathcal{L}$ of $\rth$ by planes
 with a locally finite set of lines $S(\mathcal{L})$
  orthogonal to the planes in $\mathcal{L}$, where $S(\mathcal{L})$
corresponds to the singular set of convergence of the
$M_n$ to $\mathcal{L}$.  We note that each of the lines
in $S(\mathcal{L})$ has an associated $+$ or $-$ sign
corresponding to whether or not the associated
forming helicoid in $M_n$ along the line is right or left handed.
For instance, Theorem~0.9 in~\cite{cm25} illustrates a particular
case of this convergence to a limit parking garage structure on $\R^3$
when $S(\mathcal{L})$ consists of two lines with associated
double staircases of opposite handedness.

\begin{remark}
{\rm
To study other aspects of how minimal parking garage structures
appear as the limit of a sequence of minimal surfaces in $\rth$, see
Meeks~\cite{me25,me30}.
}
\end{remark}

\section{The Proof of Theorem~\ref{tthm3introd}.}
\label{sec4}
Let $M\subset N$ be a complete, embedded minimal surface with
injectivity radius zero in a homogeneously regular three-manifold
$N$. As $N$ is homogeneously regular, its injectivity radius is positive.
After a fixed constant scaling of the metric of $N$, we may
assume that the injectivity radius of $N$ is greater than 1.
The first step in the proof of Theorem~\ref{tthm3introd} is to obtain
special points $p'_n\in M$, called {\it points of almost-minimal injectivity radius.}
To do this, first consider a sequence of
points $q_n\in M$ such that $I_M(q_n)\leq \frac{1}{n}$
(such a sequence $\{ q_n\} _n$ exists since the injectivity radius of $M$ is
zero). Consider the continuous function $h_n\colon \ov{B}_{M}(q_n,1)\to \R $ given by
\begin{equation}
\label{eq:hn}
h_n(x)=\frac{d_M(x,\partial B_M(q_n,1))}{I_M(x)},\quad x\in
\ov{B}_M(q_n,1),
\end{equation}
where $d_M$ is the distance function associated to its Riemannian metric.
As $h_n$ is continuous and vanishes on $\partial B_M(q_n,1)$, then there
exists $p'_n\in B_{M}(q_n,1)$ where $h_n$ achieves its maximum value.

We define $\lambda'_n=I_M(p'_n)^{-1}$. Note that
\begin{equation}
\label{eq:hn2}
\lambda'_n\geq \lambda'_nd_M(p'_n,\partial B_M(q_n,1)) =h_n(p'_n)\geq
h_n(q_n)=I_M(q_n)^{-1}\geq n.
\end{equation}
Fix $t>0$. Consider exponential coordinates centered at $p'_n$ in the extrinsic
ball $B_N(p_n',\frac{t}{\lambda_n'})$ (this can be done if $n$ is sufficiently large).
After rescaling the ambient metric by the factor $\lambda_n'\to \infty $ and identifying
$p'_n$ with the origin $\vec{0}$, we conclude that the
sequence $\{ \lambda'_nB_N(p'_n,\frac{t}{\lambda'_n}) \} _n$ converges to
the open ball $\B (t)$ of $\R^3$ with its usual metric. Similarly, we can consider $\{ 
\lambda'_n\overline{B}_M(p'_n,\frac{t}{\lambda'_n})\} _n$ to be a sequence of embedded
minimal surfaces with boundary, all passing through $p'_n= \vec{0}$ with
injectivity radius $1$ at this point.
\begin{lemma}
The injectivity
radius function of $\lambda_n'M$ (i.e., of $M$ endowed with the rescaled
metric by the factor $\lambda_n'$) restricted to $\lambda'_nB_M(p'_n,\frac{t}
{\lambda'_n})$ is greater than some positive constant independent of~$n$
 large.
\end{lemma}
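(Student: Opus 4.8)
The plan is to exploit the extremal choice of $p'_n$: since $p'_n$ maximizes $h_n(x)=d_M(x,\partial B_M(q_n,1))/I_M(x)$ over $\ov{B}_M(q_n,1)$, moving a short intrinsic distance away from $p'_n$ can decrease the numerator only a little, whereas the quotient must stay $\leq h_n(p'_n)$; this forces $I_M$ to remain comparable to $I_M(p'_n)=1/\lambda'_n$ on a small intrinsic ball around $p'_n$. Concretely, I would prove that for $n$ large and every $x\in B_M(p'_n,\tfrac{t}{\lambda'_n})$ one has $I_M(x)\geq \tfrac{1}{\lambda'_n}\bigl(1-\tfrac{t}{n}\bigr)$, hence $I_{\lambda'_nM}(x)=\lambda'_nI_M(x)\geq 1-\tfrac{t}{n}$, which is bounded below by $\tfrac12$ as soon as $n\geq 2t$, giving a positive constant independent of $n$.

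First I would check that $B_M(p'_n,\tfrac{t}{\lambda'_n})$ is contained in the domain $\ov{B}_M(q_n,1)$ of $h_n$, so that the maximality of $h_n$ at $p'_n$ can legitimately be invoked at nearby points. This is where the slack in~\eqref{eq:hn2} enters: that inequality yields $d_M(p'_n,\partial B_M(q_n,1))=h_n(p'_n)/\lambda'_n\geq n/\lambda'_n$, which exceeds $t/\lambda'_n$ once $n>t$, so by the triangle inequality any $x$ with $d_M(x,p'_n)<t/\lambda'_n$ still satisfies $d_M(x,\partial B_M(q_n,1))>0$, i.e.\ $x\in B_M(q_n,1)$.

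Next I would run the comparison. For such an $x$, the inequality $h_n(x)\leq h_n(p'_n)$ rearranges to $I_M(x)\geq d_M(x,\partial B_M(q_n,1))/h_n(p'_n)$; combining this with $d_M(x,\partial B_M(q_n,1))\geq d_M(p'_n,\partial B_M(q_n,1))-d_M(x,p'_n)> h_n(p'_n)\,I_M(p'_n)-\tfrac{t}{\lambda'_n}$ and dividing by $h_n(p'_n)$ gives
\[
I_M(x)\ \geq\ I_M(p'_n)-\frac{t}{\lambda'_n\,h_n(p'_n)}\ =\ I_M(p'_n)\Bigl(1-\frac{t}{h_n(p'_n)}\Bigr).
\]
Since $h_n(p'_n)\geq n$ by~\eqref{eq:hn2}, this yields $I_M(x)\geq \tfrac{1}{\lambda'_n}(1-\tfrac{t}{n})$, and because rescaling the metric by the factor $\lambda'_n$ multiplies the injectivity radius by $\lambda'_n$, we obtain $I_{\lambda'_nM}(x)\geq 1-\tfrac{t}{n}$ on all of the rescaled ball $\lambda'_nB_M(p'_n,\tfrac{t}{\lambda'_n})$.

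I do not anticipate a serious obstacle: the only delicate point is the bookkeeping that guarantees the rescaled ball lies well inside $B_M(q_n,1)$, so the extremality of $p'_n$ is applicable at nearby points, and this is exactly what the estimate $h_n(p'_n)\geq n$ in~\eqref{eq:hn2} provides. One should also recall that $I_M$ is continuous and takes values in $(0,\infty]$, the case $I_M(x)=\infty$ trivially satisfying the bound, so no degeneracy arises.
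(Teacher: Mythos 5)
Your proof is correct and follows essentially the same approach as the paper: both exploit the maximality of $h_n$ at $p'_n$, the triangle inequality to compare $d_M(x,\partial B_M(q_n,1))$ with $d_M(p'_n,\partial B_M(q_n,1))$, and the estimate $h_n(p'_n)\geq n$ from~(\ref{eq:hn2}) to arrive at the same lower bound $\lambda'_n I_M(x)\geq 1-\tfrac{t}{n}$. The only cosmetic difference is that you bound $\lambda'_n I_M(x)$ from below directly, while the paper bounds its reciprocal from above by $1+\tfrac{t}{n-t}$, which is algebraically the same estimate.
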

\begin{proof}
Pick a point $z_n\in B_M(p'_n,\frac{t}{\lambda'_n})$. 
Since for $n$ large enough, $z_n$ belongs to $B_M(q_n,1)$,
we have
\begin{equation}
\label{eq:*} \frac{1}{\lambda'_nI_M(z_n)}=\frac{h_n(z_n)}{\lambda'_n
d_M(z_n,\partial B_M(q_n,1))}\leq
\frac{h_n(p'_n)}{\lambda'_n d_M(z_n,\partial B_M(q_n,1))}
=\frac{d_M(p'_n,\partial B_M(q_n,1))}{d_M(z_n,\partial B_M(q_n,1))}.
\end{equation}
By the triangle inequality, $d_M(p'_n,\partial B_M(q_n,1)) \leq
\frac{t}{\lambda'_n}+d_M(z_n,\partial B_M(q_n,1))$ and so,
\[
\frac{d_M(p'_n,\partial B_M(q_n,1))}{d_M(z_n,\partial B_M(q_n,1))}
\leq 1+\frac{t}{\lambda'_nd_M(z_n,\partial B_M(q_n,1))}
\]
\[
\leq 1+\frac{t}{\lambda'_n\left( d_M(p'_n,\partial
B_M(q_n,1)) -\frac{t}{\lambda'_n}\right) }
=1+\frac{t}{\lambda'_n d_M(p'_n,\partial
B_M(q_n,1)) -t}
\]
\begin{equation}
\label{eq:**}
\stackrel{(\ref{eq:hn2})}{\leq }1+\frac{t}{n-t},
\end{equation}
which tends to $1$ as $n\to \infty $, thereby proving the lemma.
\end{proof}

\subsection{A chord-arc property and the proof of
items {1, 2, 3} of Theorem~\ref{tthm3introd}.}
With the notation above, we define
\begin{equation}  \label{eq:item1}
t_n=\frac{\sqrt{n}}{2},\quad
{\textstyle \widetilde{M}(n)=\lambda_n'\overline{B}_M(p_n',\frac{t_n}{\lambda_n'}).}
\end{equation}
Since by (\ref{eq:hn2})
$h_n(p'_n)\geq n>t_n$, then $\frac{t_n}{\lambda_n'}
< \frac{h_n(p'_n)}{\lambda_n'}=d_M(p_n',\partial B_M(q_n,1))$.
Therefore, given any $z\in B_M(p_n',\frac{t_n}{\lambda_n'})$, we have
\[
d_M(z,q_n)\leq d_M(z,p_n')+d_M(p_n',q_n)<\frac{t_n}{\lambda_n'}+d_M(p_n',q_n)
\]
\[
\qquad<d_M(p_n',\partial B_M(q_n,1))+d_M(p_n',q_n)=1,
\]
 that is, $z\in B_M(q_n,1)$.
This last property lets us apply (\ref{eq:*}) and (\ref{eq:**}) to
conclude that for $n$ large and $z\in B_M(p_n',\frac{t_n}{\lambda_n'})$, we have
\begin{equation} \label{eq:item2}
\frac{1}{\lambda_n'I_M(z)}\leq 1+\frac{t_n}{n-t_n}\leq 2;
\end{equation}
hence the injectivity radius function of the complete surface $\lambda_n'M$ is
greater than  $\frac{1}{2}$ at any point in $\widetilde{M}(n)$.
This clearly implies that
\begin{enumerate}[(Inj)]
\item $\widetilde{M}(n)$ has injectivity
radius at least $\frac{1}{2}$ at points of distance greater than
$\frac{1}{2}$ from its boundary.
\end{enumerate}

\begin{proposition}
\label{lemma4.2}
Given $R_1>0$, there exists $\wt{\de} =\wt{\de} (R_1)\in (0,\frac{1}{2})$ such that for any $R
\in (0,R_1]$ and for $n$ sufficiently large, the closure $\Sigma (n,\wt{\de} R)$ of the component
of $\widetilde{M}(n)\cap B_{\lambda_n'N}(p_n',\wt{\de} R)$ passing through $p_n'$ has $\partial \Sigma
(n,\wt{\de }R)\subset \partial B_{\lambda_n'N}(p_n',\wt{\de} R)$ and satisfies
\begin{equation}
\label{eq:chordarc}
\Sigma (n,\wt{\de} R)\subset B_{\widetilde{M}(n)}(p_n',{\textstyle {\frac{R}{2}}}).
\end{equation}
Furthermore, the function $r\in (0,\infty )\mapsto \wt{\de}(r)\in (0,\frac{1}{2})$ can be chosen so that
$\wt{\de}(r)$ is nonincreasing and $r\, \wt{\de}(r)\to \infty $ as $r\to \infty $.
\end{proposition}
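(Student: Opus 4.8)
The plan is to deduce Proposition~\ref{lemma4.2} from the chord-arc property Theorem~\ref{thm2.2} by a rescaling/subsequence argument, using property (Inj) to put ourselves in the hypotheses of that theorem on the scale $\lambda_n'$. First I would fix $R_1>0$ and set $\wt\de(R_1)=\de'$, the universal constant from Theorem~\ref{thm2.2}, shrinking it if necessary so that $\wt\de(R_1)R_1\le R_0$, where $\de'$ and $R_0$ depend only on $N$. The key observation is that, since the injectivity radius of $\lambda_n'N$ tends to $\R^3$ uniformly on the ball $\B(t_n)$ (so its injectivity radius there is $\ge t_n/2\to\infty$), and since (Inj) gives that $\wt M(n)$ has injectivity radius $\ge\tfrac12$ at points at distance $>\tfrac12$ from $\partial\wt M(n)$, for $n$ large the portion of $\wt M(n)$ lying in the ball $B_{\lambda_n'N}(p_n',R_1)$ is, after the usual argument, contained in an embedded minimal disk whose injectivity radius function equals the distance to its boundary — this is exactly what is needed to invoke Theorem~\ref{thm2.2} with $x=p_n'$.

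Next I would apply Theorem~\ref{thm2.2} to this disk: for any $R\in(0,R_1]$ with $R\le R_0$ and $\overline B_{\wt M(n)}(p_n',R)$ disjoint from the boundary (true for $n$ large since $t_n\to\infty$), we get
\[
\Sigma(n,\de' R)\subset \overline B_{\wt M(n)}(p_n',R/2)\subset B_{\wt M(n)}(p_n',R),
\]
and $\Sigma(n,\de' R)$ is a compact embedded minimal disk with boundary on $\partial B_{\lambda_n'N}(p_n',\de' R)$; taking $\wt\de=\de'$ gives (\ref{eq:chordarc}). The one subtlety is the passage from the ``injectivity radius $\ge\tfrac12$ away from the boundary'' statement to the ``$I_\Sigma=d_\Sigma(\cdot,\partial\Sigma)$ on a disk'' hypothesis of Theorem~\ref{thm2.2}: one restricts to the component through $p_n'$ of the intersection of $\wt M(n)$ with a slightly smaller ball, checks it is simply connected (its injectivity radius is bounded below while its extrinsic, hence intrinsic, diameter is controlled, and the ambient balls are nearly Euclidean), and notes that on such a disk the injectivity radius is forced to equal the distance to the boundary. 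I expect this verification — making precise, uniformly in $n$, that we land on a genuine disk satisfying $I_\Sigma=d_\Sigma(\cdot,\partial\Sigma)$ — to be the main technical obstacle; everything else is a direct quotation of Theorem~\ref{thm2.2}.

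Finally, for the monotonicity and divergence clause, I would define $\wt\de$ globally by a diagonal/interpolation procedure: on each interval $(0,R_1]$ the above produces an admissible constant $\wt\de(R_1)$, and one may take $\wt\de$ to be, say, the pointwise infimum over all such admissible choices at smaller scales, then replace it by a nonincreasing function dominated by it and by $\min\{\de',1/2\}$. To arrange $r\,\wt\de(r)\to\infty$, observe that for each fixed $R$ the conclusion of Theorem~\ref{thm2.2} holds with the \emph{fixed} constant $\de'$ as long as $R\le R_0$; so for $r$ large one is free to let $\wt\de(r)$ decay as slowly as $\min\{\de', R_0/r\}$ would allow while still keeping $r\,\wt\de(r)\ge \min\{\de' r, R_0\}\to\infty$ — more care gives a smooth nonincreasing $\wt\de$ with $r\,\wt\de(r)\to\infty$. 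This is the same bookkeeping that produces the function $\de$ in the statement of Theorem~\ref{tthm3introd}, and I would simply record that $\wt\de$ can be chosen with these two properties, deferring to the elementary construction.
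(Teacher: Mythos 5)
There is a genuine gap, and it is the main one: your proof only treats the regime $R\le R_0$, which is precisely the regime in which the proposition is already trivial. Theorem~\ref{thm2.2} applies to $\overline B_\Sigma(x,R)\subset\Sigma-\partial\Sigma$ with $R\le R_0$, a fixed scale depending only on $N$ (and $R_0$ may be taken $\le\tfrac12$). On this scale, property (Inj) indeed lets one check that the intrinsic ball is a disk with $I_\Sigma=d_\Sigma(\cdot,\partial\Sigma)$, and a direct quotation of Theorem~\ref{thm2.2} works. But the content of the proposition is the chord-arc bound for \emph{all} $R\in(0,R_1]$ with $R_1$ possibly much larger than $R_0$, and for large $R$ the component of $\widetilde M(n)\cap B_{\lambda_n'N}(p_n',\wt\de R)$ through $p_n'$ need not be a disk (indeed, by Assertion~\ref{beta-n}, $\widetilde M(n)$ contains homotopically nontrivial geodesic loops of length $\approx 2$, so no inclusion of $\widetilde M(n)$ into a disk on any scale $>1$ can be arranged). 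Your step ``the portion of $\widetilde M(n)$ lying in $B_{\lambda_n'N}(p_n',R_1)$ is contained in an embedded minimal disk with $I_\Sigma=d_\Sigma$'' is false for $R_1>R_0$, and with it the whole reduction collapses. The paper's proof of Property (C) is exactly designed to overcome this: it is a compactness/contradiction argument (rescaling by $\tau_n=1/(\de_nR_n)\to\infty$, finding two points $q^n_j\ne q^n_{j'}$ intrinsically very far apart yet converging extrinsically to a common $q_\infty$, and then invoking the one-sided curvature estimates of Colding--Minicozzi on the two resulting disks to produce a proper limit surface and a contradiction). None of that appears, even implicitly, in your sketch.

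The same gap recurs in your treatment of the clause $r\,\wt\de(r)\to\infty$: you assert $r\,\wt\de(r)\ge\min\{\de' r,R_0\}$, but $\min\{\de' r,R_0\}\to R_0$, not to $\infty$, so this does not establish divergence. In fact this clause is the second genuinely nontrivial part of the proposition: the paper defines $\wh\de(r)$ as the supremum of admissible constants on $(0,r]$ (automatically nonincreasing) and then proves $r\,\wh\de(r)\to\infty$ by a second contradiction argument, rescaling now by $\tau_n=1/(\wh\de(r_n)R_n)$ and using that the assumed failure $r_n\,\wh\de(r_n)\le K$ forces $\tau_n\ge 1/(2K)$, so one still has a uniform lower injectivity-radius bound after rescaling and can rerun the blow-up argument. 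Your proposed interpolation produces a nonincreasing function dominated by the fixed constant $\de'$, but gives no lower bound at large $r$, hence no control on $r\,\wt\de(r)$. To repair the proposal you would need to import the one-sided-curvature-estimate blow-up argument from the paper (or some equivalent) for both halves; the reduction to Theorem~\ref{thm2.2} alone cannot reach scales beyond $R_0$.
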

\begin{proof}
We will start by proving the following property.
\begin{enumerate}[(C)]
\item Given $R_1>0$, there exists $\wt{\de}=\wt{\de }(R_1) \in (0,\frac{1}{2})$ such that
with the notation of the lemma, the inclusion in (\ref{eq:chordarc}) holds for all $R\in(0,R_1]$.
\end{enumerate}
Arguing by contradiction, suppose there exists a sequence $\de _n\searrow 0$
so that $\Sigma (n,\de _nR_n)$ intersects the boundary of
$B_{\widetilde{M}(n)}(p_n',{\textstyle {\frac{R_n}{2}}})$ for some $R_n\leq R_1$.
Observe that we can assume that the number $R_0>0$ appearing in Theorem~\ref{thm2.2}
is not greater than $\frac{1}{2}$. For $n$ sufficiently large, all points
in $B_{\widetilde{M}(n)}(p_n',R_0)$ are at intrinsic distance greater than $\frac{1}{2}$ from
the boundary of $\widetilde{M}(n)$, and thus, the injectivity radius property (Inj)
implies that $B_{\widetilde{M}(n)}(p_n',R_0)$ is topologically a disk whose injectivity
radius function coincides with the distance to its boundary function.
This property together with the fact that $\de _n<\de'$ for $n$ large (here $\de'$ is the
positive constant that appears in Theorem~\ref{thm2.2}) allow us to apply Theorem~\ref{thm2.2}
to conclude that $R_n> R_0$.

As $\Sigma (n,\de _nR_n)\cap \partial B_{\widetilde{M}(n)}(p_n',{\textstyle {\frac{R_n}{2}}})
\neq \mbox{\O }$, then there exists a curve $\g _n\colon [0,1]\to \Sigma (n,\de _nR_n)$
such that $\g _n(0)=p_n'$,
$\g _n(1)\in \partial B_{\widetilde{M}(n)}(p_n',{\textstyle {\frac{R_n}{2}}})$
and $\g _n(t)\in B_{\widetilde{M}(n)}(p_n',{\textstyle {\frac{R_n}{2}}})$
for all $t\in [0,1)$. In particular, the length of $\g _n$
is at least $\frac{R_n}{2}$.

Consider the positive numbers $\displaystyle \tau_n=\frac{1}{\de_n R_n}\geq \frac{1}{\de_n R_1}\to \infty$
and note that the scaled surfaces
$\tau_n \Sigma (n,\de _nR_n)$ can be viewed as the closure of the component of
\[
\tau_n [\wt{M}(n) \cap B_{\lambda_n'N}(p_n',\de _nR_n)]
=\tau_n \wt{M}(n) \cap B_{\tau_n\lambda_n'N}(p_n',1)
\]
that passes through $p_n'$;
recall that $B_{\tau_n\lambda_n'N}(p_n',1)$ can be taken arbitrarily close to
$\B(1)$ with its standard flat metric.
Let $\wt{\g}_n\subset \tau_n \Sigma (n,\de _nR_n)$ be the related scaling of $\g_n$.
Since the intrinsic distance from $\wt{\g}_n(0)=\vec{0}=p_n'$ to $\wt{\g}_n(1) $
in $\tau_n \Sigma (n,\de _nR_n)$ is $\frac{\tau _nR_n}{2}=\frac{1}{2\de _n}\to \infty $,
then for $n$ large there exists a collection of points
$Q_n=\{q^n_1,\ldots,q^n_{k(n)}\}\subset  \wt{\g}_n \subset \B(1)$ whose intrinsic distances from
each other in $\tau_n\wt{M}(n)$ are diverging to infinity and with
$k(n)\to \infty$ as $n\to \infty$; here we are viewing $\B(1)$ as being
exponential coordinates for $B_{\tau_n\lambda_n'N}(p_n',1)$
and so, we can consider all of the curves $\wt{\g}_n$ to lie in
the open unit ball in $\rth$ with metrics converging to the usual flat one.  In particular,
there are positive numbers $r_n \to \infty$ such that
\[
\{B_{\tau_n \wt{M}(n)}(q^n_k,r_n)\mid k\in \{1,\ldots,k(n)\}\}
\]
forms a pairwise disjoint collection of intrinsic balls
contained in the interior of  $\tau_n \wt{M}(n)$, and property (Inj) implies that
\begin{enumerate}[\mbox{(Inj1)}]
\item The intrinsic distance from each $B_{\tau_n \wt{M}(n)}(q^n_k,r_n)$ to the boundary of
$\tau_n \wt{M}(n)$ is at least 1  for $n$ sufficiently large (this holds because $\tau_n \to \infty$
and $t_n\to \infty$), and the injectivity radius of $\tau _n\wt{M}(n)$ is at least 2 at points
of distance at least 2 from the boundary of $\tau _n\wt{M}(n)$.
\end{enumerate}

Since the number of points in $Q_n$ is diverging to infinity as $n\to \infty$,
then after replacing by a subsequence,
there exists a sequence of pairs of points $q^n_j\neq q^n_{j'}\in Q_n$ such that
$\{ q_j^n\} _n,\{ q_{j'}^n\} _n$ converge to the same point $q_\infty \in \ov{\B}(1)$.
By property (Inj1), $B_{\tau_n \wt{M}(n)}(q^n_j,1)$, $B_{\tau_n \wt{M}(n)}(q^n_{j'},1)$
are minimal disks that satisfy the hypotheses of Theorem~\ref{thm2.2}.
Therefore,
the closures $D_n, D_n'$ of the components of
$B_{\tau_n \wt{M}(n)}(q^n_j,1)\cap B_{\tau_n\lambda_n'N}(q^n_j,\de' R_0)$,
$B_{\tau_n \wt{M}(n)}(q^n_{j'},1)\cap B_{\tau_n\lambda_n'N}(q^n_{j'},\de' R_0)$ passing respectively
through $q^n_j , \, q^n_{j'}$
are disks with their boundaries in the respective ambient boundary spheres,
where $\de',R_0$ are defined in Theorem~\ref{thm2.2}.
For $n$ large enough, we may assume that the boundaries of extrinsic balls
of radius at
most~$1$ in $\tau_n\lambda_n'N$ are spheres of  positive mean curvature with respect to the inward
pointing normal vector. The
mean curvature comparison principle implies that
the respective components $\wt{D}_n, \wt{D}_{n}'$
of $D_n\cap B_{\tau_n\lambda_n'N}(q_\infty, \de' R_0/2),\, D_{n}'\cap B_{\tau_n\lambda_n'N}(q_\infty, \de ' R_0/2)$
passing through the points $q_j^n, q_{j'}^{n}$ are
disks with their boundary curves in  $B_{\tau_n\lambda_n'N}(q_\infty, \de' R_0/2)$,
see Figure~\ref{fig1}.
\begin{figure}
\begin{center}
\includegraphics[width=10.2cm]{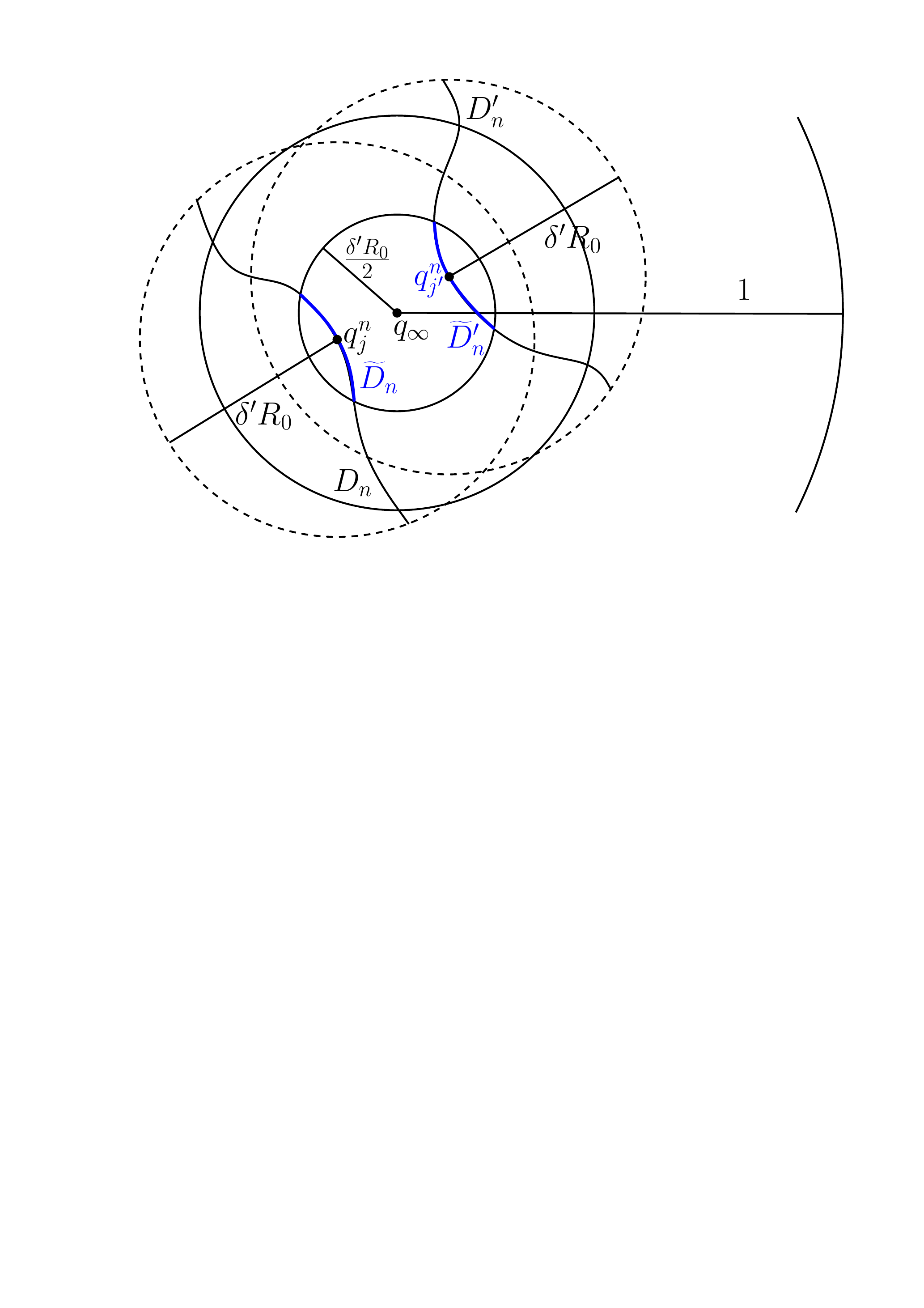}
\caption{The disks $\wt{D}_n, \wt{D}_{n}'$ are disjoint and become close to $q_{\infty }$.}
\label{fig1}
\end{center}
\end{figure}

As described in the proof of the minimal lamination closure theorem
in~\cite{mr13}, the extrinsic\footnote{One could instead use the intrinsic version
of the one-sided curvature estimates (Corollary~0.8 in Colding and Minicozzi~\cite{cm35}) to shorten this argument.}
one-sided curvature estimates for minimal disks of Colding-Minicozzi
(Corollary~0.4 in~\cite{cm23}) imply that there exists a  constant $C>0$ only depending on $N$ such that
the norm of the second fundamental forms of the subdisks of
 $\wt{D}_n, \wt{D}_{n}'$ in the smaller ball
 $B_{\tau_n\lambda_n'N}(q_\infty, \de' R_0/4)$ containing the respective points $q_j^n, q_{j'}^{n}$,
 are bounded by $C$ (see Theorem~7 in~\cite{mr13} for an exact statement of this result).
Since these subdisks have uniformly bounded second fundamental forms, a subsequence
of these subdisks  converges to a compact minimal disk $D(q_\infty)$ passing through $q_\infty$
with boundary in the boundary of the ball  $\B(q_\infty, \de' R_0/4)$ and
 the norm of the second fundamental form of $D(q_\infty)$ is everywhere
bounded from above by $C$. 
A prolongation argument (see for instance the proof of Theorem~4.37 in
P\'erez and Ros~\cite{pro2})
implies that $D(q_\infty)$ lies in a complete, embedded minimal surface
$M(\infty)$ in $\rth$ with its flat metric and with
the norm of the second fundamental form of $M(\infty)$ bounded from above by $C$;
furthermore, $M(\infty)$  must be proper in $\rth$ by Theorem~2.1 in~\cite{mr7}.

The above arguments also prove that for any fixed $T>0$, for $n$ sufficiently large,
the norms of the second fundamental forms of the intrinsic balls
$B_{\tau_n \wt{M}(n)}(q^n_j,T), \, B_{\tau_n \wt{M}(n)}(q^n_{j'},T)$,
are bounded from above by $2C$.  By Lemma~3.2 in~\cite{mpr20}, for $T$
sufficiently large, the boundary of the component of
$B_{\tau_n \wt{M}(n)}(q^n_j,T)\cap B_{\tau_n\lambda_n'N}(q_{j}^n,3)$ that passes through
$q^n_j$ lies on the boundary of the ball $B_{\tau_n\lambda_n'N}(q^n_j,3)$.
This is a contradiction since for $n$ sufficiently large,
the curve $\wt{\g}_n$ intersects this component, $\wt{\g}_n$ does not intersect
the boundary of the component and $\wt{\g}_n$ passes through the point
$q_{j'}^{n}\not \in B_{\tau_n \wt{M}(n)}(q^n_j,T)$. This contradiction
proves Property (C). 

Note that given $R_1>0$, we can assume that the number $t_n$ given by~(\ref{eq:item1})
satisfies $t_n>\frac{R_1}{2}$ for $n$ sufficiently large;
this implies that given $R\in (0,R_1]$, by definition of $\Sigma (n,\wt{\de }R)$, no points in
$\Sigma (n,\wt{\de }R)$ are in the boundary of $\wt{M}(n)$. Thus, the boundary
$\partial \Sigma (n,\wt{\de }R)$ is contained in the boundary of $B_{\lambda_n'N}(p_n',\wt{\de }R)$, as
stated in the first sentence of Proposition~\ref{lemma4.2}.

To finish the proof of Proposition~\ref{lemma4.2}, it remains to show that
$\wt{\de }=\wt{\de }(R_1)$ can be chosen so $\wt{\de }(r)$ is nonincreasing and
that $r\, \wt{\de }(r)\to \infty $ as $r\to \infty $.
Property (C) lets us define for each $r\in (0,\infty)$, $\wh{\de}(r)$ as
 the supremum of
the values $\wt{\de }\in (0,\frac{1}{2})$ such that  for $n$ sufficiently large,
the inclusion in (\ref{eq:chordarc}) holds for this value of $\wt{\de}$,  for all $R\leq r$.
Note that the function $r\mapsto \wh{\de}(r)$ is nonincreasing.
In order to complete the proof of the proposition it suffices to show that
$\lim_{r\to \infty} r \,\wh{\de}(r) =\infty$.

Arguing by contradiction,
suppose that
there exists a sequence $r_n\to \infty$ such that
$ r_n \,\wh{\de}(r_n) \leq K$, for some $K\in (1,\infty)$; in particular $\lim_{r\to \infty}\wh{\de}(r)=0$.
By the definition of $\wh{\de}(r)$, it follows that after choosing a subsequence,
\begin{equation}
\label{eq:chordarc2}
\Sigma (n,2\wh{\de}(r_n) R_n) \not \subset B_{\widetilde{M}(n)}(p_n',{\textstyle {\frac{R_n}{2}}}),
\;\mbox{\rm for some } R_n\in (R_0,r_n].
\end{equation}

We claim that
\begin{equation}
\label{eq:chordarc3}
\lim_{n\to \infty} R_n =\infty.
\end{equation}
Otherwise, after choosing a subsequence,
we have $R_n\leq C'$ for some  $C'>1$. Note that we may also assume  that
$\wh{\de}(r_n)<\frac12 \wh{\de}(C')$ for all $n\in \N$. But then,
\begin{equation}
\label{eq:chordarc4}
\Sigma (n,2\wh{\de}(r_n) R_n) \subset \S(n,\wh{\de}(C') R_n)
\subset \S(n,\wh{\de}(R_n) R_n)\stackrel{(\ref{eq:chordarc})}{\subset }
B_{\widetilde{M}(n)}(p_n',{\textstyle {\frac{R_n}{2}}}),
\end{equation}
which contradicts \eqref{eq:chordarc2}. Therefore, (\ref{eq:chordarc3}) holds.

Now define $\displaystyle \tau_n=\frac{1}{\wh{\de}(r_n) R_n}$. By property (Inj),
the rescaled surfaces $\tau_n \wt{M}(n)$ have injectivity radius bounded from below
by
\[
\frac{\tau _n}{2}=\frac{1}{2\wh{\de }(r_n)R_n}\geq \frac{1}{2\wh{\de }(r_n)r_n}\geq \frac{1}{2 K}>0
\]
at points of distance at least $\tau _n/2$ from its boundary.
Furthermore, the scaled surfaces
\[
\tau_n\Sigma (n,2\wh{\de}(r_n) R_n)
\]
can be viewed to be contained  in the ball $\ov{\B}(2)$.
As in the previous case where $r=R_1$ was fixed, one can define  curves $\g_n$ in
$\Sigma (n,2\wh{\de}(r_n) R_n)$ such that the associated scaled curves $\wt{\g}_n$
in $\tau_n\Sigma (n,2\wh{\de}(r_n) R_n)$ have  intrinsic distances between the end points
of $\wt{\g}_n$ diverging to infinity and $n\to \infty$.
From straightforward modifications of  the arguments in the first part of the proof of this
proposition, one arrives to a contradiction; for
these modifications one does not need that the injectivity
radii of the surfaces $\tau_n \wt{M}(n)$ are at least 2 but
just that they are bounded from below by a uniform constant. This contradiction proves that
$\lim_{r\to \infty} r \,\wh{\de}(r) =\infty$ and
completes the proof of the proposition after redefining $\wt{\de }(r)$ by $\wh{\de }(r)$.
\end{proof}

\begin{remark}
{\rm In Proposition~\ref{lemma4.2}, the value of $r\mapsto \wt{\de }(r)$ might depend {\it a priori}
on the homogeneously regular ambient manifold $N$ where the blow-up process
on the scale of topology was performed or
on the complete minimal surface $M$ with injectivity radius zero. In fact, this $\wt{\de }(r)$ can be
chosen independent upon $N$ because the inclusion in (\ref{eq:chordarc}) is invariant
under rescaling once the metric on the scaled manifold $\lambda_n' N$ is sufficiently
$C^2$-close  to a flat metric and the injectivity radius of $\lambda_n' N$ is at least 1.
A similar argument shows that $\wt{\de}(r)$ can be also chosen independent of the minimal surface $M$.
}
\end{remark}

We next continue with the proof of Theorem~\ref{tthm3introd}. Consider the nonincreasing function $\wt{\de}\colon (0,\infty) \to (0,\frac{1}{2})$ given by Proposition~\ref{lemma4.2}.  The function
$\de\colon (0,\infty) \to (0,\frac{1}{2})$
described in the statement of Theorem~\ref{tthm3introd} can be defined as any smooth decreasing
function such that $\frac{1}{2}\wt{\de}(r)\leq \de (r)\leq  \wt{\de}(r)$ for any $r>0$.
In particular, (\ref{eq:chordarc}) holds true after replacing $\wt{\de }$ by $\de$,
and $r\, \de (r)\to \infty $ as $r\to \infty $.

\begin{lemma}\label{lemma4.3}
 Items {1, 2, 3} of Theorem~\ref{tthm3introd} hold.
\end{lemma}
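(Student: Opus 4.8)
The plan is to obtain the required sequence by relabelling a subsequence of the points $p_n'$ already constructed, and then, after rescaling the ambient metric by $\lambda_n=1/I_M(p_n)$, to identify $M_n$ with a domain to which the chord--arc estimate of Proposition~\ref{lemma4.2} applies. First I record two elementary facts. Since $\de$ is continuous on $(0,\infty)$ with $\lim_{r\to0^+}r\,\de(r)=0$ (as $\de<\tfrac12$) and $\lim_{r\to\infty}r\,\de(r)=\infty$, for each $n\in\N$ there exists some $r_n>0$ with $r_n\,\de(r_n)=n$; fix one such. For each $n$, let $\nu_0(n)$ be an index threshold beyond which Proposition~\ref{lemma4.2} holds with the choice $R_1=r_n$ (hence for all $R\le r_n$). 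Now choose a strictly increasing sequence $k(n)\in\N$ so that, for every $n$: (i)~$k(n)\ge\nu_0(n)$; (ii)~$k(n)>r_n^2$, so that the rescaled closed intrinsic ball $\wt M(k(n))$ of intrinsic radius $t_{k(n)}=\tfrac{\sqrt{k(n)}}{2}$ strictly contains the open intrinsic ball of radius $r_n/2$ about its centre; (iii)~$k(n)\ge\tfrac{n\,r_n}{2}$; (iv)~$k(n)\ge n^2$ and $k(n)>n/\ve_0$, where $\ve_0\in(0,1]$ is such that the exponential map of any $\ve_0$-ball in a tangent space of $N$ is uniformly $C^2$-close to a flat $\ve_0$-ball (recall $N$ was normalised so that its injectivity radius exceeds $1$). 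This is possible because, for each fixed $n$, the numbers $r_n$, $\nu_0(n)$, $\ve_0$ are finite. Put $p_n:=p_{k(n)}'$, $\lambda_n:=\lambda_{k(n)}'=1/I_M(p_n)$ (which is $\ge k(n)\ge n$ by~\eqref{eq:hn2}), and $\ve_n:=n\,I_M(p_n)=n/\lambda_n$; then $\ve_n\le n/k(n)\le1/n\to0$, as claimed.

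Item~{3} is now immediate: $\lambda_n B_N(p_n,\ve_n)=B_{\lambda_n N}(p_n,n)$, and since $\ve_n<\ve_0$ and $\lambda_n\to\infty$, the reasoning used right after~\eqref{eq:hn2} (rescale exponential coordinates centred at $p_n$ by $\lambda_n$ and invoke homogeneous regularity) shows that for each fixed $R>0$ the balls $B_{\lambda_n N}(p_n,R)$ converge in $C^2$ to $\B(R)$; hence the $\lambda_n B_N(p_n,\ve_n)$ converge uniformly to $\rth$ with its flat metric. For item~{1}, apply Proposition~\ref{lemma4.2} at the point $p_{k(n)}'=p_n$ with $R_1=R=r_n$, legitimate by~(i); since $\de(r_n)\,r_n=n$, this yields that the closure $\S:=\S(k(n),n)$ of the component through $p_n$ of $\wt M(k(n))\cap B_{\lambda_n N}(p_n,n)$ satisfies $\S\subset B_{\wt M(k(n))}(p_n,\tfrac{r_n}{2})$. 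By~(ii) this ball lies in the interior of $\wt M(k(n))$ and is disjoint from $\partial\wt M(k(n))$; thus $\S$ is compact, its frontier in $\lambda_n M$ is contained in $\partial B_{\lambda_n N}(p_n,n)$, and near every point of $\S$ the surface $\lambda_n M$ coincides with $\wt M(k(n))$. A standard open--closed argument then identifies the component through $p_n$ of the \emph{whole} rescaled surface $\lambda_n M\cap B_{\lambda_n N}(p_n,n)$ with $\S\cap B_{\lambda_n N}(p_n,n)$: it is relatively open because $\lambda_n M$ agrees with $\wt M(k(n))$ near each of its points while the frontier of $\S$ avoids the open ball, and relatively closed because $\S$ is compact. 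Un-rescaling by $\lambda_n$ (and noting, via~(ii), that intrinsic balls of radius $\le r_n/2$ in $\wt M(k(n))$ are intrinsic balls in $M$), this says precisely that $M_n$ is compact, has its boundary in $\partial B_N(p_n,\ve_n)$, and is contained in $B_M(p_n,\tfrac{r_n}{2}I_M(p_n))$ with $r_n\,\de(r_n)=n$; this is item~{1}.

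For item~{2}, take $z\in M_n$. By item~{1}, $d_M(z,p_n)<\tfrac{r_n}{2}I_M(p_n)=\tfrac{r_n}{2\lambda_n}$, and since $\tfrac{r_n}{2\lambda_n}<\tfrac{k(n)}{\lambda_n}\le d_M(p_n,\partial B_M(q_{k(n)},1))$ by~(ii) and~\eqref{eq:hn2}, the triangle inequality gives $z\in B_M(q_{k(n)},1)$, just as in the derivation preceding~\eqref{eq:item2}. Then, exactly as for~\eqref{eq:*},
\[
\frac{1}{\lambda_n I_M(z)}=\frac{h_{k(n)}(z)}{\lambda_n\,d_M(z,\partial B_M(q_{k(n)},1))}\le\frac{h_{k(n)}(p_n)}{\lambda_n\,d_M(z,\partial B_M(q_{k(n)},1))}=\frac{d_M(p_n,\partial B_M(q_{k(n)},1))}{d_M(z,\partial B_M(q_{k(n)},1))}\le\frac{1}{1-\tfrac{r_n}{2\,h_{k(n)}(p_n)}}\le\frac{1}{1-\tfrac{r_n}{2k(n)}},
\]
using $h_{k(n)}(z)\le h_{k(n)}(p_n)$, $\lambda_n\,d_M(p_n,\partial B_M(q_{k(n)},1))=h_{k(n)}(p_n)$, $d_M(z,\partial B_M(q_{k(n)},1))\ge d_M(p_n,\partial B_M(q_{k(n)},1))-\tfrac{r_n}{2\lambda_n}$, and $h_{k(n)}(p_n)\ge k(n)$. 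Hence $\lambda_n I_M(z)\ge1-\tfrac{r_n}{2k(n)}\ge1-\tfrac1n$ by~(iii); since $I_{M_n}=I_M|_{M_n}$ by our conventions, this is item~{2}.

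The genuinely delicate point is item~{1}, and within it the bookkeeping behind the subsequence: Proposition~\ref{lemma4.2} is invoked with $R_1=r_n$, while its ``$n$ sufficiently large'' threshold $\nu_0(n)$ depends on $r_n$, which is itself tied to the index through $r_n\,\de(r_n)=n$ — this apparent circularity is exactly what forces us to pass from $\{p_n'\}$ to $\{p_{k(n)}'\}$ with $k(n)$ growing fast enough. Once this is arranged, identifying $M_n$ with $\S(k(n),n)$ via the chord--arc inclusion of Proposition~\ref{lemma4.2} together with the elementary open--closed argument, and running the injectivity-radius estimate above, are both routine.
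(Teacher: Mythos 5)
Your proof is correct and follows essentially the same route as the paper's: pick $r_n$ with $r_n\,\de(r_n)=n$, diagonalize the sequence $\{p_n'\}$ so that the chord--arc inclusion of Proposition~\ref{lemma4.2} applies at scale $r_n$, and read off items 1--3 from that inclusion and the almost-minimizing property of the $p_n'$. The main difference is one of presentation: you make explicit the bookkeeping the paper leaves implicit — in particular your condition~(ii) ($k(n)>r_n^2$, ensuring $\Sigma(k(n),n)$ stays interior to $\wt M(k(n))$) and condition~(iii) ($k(n)\ge n r_n/2$, which is exactly what upgrades the crude bound $\lambda_n I_M\ge 1/2$ of~\eqref{eq:item2} to the sharp $1-\tfrac1n$ of item~2, where the paper only says ``after replacing by a further subsequence'') — and you spell out the open--closed identification of $\Sigma(k(n),n)$ with the component of $\lambda_n M\cap B_{\lambda_n N}(p_n,n)$, which the paper treats as obvious.
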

\begin{proof}
By the last statement in Proposition~\ref{lemma4.2}, for $k\in \N$, we can pick
values $r_k>0$ such that $r_k\, \de (r_k)=k$. In particular, Proposition~\ref{lemma4.2}
ensures that $\partial \S (n,k)\subset \partial B_{\lambda'_{n}N}(p_{n}',k)$ and
\begin{equation}
\label{eq:chordarc'}
\Sigma (n,k)\subset B_{\wt{M}(n)}(p_n',\frac{r_k}{2})
\end{equation}
for all $n\geq n(k)$, where $n(k)\in \N$ that can be assumed to tend to infinity as $k\to \infty$.
Furthermore, we can also assume $n(k)$ is chosen so that $\frac{k}{n(k)}\to 0$ as $k\to \infty $.
Defining
\[
\ve _k=\frac{k}{\lambda'_{n(k)}}=k\, I_M(p'_{n(k)}) 
\stackrel{(\ref{eq:hn2})}{\leq }
\frac{k}{n(k)},
\]
then $\ve _k\to 0$ as $k\to \infty$. Finally, we define for every $k\in \N$
\begin{equation}
\label{eq:Mk}
p_k=p'_{n(k)}\quad \mbox{ and }\quad M_k=\frac{1}{\lambda'_{n(k)}}\Sigma (n(k),k).
\end{equation}
Then, item~{1} of Theorem~\ref{tthm3introd} follows directly from (\ref{eq:chordarc'}).
Item~{2} of the theorem also holds from (\ref{eq:item2}) after replacing by a further subsequence,
and item~{3} of the theorem hold trivially since $N$ is homogeneously regular
and $\lambda_k=1/I_M(p_k)=\lambda'_{n(k)}$
tends to $\infty $ as $k\to \infty $. This completes the proof of the lemma, after replacing $k$ by $n$.
\end{proof}

From this point on  in the proof, we will assume that the first
three items of Theorem~\ref{tthm3introd} hold for
the sequence of points $p_n\in M$ and we will discuss two
cases in distinct subsections, depending on whether or not
a subsequence of the surfaces
$\lambda_nB_M(p_n,\frac{t}{\lambda_n})$ has uniformly bounded Gaussian
curvature (the bound could depend on $t>0$).
Before doing this, we will state a property which will be useful in both cases.

\begin{assertion} \label{beta-n}
For $n$ large, there exists an embedded geodesic loop $\be_n\subset \lambda_n M_n$
of length two based at $p_n$ (smooth except possibly at $p_n$) which is
homotopically nontrivial in $\lambda_n M_n$.
\end{assertion}
\begin{proof}
Since the surfaces $\lambda_n M_n$ are minimal and the sectional curvatures of the
ambient spaces $\lambda_n N$ are converging uniformly to zero,
then the Gauss equation implies that the exponential map $\exp _{p_n}$ of $ T_{p_n}(\lambda_n M_n)$
restricted to the closed metric ball of radius $2$ centered at the origin
is a local diffeomorphism. As the injectivity radius of $\lambda_nM_n$ at $p_n$ is 1, then
$\exp _{p_n}$ is a diffeomorphism when restricted to the open disk of radius $1$, and it fails
to be injective on the boundary circle of radius $1$.
Now it is standard to deduce the existence of a geodesic loop $\be_n$
as in the statement of the assertion, except for the property that $\be _n$
is homotopically nontrivial which we prove next.

Arguing by contradiction and after extracting a subsequence, assume that
$\be_n$ is homotopically trivial in $\lambda_n M_n$. Thus, $\be_n$ bounds a
disk $D_n\subset \lambda_n M_n$.
Observe that $\lambda_nM_n$ is contained in $B_{\lambda_nN}(p_n,n)$
(this follows from~(\ref{eq:Mk}) and from Definition~\ref{def2.3}).
Since the extrinsic spheres $\partial B_{\lambda_n N}(p_n,R)$ have positive mean curvature
for $n$ large with respect to the inward pointing normal vector
(because they are produced by rescaling of extrinsic balls of radius $R/\lambda_n\to 0$ in
the homogeneously regular manifold $N$)
and $D_n\subset B_{\lambda_n N}(p_n,n)$
with $\partial D_n=\be _n\subset B_{\lambda_nM_n}(p_n,2)\subset B_{\lambda_n N}(p_n,2)$,
then the mean curvature comparison principle implies
$D_n\subset B_{\lambda_n N}(p_n,2)$.  As the balls $B_{\lambda_n N}(p_n,2)$ are converging uniformly to the
flat ball $\B(2)$, and $\B(2)$ contains no closed minimal surfaces without boundary,
then the isoperimetric inequality in~\cite{wh13} implies that there exists an
upper bound $A_0$ for the areas of the disks $D_n$
(here we are using Theorem~2.1 in~\cite{wh13} on the mean convex balls $B_{\lambda_nN}(p_n,2)$ for $n$ large, hence the constant $A_0$ does in principle depend on~$n$;
the fact that $A_0$ can be taken independently
of $n$ large follows from the upper semicontinuous dependence of $A_0$ on the ambient
Riemannian manifold with mean convex boundary, see the sentence just before Corollary~2.4 in~\cite{wh13}).
As the limsup of the sequence of numbers
$\max _{D_n}K_{\lambda_nM_n}$ is nonpositive,
then the Gauss-Bonnet formula gives
\[
2\pi =\int_{D_n} K_{\lambda_nM_n} +\a \leq \int_{D_n} K_{\lambda_nM_n} +\pi ,
\]
where $\a $ is the angle of $\be _n$ at $p_n$. The above inequality is impossible, since as $n\to \infty$,
\[
\int_{D_n} K_{\lambda_nM_n} \leq \max _{D_n}(K_{\lambda_nM_n})\, A_0\leq
\limsup \left( \max _{D_n}K_{\lambda_nM_n}\right) \, A_0\leq 0.
\]
 This contradiction proves
that the embedded geodesic loop $\be_n$ is homotopically nontrivial in $\lambda_n M_n$.
\end{proof}

\subsection{The case of uniformly bounded Gaussian curvature on compact subsets of $\rth$.}
\begin{proposition}
\label{propos4.2}
In the situation above, suppose that for every $t>0$, the surfaces
$\lambda_nB_M(p_n,\frac{t}{\lambda_n})$
have uniformly bounded Gaussian curvature.
Then, item~4 of Theorem~\ref{tthm3introd} holds.
\end{proposition}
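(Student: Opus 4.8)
The plan is to extract the limit surface $M_\infty$ from the rescaled surfaces $\lambda_n M_n$ by a standard compactness argument, using the hypothesis of uniformly bounded Gaussian curvature on compact subsets together with the injectivity radius control already established, and then to verify the four asserted properties of $M_\infty$: connectedness, properness in $\R^3$, $I_{M_\infty}\geq 1$, and $I_{M_\infty}(\vec 0)=1$. First I would fix $t>0$ and note that by hypothesis the surfaces $\lambda_n B_M(p_n,\frac{t}{\lambda_n})$ have uniformly bounded second fundamental form (the Gauss equation plus the fact that the ambient curvatures of $\lambda_n N$ tend to zero lets one pass freely between bounded Gaussian curvature and bounded $|\sigma|$). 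Combined with property (Inj), this gives uniform local graph representations, so after a diagonal argument over an exhaustion of $\R^3$ by balls $\B(t)$ and passing to a subsequence, the surfaces $\lambda_n M_n$ converge $C^k$ on compact subsets of $\R^3$, for every $k$, to a (possibly disconnected, a priori) minimal lamination, one of whose leaves passes through $\vec 0$; I would take $M_\infty$ to be the connected component through $\vec 0$. Since item~3 of Theorem~\ref{tthm3introd} identifies $\lambda_n B_N(p_n,\ve_n)$ with $\R^3$ and $M_n$ exhausts a large intrinsic ball, the convergence to $M_\infty$ is with multiplicity one near $\vec 0$, and I would want to promote this to multiplicity one on all of $M_\infty$.

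The properties $I_{M_\infty}\geq 1$ and $I_{M_\infty}(\vec 0)=1$ follow from item~2 of Theorem~\ref{tthm3introd}: the inequality $\lambda_n I_{M_n}\geq 1-\frac1n$ passes to the limit to give $I_{M_\infty}\geq 1$ (injectivity radius is continuous and lower semicontinuous under such $C^k$-convergence with ambient curvatures going to zero), while at $p_n=\vec 0$ we have $\lambda_n I_{M}(p_n)=1$ exactly by the definition $\lambda_n=1/I_M(p_n)$, so $I_{M_\infty}(\vec 0)\leq 1$, and together with $I_{M_\infty}\geq 1$ this forces equality. Here one must be slightly careful that the injectivity radius of the limit is not strictly larger than the limit of the injectivity radii because of the boundary of $M_n$ receding to infinity; but property (Inj) and item~1 (that $M_n$ fills an intrinsic ball of radius $\frac{r_n}{2}I_M(p_n)$ with $r_n\to\infty$) guarantee that on any fixed compact set the boundary plays no role for $n$ large, so the injectivity radius of $M_\infty$ at a point is genuinely the limit of those of $\lambda_n M_n$.

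For properness of $M_\infty$ in $\R^3$, I would invoke the same mechanism used in the proof of Proposition~\ref{lemma4.2}: $M_\infty$ is a complete, embedded minimal surface in $\R^3$ with $I_{M_\infty}\geq 1$, hence of positive injectivity radius, so Theorem~2.1 in~\cite{mr7} applies and gives that $M_\infty$ is proper. (Alternatively one could first observe $M_\infty$ has bounded Gaussian curvature from the uniform bounds and then appeal to the same reference.) Finally, the multiplicity-one statement: I would argue that if the convergence had multiplicity $\geq 2$ at some point then, since $M_\infty$ is embedded and the $\lambda_n M_n$ are embedded, the lamination would have two distinct nearby leaves collapsing onto $M_\infty$, and a sheet-counting argument along a path from $\vec 0$ — where multiplicity is one — would produce a contradiction with embeddedness of $\lambda_n M_n$ or with the connectedness of the component $M_n\ni p_n$. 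I expect the main obstacle to be precisely this last point, i.e.\ carefully ruling out higher multiplicity and establishing that $M_\infty$, the connected component through $\vec 0$, is the full $C^k$-limit rather than merely one leaf of a larger lamination; the injectivity radius lower bound $I_{M_\infty}\geq 1$ is the key tool here, since it prevents leaves from accumulating and forces the lamination to be, locally, a single surface.
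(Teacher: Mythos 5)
The overall architecture you propose (compactness from curvature bounds, injectivity radius passes to the limit, then properness) is the right shape, but two essential ingredients are missing or wrong, and the paper's proof handles both carefully.

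First, the multiplicity-one step. Your ``sheet-counting argument'' is too vague to work and, more importantly, it cannot work in the generality you state it. Embeddedness of the $\lambda_n M_n$ is perfectly compatible with a limit lamination having several nearby sheets collapsing onto a single leaf: this is exactly what happens if $M_\infty$ were a plane. The standard and necessary mechanism, which the paper invokes (via Lemma~3.1 of~\cite{mpr20}), is that multiplicity $\geq 2$ produces a positive Jacobi function on $M_\infty$, forcing $M_\infty$ to be stable and hence a plane. So the entire multiplicity-one argument hinges on first ruling out that $M_\infty$ is a plane, and this is where the geodesic loop $\be_n$ of length two from Assertion~\ref{beta-n} enters: $\be_n$ converges to an embedded geodesic loop $\be_\infty\subset M_\infty(1)$, which cannot exist on a plane. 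Your proposal never uses $\be_n$ or any replacement for it; without it you cannot exclude the plane, and the Jacobi-function step then fails to give a contradiction, so you cannot conclude multiplicity one. Relatedly, the injectivity radius lower bound $I_{M_\infty}\geq 1$ does \emph{not} by itself prevent several sheets from collapsing onto $M_\infty$: the sheets live in separate leaves of the limit lamination, each of which individually has injectivity radius $\geq 1$.

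Second, properness. You propose to appeal to Theorem~2.1 in~\cite{mr7}, either directly or after ``observing $M_\infty$ has bounded Gaussian curvature from the uniform bounds.'' Neither route is available here. The hypothesis of Proposition~\ref{propos4.2} gives a Gaussian curvature bound on each $\lambda_n B_M(p_n,\frac t{\lambda_n})$ that may depend on $t$; the paper explicitly emphasizes this (``the bound $C$ might depend on $t>0$''), so $M_\infty$ need not have globally bounded curvature, and Theorem~2.1 of~\cite{mr7}, which assumes bounded second fundamental form, does not apply. The paper instead deduces properness from $I_{M_\infty}\geq 1$ via the Minimal Lamination Closure Theorem of~\cite{mr13}, which is the correct tool for a complete embedded minimal surface of positive injectivity radius and possibly unbounded curvature. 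You should either cite~\cite{mr13} here, or if you insist on~\cite{mr7} you must first prove a global curvature bound, which the hypotheses do not give you.

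A smaller remark on construction: the paper builds $M_\infty$ directly as the increasing union of the compact pieces $M_\infty(t_m)$, each obtained as a limit of the connected \emph{intrinsic} balls $\lambda_n\ov{B}_M(p_n,\frac{t_m}{\lambda_n})$, so connectedness of $M_\infty$ is automatic; the argument that the intrinsic distance from $\vec 0$ to $\partial M_\infty(t)$ is exactly $t$ (by lifting a minimizing geodesic normally to the approximating surfaces) is what makes $M_\infty$ complete and boundaryless. Your route via a lamination and its connected component through $\vec 0$ requires an extra argument that the lamination has only the one leaf, and that argument is precisely the Jacobi/stability/non-plane chain discussed above, so it cannot be skipped.
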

\begin{proof}
In the special case that there exists $C>0$ so that for every $t>0$
there exists $n(t)\in \N$ such that the surfaces
$\lambda_nB_M(p_n,\frac{t}{\lambda_n})$, $n\geq n(t)$,
have absolute Gaussian curvature bounded by $C$, then a complete proof of
this proposition can be  found in Section~3 of~\cite{mpr20}. We will next
modify some of those arguments in order to deal with the more general current
situation, where the bound $C$ might depend on $t>0$.

Fix $t>0$. As by hypothesis the surfaces $\lambda_nB_M(p_n,\frac{t}{\lambda_n})$
have uniformly bounded Gaussian curvature, then they also have uniformly
bounded area by comparison theorems in Riemannian geometry (Bishop's second theorem, see e.g.,
Theorem III.4.4 in Chavel~\cite{ch2}).
After extracting a subsequence, the compact surfaces
$\lambda_n\overline{B}_M(p_n,\frac{t}{\lambda_n})$ converge on compact sets of $\R^3$
(possibly with integer nonconstant multiplicities)
to an embedded, compact minimal surface with boundary
$M_{\infty }(t)\subset \overline{\B}(t)$, with bounded Gaussian curvature, such that $\vec{0}$
lies in the interior of $M_{\infty }(t)$.  We claim that
the intrinsic distance from $\vec{0}$ to
$\partial M_{\infty }(t)$ is $t$. To see this, first note that this
intrinsic distance is clearly at most $t$, as $t$ is the radius of
$\lambda_nB_M(p_n,\frac{t}{\lambda_n})$.  Let $\a $ be a
 minimizing geodesic from $\vec{0}$ to $\partial M_{\infty
}(t)$.  For $n$ large, one can lift  $\a$ normally to
nearby arcs
$\a_n$ on $\lambda_nB_M(p_n,\frac{t}{\lambda_n})$, each of which
starts at $\vec{0}$ and has one end point in
$\partial [\lambda_nB_M(p_n,\frac{t}{\lambda_n})]$, so that their lengths converge as
$n\to \infty $ to the length of $\a $. Clearly the length of $\a _n$
is at least $t$; hence after taking limits we deduce that the length
of $\a $ is at least~$t$, and our claim is proved.

Consider an increasing sequence $1=t_1<t_2<\ldots $ with $t_m\to \infty $ as $m\to \infty $.
For $m=1$, consider the compact surface $M_{\infty }(1)$ together with the
sequence of surfaces $\lambda_n\overline{B}_M(p_n,\frac{1}{\lambda_n})$ that converges to it
(after passing to a subsequence). For this sequence, consider the corresponding
intrinsic balls $\lambda_n\overline{B}_M(p_n,\frac{t_2}{\lambda_n})$. After extracting a subsequence,
these surfaces converge to $M_{\infty }(t_2)$; in particular, $M_{\infty }(1)\subset
M_{\infty }(t_2)$. Repeating this argument and using a diagonal subsequence, one can construct the surface
\[
M_{\infty }=\bigcup _{m=1}^{\infty }M_{\infty }(t_m).
\]
As the intrinsic distance from $\vec{0}$ to
$\partial M_{\infty }(t_m)$ is $t_m$ for every $m\in \N$,
then
$M_{\infty }$ is a complete, injectively immersed minimal surface in $\R^3$ without boundary.
Observe that for every $m$,
the convergence of the limit of the surfaces
$\lambda_n\overline{B}_M(p_n,\frac{{t_m}}{\lambda_n})$ to
$M_{\infty }({t_m})$ is one, as follows,
for example, from
the arguments in the proof of Lemma~3.1 in~\cite{mpr20} (higher multiplicity produces
a positive Jacobi function on $M_{\infty }$, hence $M_{\infty }$ is stable and so,
$M_{\infty }$ is a plane, which contradicts the following assertion).

\begin{assertion}
  $M_{\infty }$ is not a plane.
\end{assertion}
\begin{proof}
Consider for each $n\in \N $ large, the embedded geodesic loop $\be_n\subset \lambda_n M_n$
of length two based at $p_n$ given by Assertion~\ref{beta-n}. Clearly, $\be _n\subset \lambda_n\overline{B}_M(p_n,\frac{1}{\lambda_n})$.
As the surfaces $\lambda_n\overline{B}_M(p_n,\frac{1}{\lambda_n})$ converge on compact subsets of $\R^3$ to $M_{\infty }(1)$,
then the $\be _n$ converge after passing to a subsequence to an embedded geodesic loop $\be _{\infty }\subset M_{\infty }(1)$,
which is impossible if $M_{\infty }$ were a plane. Hence the assertion follows.
\end{proof}
We next analyze the injectivity radius function of $M_{\infty }$.
Fix $m\in N$. Since $M_{\infty }(t_m)$ is compact and injectively immersed, then
there exists $\mu ({t_m})>0$ such that $M_{\infty }({t_m})$ admits a regular
neighborhood $U({t_m})\subset \R^3$ of radius $\mu (t_m)$
and we have a related normal projection
\[
{\Pi _{t_m}\colon U(t_m)\to M_{\infty }(t_m).}
\]
In this setting, Lemma~3.1 in~\cite{mpr20} applies\footnote{In Section~3
of~\cite{mpr20} we had the additional hypothesis that $M_{\infty }$ has
globally bounded Gaussian curvature, hence it is proper; nevertheless, the proof
of Lemma~3.1 in~\cite{mpr20} only uses that $M_{\infty }({t_m})$ is compact
for each $m\in \N$ and that $M_{\infty }$ is not a plane
(or more precisely, that
the convergence of the limit $\{ \lambda_n\overline{B}_M(p_n,\frac{t_m}{\lambda_n})\} _n
\to M_{\infty }(t_m)$ is
 one, which in turn follows from the fact that $M_{\infty }$ is not a plane),
conditions which are
satisfied in our current setting.}
to give the following property:
\begin{assertion}
\label{ass4.3}
Given $m\in \N$, there exists $k\in \N$ such that if $n\geq k$,
then $\lambda_nB_M(p_n,\frac{{t_m}}{\lambda_n})$ is  contained in $U(t_{m+1})$
and $\lambda_nB_M(p_n,\frac{{t_m}}{\lambda_n})$ is a small normal
graph over its projection to $M_{\infty }(t_{m+1})$, i.e.,
\[
{\textstyle
(\Pi _{t_{m+1}})|_{\lambda_nB_M(p_n,\frac{t_m}{\lambda_n})}\colon
\lambda_nB_M(p_n,\frac{t_m}{\lambda_n})
\to \Pi _{t_{m+1}}\left( \lambda_nB_M(p_n,\frac{t_m}{\lambda_n})\right) \subset M_{\infty }(t_{m+1})
}
\]
is a diffeomorphism.
\end{assertion}
\par
\vspace{.2cm}
We now remark on some properties of the minimal
surface $M_{\infty }\subset \R^3$.  By Assertion~\ref{ass4.3},
given $m\in \N$ there exists $k\in \N$ such that we can induce the metric
of $\lambda_n\overline{B}_M(p_n,\frac{{t_m}}{\lambda_n})$ to its projected
image $\Pi _{t_{m+1}}\left( \lambda_nB_M(p_n,\frac{t_m}{\lambda_n})\right) $
through the diffeomorphism $\Pi _{t_{m+1}}$. As the sequence
$\{ \lambda_n\overline{B}_M(p_n\frac{{t_m}}{\lambda_n})\} _n$ converges to
$M_{\infty }({t_m})$ with multiplicity one
as $n\to \infty $, then using
the continuity of the injectivity radius function with respect
to the Riemannian metric on a given compact surface
(see Ehrlich~\cite{ehr1} and Sakai~\cite{sa2})
and inequality (\ref{eq:**}),
we deduce that
\begin{equation}
\label{eq:a}
\left( I_{M_{\infty }}\right) |_{M_{\infty }({t_m})}=
\lim _{n\to \infty }
\left( I_{\lambda_nM}\right) |_{\lambda_n\overline{B}_M(p_n,\frac{{t_m}}{\lambda_n})}
\geq \lim _{n\to \infty }\frac{1}{1+\frac{{t_m}}{n-{t_m}}}=1.
\end{equation}

Hence,
we conclude that $I_{M_{\infty }}\geq 1$ everywhere on $M_{\infty }$,
with $I_{M_{\infty }}(\vec{0})=1$.

Since $M_{\infty }\subset \rth$ is a complete embedded
minimal surface in $\R^3$ with positive
injectivity radius, the minimal lamination closure
theorem~\cite{mr13} insures
that $M_{\infty }$ is properly embedded
in $\R^3$. This finishes the proof of Proposition~\ref{propos4.2}.
\end{proof}

\subsection{The case of Gaussian curvature not uniformly bounded.}
Suppose now that the uniformly bounded Gaussian curvature
hypothesis in Proposition~\ref{propos4.2} fails to hold.  It follows,
after extracting a subsequence, that for some fixed positive number
$t_1>0$, the maximum absolute Gaussian curvature of the surfaces 
$\lambda'_nB_M(p_n,\frac{t_1}{\lambda_n})$ diverges to infinity as $n\to
\infty$. To finish the proof of Theorem~\ref{tthm3introd}, it remains to show that
under this condition, items 5 or 6 hold.

\begin{definition}
{\em
A sequence of compact embedded minimal surfaces
$\Sigma_n$ in $\rth$ with boundaries diverging in space, is called
{\em uniformly locally simply connected}, if there is an $\ve >0$ such that
for any ambient ball $\B $ of radius $\ve >0$ and for $n$ sufficiently large, $\B $
intersects $\Sigma_n$ in compact disks with boundaries in the boundary of $\B $
(this definition is more restrictive than the similarly defined notion
in the introduction of~\cite{cm25}, where $\ve $ might depend on $\B $).
}
\end{definition}

We next return to the proof of Theorem~\ref{tthm3introd}.
By the discussion in the proof of Proposition~\ref{lemma4.2} (also see Theorem \ref{thm2.2}),
the sequence of minimal surfaces
$\lambda_nM_n$ can be considered to be uniformly locally
simply connected, as the metric  balls containing the surfaces are
converging to $\rth$ with the usual metric.
Thus,
\begin{enumerate}[(ULSC)]
\item There exists $\ve _1\in (0,1/2)$ such that for every ball
$\B \subset \R^3$ of radius $\ve _1$, and for $n$ sufficiently large, $(\lambda_nM_n)\cap \B $
consists of disks with boundaries in $\partial \B $.
\end{enumerate}
In this situation,
several results by Colding and Minicozzi~\cite{cm21,cm22,cm23,cm25}
apply to describe the nature of
both the surfaces $\lambda_nM_n$ in the sequence and their limit objects after
passing to a subsequence. We next briefly explain this description, which can also be
modified to work in the setting of a homogeneously regular manifold (see
for instance page 33 of~\cite{cm21} and~\cite{mr13}).

\begin{definition}
\label{kgraph}
{\rm
In polar coordinates $(\rho, \theta )$ on $\R^2-\{ 0\} $ with $\rho >0$ and
$\theta \in \R $, a {\it $k$-valued graph on an annulus of inner radius
$r$ and outer radius $R$,} is a single-valued graph of a real-valued function $u(\rho ,\theta )$
defined over
\begin{equation}
\label{eq:sectormultigraph}
S_{r,R}^{-k,k}=\{ (\rho ,\theta )\ | \ r\leq \rho \leq R,\ |\theta |\leq k\pi \} ,
\end{equation}
$k$ being a positive integer.
}
\end{definition}

By the one-sided curvature estimates for minimal disks
as applied in the proof of Theorem~0.1 in~\cite{cm23}
(also see the proof of Theorem~0.9 in~\cite{cm25}),
there exists a closed set $\mathcal{S}\subset \R^3$,
a nonempty minimal lamination $\mathcal{L}$ of $\R^3-\mathcal{S}$ which
cannot be extended across any proper closed subset of $\mathcal{S}$, and a subset $S(\mathcal{L})
\subset \mathcal{L}$ which is closed in the subspace topology, such that
after replacing by a subsequence,
$\{ \lambda_nM_n\} _n$ has uniformly bounded second fundamental form
on compact subsets of $\R^3-\Delta (\mathcal{L}) $ where $\Delta (\mathcal{L}) =
\mathcal{S}\cup S(\mathcal{L})$, and $\{ \lambda_nM_n\} _n$ converges $C^{\a}$,
$\a\in (0,1)$, on compact subsets of
$\R^3-\Delta (\mathcal{L}) $ to $\mathcal{L}$.

Around each point $p \in \Delta (\mathcal{L})$, the surfaces $\lambda_nM_n$
have the following local description. By (ULSC) and
Theorem~5.8 in~\cite{cm22}, there exists $\ve \in (0,\ve _1)$ such that
after a rotation of $\R^3$ and extracting a subsequence, each of the disks
$(\lambda_nM_n)\cap \overline{\B }(p, \ve)$ contains a  $2$-valued minimal graph defined on an annulus
$\{ (x_1,x_2,0)\ | \ r_n^2\leq x_1^2+x_2^2\leq R^2\} $
with inner radius $r_n\searrow 0$, for certain $R\in (r_n,\ve )$ small but fixed.

\begin{enumerate}[(D1)]
\item If $p \in S(\mathcal{L})$ (in particular, $\mathcal{L}$ admits a local lamination
structure around $p$), then after possibly choosing a smaller $\ve >0$,
there exists a neighborhood of $p$
 in $\overline{\B}(p, \ve)$ which is foliated by compact disks in $\mathcal{L}\cap
 \overline{\B }(p,\ve )$, and $S(\mathcal{L})$ intersects this family of disks transversely in a connected
Lipschitz arc. This case corresponds to case (P) described in Section~II.2 of~\cite{cm25}.
In fact, the Lipschitz curve
$S(\mathcal{L})$ around $p$ is a $C^{1,1}$-curve orthogonal to the local foliation~(Meeks~\cite{me25, me30}).
See Figure~\ref{CM} left.
\begin{figure}
\begin{center}
\includegraphics[width=11.4cm]{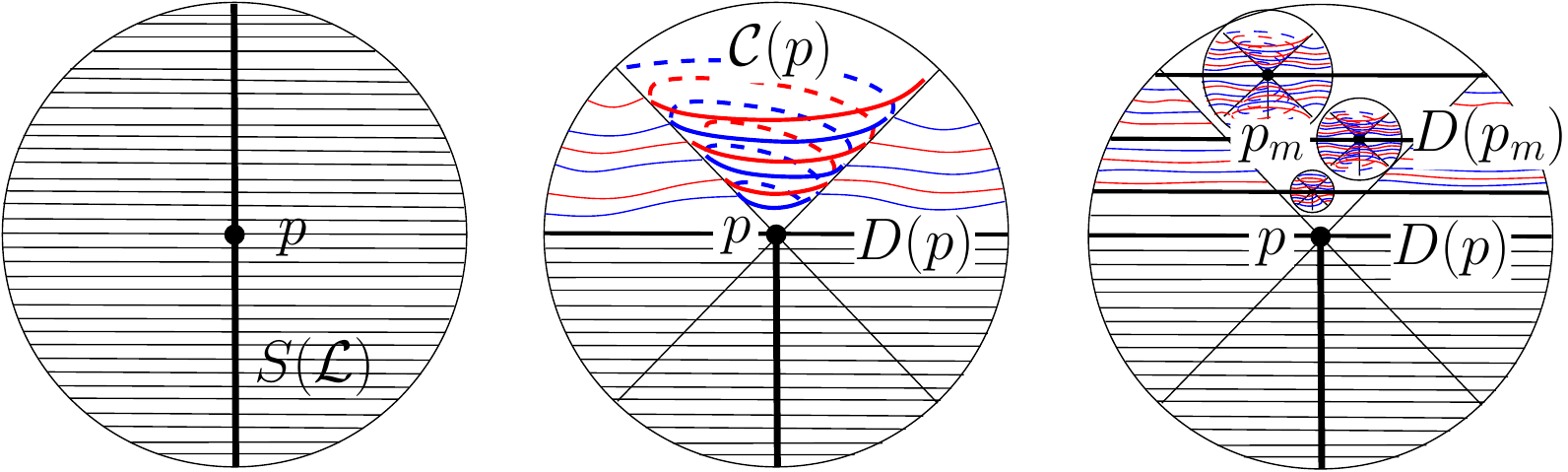}
\caption{Left: Case (D1), in a neighborhood of a point $p\in S(\mathcal{L})$.
Center: Case (D2-A) for an isolated point $p\in \mathcal{S}$; in the picture,
$p$ is the end point of an arc contained in $S(\mathcal{L})$, although $D(p,*)$ could
also be the limit of two pairs of multivalued graphical leaves, one pair on each side.
Right: Case (D2-B) for a nonisolated point $p\in \mathcal{S}$.}
\label{CM}
\end{center}
\end{figure}

\item  If $p\in \mathcal{S}$, then
after possibly passing to a smaller $\ve $,
a subsequence of the
surfaces $\{ (\lambda_nM_n)\cap \B (p,\ve )\}_n$
(denoted with the same indexes $n$) converges $C^\a$, $\a\in (0,1)$,
on compact subsets of $\B (p,\ve )-\Delta (\mathcal{L})$ to
the (regular) lamination $\mathcal{L}_p=\mathcal{L}\cap \B(p,\ve)$
of $\B(p,\ve )-\mathcal{S}_p$, where $\mathcal{S}_p =\mathcal{S}\cap  \B (p, \ve)$.
Furthermore, $\mathcal{L}_p$ contains a limit leaf $D(p,*)$ which
is a stable, minimal punctured disk with $\partial D(p,*)\subset
\partial \B (p, \ve)$ and $\overline{D(p,*)} \cap \mathcal{S}_p
=\{ p\}$, and $D(p,*)$ extends to a stable, embedded minimal
disk $D(p)$ in $\B(p, \ve)$
(this is Lemma II.2.3 in~\cite{cm25}). By
Corollary~I.1.9 in~\cite{cm23},
there is a solid double cone\footnote{A {\it solid double cone} in $\R^3$ is a set that after a
rotation and a translation, can be written as $\{ (x_1,x_2,x_3)\ | \ x_1^2+x_2^2
\leq \de ^2x_3^2\} $ for some $\de >0$. A solid double cone in a ball is the intersection
of a solid double cone with a ball centered at its vertex.}
$\mathcal{C}_p\subset \B (p, \ve)$ with vertex at
$p$ and axis orthogonal to the tangent plane $T_pD(p)$, that
intersects $D(p)$ only at the point $p$ (i.e., $D(p, *) \subset \B (p,\ve )-\mathcal{C}_p$)
and such that $[\B (p,\ve )-\mathcal{C}_p]\cap \Delta (\mathcal{L}) = \mbox {\O}$.
Furthermore for $n$ large, $(\lambda_nM_n)\cap \B (p, \ve)$ has the appearance
outside $\mathcal{C}_p$ of a highly-sheeted double multivalued graph over
$D(p,*)$, see Figure~\ref{fig2cone}.
\end{enumerate}
\begin{figure}
\begin{center}
\includegraphics[width=10cm]{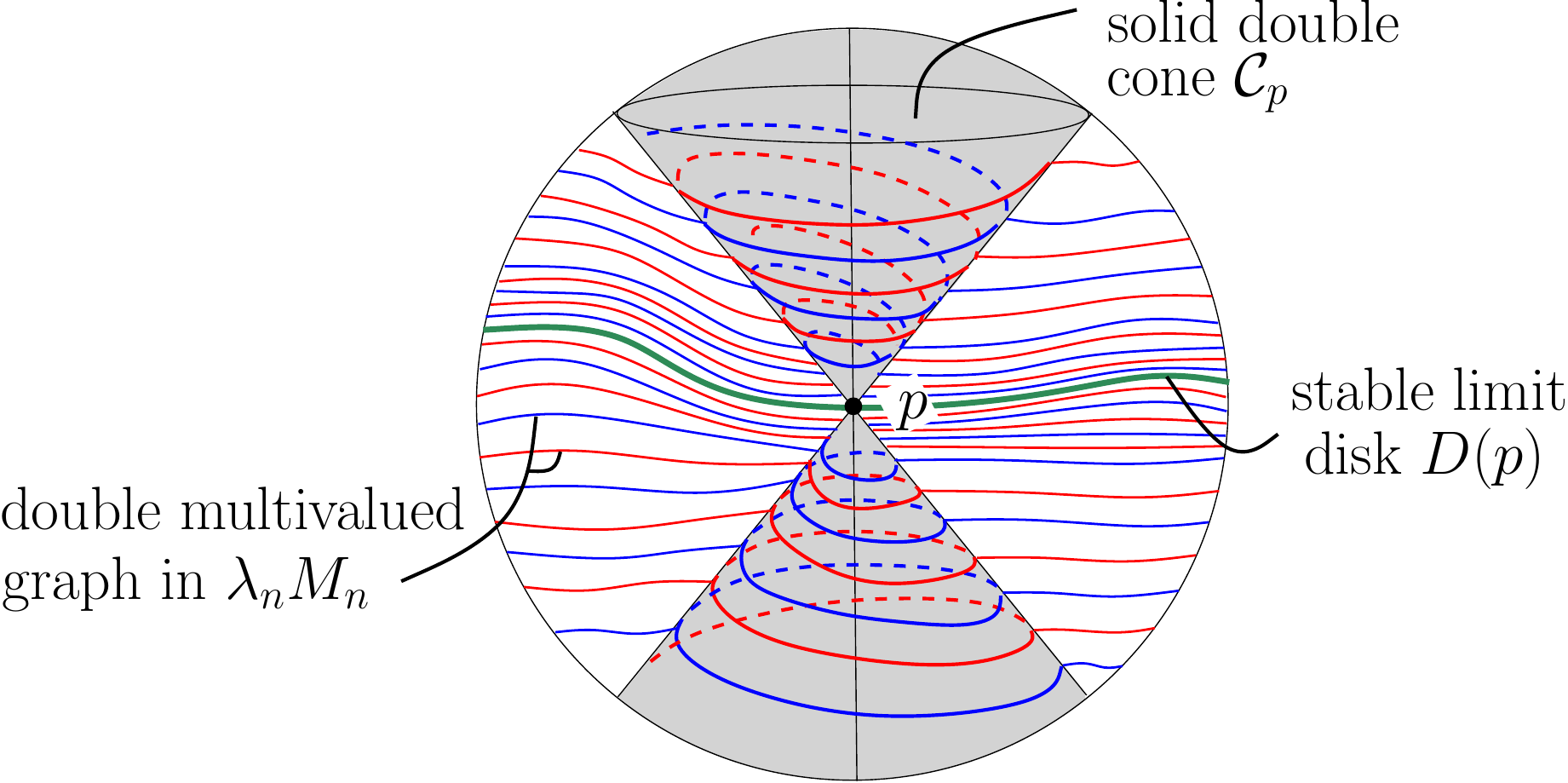}
\caption{The local picture of disk-type portions of $\lambda_nM_n$ around an
isolated point $p\in \mathcal{S}$. The stable minimal punctured disk
$D(p,*)$ appears in the limit lamination $\mathcal{L}_p$, and extends smoothly
through $p$ to a stable minimal disk $D(p)$ which is orthogonal at $p$ to the axis of
the double cone $\mathcal{C}_p$.}
\label{fig2cone}
\end{center}
\end{figure}

In this local description of this case (D2), it is worth distinguishing
two subcases:

\begin{description}
\item[{\rm (D2-A)}] If $p$ is an isolated point in $\mathcal{S}$, then the limit leaf $D(p,*)$
of $\mathcal{L}_p$ is either the limit of two pairs of multivalued graphical leaves in $\mathcal{L}_p$ (one
pair  on each side of $D(p,*)$),
or $D(p,*)$ is the limit on one side of just one pair of multivalued graphical leaves
in $\mathcal{L}_p$; in this last case, $p$ is the end point of an open arc $\G \subset
S(\mathcal{L})\cap \mathcal{C}_p$, and a neighborhood of $p$ in the closure of the component
of $\B(p,\ve )-D(p,*)$ that contains $\G $ is entirely foliated by disk leaves of $\mathcal{L}_p$.
See Figure~\ref{CM} center.

\item[{\rm (D2-B)}] $p$ is not isolated as a point in $\mathcal{S}$. In this case, $p$ is the limit
of a sequence $\{ p_m\} _m\subset \mathcal{S}\cap \mathcal{C}_p$. In particular, $D(p)$ is the limit
of the related sequence of stable minimal disks $D(p_m)$, and $D(p,*)$ is the limit of a sequence of
pairs of multivalued graphical leaves of $\mathcal{L}_p\cap [\B (p,\ve )-(\mathcal{C}_p\cup \{ D(p_m)\} _m]$.
Note that these singular points $p_m$ might be isolated or not in $\mathcal{S}$. See
Figure~\ref{CM} right.
\end{description}

We next continue with the proofs of items 5 and 6 of Theorem~\ref{tthm3introd}.
Since the maximum absolute Gaussian curvature of the surfaces $\lambda_nB_M(p_n,\frac{t_1}{\lambda_n})$
diverges to infinity as $n\to
\infty$ and $\lambda_nB_M(p_n,\frac{t_1}{\lambda_n})$ is contained in  $\lambda_nM_n$
for $n$ sufficiently large, then $\Delta (\mathcal{L})$ is nonempty and contains a
point which is at an extrinsic distance at most $t_1$ from the
origin in $\rth$.

\begin{lemma}
\label{lemma4.5}
  Let $p\in \Delta (\mathcal{L})$. Then, there exists a limit leaf $L_p$ of $\mathcal{L}$
whose closure $\overline{L_p}$ in $\R^3$ is a plane
passing through $p$. Moreover, the set
\[
\mathcal{P} '= \{ \overline{L_p} \mid p \in \Delta(\mathcal{L})\}
\]
is a nonempty, closed  set of planes.
\end{lemma}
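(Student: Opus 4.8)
The plan is, given $p\in \Delta(\mathcal{L})=\mathcal{S}\cup S(\mathcal{L})$, to single out a limit leaf $L_p$ of $\mathcal{L}$ whose closure contains $p$, to invoke stability of limit leaves, to show that the singular points of $\overline{L_p}$ (which must lie in $\mathcal{S}$) are removable, and to conclude via the classification of complete stable minimal surfaces in $\rth$.

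First I would produce $L_p$. If $p\in S(\mathcal{L})$, I use the local description (D1): a neighborhood of $p$ in $\overline{\B}(p,\ve)$ is foliated by compact disk leaves of $\mathcal{L}$ and $p$ lies on one of them, so I let $L_p$ be the leaf of $\mathcal{L}$ containing that disk and note $p\in L_p$. If $p\in \mathcal{S}$, I use (D2): there is a stable minimal punctured disk $D(p,*)$ of $\mathcal{L}$ with $\overline{D(p,*)}\cap \mathcal{S}=\{p\}$, and I let $L_p$ be the leaf containing $D(p,*)$, so $p\in \overline{L_p}$. In both cases $L_p$ is a limit leaf: in case (D1) the other disk leaves of the local foliation accumulate on it; in case (D2), $D(p,*)$ is by construction the limit of pairs of multivalued graphical leaves. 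By the Limit Leaf Stability Theorem of Meeks--P\'erez--Ros (the normalized distances to a sequence of nearby leaves converge to a positive Jacobi function on $L_p$, so the first eigenvalue of the stability operator is nonnegative), $L_p$, or its two-sided cover, is stable; since the argument below exhibits $\overline{L_p}$ as a plane, which has no connected nontrivial double cover, $L_p$ is in fact two-sided.

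The heart of the proof is to bound $|\sigma_{L_p}|$ near each point of $\overline{L_p}\cap \mathcal{S}$; note $\overline{L_p}\setminus L_p\subset \mathcal{S}$, since off $\mathcal{S}$ the lamination is regular and $L_p$ is relatively closed there. Fix such a $q$ and apply (D2) at $q$: there is a solid double cone $\mathcal{C}_q\subset \B(q,\ve)$ with vertex $q$ and axis orthogonal to $T_qD(q)$ with $\Delta(\mathcal{L})\cap[\B(q,\ve)-\mathcal{C}_q]=\mbox{\O}$, the disk $D(q,*)$ lies in $\B(q,\ve)-\mathcal{C}_q$ away from $q$, and each $\lambda_nM_n$ is a double multigraph over $D(q,*)$ outside $\mathcal{C}_q$; hence every leaf of $\mathcal{L}$ meeting $\B(q,\ve)-\mathcal{C}_q$ is a single-valued minimal graph over a subdomain of $D(q,*)$. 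The one remaining possibility is that $L_p$ behaves near $q$ as a genuinely multivalued sheet spiraling into the axis of $\mathcal{C}_q$, and this I would exclude using that $L_p$ is a stable limit leaf: such spiraling, curvature-blowing-up behavior would propagate to the nearby leaves accumulating on $L_p$ and violate the one-sided curvature estimates of Colding--Minicozzi (equivalently, one may invoke the Local Removable Singularity Theorem for minimal laminations). Thus near $q$, $L_p$ is a bounded-curvature minimal graph extending smoothly across $q$; combined with regularity off $\mathcal{S}$, the closure $\overline{L_p}$ is a smooth, complete (being a closed submanifold of $\rth$ without boundary), embedded, stable minimal surface, hence a plane by the theorem of do Carmo--Peng, Fischer-Colbrie--Schoen, and Pogorelov; and it passes through $p$ by construction. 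I expect this removable-singularity step --- controlling the stable limit leaf inside the cones $\mathcal{C}_q$ --- to be the main difficulty.

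For the second statement: $\mathcal{P}'\neq \mbox{\O}$ is immediate since $\Delta(\mathcal{L})$ is nonempty (as recorded just before the lemma, it contains a point within extrinsic distance $t_1$ of $\vec{0}$). To see $\mathcal{P}'$ is closed, suppose $P_j=\overline{L_{p_j}}\in \mathcal{P}'$ converge uniformly on compact subsets of $\rth$ to a plane $P$. Then $P\subset \mathcal{L}\cup \mathcal{S}$, and since $\Delta(\mathcal{L})$ is closed in $\rth$ I would locate $q\in \Delta(\mathcal{L})\cap P$ (using again that $P_j\setminus L_{p_j}\subset \mathcal{S}$ if the $p_j$ leave every bounded set). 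Applying (D2) at $q$ as above, inside $\B(q,\ve)$ each $P_j$ is a graph over $D(q,*)$ or equals $\overline{L_q}$, so $P_j$ is a plane parallel to $\overline{L_q}$ up to $o(1)$; in the limit $P$ is parallel to $\overline{L_q}$, and since $q\in P\cap \overline{L_q}$ we get $P=\overline{L_q}\in \mathcal{P}'$. Hence $\mathcal{P}'$ is a nonempty closed set of planes.
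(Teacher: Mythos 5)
Your treatment of the first assertion is in the same spirit as the paper's: identify $L_p$ from the local descriptions (D1)--(D2), invoke the Limit Leaf Stability Theorem, reduce to a removable-singularity statement at the points of $\mathcal{S}$ in $\overline{L_p}$, and conclude via the classification of complete stable minimal surfaces in $\R^3$. You correctly flag the removable-singularity step as the crux, but your sketch leaves it at the level of gesture (``such spiraling\ldots would violate the one-sided curvature estimates''). The paper carries it out by a specific reduction: it sets $\wt{L}=L_p\cup\{q\in\mathcal{S}: D(q,*)\subset L_p\}$, argues that completeness of $\wt{L}$ suffices, and, supposing a shortest geodesic in $L_p$ terminates at some $p_0\in\mathcal{S}$, shows that after shrinking $\ve$ the pair $\{D(p_0)\}\cup\{L_p\cap\overline{\B}(p_0,\ve)\}$ is a stable minimal lamination of $\overline{\B}(p_0,\ve)-\{p_0\}$ to which Corollary~7.1 of \cite{mpr10} applies, forcing a smooth extension across $p_0$ that is incompatible with the geodesic's behavior. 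That concrete reduction, not a generic appeal to one-sided estimates, is what makes the step go through.

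The closedness argument, however, has a genuine gap. You write ``since $\Delta(\mathcal{L})$ is closed in $\R^3$ I would locate $q\in\Delta(\mathcal{L})\cap P$'', but closedness of $\Delta(\mathcal{L})$ only lets you extract a limit point when you already have a bounded sequence in $\Delta(\mathcal{L})$. If the points $p_j\in P_j\cap\Delta(\mathcal{L})$ leave every compact set, then the fact that $P_j\to P$ uniformly on compacts gives you no candidate sequence at all, and the parenthetical about ``$P_j\setminus L_{p_j}\subset\mathcal{S}$'' does not manufacture one. Showing $P\cap\Delta(\mathcal{L})\neq\mbox{\O}$ is precisely the content that must be proved; the paper obtains it by contradiction. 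Assuming $P\cap\Delta(\mathcal{L})=\mbox{\O}$, then $P\subset\mathcal{L}$ and the orthogonal projection $q_m$ of $p_m$ onto $P$ lies at positive distance from $\Delta(\mathcal{L})$, so a fixed-radius disk in $P$ about $q_m$ is arbitrarily well-approximated by almost-flat compact disks $D_{n,m}\subset\lambda_nM_n$. The component of $(\lambda_nM_n)\cap\overline{\B}(q_m,\ve_1/2)$ through $p_m$ is then a compact disk lying to one side of $D_{n,m}$ and (for $m$ large) arbitrarily close to it, while its second fundamental form blows up because $p_m\in\Delta(\mathcal{L})$; this contradicts the one-sided curvature estimate of Colding--Minicozzi. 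Without some argument of this type you cannot conclude $P\in\mathcal{P}'$, so the second half of your proof is incomplete as written.
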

\begin{proof}
The previous description (D1)-(D2) shows that there exists a minimal disk $D(p)$ passing
through $p$ such that $D(p,*)=D(p)-\{ p\} $ is contained in a limit leaf $L$ of $\mathcal{L}$.
As $L$ is a limit leaf of $\mathcal{L}$, then the two-sided cover $\wh{L}$ of $L$ is stable
(Meeks, P\'erez and Ros~\cite{mpr18,mpr19}). Consider the union $\wt{L}$ of $L$
with all points $q\in \mathcal{S}$ such that the related punctured disk
$D(q,*)$ defined in (D2) above is contained in $L$. Clearly, the two-sided
cover of $\wt{L}$ is stable and thus, the classification of complete stable minimal surfaces in
$\R^3$ (see e.g. do Carmo and Peng~\cite{cp1}, Fischer-Colbrie and Schoen~\cite{fs1} or Pogorelov~\cite{po1})
implies that to prove the lemma it remains to demonstrate
that $\wt{L}$ is complete.

Arguing by contradiction, suppose that there exists a shortest unit speed
geodesic $\g \colon [0,l)\to L$ such that $\g (0)\in L$ and $p:=\lim _{t\to l^-}\g (t)\in \mathcal{S}$.
Therefore, there exists $\de >0$ such that $\g (t)\in \overline{\B }(p,\ve )$ for every
$t\in [l-\de ,l)$, where $\overline{\B }(p,\ve )$ is the closed ball that appears in (D2).
Note that by construction, $\g (t)\notin D(p,*)$ for every $t\in [l-\de ,l)$.
As $D(p)$ separates $\overline{\B }(p,\ve )$, then $\g ([l-\de ,l))$ is contained in one of the
two half-balls of $\overline{\B }(p,\ve )-D(p)$, say in the upper half-ball (we can choose
orthogonal coordinates in $\R^3$ centered at $p$ so that $T_pD(p)$ is the $(x_1,x_2)$-plane).
In particular, there cannot exist a sequence
of $\{ p_m\} _m\subset \mathcal{S}$ converging to $p$ in that upper half-ball (otherwise $p_m$
produces a related disk $D(p_m)$ which is proper in the upper half-ball, such that
the sequence $\{ D(p_m)\} _m$ converges to $D(p)$ as $m\to \infty$; as $\g (l-\de )$ lies
above one of these disks $D(p_k)$ for $k$ sufficiently large, then $\g ([l-\de ,l))$ lies
entirely above $D(p_k)$, which contradicts that $\g $ limits to $p$).
Therefore,  after possibly choosing a smaller $\ve $, we can assume that there are no
points of $\mathcal{S}$ in the closed upper half-ball of  $\overline{\B }(p,\ve )-D(p)$ other than $p$.
Now consider the lamination $\mathcal{L}_1$ of $\overline{\B }(p,\ve )-\{ p\} $ given
by $D(p)$ together with the closure of $L\cap \overline{\B }(p,\ve )$ in
$\overline{\B }(p,\ve )-\{ p\} $. As the leaves of $\mathcal{L}_1$ are all stable (if $L$ is two-sided; otherwise
its two-sided cover is stable), then
Corollary~7.1 in~\cite{mpr10} implies that $\mathcal{L}_1$ extends smoothly across $p$,
which is clearly impossible. This contradiction proves the first statement in the lemma.

We now prove the second statement of the lemma. Suppose that
$\{ p_m\} _m\subset \Delta (\mathcal{L})$ and the related planes
$\overline{L_{p_m}}$ converge to a plane $P \subset \R^3$ as $m\to \infty $.
Arguing by contradiction, we suppose that $P\cap \Delta (\mathcal{L})= \mbox{\O }$.
Given $m\in \N$, consider a closed disk $D(q_m,\ve _1)$ in $P$ of radius
$\ve _1>0$ centered at the orthogonal projection $q_m$ of $p_m$ over $P$,
where $\ve _1\in (0,1/2)$ was defined in Property (ULSC).
As $P$ lies in $\mathcal{L}$ (because $\mathcal{L}$ is closed in $\R^3-\mathcal{S}$
and $P\cap \mathcal{S}=\mbox{\O }$) and $P$ does not contain points of $\Delta (\mathcal{L})$, then $q_m$
is at positive distance from $\Delta (\mathcal{L})$; in particular,
$D(q_m,\ve _1)$ can be arbitrarily well-approximated by
almost-flat closed disks $D_{n,m}$ in $\lambda_nM_n$ for $n$ large.
For $m$ large, the component $\Omega _n(p_m)$ of $(\lambda_nM_n)
\cap \overline{\B}(q_m,\frac{\ve _1}{2})$ that contains $p_m$ is a compact disk which is
disjoint from the almost-flat compact disk $D_{n,m}\cap \overline{\B}(q_m, \frac{\ve _1}{2})$
 and thus, $\Omega _n(p_m)$ lies at one side of $D_{n,m}\cap \overline{\B}(q_m, \frac{\ve _1}{2})$.
 As $p_m\in \Omega _n(p_m)$ is arbitrarily close to $D_{n,m}\cap \overline{\B}(q_m, \frac{\ve _1}{2})$
 (for $m$ large), then we contradict the one-sided curvature estimates for embedded minimal disks
(Corollary~0.4 in~\cite{cm23}). Now the proof of the lemma is complete.
\end{proof}

{\bf In the sequel, we will assume that the planes in $\mathcal{P}'$ are  horizontal.}

Recall that our goal in this section is to prove that items 5 or 6 of Theorem~\ref{tthm3introd} occur.
The key to distinguish which of these options occurs will be based on the singular set $\mathcal{S} $ of $\mathcal{L}$:
If $\mathcal{S} =\mbox{\O}$ (hence $\mathcal{L}$ is a lamination of $\R^3$) then item~5 holds, while if $\mathcal{S} \neq
\mbox{\O }$ then item~6 holds. This distinction is equivalent to $\mathcal{L} =\mathcal{P} $ or $\mathcal{L} \neq \mathcal{P}$.
The arguments to prove this dichotomy
are technical and delicate; we will start by adapting some of the arguments in the last paragraph
of the proof of Lemma~\ref{lemma4.5} to demonstrate the following result.

\begin{lemma}
\label{lemma4.6}
Every flat leaf of $\mathcal{L}$ lies in a plane in $\mathcal{P} '$, and no plane in $\R^3$ is disjoint from $\mathcal{L}$.
\end{lemma}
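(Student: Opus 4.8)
The plan is to split the statement into its two assertions and handle each by a contradiction argument built on the one-sided curvature estimates of Colding--Minicozzi, exactly as in the last paragraph of the proof of Lemma~\ref{lemma4.5}. For the first assertion, let $F$ be a flat leaf of $\mathcal{L}$, so that $\overline{F}$ is a plane, which we may assume is horizontal by the normalization preceding the lemma. If $\overline{F}$ is not a plane in $\mathcal{P}'$, then by definition of $\mathcal{P}'$ (Lemma~\ref{lemma4.5}) the plane $\overline{F}$ is disjoint from $\Delta(\mathcal{L})=\mathcal{S}\cup S(\mathcal{L})$; in particular $\overline{F}\subset \mathcal{L}$ and $F=\overline{F}$ is an entire plane which is a leaf of $\mathcal{L}$ and over which the convergence $\lambda_nM_n\to \mathcal{L}$ is smooth with uniformly bounded curvature on compact subsets. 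This is the same configuration treated at the end of the proof of Lemma~\ref{lemma4.5}: picking any point $q\in F$, the disk $D(q,\ve_1)\subset F$ is approximated by almost-flat disks $D_{n}$ in $\lambda_nM_n$. The point is then to produce, near $F$, a nearby sheet of $\lambda_nM_n$ that is forced to come arbitrarily close to $D_n$ while having unboundedly large curvature, contradicting the one-sided curvature estimate (Corollary~0.4 in~\cite{cm23}). Such a nearby sheet must exist because $F\notin \mathcal{P}'$ precisely when $F$ carries no ``singular data'', so $F$ cannot be the limit leaf $D(p,*)$ attached to any point of $\Delta(\mathcal{L})$; hence $F$ is a non-limit-leaf flat plane, and since $\Delta(\mathcal{L})\neq\mbox{\O}$ there is a point $p\in \Delta(\mathcal{L})$, whose associated double multivalued graph (cases (D1),(D2)) provides highly-sheeted pieces of $\lambda_nM_n$ that, when followed outward over the annulus, must pass near $F$ and degenerate — giving the contradiction.

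For the second assertion, suppose some plane $P\subset \R^3$ is disjoint from $\mathcal{L}$. Since $\mathcal{L}$ is nonempty and closed in $\R^3-\mathcal{S}$ and contains at least one plane $\overline{L_p}\in \mathcal{P}'$ (by Lemma~\ref{lemma4.5}, using $\Delta(\mathcal{L})\neq\mbox{\O}$), after the horizontal normalization $\overline{L_p}$ is horizontal. If $P$ is also horizontal it is parallel to $\overline{L_p}$, and between them lies a maximal open slab containing no leaf of $\mathcal{L}$; if $P$ is not horizontal, $P$ must cross every horizontal plane in $\mathcal{P}'$ and one still extracts a half-space, bounded by a horizontal plane of $\mathcal{P}'$, whose interior meets $\mathcal{L}$ in leaves that do not accumulate on that boundary plane from the appropriate side. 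In either case one obtains a horizontal plane $Q\in\mathcal{P}'$ together with a sequence of interior points of $\lambda_nM_n$ converging to a point of $Q$ whose distance to $\Delta(\mathcal{L})$ in $Q$ is positive, but whose nearby sheet of $\lambda_nM_n$ is not graphical and has blowing-up curvature — again contradicting Corollary~0.4 in~\cite{cm23}. The cleanest way to set this up is: $Q\in\mathcal{P}'$ means $Q$ passes through a point $p_0\in\Delta(\mathcal{L})$, so $Q$ is the extended limit leaf $\overline{D(p_0,*)}$ and $\lambda_nM_n$ has near $p_0$ the double-staircase/double-multivalued-graph structure of (D1)--(D2); these spiraling sheets, viewed on one side of $Q$, are trapped in the slab up to $P$ yet must keep winding, forcing a point of $\Delta(\mathcal{L})$ strictly between $Q$ and $P$, contrary to the choice of the slab.

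I expect the main obstacle to be the first assertion, specifically pinning down \emph{why} a flat leaf not lying in a plane of $\mathcal{P}'$ cannot occur — i.e.\ ruling out an ``isolated'' entire flat plane leaf of $\mathcal{L}$ that is genuinely disjoint from all the multivalued graphs. The delicate point is geometric: one must show that the spiraling sheets emanating from a point $p\in\Delta(\mathcal{L})$ (which exists since $\Delta(\mathcal{L})\neq\mbox{\O}$, by the curvature-blow-up at extrinsic distance $\le t_1$ from the origin established just before Lemma~\ref{lemma4.5}) necessarily accumulate, as $n\to\infty$, onto \emph{every} flat plane leaf of $\mathcal{L}$ that they can ``see'', not just the one through $p$. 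This should follow from the fact that a multivalued graph over an annulus, continued to larger radius via (ULSC) and Theorem~5.8 in~\cite{cm22}, sweeps out a region whose closure contains a whole plane, together with the observation that distinct plane leaves of $\mathcal{L}$ that are not separated by a point of $\mathcal{S}$ cannot both be approximated with bounded curvature and multiplicity one by the $\lambda_nM_n$ unless the extra sheet in between forces a curvature-estimate violation. Once this local-to-global spreading of the multivalued graphs is in hand, the actual contradiction with Corollary~0.4 in~\cite{cm23} is the same short argument as in the last paragraph of the proof of Lemma~\ref{lemma4.5}, and the second assertion follows by the same mechanism applied to the plane $P$ in place of a flat leaf.
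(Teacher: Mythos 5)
Your proposal has a genuine gap, which you yourself flag: you never actually establish that the highly-sheeted multivalued graphs emanating from a point $p\in\Delta(\mathcal{L})$ must ``reach'' and violate the one-sided curvature estimate near a distant flat leaf $F$ disjoint from $\Delta(\mathcal{L})$. The sentence ``these spiraling sheets \dots must pass near $F$ and degenerate'' is the entire content of the lemma and is not proved. The Colding--Minicozzi extension theorems (Theorem~5.8 in \cite{cm22}, (ULSC), etc.) give you that a $2$-valued graph forming near $p$ extends outward in a neighborhood of the plane through $p$; they do not by themselves let you propagate that spiraling structure vertically across the slab to a plane $F$ at unknown positive distance from $p$ --- that would amount to reproving the one-sided estimate in the large. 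Your second half has the same missing step: ``must keep winding, forcing a point of $\Delta(\mathcal{L})$ strictly between $Q$ and $P$'' is asserted, not argued.

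The paper avoids this propagation problem entirely by a connectedness argument, which is a genuinely different (and cleaner) route. Given such an $L$ (or a plane $P$ disjoint from $\mathcal{L}$), one first shows via the one-sided curvature estimate (Corollary~0.4 in \cite{cm23}) that the surfaces $\lambda_n M_n$ have curvature $\le C$ in the slab $\{|x_3-x_3(L)|<\delta\}$ for uniform constants $\delta, C$; then the argument in the proof of Lemma~1.3 of \cite{mr8} forces $\mathcal{L}\cap\{|x_3-x_3(L)|<\delta\}$ to consist entirely of planes, so $L$ is at distance $\ge\delta$ from $\Delta(\mathcal{L})$. Enlarging $\mathcal{L}$ to $\mathcal{L}'$ by throwing in every plane disjoint from $\mathcal{L}$, one sees --- with $\delta$ uniform in $L$ --- that the union of the planes of $\mathcal{L}'$ not in $\mathcal{P}'$ is both open and closed in $\R^3$; since $\mathcal{P}'\neq\O$ and $\R^3$ is connected, this set is empty, giving both assertions at once. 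The crucial technical input you did not invoke is Lemma~1.3 of \cite{mr8} (to upgrade ``bounded curvature in a $\delta$-slab'' to ``the slab is foliated by planes of $\mathcal{L}$''), and the crucial structural move you did not make is to prove the two statements simultaneously via the open-closed dichotomy rather than to attack the flat leaf in isolation.
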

\begin{proof}
Arguing by contradiction, suppose $L$ is a flat leaf in $\mathcal{L}$ which
does not lie in a plane in $\mathcal{P}'$. Hence, $L$ does not intersect $\Delta(\mathcal{L})$.
This implies that $L$ is a plane and arbitrarily large disks in $L$ can be approximated
by almost-flat disks in the surfaces $\lambda_n M_n$. Since
these surfaces have injectivity radius greater than $1/2$ at points at distance at least $1/2$ from
their boundaries, then the one-sided curvature estimates for minimal  disks (Corollary~0.4 in~\cite{cm23})
imply that there are positive constants $\de $ and $C$, both independent of $L$, such that for $R>0$ and
for $n$ sufficiently large, the surface
\[
(\lambda_n M_n)\cap \{ |x_3-x_3(L)|<\de \} \cap \B
(\vec{0},R)
\]
 has Gaussian curvature less than $C $. From here, we deduce that
the leaves of 
$\mathcal{L}\cap \{ |x_3-x_3(L)|<\de \} $
 have uniformly
bounded Gaussian curvature. From this bounded curvature hypothesis,
the proof of Lemma 1.3 in~\cite{mr8} implies that
$\{ |x_3-x_3(L)|<\de \} \cap \mathcal{L}$
consists only of planes of $\mathcal{L}$; hence, the distance from $L$ to
$\Delta(\cL)$ is at least $\de$.
Let $\mathcal{L}'$ be the minimal lamination of $\R^3-\mathcal{S}$ obtained by
enlarging $\mathcal{L}$ by adding to it all planes which are disjoint from $\mathcal{L}$.
Note that by the one-sided curvature estimates in~\cite{cm23}, each of the
added on planes is also at a fixed distance at least $\de >0$ from
$\Delta(\mathcal{L})$ and from leaves of $\mathcal{L}$ which are not flat, where $\de$
is the same small number defined previously. Hence, the planes of $\mathcal{L}'$
which are not in $\mathcal{P}'$ form a both open and closed subset
of $\R^3$,
but $\rth$ is connected. Hence, this set is empty.

Note that the arguments in the last paragraph also prove that no
plane in $\rth - \mathcal{L}$ is disjoint from $\mathcal{L}$, which proves the lemma.
\end{proof}

Clearly, the closure of every flat leaf of $\mathcal{L}$ is an element of $\mathcal{P}'$ and vice versa, which
gives a bijection between $\mathcal{P}'$ and the collection $\mathcal{P}$ that appears in the statement of
item~6 of Theorem~\ref{tthm3introd}.

\begin{lemma}
\label{lemma4.7}
Consider a point $x\in \Delta (\mathcal{L})$ and the plane $\overline{L_x}\in \mathcal{P} '$
passing through $x$. Then, the distance between any
  two points in $\overline{L_x}\cap \Delta (\mathcal{L})$ is at least 1.
 \end{lemma}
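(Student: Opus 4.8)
The plan is to argue by contradiction, producing for $n$ large a homotopically nontrivial loop in $\lambda_nM$ of length converging to a number strictly less than $2$, which is impossible because, by item~2, every homotopically nontrivial based loop in $\lambda_nM$ has length at least $2(1-\tfrac1n)$. So suppose $\overline{L_x}\cap\Delta(\cL)$ contains two distinct points at distance $d<1$. Writing $P=\overline{L_x}$ (a horizontal plane, by the convention fixed above) and $\Delta=\Delta(\cL)$, I would first reduce to a convenient configuration: since $\Delta$ is closed and the open segment joining two points of $P\cap\Delta$ of distance $<1$ either misses $\Delta$ or contains a point of $P\cap\Delta$ strictly closer to an endpoint, a short limiting argument produces $x_1,x_2\in P\cap\Delta$ with $d:=|x_1-x_2|<1$ and $(x_1,x_2)\cap\Delta=\emptyset$; in particular $(x_1,x_2)$ lies in the leaf $L_x$ of $\cL$. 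I would then fix a radius $\rho>0$ with $\rho<\frac{1-d}{2\pi-2}$ (so that $2\rho+d<1$ and $2d+(4\pi-4)\rho<2$) and small enough for the local descriptions below and for Theorem~\ref{thm2.2} to apply at scale $\rho$, and set $v=(x_2-x_1)/d$.

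Next I would assemble the local ingredients. By the local description (D1)--(D2) around $x_i$ ($i=1,2$), for $n$ large $\lambda_nM_n$ contains near $x_i$ a highly-sheeted double $k_n$-valued graph over an annulus in $P$ centered at $x_i$ with inner radius $r_n\searrow0$, and the associated stable limit disk $D(x_i)$ lies in $P$ (its punctured version lies in a limit leaf whose closure, a horizontal plane through $x_i$, must be $P$). Since the limit near $x_i$ collapses these $k_n$ sheets onto $P$, the separation of the $k_n$-valued graph at the fixed radius $\rho\in(r_n,\ve)$ tends to $0$, and by the Colding--Minicozzi gradient estimate the sheets are almost-horizontal there. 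Along $(x_1,x_2)\subset L_x$ the surfaces $\lambda_nM_n$ have locally bounded geometry, by the one-sided curvature estimates of~\cite{cm23} exactly as in Lemmas~\ref{lemma4.5}--\ref{lemma4.6}; hence for $n$ large each sheet emanating from near $x_1$ continues as an almost-horizontal graph over a fixed neighborhood of $[x_1+\rho v,\,x_2-\rho v]$ in $P$ and plugs into the $k_n$-valued graph near $x_2$ respecting the vertical ordering of sheets, and (by the maximum principle for the almost-harmonic height function on each graphical sheet) all sheets in play lie in a slab $\{|x_3|<\ve_n\}$ with $\ve_n\to0$.

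With these in hand I would build the loop $\gamma_n\subset\lambda_nM_n$ as a concatenation of four arcs: (i) wind once around the column through $x_1$ at extrinsic radius $\rho$, rising from a sheet $S_0$ to the next same-half sheet $S_1$ (length $\to2\pi\rho$); (ii) run along $S_1$ from near $x_1$ to near $x_2$ (length $\to d$); (iii) wind once around the column through $x_2$ at radius $\rho$ in the direction that \emph{descends} from $S_1$ back to $S_0$ (length $\to2\pi\rho$); (iv) run back along $S_0$ to the starting point (length $\to d$). Then $\gamma_n$ closes up, has length converging to $2d+(4\pi-4)\rho<2$, lies in $\{|x_3|<\ve_n\}$ with $\ve_n\to0$, and has winding number $1$ about the vertical line $\ell_1:=\{x_1\}\times\R$ (the only contribution coming from arc (i); arc (iii) winds about $\{x_2\}\times\R$, not about $\ell_1$, and the two passages over the segment contribute nothing since $2\rho+d<1$ keeps them away from $\ell_1$'s enclosed region appropriately). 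By the injectivity-radius bound above, for $n$ large $\gamma_n$ must be homotopically trivial in $\lambda_nM$, so it bounds a disk $D_n\subset\lambda_nM$. Since $x_3$ is almost-harmonic on minimal surfaces in the almost-flat $\lambda_nM$ and $D_n$ lies in the convex hull of $\gamma_n$ (so has bounded extrinsic diameter), the maximum principle confines $D_n$ to a slab $\{|x_3|<\ve_n'\}$, $\ve_n'\to0$; as $\partial D_n=\gamma_n$ links $\ell_1$ once and $D_n$ is trapped in this thin slab, $D_n$ must meet $\ell_1$, so $\lambda_nM_n$ contains a point $w_n\in\ell_1$ with $w_n\to x_1$. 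Running the chord-arc property (Theorem~\ref{thm2.2}) on the minimal disk $D_n$ and then the one-sided curvature estimates exactly as in the final paragraphs of the proofs of Lemmas~\ref{lemma4.5} and~\ref{lemma4.6}, the component of $\lambda_nM_n\cap\overline{\B}(x_1,\rho/4)$ through $w_n$ would have uniformly bounded second fundamental form for $n$ large, contradicting $x_1\in\Delta(\cL)$ together with the double-multigraph structure of $\lambda_nM_n$ near $x_1$. This contradiction forces $d\geq1$.

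The hard part will be this last step: excluding the hypothetical disk $D_n$. Both pinning $D_n$ into a thin slab and forcing it to hit $\ell_1$, and then converting that into a curvature contradiction at the singular point $x_1$, require the same delicate interplay of the chord-arc estimate, the one-sided curvature estimates, and the injectivity-radius lower bound that is used at the ends of Lemmas~\ref{lemma4.5}--\ref{lemma4.6} --- which is presumably why the authors warn that the argument is ``technical and delicate''. A secondary difficulty is the level-bookkeeping in the construction of $\gamma_n$: one must use the precise Colding--Minicozzi picture of the $k_n$-valued graphs at $x_1$ and $x_2$, and the fact that every one of their $k_n$ sheets propagates across the segment as a disjoint almost-horizontal graph, to be sure that the two passages over the segment can be taken on sheets $S_0,S_1$ that genuinely close the loop (and that its length really does converge to $2d+(4\pi-4)\rho$ rather than to something larger because the sheets drift vertically).
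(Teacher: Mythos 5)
The paper's proof is considerably shorter and takes a different route. It first observes (from the uniformly locally simply connected property and the Colding--Minicozzi local picture (D1)--(D2)) that the points of $\overline{L_x}\cap\Delta(\mathcal{L})$ are uniformly separated, fixes a closest pair $p,q$, and then produces on $\lambda_n M_n$ a sequence of \emph{connection loops}: two lifts of (all but the ends of) the segment $[p,q]$ to adjacent sheets, joined near $p$ and $q$ by short arcs whose lengths tend to $0$. These loops converge with multiplicity~$2$ to $[p,q]$, so their lengths tend to $2|p-q|$, and they are \emph{homotopically nontrivial}, citing~\cite{mpr3}. The injectivity-radius bound $\lambda_n I_{M_n}\ge 1-\tfrac1n$ then gives length $\ge 2(1-\tfrac1n)$, hence $|p-q|\ge 1$ in the limit. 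Your loop instead winds a full turn around each column at a fixed radius $\rho>0$, which inflates the limiting length to $2d+(4\pi-4)\rho$ and forces you to tune $\rho$; more importantly, rather than proving $\gamma_n$ nontrivial, you invert the logic: you deduce from the injectivity-radius bound that $\gamma_n$ must be trivial and then try to contradict the existence of the bounding disk $D_n$.

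That last step is where the gap lies. You claim that, because $D_n$ is trapped in a thin slab and must meet $\ell_1$ at a point $w_n\to x_1$, the chord-arc estimate and the one-sided curvature estimates give a uniform bound on $|A|$ for the component of $\lambda_n M_n\cap\overline{\B}(x_1,\rho/4)$ through $w_n$, contradicting $x_1\in\Delta(\mathcal{L})$. But the one-sided estimates apply to a minimal disk lying on one side of, and passing near, a plane, and they bound the curvature of \emph{that} disk inside a smaller ball; they do not propagate a bound from the small disk $D_n$ (which lives inside the slab) to the much larger component of $\lambda_n M_n$ through $w_n$, which near the column $\ell_1$ spirals densely on \emph{both} sides of $P$ and far outside the slab, and whose curvature is precisely what is expected to blow up there. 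So no contradiction with $x_1\in\Delta(\mathcal{L})$ is actually established. The conclusion you should be aiming for is that $\gamma_n$ is \emph{not} null-homotopic in the first place (and indeed it is not: it is a connection loop up to homotopy, winding once around each column); the paper secures exactly this directly for its shorter connection loops, so it never needs to reason about a hypothetical bounding disk at all.
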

\begin{proof}
Given any $p\in \overline{L_x}\cap \Delta (\mathcal{L})$,
$\overline{L_x}$ contains the disk $D(p)$ that appears in description (D2) above.
Since the sequence $\{ \lambda_nM_n\} _n$ is uniformly
locally simply connected, the Colding-Minicozzi
local picture (D1)-(D2) of $\lambda_nM_n$ near a point of $\Delta (\mathcal{L})$ implies that
there exists an $\eta >0$ so that for every
pair of distinct points $p,q\in \overline{L_x}\cap \Delta (\mathcal{L})$,
the distance between $p$ and $q$ is at least $\eta$.  Fix $p\in \overline{L_x}\cap \Delta (\mathcal{L})$ and take
a point $q\in \overline{L_x}\cap \Delta (\mathcal{L})$ closest to $p$. Hence, $\Delta (\mathcal{L})$ only intersects
the closed segment $[p,q]=\{ tp+(1-t)q\ | \ t\in [0,1]\} $ at the extrema $p,q$.
Using the plane $\overline{L_x}$ as a guide, one can produce homotopically nontrivial
simple closed curves $\g _n$ on the approximating surfaces $\lambda_nM_n$
which converge with multiplicity 2 outside $p,q$ to the segment $[p,q]$
as $n\to \infty $ (see for example the discussion just after Remark~2 in~\cite{mpr3}).
Our injectivity radius assumption implies that the length of the $\g _n$ is greater 
than or equal to 2, which
implies after taking $n\to \infty $ that the distance
between $p$ and $q$ is at least $1$. This finishes the proof of the lemma.
\end{proof}

\begin{definition}
\label{def4.12}
{\rm
Given $p\in \Delta (\mathcal{L})$, we assign an {\it orientation number} $n(p)=\pm 1$
according to the following procedure.
Outside a solid double vertical cone $\mathcal{C}_p\subset \B (p,\ve )$ based at $p$,
there exists a pair of multivalued graphs
contained in the approximating surface $\lambda_nM_n$ for $n$ large,
which, after choosing a subsequence, are both right or left handed
for $n$ sufficiently large (depending on $p$).  Assign a number
$n(p)= \pm 1$ depending on whether these multivalued graphs in $\lambda_nM_n$ occurring
nearby $p$ are right $(+)$ or left handed $(-)$.

We also define, given a plane $P\in \mathcal{P}'$,
\[
|I|(P)= \sum_{p \in P \cap \Delta(\mathcal{L})} |n(p)|=\mbox{ Cardinality}(P\cap \Delta (\mathcal{L}))\in \N \cup \{ \infty \} .
\]
By Lemma~\ref{lemma4.7}, the set $P \cap \Delta(\mathcal{L})$ is a closed, discrete countable set. After enumerating
this set and applying a diagonal argument, it is possible to choose a subsequence of the $\lambda_nM_n$ so that
locally around each point $p\in P \cap \Delta(\mathcal{L})$, there exists $n(p)\in \N$ such that $\lambda_nM_n$ contains
a pair of multivalued graphs around $p$ with a fixed handedness for all $n\geq n(p)$. This allows us to define
consistently the number
\[
I(P) = \sum_{p \in P \cap \Delta(\mathcal{L})}n(p)\in \Z,\quad \mbox{provided that $P \cap \Delta(\mathcal{L})$ is finite.}
\]
}
\end{definition}

Our next step will be to study the local constancy of the quantities $|I|(P)$ and
$I(P)$ when we vary the plane $P\in \mathcal{P}'$. Recall that the sequence
$\{ \lambda_nM_n\} _n$ is uniformly locally simply connected close, see Property (ULSC).
\begin{lemma}
\label{lemma4.9}
Given $P\in \mathcal{P}'$, there exists $\mu _0=\mu _0(P)$ such that if
$x\in P\cap \Delta (\mathcal{L} )$, then
for every $P'\in \mathcal{P}'$ with $|x_3(P)-x_3(P')|<\mu _0$
there exists a unique point $x'\in P'\cap \Delta (\mathcal{L} )\cap
\B (x,\ve )$ (this number $\ve >0$ was defined in Description (D1)-(D2)),
and the handedness of the two multivalued graphs
occurring in (a subsequence of) the $\lambda_nM_n$ nearby $x$ coincides with the
handedness of the two multivalued graphs occurring in $\lambda_nM_n$
nearby $x'$ (note that in particular, we do not need
to change the subsequence of the $\lambda_nM_n$ to produce a well-defined handedness at $x'$).
In particular, if $|I|(P)<\infty $, then $|I|(P')=|I|(P)$ and $I(P')=I(P)$.
\end{lemma}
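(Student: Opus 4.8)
The plan is to establish, in turn, the existence of $x'$, its uniqueness, and the coincidence of the handedness at $x$ and at $x'$, and then to read off the statements about $|I|$ and $I$. Uniqueness is immediate: since $\varepsilon<\varepsilon_1<1/2$, Lemma~\ref{lemma4.7} shows that any horizontal plane meets $\Delta(\mathcal{L})$ in a set all of whose points lie at mutual distance at least $1$, so $P'\cap\Delta(\mathcal{L})\cap\B(x,\varepsilon)$ contains at most one point; in particular the trivial subcase $P'=P$ forces $x'=x$.

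For existence I would use the Colding--Minicozzi local description (D1)--(D2) around $x$. In case (D1), after possibly shrinking $\varepsilon$ one has $\mathcal{S}\cap\B(x,\varepsilon)=\emptyset$, so $\Delta(\mathcal{L})\cap\B(x,\varepsilon)=S(\mathcal{L})\cap\B(x,\varepsilon)$ is the $C^{1,1}$ arc through $x$ transverse to the local foliation of $\B(x,\varepsilon)$ by horizontal disk leaves; being orthogonal to horizontal planes, this arc is a graph over an interval of the $x_3$-axis centred at $x_3(P)$, hence for $\mu_0$ below a fixed fraction of $\varepsilon$ every $P'\in\mathcal{P} '$ with $|x_3(P')-x_3(P)|<\mu_0$ meets it in exactly one point $x'\in S(\mathcal{L})\subset\Delta(\mathcal{L})$, and clearly $x'\in P'\cap\B(x,\varepsilon)$. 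In case (D2), $\Delta(\mathcal{L})\cap\B(x,\varepsilon)$ lies in the solid vertical cone $\mathcal{C}_x$ with vertex $x$, while outside $\mathcal{C}_x$ the surfaces $\lambda_nM_n$ carry a highly-sheeted double multivalued graph over the punctured disk $D(x,*)\subset P$. The geometric heart of the matter is that, for $n$ large, the sheets of this double multivalued graph span a fixed two-sided interval $(x_3(P)-\sigma,x_3(P)+\sigma)$ of heights about $x_3(P)$, and that along a sheet at any height $h$ in that interval the surface must ``turn'' near $\mathcal{C}_x$ inside $\B(x,\varepsilon)$. Consequently, if some $P'\in\mathcal{P} '$ at height $h$ in this interval had $P'\cap\Delta(\mathcal{L})\cap\B(x,\varepsilon)=\emptyset$, then $\{\lambda_nM_n\}_n$ would have uniformly bounded second fundamental form on a slab $\{|x_3-h|<\sigma'\}\cap\B(x,\varepsilon')$ — using that the $\lambda_nM_n$ have uniformly bounded curvature on compact subsets of $\R^3-\Delta(\mathcal{L})$ — which is incompatible with those turning sheets. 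Since $\Delta(\mathcal{L})$ is closed, the set of third coordinates of points of $\Delta(\mathcal{L})\cap\overline{\B}(x,\varepsilon)$ therefore contains a whole neighbourhood of $x_3(P)$, and taking $\mu_0\le\sigma$ yields the required $x'$. To make a single $\mu_0$ serve every $x\in P\cap\Delta(\mathcal{L})$ (needed when $|I|(P)=\infty$) one observes that the local scales $\varepsilon$, $\sigma$ are bounded below along $\Delta(\mathcal{L})$: translating $x$ to $\vec{0}$ returns us to the (ULSC) setting, where these scales are controlled only through $\varepsilon_1$.

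I expect this last point — that the double multivalued graph about $x$ genuinely occupies a uniform two-sided interval of heights and forces $\Delta(\mathcal{L})$ to be met on every plane of that slab within $\B(x,\varepsilon)$ — to be the main obstacle; it is in essence a careful reading of the Colding--Minicozzi structure of a forming helicoid \cite{cm22} (arbitrarily many embedded minimal sheets cannot collapse to a single plane without a singular axis persisting at each of their heights) together with the description of $\Delta(\mathcal{L})$ in Definition~\ref{def2.2}. The handedness claim is comparatively soft: since $x'\in\B(x,\varepsilon)$ we have $d(x,x')<\varepsilon$, so $\B(x,\varepsilon)$ and $\B(x',\varepsilon)$ overlap, and for $n$ in the already chosen subsequence and $n$ large the double multivalued graphs of $\lambda_nM_n$ around $x$ and around $x'$ are two windows onto one and the same double spiral staircase of the connected surface $\lambda_nM_n$; sliding along the common axis identifies them, so their handedness agrees. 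In particular the orientation number at $x'$ is well defined for the very same subsequence, that is, no further subsequence is needed, and it equals $n(x)$.

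Finally, suppose $|I|(P)<\infty$ and write $P\cap\Delta(\mathcal{L})=\{x_1,\dots,x_m\}$; put $\mu_0=\min_i\mu_0(x_i)$. For $P'$ with $|x_3(P')-x_3(P)|<\mu_0$ the associated points $x_i'\in P'\cap\Delta(\mathcal{L})\cap\B(x_i,\varepsilon)$ are pairwise distinct, since the $x_i$ lie at mutual distance at least $1$ and $\varepsilon<1/2$; hence $|I|(P')\ge m$ and $n(x_i')=n(x_i)$ for each $i$. Conversely, any $y'\in P'\cap\Delta(\mathcal{L})$ different from all the $x_i'$ lies outside every $\B(x_i,\varepsilon)$ by uniqueness, and running the correspondence just established with the roles of $P$ and $P'$ exchanged at $y'$ — legitimate because $\mu_0$ was chosen below the uniform lower bound for the local scales — produces a point of $P\cap\Delta(\mathcal{L})$ outside $\{x_1,\dots,x_m\}$, a contradiction. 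Therefore $P'\cap\Delta(\mathcal{L})=\{x_1',\dots,x_m'\}$, and so $|I|(P')=|I|(P)$ and $I(P')=\sum_i n(x_i')=\sum_i n(x_i)=I(P)$.
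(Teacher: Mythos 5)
Your overall architecture (uniqueness via Lemma~\ref{lemma4.7}, case split along (D1)/(D2), finite-ness of $P\cap\Delta(\mathcal{L})$ feeding into the $|I|$--$I$ claim) is reasonable, and the (D1) existence argument via the arc of $S(\mathcal{L})$ transverse to the local foliation is fine. But the (D2) existence argument has a real gap, and you flag it yourself as ``the main obstacle'' without closing it. Concretely, the claim that ``the set of third coordinates of points of $\Delta(\mathcal{L})\cap\overline{\B}(x,\varepsilon)$ contains a whole neighbourhood of $x_3(P)$'' is stronger than what the lemma asserts and is, in general, false: the heights of planes in $\mathcal{P}'$ can form a Cantor-like set, and $\Delta(\mathcal{L})$ is contained in $\bigcup_{L\in\mathcal{P}}\overline{L}$, so it need not hit every height near $x_3(P)$. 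What is true, and what the lemma asserts, is only that every $P'\in\mathcal{P}'$ close enough to $P$ meets $\Delta(\mathcal{L})$ near $x$. Moreover, the contradiction you propose at a \emph{fixed} $P'$ (``bounded second fundamental form on a slab around $P'$ is incompatible with the turning sheets'') does not work as stated: the turning sheets occur at heights collapsing to $x_3(P)$, not at the height $h$ of $P'$, so bounded curvature on a thin slab at height $h>0$ is not in conflict with them. The paper instead runs a \emph{sequential} argument: assume planes $P(m)\in\mathcal{P}'$ converge to $P$ from one side with $P(m)\cap\Delta(\mathcal{L})\cap\B(\varepsilon)=\emptyset$, produce a compact almost-horizontal disk $\Omega_1(m,n)\subset\lambda_nM_n$ near height $x_{3,m}$ and a disjoint disk $\Omega_2(n)$ through the blow-up points near $x$, and derive the contradiction from the one-sided curvature estimate only in the limit $m\to\infty$, when the two disks crowd together.

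On the $|I|$--$I$ part, the paper's route is also genuinely different from yours: for $x\in S(\mathcal{L})$ it cites Meeks' $C^{1,1}$-regularity theorem~\cite{me25} to conclude that the Lipschitz curve of $S(\mathcal{L})$ through $x$ is actually a vertical segment, which gives at once a canonical $x'$ on each nearby $P'$ and the constancy of handedness along that segment; a parallel statement is invoked when $x\in\mathcal{S}$. Your converse step (that $P'$ has no points of $\Delta(\mathcal{L})$ outside $\bigcup_i\B(x_i,\varepsilon)$) hinges on being able to run the lemma from $P'$ back to $P$ at an arbitrary $y'\in P'\cap\Delta(\mathcal{L})$, which needs the local scales $\varepsilon,\mu_0$ to be bounded below uniformly along $\Delta(\mathcal{L})$. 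You assert this follows from (ULSC), but (ULSC) fixes only the outer scale $\varepsilon_1$ of the disk decomposition; the scale $\varepsilon$ of the cone $\mathcal{C}_p$ in (D2) and the slab scale $\mu_0$ come from the Colding--Minicozzi structure and are allowed to depend on the point, so this uniformity needs a separate argument or a different route (such as the vertical-segment picture above).
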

\begin{proof}
We will start by proving the following simplified version
of the first sentence in the statement of the lemma.
\begin{claim}
Suppose $P=\{ x_3=0\} \in \mathcal{P}'$ and $x=\vec{0}\in P\cap \Delta (\mathcal{L})$. Then, there exists
$\mu _0>0$ such that if $P'\in \mathcal{P}'$ and $|x_3(P')|<\mu _0$, then there exists a unique point
$x'\in P'\cap \Delta (\mathcal{L} )\cap \B (\ve )$.
\end{claim}
{\it Proof of the Claim.}
Arguing by contradiction,  assume that there exists a sequence $P(m)$ of
horizontal planes in $\mathcal{P}'$ of heights $x_{3,m}$ converging to $0$ as $m\to \infty $,
so that for each $m\in \N$, the open disk $P(m)\cap \B (\ve )$ does not contain any point in $\Delta (\mathcal{L})$.
After passing to a subsequence, we can assume that the $P(m)$ converge to $P$ on one of its sides, say from above.
By definition of $\mathcal{P}'$, every plane $P(m)$ is the closure in $\R^3$ of a limit leaf $L_m$ of $\mathcal{L}$.
As for $m$ fixed the intersection $P(m)\cap \B (\ve )\cap \Delta (\mathcal{L})$ is empty,
then the convergence of portions of the surfaces
$\lambda_nM_n$ to $P(m)\cap \B (\ve )=L_m\cap \B (\ve )$ as $n\to \infty $
is smooth; in particular, for $n,m$ large
(but $m$ fixed), $(\lambda_nM_n)\cap \overline{\B }(\ve /2)$
contains a component $\Omega _1(m,n)$ which is a compact,
almost-horizontal disk of height arbitrarily close to $x_{3,m}$,
with $\partial \Omega _1(m,n)\subset \partial \B (\ve/2)$.
On the other hand, as $\vec{0}\in \Delta (\mathcal{L} )$ then
there exists a sequence of points $y_n\in \lambda_nM_n$ converging to the origin where the
absolute Gaussian curvature of $\lambda_nM_n$ tends to infinity. Let $\Omega _2(n)$ be the
component of $(\lambda_nM_n)\cap \overline{\B }(\ve /2)$ that contains $y_n$.
In particular, $\Omega _2(n)\cap \Omega _1(m,n)=\mbox{\O }$ for $m$ fixed and large,
and for all $n$ sufficiently large
depending on $m$. Note that as $\Omega _2(n)$ is topologically a disk and
Theorem~\ref{thm2.2} ensures that $\Omega _2(n)$ is compact
with boundary contained in $\partial \B (\ve /2)$.
As $x_{3,m}\to 0$ but $\ve $ is fixed, then the one-sided curvature
estimates for disks in~\cite{cm23} (see also
Theorem~7 in~\cite{mr13}) applied to $\Omega _1(m,n)$, $\Omega _2(n)$
gives a contradiction if $m,n$ are large enough. This contradiction proves the claim, as
the uniqueness of the point $x'\in P'\cap \Delta (\mathcal{L} )\cap \B (\ve )$
in the last part of the statement of the claim
follows directly from Lemma~\ref{lemma4.7}.
Now the claim is proved.

We next continue the proof of Lemma~\ref{lemma4.9}.
The existence part of the first sentence in the statement of the lemma
can be deduced from similar arguments as those in the proof of the last claim; the only difference
is that in the argument by contradiction, the plane $P$ cannot be assumed to be $\{ x_3=0\} $
but instead one assumes that there exist sequences $\{ P_m\} _m,\{ P'_m\} _m\subset \mathcal{P}'$,
$\{ \mu _m\} _m\subset \R^+$ with $\mu _m\searrow 0$ and $x_m\in P_m\cap \Delta (\mathcal{L})$
so that $|x_3(P_m)-x_3(P'_m)|<\mu _m$ and $P'_m\cap \Delta (\mathcal{L})\cap \B (x_m,\ve )=\mbox{\O }$.
The desired contradiction also appears in this case from the one-sided curvature estimates for embedded
minimal disks and we leave the details to the reader.
This finishes the proof of the first sentence in the statement
of the lemma.

It remains to show that if $|I|(P)<\infty $, then we can choose $\mu _0>0$ sufficiently small so that
$|I|(P')<\infty $ and $|I|(P)=|I|(P')$ and $I(P)=I(P')$ for
every $P'\in \mathcal{P}'$ with $|x_3(P)-x_3(P')|<\mu _0$.
As before, take $x\in P\cap \Delta (\mathcal{L})$. If $x\in S(\mathcal{L})$,
then Lemma~\ref{lemma4.5} and the description in (D1) of $S(\mathcal{L})$
around $x$ imply that after possibly choosing a smaller $\mu _0>0$,
$\mathcal{L}\cap \B (x,\mu _0)$ consists of a foliation by horizontal flat disks,
and $S(\mathcal{L})\cap \B(x,\mu _0)$ consists of a Lipschitz curve passing
through $x$ which is transverse to this
local foliation by flat disks. By the main theorem in Meeks~\cite{me25}, this Lipschitz curve is
in fact a vertical segment with $x$ in its interior. In particular,
every horizontal plane $P'$ that intersects
$\B (x,\mu _0)$ also intersects $S(\mathcal{L})\cap \B(x,\mu _0)$ at exactly one point $x'$,
and the handedness of the two multivalued graphs occurring in $\lambda_nM_n$ nearby $x$, $x'$ coincide.

If $x\in \mathcal{S}$, then similar arguments as in the last paragraph give the
same conclusion, following the local description in (D2) of $\mathcal{L} \cap \B (x,\mu _0)$
together with the uniform locally simply connected property of $\{
\lambda_nM_n\} _n$. This completes the proof of Lemma~\ref{lemma4.9}.
\end{proof}

\begin{lemma}
\label{lemma4.10}
For a plane $P\in \mathcal{P}'$, the following properties are equivalent:
\begin{enumerate}
\item $\mathcal{L} $ does not restrict to a foliation in any $\mu $-neighborhood of $P$, $\mu >0$.
\item $\Delta (\mathcal{L})\cap P=\mathcal{S}\cap P$.
\item $\mathcal{S}\cap P \neq \mbox{\rm \O }$.

\end{enumerate}
\end{lemma}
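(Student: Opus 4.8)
The plan is to establish the cycle of implications $(1)\Rightarrow(2)\Rightarrow(3)\Rightarrow(1)$. Two of the links are immediate. For $(2)\Rightarrow(3)$: since $P\in\mathcal{P}'$, we have $P=\overline{L_{p_0}}$ for some $p_0\in\Delta(\mathcal{L})$, so $\Delta(\mathcal{L})\cap P\neq\mbox{\O}$, and hence $(2)$ forces $\mathcal{S}\cap P=\Delta(\mathcal{L})\cap P\neq\mbox{\O}$. For $(3)\Rightarrow(1)$: if $p\in\mathcal{S}\cap P$, then $p$ lies in every slab $U_{\mu}=\{\,|x_3-x_3(P)|<\mu\,\}$, $\mu>0$, yet $p\notin\mathcal{L}$ because $\mathcal{L}$ is a lamination of $\R^3-\mathcal{S}$; since a lamination is a foliation of an open set only when it fills that set, $\mathcal{L}$ cannot restrict to a foliation in $U_\mu$. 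So it remains to prove $(1)\Rightarrow(2)$; contrapositively, recalling $S(\mathcal{L})=\Delta(\mathcal{L})-\mathcal{S}$ and $\mathcal{S}\cap P\subset\Delta(\mathcal{L})\cap P$, this says: \emph{if $S(\mathcal{L})\cap P$ contains a point $q$, then $\mathcal{L}$ restricts to a foliation in some slab about $P$.} I will in fact show that in this case $\mathcal{S}\cap P=\mbox{\O}$ as well, which by contraposition also yields $(3)\Rightarrow(2)$.

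Assume then $q\in S(\mathcal{L})\cap P$. Near $q$ we are in case (D1): using that the planes of $\mathcal{P}'$ are horizontal, that the leaf of the local foliation through $q$ lies in $P$, and Lemma~\ref{lemma4.6}, there is $\ve>0$ with $\mathcal{S}\cap\B(q,\ve)=\mbox{\O}$ such that $\mathcal{L}\cap\B(q,\ve)$ is a foliation by horizontal flat disks, while $S(\mathcal{L})\cap\B(q,\ve)$ is the vertical segment through $q$ (Meeks, as quoted in (D1)). In particular $\B(q,\ve)\subset\mathcal{L}$, so for $|t-x_3(P)|<\ve$ the plane $\{x_3=t\}$ contains a flat disk of $\mathcal{L}$ and therefore, by Lemma~\ref{lemma4.6}, is the closure $\overline{L_t}$ of a flat leaf $L_t$; as $L_t$ is the unique flat leaf contained in $\{x_3=t\}$, one checks via the tangency/maximum principle, exactly as in the last step of the proof of Lemma~\ref{lemma4.6}, that $\overline{L_t}-L_t\subset\mathcal{S}$, so that $\{x_3=t\}\subset\mathcal{L}$ whenever $\mathcal{S}\cap\{x_3=t\}=\mbox{\O}$. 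Hence the whole statement reduces to the claim: \emph{there is $\mu_0\in(0,\ve)$ with $\mathcal{S}\cap U_{\mu_0}=\mbox{\O}$}; indeed, then $U_{\mu_0}=\bigcup_{|t-x_3(P)|<\mu_0}\{x_3=t\}\subset\mathcal{L}$ is foliated by horizontal planes (each such plane being a complete connected subset of $\mathcal{L}$, hence a leaf), which is $\neg(1)$, and in particular $\mathcal{S}\cap P=\mbox{\O}$.

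The main obstacle is this claim: the local flat foliation supplied by $q$ must persist, free of points of $\mathcal{S}$, throughout a slab about $P$ of a definite width. I would argue by contradiction using the one-sided curvature estimates for embedded minimal disks (Corollary~0.4 in~\cite{cm23}, in the homogeneously regular form of Theorem~7 in~\cite{mr13}), following the pattern of the last paragraph of the proof of Lemma~\ref{lemma4.5} and of the proof of Lemma~\ref{lemma4.6}. If no such $\mu_0$ existed, there would be points $z_k\in\mathcal{S}$ with $x_3(z_k)\to x_3(P)$; fixing a height $t$ with $|t-x_3(P)|<\ve$ and $t\neq x_3(z_k)$, the flat leaf $L_t$ is approximated on compact subsets of $\{x_3=t\}-\mathcal{S}$ by almost-flat disks of $\lambda_nM_n$, one of which — near the point of $L_t$ lying directly over $z_k$ — lies on a fixed side of, and becomes arbitrarily close to, the disk $\Omega(n)\subset\lambda_nM_n$ of arbitrarily large curvature forming near $z_k$ (as in (D2), and as produced around points of $\mathcal{S}$ in the proof of Lemma~\ref{lemma4.9}); applying the one-sided estimate to this configuration bounds the curvature of $\Omega(n)$, a contradiction. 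The delicate bookkeeping — arranging that the almost-flat sheet and $\Omega(n)$ are genuinely disjoint and that the resulting bound is uniform as $x_3(z_k)\to x_3(P)$ — is of exactly the type already carried out in Lemmas~\ref{lemma4.5} and~\ref{lemma4.6}. With the claim in hand, the cycle $(1)\Rightarrow(2)\Rightarrow(3)\Rightarrow(1)$ closes.
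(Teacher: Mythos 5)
Your cycle $(1)\Rightarrow(2)\Rightarrow(3)\Rightarrow(1)$ and the two easy links are fine; in fact your $(3)\Rightarrow(1)$ argument (a point of $\mathcal{S}$ lies in no leaf of $\mathcal{L}$, so $\mathcal{L}$ cannot fill any slab containing it) is more elementary than the paper's appeal to (D2-A), (D2-B). The reduction of $(1)\Rightarrow(2)$ to the claim $\mathcal{S}\cap U_{\mu_0}=\mbox{\O}$ for some $\mu_0<\ve$ is also exactly right, and is precisely the content hidden in the paper's phrase ``which in turn implies'': every plane $\{x_3=t\}$ with $|t-x_3(P)|<\ve$ is the closure of a flat leaf $L_t$ with $\{x_3=t\}-L_t\subset\mathcal{S}$ (proof of Lemma~\ref{lemma4.5}), so the $\ve$-slab is a foliated piece of $\mathcal{L}$ if and only if $\mathcal{S}$ avoids it.

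Your proof of that claim, however, does not work as written. You fix a height $t$ with $|t-x_3(P)|<\ve$ and take $z_k\in\mathcal{S}$ with $x_3(z_k)\to x_3(P)$; then $|t-x_3(z_k)|$ stays bounded away from zero, so the almost-flat disks of $\lambda_nM_n$ near height $t$ never get close to the high-curvature disks $\Omega(n)$ forming at $z_k$, and the one-sided estimate is never triggered. Even after replacing $t$ by $t_k\to x_3(z_k)$, you would still need an almost-flat disk of \emph{definite} radius in $\lambda_nM_n$ at height $t_k$ near the vertical through $z_k$, which requires a fixed-size disk of $\{x_3=t_k\}$ free of $\Delta(\mathcal{L})$ there --- exactly what is not yet established (the point over $z_k$ may lie in $S(\mathcal{L})$, cf.\ the arc $\Gamma\subset S(\mathcal{L})\cap\mathcal{C}_p$ in (D2-A), and the cone $\mathcal{C}_{z_k}$ has no a priori uniform aperture). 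A cleaner route bypasses the one-sided estimate. If $z\in\mathcal{S}$ with $|x_3(z)-x_3(P)|<\ve$, then (D2) in every subcase produces a nonflat multivalued graph leaf $L'$ of $\mathcal{L}$ converging to $D(z,*)\subset\{x_3=x_3(z)\}$; $L'$ has points at heights $t'\neq x_3(z)$ arbitrarily close to $x_3(z)$, hence still in the $\ve$-slab. Such a point lies in $\mathcal{L}\subset\R^3-\mathcal{S}$ and in $\{x_3=t'\}$, hence in $\{x_3=t'\}-\mathcal{S}\subset L_{t'}$; so the nonflat leaf $L'$ meets the flat leaf $L_{t'}$, which is impossible because distinct leaves of a lamination are disjoint. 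This gives $\mathcal{S}\cap\{|x_3-x_3(P)|<\ve\}=\mbox{\O}$ outright and closes $(1)\Rightarrow(2)$.
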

\begin{proof}
That statement 1 implies 2 follows from the observation that
if there exists a point $p$ in $S(\mathcal{L})\cap P$, then
there exists a small cylindrical neighborhood of $p$ which is entirely foliated by horizontal
disks contained in planes of $\mathcal{P}$, which in turn implies that
there exists a slab neighborhood of $P$ which is
foliated by planes in $\mathcal{L}$, a contradiction. Statement 2 implies 3 by definition of $\mathcal{P}'$.
Finally, the description in (D2-A), (D2-B) give that item~3 implies item~1.
\end{proof}

\begin{proposition}
\label{propos4.11}
Let $x$ be a point in
$\Delta (\mathcal{L})$ and let $P_x\in \mathcal{P}'$ be the horizontal plane that passes through $x$. If
$|I|(P_x)<\infty $ and  $I(P_x)=0$, then $\mathcal{L}$ is a foliation of $\R^3$ by horizontal planes.
\end{proposition}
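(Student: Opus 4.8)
The plan is a continuity (nonempty/open/closed) argument on the set of heights of the horizontal planes of $\mathcal{P}'$. Write $N=|I|(P_x)$; since $I(P_x)=0$ and every orientation number $n(p)$ is $\pm 1$, and since $x\in P_x\cap\Delta(\mathcal{L})$, the integer $N$ is finite and $N\geq 2$ (in fact even). Set
\[
\mathcal{A}=\left\{ h\in \R \ :\ \{x_3=h\}\in \mathcal{P}',\ |I|(\{x_3=h\})=N\ \text{and}\ I(\{x_3=h\})=0\right\}.
\]
By hypothesis $x_3(x)\in \mathcal{A}$, so $\mathcal{A}\neq \mbox{\O}$. The goal is $\mathcal{A}=\R$; granting this together with the key Property $(\star)$ stated below, every horizontal plane is the closure of a flat leaf of $\mathcal{L}$ and is disjoint from $\mathcal{S}$, hence is an honest plane leaf of $\mathcal{L}$, and since the leaves of $\mathcal{L}$ are pairwise disjoint and the horizontal planes fill $\R^3$ we conclude that $\mathcal{L}$ is the foliation of $\R^3$ by horizontal planes and $\mathcal{S}=\mbox{\O}$, which is the assertion of the proposition.

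The crucial point, and the step I expect to be the main obstacle, is:
\[
(\star)\qquad \text{if }P\in \mathcal{P}'\text{ satisfies }|I|(P)<\infty\text{ and }I(P)=0,\text{ then }\mathcal{S}\cap P=\mbox{\O}.
\]
One proves $(\star)$ by contradiction. Assume $p\in \mathcal{S}\cap P$. By Lemma~\ref{lemma4.10}, $P\cap \Delta(\mathcal{L})=\mathcal{S}\cap P$, so writing $P\cap \Delta(\mathcal{L})=\{p=p_1,\dots,p_N\}$ (finite, as $|I|(P)<\infty$), all the $p_i$ lie in $\mathcal{S}$; around each $p_i$ we have the Colding--Minicozzi description (D2) (with subcases (D2-A), (D2-B)): outside a solid double cone $\mathcal{C}_{p_i}$ the surface $\lambda_nM_n$ is a highly sheeted double multivalued graph over a stable minimal disk $D(p_i)\subset P$ with handedness $n(p_i)=\pm 1$. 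Since $I(P)=\sum_i n(p_i)=0$, some column has handedness opposite to the one at $p$. Using the plane $P$ as a guide and an embedded arc in $P$ joining $p$ to such a compensating column (in the spirit of the proofs of Lemmas~\ref{ass2.1} and~\ref{lemma4.7} and the discussion after Remark~2 in~\cite{mpr3}), one produces on $\lambda_nM_n$, for $n$ large, a simple closed curve confined to a thin slab about $P$ whose two opposite windings cancel. Combined with the blow-up of curvature at $p\in \mathcal{S}$, the one-sided curvature estimates for embedded minimal disks (Corollary~0.4 in~\cite{cm23}), and the stability of the limit leaf $\overline{D(p,*)}$ used exactly as in the last paragraphs of the proofs of Lemmas~\ref{lemma4.5} and~\ref{lemma4.6} and in the Claim inside the proof of Lemma~\ref{lemma4.9}, this compensated configuration is shown to be incompatible with $p$ being a singular point of $\mathcal{L}$ (it forces a regular local foliation by flat disks through $p$), a contradiction. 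This is the delicate part of the argument.

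Granting $(\star)$, the continuity argument closes quickly. \textbf{Nonempty:} the hypothesis. \textbf{Open:} let $h\in \mathcal{A}$ and $P=\{x_3=h\}$. By $(\star)$, $\mathcal{S}\cap P=\mbox{\O}$, so each of the $N$ columns of $P$ lies in $S(\mathcal{L})$; as in the proof of Lemma~\ref{lemma4.9} (using Meeks~\cite{me25}), near each such column $S(\mathcal{L})$ is a vertical segment with the column in its interior and $\mathcal{L}$ is locally a foliation by flat horizontal disks. Hence for $|h'-h|$ small every plane $\{x_3=h'\}$ contains a column on the vertical prolongation of each column of $P$, so $\{x_3=h'\}\in \mathcal{P}'$; shrinking further and applying Lemma~\ref{lemma4.9} gives $|I|(\{x_3=h'\})=N$ and $I(\{x_3=h'\})=0$, so an interval around $h$ lies in $\mathcal{A}$. \textbf{Closed:} if $h_k\in \mathcal{A}$ and $h_k\to h$, then $\{x_3=h\}\in \mathcal{P}'$ because $\mathcal{P}'$ is closed (Lemma~\ref{lemma4.5}); that $|I|(\{x_3=h\})=N$ and $I(\{x_3=h\})=0$ follows from a limiting argument using the first part of Lemma~\ref{lemma4.9} to build a handedness-preserving correspondence between the columns of $\{x_3=h\}$ and those of $\{x_3=h_k\}$ for $k$ large, together with the closedness of $\Delta(\mathcal{L})=\mathcal{S}\cup S(\mathcal{L})$ and the constancy of the column count $N$, which prevent columns from being created or escaping to infinity in the limit. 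Since $\R$ is connected, $\mathcal{A}=\R$, and the proposition follows.
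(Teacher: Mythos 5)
Your nonempty/open/closed argument on the set $\mathcal{A}$ of heights is structurally the same as the paper's first move, namely considering the largest closed horizontal slab $W$ containing $P_x$ in which every plane lies in $\mathcal{P}'$ with the same values of $|I|$ and $I$, and reducing (through cases (F1), (F2)) to a boundary plane of $W$ if $W\neq\R^3$. That part of your outline is fine, and it isolates exactly the same statement the paper must rule out: a plane $P\in\mathcal{P}'$ with $|I|(P)<\infty$, $I(P)=0$, and $\mathcal{S}\cap P\neq\O$.

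The gap is that your $(\star)$ is not proven; it is asserted. The passage ``one produces ... a simple closed curve confined to a thin slab about $P$ whose two opposite windings cancel ... this compensated configuration is shown to be incompatible with $p$ being a singular point of $\mathcal{L}$ (it forces a regular local foliation by flat disks through $p$)'' is a restatement of the desired conclusion, not a derivation of it, and the parenthetical local claim is not how the contradiction actually arises. The real obstruction is global, not local: after Lemma~\ref{lemma4.10} gives $\Delta(\mathcal{L})\cap P_x=\mathcal{S}\cap P_x$, the paper works with the nonflat leaf $L$ of $\mathcal{L}$ directly above $P_x$, proves the connectedness of $L^{\mu}=L\cap\{0<x_3\le\mu\}$ (Claim~\ref{claim4.17}) via a barrier plus the removable-singularity result for stable laminations (Corollary~7.1 in~\cite{mpr10}), constructs the collection $\Gamma_m$ of connection loops over arcs $\delta_i$ joining the paired columns $p_i,q_i$ with $n(p_i)=-n(q_i)$ (using $I(P_x)=0$ to pair all $2d$ columns, not just $p$ and one partner), shows $\Gamma_m$ separates $L^{\mu}$ so that the piece $L^{\mu}_m$ containing $\partial L^{\mu}\cap\{x_3=\mu\}$ is proper in the slab and therefore parabolic (Theorem~3.1 of~\cite{ckmr1}), and finally applies the invariance of flux of $\nabla(x_3|_L)$ via the double Green's formula for the auxiliary harmonic functions $u_m$, $u_{m,k}$ to reach the incompatible estimates (G1)--(G4): the flux term $F(\nabla x_3,\sigma)$ is a fixed positive number, yet the $\Gamma_m$-contribution tends to zero. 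None of that machinery appears in your sketch, and I don't see how to get the contradiction from one-sided curvature estimates and stability of $\overline{D(p,*)}$ alone; in the situation you describe, the curvature does blow up at $p$, and the local picture of $\lambda_nM_n$ near $p$ is precisely the Colding--Minicozzi double multigraph, with no local incompatibility. So the heart of the proposition is missing from your proposal.
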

\begin{proof}
Consider the largest closed horizontal slab $W$ containing $P_x$, so that
every plane $P$ in $W$ lies in $\mathcal{P}'$ and satisfies $|I|(P)=|I|(P_x)$ and $I(P)=I(P_x)$
(we allow $W$
to be just $P_x$, to be $\R^3$ or a closed halfspace of $\R^3$). If $W=\R^3$
then the proposition is proved. Arguing by contradiction, suppose $W$ has a boundary plane,
which we relabel as $P_x$ (since it has the same numbers $|I|(P_x)$, $I(P_x)$
as the original $P_x$ by Lemma~\ref{lemma4.9}). Without loss of generality,
we will assume $P_x$ is the upper boundary plane of $W$. Since $\R^3-\cup _{P'\in \mathcal{P}'}P'$
is a nonempty union of open slabs or halfspaces, then one of the following possibilities occurs:
\begin{enumerate}[(F1)]
\item $P_x$ is not a limit of planes in $\mathcal{P}'$ from above.
 In this case, $P_x$ is
the boundary of an open slab or halfspace in $\R^3-\cup _{P'\in \mathcal{P}'}P'$.
\item $P_x$ is the limit of a sequence of planes $P_m\in \mathcal{P}'$ from
above,
such that
for every $m$, $P_{2m}\cup P_{2m+1}$ is the boundary of an open slab component of
$\R^3-\cup _{P'\in \mathcal{P}'}P'$. Thus, Lemma~\ref{lemma4.9} implies that for $m$ sufficiently large,
$P_m$ satisfies $|I|(P_m)=|I|(P_x)$ and $I(P_m)=I(P_x)$.
\end{enumerate}
In either of the cases (F1), (F2), we can relabel $P_x$ so that $P_x$ is the bottom boundary
plane of an open component of $\R^3-\cup _{P'\in \mathcal{P}'}P'$.
By Lemma~\ref{lemma4.10}, $\Delta (\mathcal{L})\cap P_x=\mathcal{S}\cap P_x$.

After the translation by vector $-x$, we can assume $x=\vec{0}$ and $P_x=\{ x_3=0\} $.
By the local description of $\mathcal{L}$ in (D2-A), there is a nonflat leaf $L$ of $\mathcal{L}$
which has the origin in its closure. Note that this implies that $P_x$ is contained in the limit
set of $L$. As by hypothesis $P_x\cap \Delta (\mathcal{L})$ is finite, we can choose a large round open disk $D\subset P_x$
such that $P_x\cap \Delta(\mathcal{L})\subset D$.  Let \[
A=\{ (x_1,x_2)\in \R ^2\ | \ (x_1,x_2,0)\notin D\} .
\]
Choose $\mu >0$ small enough so that
\begin{equation}
\label{eq:A1}
(A\times [0,\mu ])\cap \mathcal{P} '=A\times \{ 0\} .
\end{equation}
\begin{claim}
\label{claim4.17}
The surface $L^{\mu} =\mathcal{L} \cap \{ (x_1,x_2,x_3) \ | \  0 < x_3 \leq \mu\} $
is connected. Furthermore, $(A\times (0,\ve ])\cap \mathcal{L}=(A\times (0,\ve ])\cap L$.
\end{claim}
{\it Proof of the claim.}
The equality (\ref{eq:A1}) together with
a standard barrier argument (see the proof of Lemma~1.3 in~\cite{mr8})
imply
that if the claim fails, then there exists a connected, nonflat, stable minimal
surface $\S$ in $\{ (x_1,x_2,x_3) \ | \  0 < x_3 \leq \mu\}$, such that
its boundary $\partial \Sigma $ is contained in $\mathcal{L}\cap \{x_3=\mu \}$,
$\Sigma $ is proper in $\{ (x_1,x_2,x_3) \ | \  0 < x_3 \leq \mu\}$, $\Sigma $ is not
contained in $\{ x_3=\mu \} $ and $\Sigma $
is complete outside the finite set $\mathcal{S}\cap \{x_3=0\}$
in the sense that any proper divergent arc $\a$ in $\S$ of finite length must have its
divergent end point contained  in $\mathcal{S}\cap \{x_3=0\}$.
Since $[ \Sigma \cap \{ (x_1,x_2,x_3) \ | \  0\leq x_3 < \mu\} ]\cup (P_x-\mathcal{S})$
is a minimal lamination of $\{ (x_1,x_2,x_3) \ | \  {-1}< x_3 < \mu \}$ outside of a finite
set of points and this lamination consists of stable leaves,
then item~1 of Corollary~7.1 in~\cite{mpr10} implies that
this lamination extends to a minimal lamination of $\{ (x_1,x_2,x_3) \ | \  -1< x_3 < \mu \}$,
which contradicts
the maximum principle for minimal surfaces at the origin. This finishes the proof of the claim.
\par
\vspace{.2cm}
We next continue the proof of Proposition~\ref{propos4.11}.
Clearly, we may also assume
\[
\partial L^{\mu }\cap \{ x_3=\mu \} \neq \mbox{\O }
\]
 by taking $\mu >0$
smaller. As $L^{\mu }$ is not flat and the injectivity
radius function of the surfaces $\lambda_nM_n$ is bounded from
below by 1/2 away from their diverging boundaries, then we can apply the
intrinsic version of the
one-sided curvature estimates for minimal disks in~\cite{cm23}
to $\lambda_nM_n$ to conclude that the norm of the second fundamental form  of
the possibly disconnected surfaces $(A\times [0,\mu ])\cap (\lambda_nM_n)$ is arbitrarily small if we
take $\mu $ sufficiently small. Thus by choosing $\mu >0$ small enough, it follows that each component
$G$ of $(A\times [0,\mu ]) \cap L$ is locally a graph over its projection to $A\times \{ 0\} $,
with boundary $\partial G$ contained in
$((\partial A )\times [0,\mu ]) \cup (A\times \{ \mu \})$.

Note that so far in the proof of the proposition we have only used that $|I|(P_x)<\infty $.
Next we will use the second hypothesis $I(P_x)=0$ to obtain the desired contradiction,
which will follow from the invariance of flux for $\nabla (x_3|_L)$.

Since $I(P_x)=0$, then $P_x \cap \Delta(\mathcal{L}) = P_x \cap
\mathcal{S}$ consists of an even positive number of points $p_1, q_1, p_2, q_2, \ldots, p_d, q_d$
with $n(p_i)=-n(q_i)$ for each $i=1,\ldots ,d$.  Consider a
collection $\{ \delta_1, \ldots, \delta_d\}$ of pairwise disjoint
embedded planar arcs in $P_x$ such that the end points of $\delta_i$ are $p_i, q_i$.
For $i\in \{ 1,\ldots ,d\} $ fixed, construct a sequence $\{ \g _i(m)\} _m$ of
connection loops in $L^{\mu }$, as indicated in the proof of Lemma~\ref{lemma4.7}
(note that now the $\de _i$ are not necessarily
straight line segments), i.e.,
each $\g _i(m)$ consists of two lifts of $\delta_i-[\B (p_i,\ve _i(m))\cup \B (q_i,\ve _i(m))]$
to adjacent sheets of $L^{\mu }$ over $\delta_i$ joined by short arcs of length at most $3\varepsilon_i(m)>0$
near $p_i$ and $q_i$, such that the $\g _i(m)$ converge as $m\to \infty $ with multiplicity 2
to $\delta_i$ and $\varepsilon_i(m) \to 0$ as $m \rightarrow \infty$.
It is possible to choose the indexing
of these curves $\g _i(m)$ so that for every $m\in \N$, the collection
$\Gamma_m = \{ \gamma_1(m), \gamma_2(m),
\ldots, \gamma_d(m) \}$ separates the connected surface
$L^{\mu}$ into two components (this property holds because
the portion of $L^{\mu }$ sufficiently close to $P_x$ is topologically equivalent to the intersection
of a periodic parking garage surface with a closed lower halfspace, hence it suffices to choose
all curves in the collection $\G_m$ corresponding to closed curves at the same level in the parking garage surface
(for instance, in the particular case $d=1$,
the surface $L^{\mu }$
is modeled by a suitable portion of a Riemann minimal example, and each of the
connection loops $\g_1(m)$ is a generator of the homology of the approximating Riemann
minimal example).

Recall that $\partial L^{\mu }\cap \{ x_3=\mu \} \neq \mbox{\O }$, and that $L^{\mu }$ is
separated by $\Gamma _m$ into two components; we will denote by $L^{\mu }_m$ the component of $L^{\mu }-\G _m$
whose nonempty boundary contains
$\partial L^{\mu }\cap \{ x_3=\mu \} $,
see Figure~\ref{fig2}.
We remark that $L_m^{\mu }$ lies above any horizontal plane $P'\subset \{ x_3>0\} $ which is strictly below
$\partial L_m^{\mu }$ (since otherwise we would contradict that the portion of $L$ below
$P'$ is connected by Claim~\ref{claim4.17},
as $L-L_m^{\mu }$ intersects the open slab bounded by $P_x$ and $P'$);
in particular, equation~(\ref{eq:A1}) ensures that
$L_m^{\mu }$ is properly embedded in $x_3^{-1}([0,\mu ])$; this is in contrast
with $L-L_m^{\mu }$, which is nonproper and limits to $P_x$.
\begin{figure}
\begin{center}
\includegraphics[width=11.4cm]{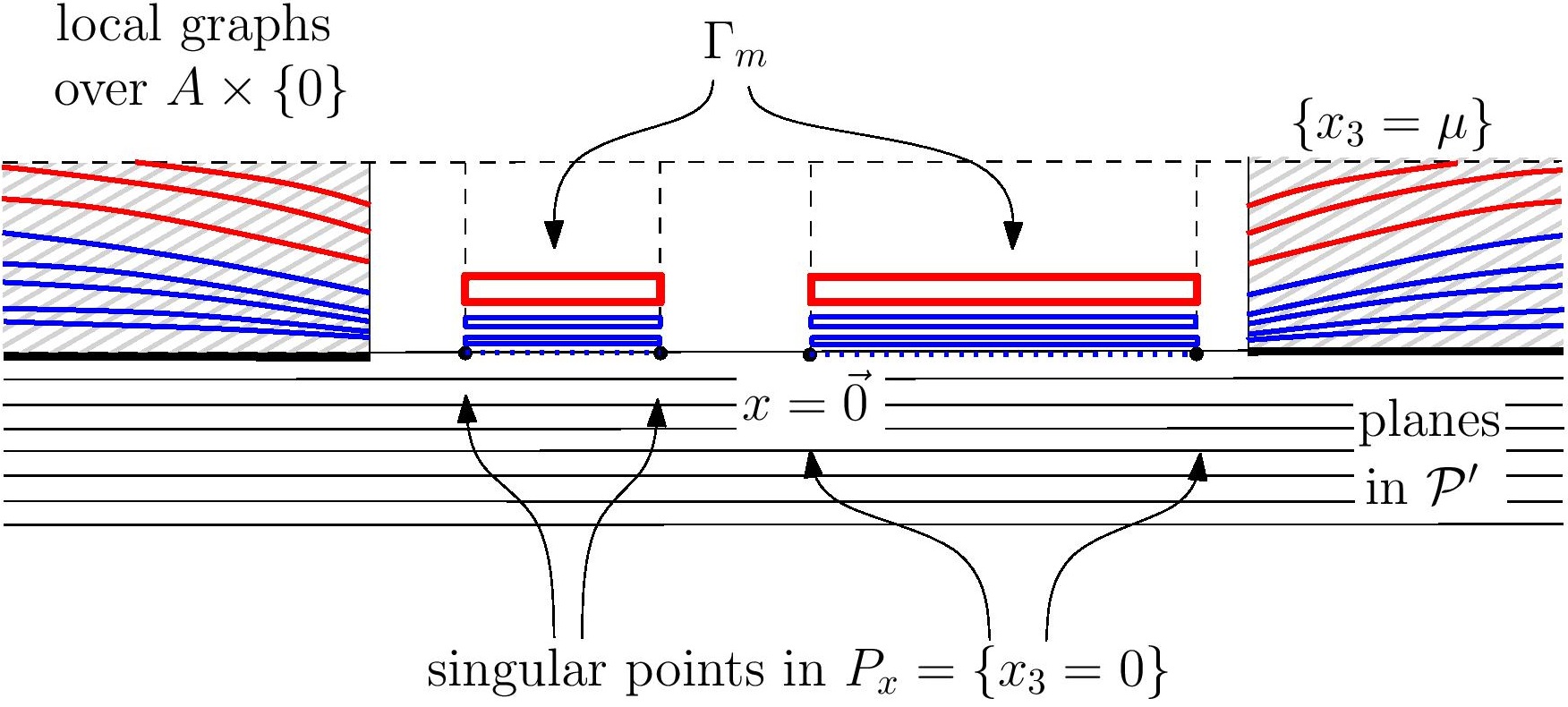}
\caption{The collection of connection loops $\G _m$ disconnects $L^{\mu }$ into two components,
one of which, denoted by $L_m^{\mu }$,
is proper in $x_3^{-1}([0,\mu ])$ (we have sketched $L_m^{\mu }$ in  red color).
Note that the boundary of $L^{\mu }_m$ contains curves lying in $\{ x_3=\mu \} $, and that
$\partial L^{\mu }_m\cap \{ x_3=\mu \} \subset
\partial L^{\mu }_{m+1}\cap \{ x_3=\mu \} $ for all $m\in \N$.}
\label{fig2}
\end{center}
\end{figure}

As $L_m^{\mu }$ is a properly immersed minimal surface with boundary (in fact, embedded) in a halfspace,
then Theorem~3.1 in Collin, Kusner, Meeks
and Rosenberg~\cite{ckmr1} implies that $L_m^{\mu }$ is a parabolic surface,
in the sense that the harmonic measure on its boundary is full.
Now consider
the scalar flux of a smooth tangent vector field $X$ to $L_m^{\mu }$
across a finite collection of compact curves or arcs
$\a \subset\partial L_m^{\mu }$, defined as
\[
F(X,\a)=\int _{\a}\langle X,\eta \rangle ,
 \]
 where $\eta $ is the exterior unit conormal vector to $L_m^{\mu }$ along $\a $.
Pick a compact arc $\sigma\subset\partial L_1^{\mu } \cap \{x_3 = \mu \}$. Since
$\sigma \subset \partial L^{\mu }_m\cap \{ x_3=\mu \} $ for all $m\in \N$, then we conclude that
\begin{equation}
\label{flux1}
F(\nabla x_3,\sigma )\leq F(\nabla x_3 ,\partial L^{\mu }_m\cap \{ x_3=\mu \} ),
\end{equation}
where the right-hand-side of (\ref{flux1}) must be understood as the limit of $F(\nabla x_3,\a )$ where $\a $ runs
along an increasing exhaustion of  $\partial L^{\mu }_m\cap \{ x_3=\mu \} $ by compact curves or arcs. Also
note that the left-hand-side of (\ref{flux1}) is a positive number by the maximum
principle, which does not depend on $m$.

For each $m\in \N$ fixed, let $\{ L_{m,k}^{\mu }\ | \ k\in \N ,k\geq k_0(m)\} $ be
a smooth increasing compact exhaustion of $L_m^{\mu }$
such that each $L_{m,k}^{\mu }$ contains all points of distance at most $k$
from some previously chosen point in $L_m^{\mu }$
and such that $\sigma \cup \G_m \subset \partial L_{m,k_0(m)}^{\mu }$
(hence $\sigma \cup \G_m\subset \partial L_{m,k}^{\mu }$
for all $k\geq k_0(m)$).  The boundary of $L_{m,k}^{\mu }$ is the disjoint union
of the following three pieces:
\[
\partial L_{m,k}^{\mu }=\partial_{\mu }(m,k)\cup \G_m \cup \partial_{*}(m,k),
\]
where
\[
\begin{array}{rcl}
\partial_{\mu }(m,k) & = & \partial L_{m,k}^{\mu }\cap \{x_3=\mu \} ,
\\
\partial_{*}(m,k) & = & \partial L_{m,k}^{\mu }-[\partial_{\mu }(m,k)\cup \G_m].
\end{array}
\]
Consider the nonnegative harmonic function $u_m=\mu-x_3$ on the surface
$L_m^{\mu }$. Let $u_{m,k}$ be the harmonic function on $L_{m,k}^{\mu }$
defined by the boundary values $u_{m,k}=0$ on $\partial _{*}(m,k)$,
$u_{m,k}=u_m$ on $\partial L_{m,k}^{\mu }-\partial _{*}(m,k)$.
Using that $u_{m,k}=0$ along $\partial _{*}(m,k)$,
$u_m=u_{m,k}=0$ along $\partial_{\mu }(m,k)$ and the double Green's formula, we have
\[
F(u_{m,k}\nabla u_m,\G _m)=F(u_{m,k}\nabla u_m,\partial L_{m,k}^{\mu })
=F(u_{m}\nabla u_{m,k},\partial L_{m,k}^{\mu })
\]
\begin{equation}
\label{eq:aa}
=F(u_{m}\nabla u_{m,k},\G_m)+F(u_{m}\nabla u_{m,k},\partial _{*}(m,k)).
\end{equation}
Also note that
\[
F(u_m\nabla u_{m,k},\G_m)=F((u_m-\mu )\nabla u_{m,k},\G_m)+\mu F(\nabla u_{m,k},\G_m)
\]
\begin{equation}
\label{eq:ab}
\stackrel{(\star)}{=}F((u_m-\mu )\nabla u_{m,k},\G_m)-\mu F(\nabla u_{m,k},\partial _{\mu }(m,k))
-\mu F(\nabla u_{m,k},\partial _{*}(m,k)),
\end{equation}
where in $(\star)$ we have used the Divergence Theorem applied to $u_{m,k}$ in $L_{m,k}^{\mu }$.
From (\ref{eq:aa}) and
(\ref{eq:ab}) we deduce that
\[
F(u_{m,k}\nabla u_m,\G _m)=
\]
\begin{equation}
\label{eq:ac}
F((u_m-\mu )\nabla u_{m,k},\G_m)-\mu F(\nabla u_{m,k},\partial _{\mu }(m,k))
+F((u_m-\mu )\nabla u_{m,k},\partial _{*}(m,k)).
\end{equation}

Equation (\ref{eq:ac}) leads to a contradiction, as the following properties hold:
\begin{enumerate}[(G1)]
\item The left-hand-side of (\ref{eq:ac}) tends to zero as $m,k\to \infty $.
\item The first term in the right-hand-side of
(\ref{eq:ac}) tends to zero as $m,k\to \infty $.
\item The second term in the right-hand-side of
(\ref{eq:ac}) is at least $\frac{1}{2}\mu \, F(\nabla x_3,\sigma )>0$ for $k$ large.
\item The third term in the right-hand-side of
(\ref{eq:ac}) is nonnegative.
\end{enumerate}

We next prove (G1)-(G4).
As $u_m-\mu =-x_3\leq 0$ and $\langle \nabla u_{m,k},\eta \rangle \leq 0$
along $\partial _{*}(m,k)$
(this last inequality follows from the facts that $u_{m,k}\geq 0$ in
$L^{\mu }_{m,k}$, $u_{m,k}=0$ along $\partial _{\mu }(m,k)$ and $\eta $ is
exterior to $L_m^{\mu }$ along its boundary), then (G4) holds.
Similarly, the fact that $\langle \nabla u_{m,k},\eta \rangle \leq 0$ along $\partial _{\mu }(m,k)$ implies
that $F(\nabla u_{m,k},\partial _{\mu }(m,k))\leq F(\nabla u_{m,k},\sigma )$.
As $L^{\mu }_m$ is a parabolic
surface, then the functions $u_{m,k}$ converge as $k\to \infty $ uniformly
on compact subsets of $L^{\mu }_m$ to the bounded
harmonic function $u_m$ (hence their gradients converge as well). This implies
that $F(\nabla u_{m,k},\sigma )$ converges as $k\to \infty $ to $F(\nabla u_{m},\sigma )=
-F(\nabla x_3,\sigma )$, from where (G3) follows.
Property (G2) also holds because
\begin{enumerate}[(G2.a)]
\item $\lim _{k\to \infty }| (\nabla u_{m,k})|_{\G _m}| =| (\nabla u_m)|_{\G _m}| \leq 1$,
\item Length$(\G _m)$ is bounded independently of $m$, and
\item $u_m-\mu =-x_3$ is arbitrarily small along $\G _m$ for $m$ large,
\end{enumerate}
Finally, (G1) holds because
\[
F(u_{m,k}\nabla u_m,\G _m)\stackrel{(k\to \infty )}{\longrightarrow }F(u_m\nabla u_m,\G _m)
\stackrel{(m\to \infty )}{\longrightarrow }
-\mu \lim _{m\to \infty }F(\nabla x_3,\G _m)=0,
\]
where in the last equality we have used that the tangent plane to $L^{\mu }$
becomes arbitrarily horizontal along $\G _m$ except along
$2d$ subarcs of $\G _m$ whose total length goes to zero as $m\to \infty $.
Now Proposition~\ref{propos4.11} is proved.
\end{proof}

As announced above, we next show that if $\mathcal{S}=\mbox{\O }$, then item~5
of Theorem~\ref{tthm3introd} holds. In fact, we will obtain a more detailed
description in the following result.
\begin{proposition}
\label{ass4.17}
If $\mathcal{L}$ is a regular lamination of $\R^3$, then $\mathcal{L}$ is a foliation
of $\R^3$ by parallel planes and  item~5
of Theorem~\ref{tthm3introd} holds.
Furthermore:
\begin{enumerate}[(A)]
\item $S(\mathcal{L})$ contains a line $l_1$  which
passes through the closed ball of radius 1 centered at the origin, and another line $l_2$ at
distance one from $l_1$, and all of the lines in $S(\mathcal{L})$
have distance at least one from each other.
\item There exist oriented closed geodesics  $\g_n\subset \lambda_nM_n$
with lengths converging to~$2$, which converge to the line segment $\g$ that joins
$(l_1\cup l_2)\cap \{x_3=0\}$ and such that the integrals of the unit conormal vector
of $\lambda_nM_n$ along $\g_n$ in the induced exponential $\rth$-coordinates of $\lambda_nB_N(p_n,\ve _n)$
converge to a horizontal vector of length $2$ orthogonal to $\g$.
\end{enumerate}
\end{proposition}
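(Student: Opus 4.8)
\emph{Overall plan.} The statement has two layers: first that $\mathcal{L}$ is forced to be the foliation of $\rth$ by horizontal planes (so that item~5 of Theorem~\ref{tthm3introd} applies), and then the refinements (A) and (B). The plan is to derive the global foliation structure from the local Colding--Minicozzi models (D1)--(D2) together with Lemma~\ref{lemma4.5}, and then to feed in the topological information carried by the geodesic loop $\be_n$ of Assertion~\ref{beta-n} to bound below the number of columns, pin down the separation of two of them, and compute the limiting conormal integral.

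\emph{Step 1: $\mathcal{L}$ is the horizontal foliation and $S(\mathcal{L})$ is a locally finite union of vertical lines.} Since $\mathcal{S}$ is empty, $\Delta(\mathcal{L})=S(\mathcal{L})$, and around every point $p\in S(\mathcal{L})$ only case (D1) can occur. By (D1) a neighbourhood of $p$ is foliated by compact minimal disks of $\mathcal{L}$ and $S(\mathcal{L})$ meets this family in a connected $C^{1,1}$ arc transverse to it; the arc meets each disk of the family (otherwise it would have an endpoint at an interior point of some disk, impossible at a point of $S(\mathcal{L})$ by (D1) again), and by Lemma~\ref{lemma4.5} the leaf through any point of $\Delta(\mathcal{L})$ closes up to a plane of $\mathcal{P}'$, i.e.\ a horizontal plane. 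Hence near $p$ the lamination $\mathcal{L}$ is literally a product foliation by horizontal planar disks and $S(\mathcal{L})$ is a vertical arc through $p$. Because $S(\mathcal{L})$ is closed in $\rth$, such an arc cannot terminate, so each extends to a complete vertical line; thus $S(\mathcal{L})$ is a disjoint union of vertical lines, pairwise at distance at least $1$ by Lemma~\ref{lemma4.7} (hence locally finite). Finally, since each such line meets every horizontal plane $\{x_3=t\}$ and near every such crossing $\{x_3=t\}$ lies in $\mathcal{L}$, an open-and-closed argument in the connected plane $\{x_3=t\}$ shows that $\{x_3=t\}$ is a leaf of $\mathcal{L}$; as the planes $\{x_3=t\}_{t\in\R}$ already foliate $\rth$, $\mathcal{L}$ equals this foliation. (Lemma~\ref{lemma4.6} gives an alternative route for the last point.) This proves the first assertion and exhibits the structure of item~5, up to showing there are at least two lines.

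\emph{Step 2: at least two columns, and part (A).} Let $\be_n\subset\lambda_nM_n$ be the homotopically nontrivial geodesic loop of length $2$ based at $p_n=\vec{0}$ from Assertion~\ref{beta-n}; every point of $\be_n$ is within intrinsic distance $1$ of $\vec{0}$, so $\be_n\subset\ov{\B}(1)$. If $S(\mathcal{L})\cap\ov{\B}(1)$ were empty, then for $n$ large $(\lambda_nM_n)\cap\B(1)$ would consist of almost-flat graphical disks, so $\be_n$ would be null-homotopic in $\lambda_nM_n$, a contradiction; hence $S(\mathcal{L})$ contains a line $l_1$ meeting $\ov{\B}(1)$. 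If $S(\mathcal{L})=\{l_1\}$, the limit parking garage structure has a single column, so by Lemma~\ref{ass2.1} the associated parking garage surface is simply connected, and then by the structure of convergence to such a limit (Section~\ref{seclt2}) the component of $(\lambda_nM_n)\cap\B(R)$ containing $\be_n$ is a disk for $n$ large and $R$ large, again contradicting the nontriviality of $\be_n$; hence there is a second line $l_2\in S(\mathcal{L})$. Since $\be_n$ is nontrivial of length tending to $2$ and the lines of $S(\mathcal{L})$ are pairwise at least $1$ apart, $\be_n$ can wrap at most two of them, so it wraps exactly $l_1$ and a line at distance exactly $1$ from $l_1$ (relabel it $l_2$); combined with Lemma~\ref{lemma4.7} this is precisely (A).

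\emph{Step 3: part (B) and the conormal identity.} Take $\g_n=\be_n$ (a geodesic loop); if a smooth closed geodesic is wanted, instead let $\g_n$ minimize length in the free homotopy class of $\be_n$, so that $\mathrm{length}(\g_n)\le\mathrm{length}(\be_n)\to 2$ and $\g_n$ stays near $\ov{\B}(1)$. Passing to a subsequence, $\g_n$ converges to the segment $\g$ joining $l_1\cap\{x_3=0\}$ to $l_2\cap\{x_3=0\}$ (the horizontal plane through $\vec{0}$), with $\g_n$ made of two strands lying on adjacent sheets of the double multivalued graphs around $l_1$ and $l_2$ and traversing $\g$ in opposite directions; since $\mathrm{length}(\g_n)\to 2$ this forces $\mathrm{length}(\g)=1$, i.e.\ $\mathrm{dist}(l_1,l_2)=1$. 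For the conormal: the two strands lie on consecutive sheets of the forming double staircase, which carry opposite orientations; fixing the global unit normal $N$ of $\lambda_nM_n$ and setting $\eta=N\times\g_n'$, along one strand $N\to(0,0,1)$ and $\g_n'\to\g'$, while along the other $N\to(0,0,-1)$ and $\g_n'\to-\g'$, so along both strands $\eta$ converges to $(0,0,1)\times\g'$, a horizontal unit vector orthogonal to $\g$; hence $\int_{\g_n}\eta\to 2\,(0,0,1)\times\g'$, which is horizontal, of length $2$, and orthogonal to $\g$.

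\emph{Main obstacle.} I expect Step~1 to be the crux: upgrading the purely local models (D1) around points of $S(\mathcal{L})$ to the global statement that $S(\mathcal{L})$ consists of \emph{complete} vertical lines and that $\mathcal{L}$ is the full horizontal foliation --- in particular that the $C^{1,1}$ arcs of $S(\mathcal{L})$ cannot stop and that the surrounding product foliations patch together consistently. A secondary subtlety is the orientation bookkeeping in Step~3: verifying that the two strands of $\g_n$ carry opposite unit normals, so that the conormal integrals reinforce rather than cancel, and that the limiting segment lies in the horizontal plane through $\vec{0}$ at distance exactly $1$ between the columns. The remaining ingredients --- the nontrivial loop $\be_n$ and Lemmas~\ref{lemma4.5}, \ref{lemma4.6}, \ref{lemma4.7}, \ref{ass2.1} --- are already available.
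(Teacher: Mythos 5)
Your plan mirrors the paper's proof quite closely, and the main line of reasoning is sound, but there are two places where the paper is more careful than you are and one omission worth flagging.

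\textbf{Step 1.} Your patching argument via (D1) and Lemma~\ref{lemma4.5} is a workable alternative to what the paper actually does: since $\mathcal{S}=\O$, Lemma~\ref{lemma4.10} immediately forces $\mathcal{L}$ to foliate a slab neighborhood of every plane in $\mathcal{P}'$, so the Lipschitz arcs in $S(\mathcal{L})$ run from $-\infty$ to $+\infty$ and $\mathcal{L}$ is a planar foliation; Meeks' $C^{1,1}$-regularity then straightens the arcs into vertical lines. Both routes arrive at the same conclusion; the lemma-based route is tidier because it avoids the open-and-closed plane argument, for which your justification (closedness from $\mathcal{L}$ being closed, openness near crossings only) is incomplete away from $S(\mathcal{L})$; the cleaner way to conclude that the whole plane $\{x_3=t\}$ is a leaf is unique continuation from the flat disks furnished by (D1).

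\textbf{Step 2.} This is where the real difference lies. You establish a first line $l_1$ meeting $\ov{\B}(1)$ and then dispose of $S(\mathcal{L})=\{l_1\}$; but the jump ``since lines are at least $1$ apart, $\be_n$ can wrap at most two, so it wraps exactly two at distance exactly $1$'' leaves a genuine gap: you have not ruled out the case $\#S(\mathcal{L})\ge 2$ but with all other columns far from $l_1$, in which case the same single-column disk argument would make $\be_n$ trivial. The paper closes exactly this gap by arguing at a limit point $q\in S(\mathcal{L})\cap\mathrm{Lim}(\{\be_n\})$: if no line of $S(\mathcal{L})$ lies at distance $1$ from $l_q$, then closedness of $S(\mathcal{L})$ plus Lemma~\ref{lemma4.7} produce an $\eta>0$ with the cylinder of radius $1+2\eta$ around $l_q$ meeting $S(\mathcal{L})$ only in $l_q$; since $\be_n$ has length $2$ and hence lies in $B_{\lambda_nN}(q_n,1+\eta)$ for $q_n\in\be_n\to q$, the parking-garage description forces $\be_n$ into the unique main disk component there, contradicting Assertion~\ref{beta-n}. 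Note that the ball is centered at $q_n$, not at the origin; that is what makes the ``only one column visible'' conclusion work independently of where $\be_n$ sits relative to $l_q$. Your version can be repaired along these lines, but as written it is not complete.

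\textbf{Step 3.} Your conormal/orientation bookkeeping for the two strands is correct in spirit and matches what happens on a compressed helicoid (opposite limiting unit normals on consecutive sheets, opposite tangent directions, so the conormals reinforce); the paper simply delegates this verification to~\cite{mpr3}. Your sketch is a reasonable substitute for that reference.

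\textbf{Omission.} Item~5 of Theorem~\ref{tthm3introd} contains the conditional clause about genus bounds (``if the genus of $\lambda_nM_n$ is bounded, then $S(\mathcal{L})$ has exactly two oppositely-handed columns and $\lambda_nM_n$ is eventually genus zero on compact sets''). Proving the proposition requires establishing that conditional as well; the paper does so by the same counting argument as Lemma~\ref{ass2.1}, since near the limit the $\lambda_nM_n$ mimic a periodic parking garage surface. You do not address this, and it should be added.
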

\begin{proof}
Since we are assuming $\mathcal{S}=\O$ but the uniformly bounded Gaussian
curvature hypothesis in Proposition~\ref{propos4.2} fails to hold,
then $\Delta (\mathcal{L})=S(\mathcal{L})\neq \mbox{\O}$
and Lemma~\ref{lemma4.10} implies that the Lipschitz curves in $S(\mathcal{L})$
go from $-\infty $ to $+\infty $ in height and thus, $\mathcal{L}$ is a foliation
of $\R^3$ by planes.
By the $C^{1,1}$-regularity theorem for $S(\mathcal{L})$
in~\cite{me25}, $S(\mathcal{L})$ consists of vertical lines, precisely one
passing through each point in $P_z \cap S(\mathcal{L})$ (here $z$ is
any point in $S(\mathcal{L})$).
The surfaces $\lambda_nM_n$ are now seen to converge on
compact subsets of $\rth-S(\mathcal{L})$ to the minimal parking garage
structure on $\rth$ consisting of horizontal planes, with
vertical columns over the points $y \in P_z \cap S(\mathcal{L})$ and with
orientation numbers $n(y)= \pm 1$ in the sense of Definition~\ref{def4.12}.

We next show that item (A) of the proposition holds.
Note that since the geodesic loops $\be_n$ given in Assertion~\ref{beta-n} all pass through $p_n$,
then the limit set Lim$(\{ \be _n\} _n)$ of the $\be _n$ contains the origin in $\rth$.
Since the surfaces  $\lambda_n M_n$ converge to the foliation of $\rth$ by horizontal planes and
this convergence is $C^\a$ (actually $C^{\infty }$ tangentially to the leaves of the limit
foliation) away from $S(\mathcal{L})$, for any $\a \in (0,1)$, then  Lim$(\{ \be _n\} _n)$
consists of a connected, simplicial complex consisting of finitely many horizontal
segments joined by vertical segments contained in lines of $S(\mathcal{L})$; this
complex could have dimension zero, in which case it reduces to the origin.
Clearly there exists a point $q\in S(\mathcal{L})\cap \mbox{ Lim}(\{ \be _n\} _n)$,
since otherwise Lim$(\{ \be _n\} _n)$ contains a horizontal segment $l$ with
$\vec{0}\in l$, and in this case one of the end points of $l$ lie in $S(\mathcal{L})$. As
$q\in S(\mathcal{L})$, then the vertical line $l_q$ passing through $q$ lies in $S(\mathcal{L})$.

We claim that there exists a vertical line $l$ contained in $S(\mathcal{L})$ at
distance one from $l_q$. As $S(\mathcal{L})$ is a closed set, Lemma~\ref{lemma4.7}
implies that if our claim fails to hold
then there exists $\eta >0$ such that the vertical cylinder of radius $1+2\eta $
with axis $l_q$ only intersects $S(\mathcal{L})$ at $l_q$. Consider a sequence of points
$q_n\in \be _n$ limiting to $q$ and consider the related extrinsic balls
$B_{\lambda_nN}(q_n,1+\eta )$. By the triangle inequality, the loop $\be _n$ is contained
in $(\lambda_nM_n)\cap B_{\lambda_nN}(q_n,1+\eta )$ (as the length of $\be _n$ is two and
the intrinsic distance between any two points in $\be _n$ is at most one).
By the parking garage structure of the limit foliation, for $n$ large each of the surfaces
$(\lambda_nM_n)\cap B_{\lambda_nN}(q_n,1+\eta )$ contains a unique main component $\Delta _n$ which is
topologically a disk (this is the component that contains $q_n$). Therefore, $\be _n\subset
\Delta _n$, which implies that $\be _n$ is homotopically trivial. This contradiction proves
our claim. Note that these arguments also imply that $q$ lies in the closed ball of radius
1 centered at the origin. This claim together with Lemma~\ref{lemma4.7} imply that
item~(A) of the proposition holds.
Also observe that we have completed the proof of the first statement of
Theorem~\ref{tthm3introd} (the $\lambda_nM_n$ converge to a minimal
parking garage structure of $\R^3$
with at leat two vertical lines in $S(\mathcal{L})$).

Since $l_q$ and $l$ are at distance 1 apart, then
there exist connection loops $\g _n$ on $\lambda_nM_n$ of lengths converging to 2
which converge as $n\to \infty $ to a horizontal line segment of length 1
joining $l_q$ and~$l$. These connection loops
satisfy the properties in
item~(B) of the proposition
(see~\cite{mpr3} for details).

To complete the proof of Proposition~\ref{ass4.17}, it only remains to demonstrate
the last sentence of item~5 of Theorem~\ref{tthm3introd} assuming that there exists
a bound on the genus of the surfaces $\lambda_nM_n$. This
 follows from similar arguments as those in the proof of Lemma~\ref{ass2.1}, since
the surfaces $\lambda_nM_n$ approximate arbitrarily well the behavior of a
periodic parking garage surface in $\R^3$.
This finishes the proof of Proposition~\ref{ass4.17}.
\end{proof}

By Proposition~\ref{ass4.17}, to finish the proof of Theorem~\ref{tthm3introd} it remains to
demonstrate that item~6 holds provided that $\mathcal{S}\neq \mbox{\O }$, a hypothesis that
will be assumed for the remainder of this section. We will start
by stating a property to be used later. Recall that given $R>0$ and $n\in \N$ sufficiently large,
 $\Sigma (n,R)$ denotes the closure of the component of $[\lambda_n\overline{B}_M(p_n,\frac{\sqrt{n}}{2\lambda_n})]
 \cap B_{\lambda_nN}(p_n,R)$
that contains $p_n$, and that the surface $M_k$ that appears below was defined in equation (\ref{eq:Mk})
as a rescaling by
$1/\lambda_k=1/\lambda'_{n(k)}$
 of $\Sigma (n(k),k)$, where $n(k)$ was defined
in the proof of Lemma~\ref{lemma4.3}.
The purpose of the next result is to show that given a radius $k_0$,
all the components of $\lambda_kM_k$ in an extrinsic ball of that radius centered at the origin
(for $k$ sufficiently large depending on $k_0$)  can be joined within an extrinsic
ball of a larger radius $m(k_0)$ independent of $k$, see Figure~\ref{figprop18}.
\begin{figure}
\begin{center}
\includegraphics[width=6cm]{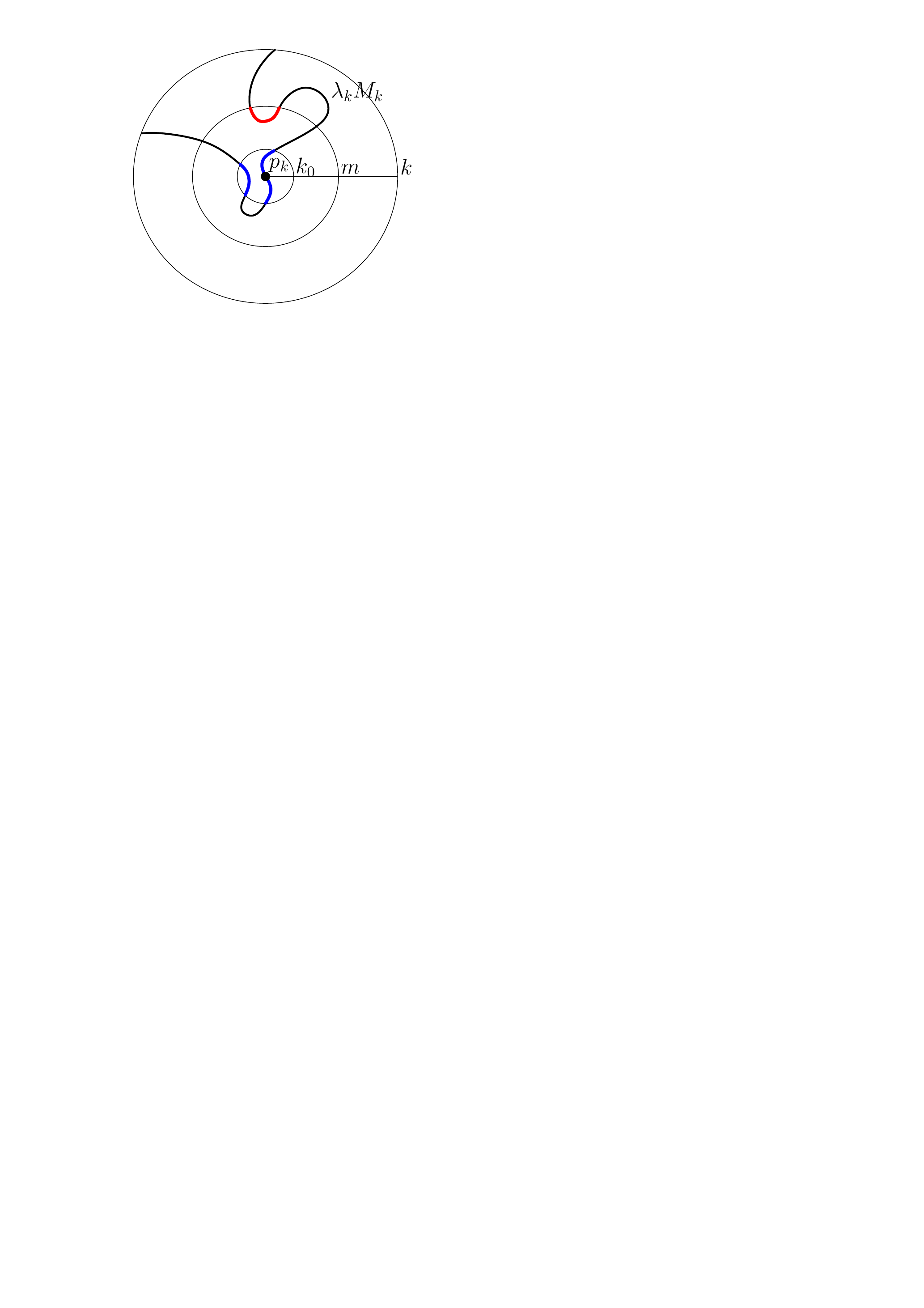}
\caption{The red portion of $\lambda_kM_k$ cannot enter into the ball of radius $k_0$,
since it does not join to the blue portions within the ball of radius $m$.}
\label{figprop18}
\end{center}
\end{figure}

\begin{proposition}
\label{propos4.18}
  Given $k_0\in \N$ there exists $m=m(k_0)\in \N$ such that for any $k\in \N$ sufficiently large, we have
\begin{equation}
\label{eq:4.5A}
(\lambda_{k}M_{k})\cap B_{\lambda_{k}N}(p_{k},k_0)\subset \Sigma (n(k),m).
\end{equation}
Furthermore, the intrinsic distance in $\lambda_kM_k=\Sigma (n(k),k)$ from any point in
$(\lambda_{k}M_{k})\cap B_{\lambda_{k}N}(p_{k},k_0)$ to $p_k$ is not greater than some number
independent on $k$, for all such $k$.
\end{proposition}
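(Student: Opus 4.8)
The plan is to prove both assertions by contradiction, using the Colding--Minicozzi local pictures (D1)--(D2) of $\lambda_nM_n$ near $\Delta(\mathcal{L})$ together with the one-sided curvature estimates, in the spirit of the proofs of Lemmas~\ref{lemma4.5}--\ref{lemma4.9} and Proposition~\ref{ass4.17}.

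\emph{The inclusion (\ref{eq:4.5A}).} Suppose it fails for some $k_0$. A diagonal argument yields sequences $k_j\to\infty$ and $m_j\to\infty$ with $m_j<k_j$, and points $x_j\in(\lambda_{k_j}M_{k_j})\cap B_{\lambda_{k_j}N}(p_{k_j},k_0)$ such that the component $\Omega_j$ of $(\lambda_{k_j}M_{k_j})\cap B_{\lambda_{k_j}N}(p_{k_j},m_j)$ containing $x_j$ does not contain $p_{k_j}$. Since $\lambda_{k_j}M_{k_j}=\Sigma(n(k_j),k_j)$ has boundary in $\partial B_{\lambda_{k_j}N}(p_{k_j},k_j)$ and $m_j<k_j$, we get $\partial\Omega_j\subset\partial B_{\lambda_{k_j}N}(p_{k_j},m_j)$; thus the $\Omega_j$ are compact embedded minimal surfaces with boundaries diverging in space, inheriting from the $\lambda_nM_n$ both the property (ULSC) and the fact that the injectivity radius is at least $\frac12$ at points at distance at least $\frac12$ from the boundary. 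Running the same Colding--Minicozzi analysis on the $\Omega_j$, after passing to a subsequence the $\Omega_j$ converge, away from a set $\Delta(\mathcal{L}^*)=\mathcal{S}^*\cup S(\mathcal{L}^*)$ with $\mathcal{S}^*\subset\mathcal{S}$ and $\mathcal{L}^*\subset\mathcal{L}$, to a nonempty minimal lamination $\mathcal{L}^*$ which is complete away from $\mathcal{S}^*$ and whose closure contains $x_\infty:=\lim_jx_j$, with $|x_\infty|\le k_0$. Since every leaf of $\mathcal{L}^*$ is a leaf of $\mathcal{L}$, Lemma~\ref{lemma4.6} together with the descriptions (D2-A), (D2-B) show that $\overline{\mathcal{L}^*}$ contains a plane $P^*\in\mathcal{P}'$: if instead $\mathcal{L}^*$ had only non-flat leaves whose limit sets avoided $\bigcup_{P\in\mathcal{P}'}P$, the curvature-estimate argument of Lemma~\ref{lemma4.6} would produce a slab through $x_\infty$ with bounded Gaussian curvature and hence a plane disjoint from $\mathcal{L}$, which is impossible. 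Fix $q\in P^*\cap\Delta(\mathcal{L})$. The full local picture (D1)--(D2) of $\lambda_{k_j}M_{k_j}$ in $\B(q,\ve)$ is \emph{connected} (a double multivalued graph joined through the cone $\mathcal{C}_q$, resp.\ a family of disks met transversally by one singular arc), so as soon as $\Omega_j$ contains one sheet of $\lambda_{k_j}M_{k_j}$ near $P^*$ it contains every component of $(\lambda_{k_j}M_{k_j})\cap\B(q,\ve)$. On the other hand, arguing as in the last paragraph of the proof of Lemma~\ref{lemma4.5}, the central component $\Sigma(n(k_j),m_j)$, which contains $p_{k_j}$ and near $p_{k_j}$ has blowing-up Gaussian curvature, must also contain an almost-flat disk of $\lambda_{k_j}M_{k_j}$ approximating $P^*$ near $q$, since otherwise the one-sided curvature estimate of Corollary~0.4 in~\cite{cm23} is violated. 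Hence $\Omega_j=\Sigma(n(k_j),m_j)$ for $j$ large, contradicting $p_{k_j}\notin\Omega_j$; this proves (\ref{eq:4.5A}).

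\emph{The intrinsic distance bound.} Given (\ref{eq:4.5A}) with $m=m(k_0)$, for every $x\in(\lambda_kM_k)\cap B_{\lambda_kN}(p_k,k_0)$ we have $x\in\Sigma(n(k),m)$, whence $d_{\lambda_kM_k}(x,p_k)\le d_{\Sigma(n(k),m)}(x,p_k)$, and it suffices to bound the right-hand side independently of $k$. If it were not bounded, after passing to a subsequence there would be points $y_k$ with $d_{\Sigma(n(k),m)}(y_k,p_k)\to\infty$; a minimizing geodesic $\sigma_k$ in $\Sigma(n(k),m)$ from $p_k$ to $y_k$ then has length tending to infinity and lies in the fixed ball $\overline{\B}(m)$. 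Repeating the argument that proves property (C) in Proposition~\ref{lemma4.2}, now with $\sigma_k$ in the role of $\wt{\g}_n$ --- extracting along $\sigma_k$ a number, tending to infinity, of points with mutually diverging intrinsic distances, using (ULSC) and the injectivity radius lower bound to obtain a pairwise disjoint family of large intrinsic balls converging to a common point of $\overline{\B}(m)$, and then invoking the one-sided curvature estimates together with Lemma~3.2 in~\cite{mpr20} --- leads to a contradiction. Hence these intrinsic distances are uniformly bounded, completing the proof.

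\emph{Main obstacle.} The delicate step is the identification $\Omega_j=\Sigma(n(k_j),m_j)$: one must control the limit of the ``exotic'' component $\Omega_j$ precisely enough to see that it participates in the local picture of $\mathcal{L}$ around some point $q\in\Delta(\mathcal{L})$, and then use the connectedness of that local picture and the one-sided curvature estimates to force $\Omega_j$ to coincide with the central component.
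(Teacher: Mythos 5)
Your proof of the inclusion~(\ref{eq:4.5A}) takes a genuinely different route from the paper, and I believe it has a real gap.

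\textbf{The paper's argument.} The paper does not run the Colding--Minicozzi analysis on the ``exotic'' components $\Omega_j$ directly. Instead it considers a complementary region $\Omega(k)\subset B_{\lambda_{a_k}N}(p_{a_k},m_k)-\lambda_{a_k}M_{a_k}$ bounded on one side by $\Sigma(n(a_k),m_k)$ and on the other by a second component $\Delta(k)$, and solves a Plateau problem inside $\Omega(k)$ to produce a stable, area-minimizing surface $\Sigma'(k)$ homologous to $\Sigma(n(a_k),m_k)$ with the same boundary. Stability gives \emph{uniform} curvature estimates for $\Sigma'(k)$, so $\Sigma'(k)$ subconverges to a lamination by planes of $\R^3$; combined with the fact that $\Sigma'(k)$ is disjoint from $\lambda_{a_k}M_{a_k}$ for $k$ large (by instability of the latter), one of these limit planes $\Pi$ would have to lie in $\R^3-\mathcal{L}$ or be a leaf of $\mathcal{L}$ with bounded ambient curvature nearby, either of which contradicts Lemma~\ref{lemma4.6}. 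Inserting a stable barrier between the two boundary components is what makes the argument work: it replaces a topological question (``are the two components the same?'') by a curvature/classification statement.

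\textbf{The gap in your argument.} You aim to show directly that $\Omega_j=\Sigma(n(k_j),m_j)$ for $j$ large, by finding a point $q\in P^*\cap\Delta(\mathcal{L})$ and arguing that both components must participate in the local picture of $\lambda_{k_j}M_{k_j}$ in $\B(q,\ve)$, and that that local picture is connected. Neither claim is justified. First, nothing you have established forces the central component $\Sigma(n(k_j),m_j)$ to reach $\B(q,\ve)$: the central component contains $p_{k_j}=\vec{0}$ and has boundary on $\partial B(m_j)$, but its intersection with a small extrinsic ball $\B(q,\ve)$ far from $\vec{0}$ could be empty. Your appeal to Corollary~0.4 in~\cite{cm23} to rule this out is not a complete argument; the one-sided curvature estimate constrains a minimal disk that \emph{is} close to an almost-flat disk, it does not force a given component to travel to a prescribed location. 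Second, even if both components did intersect $\B(q,\ve)$, the local descriptions~(D1)--(D2) do not assert that $(\lambda_{k_j}M_{k_j})\cap\B(q,\ve)$ is a single component: by (ULSC) it is a union of disks, and only the forming double multivalued graph spiraling toward $q$ is described; nothing in (D1)--(D2) rules out additional disjoint disk components in $\B(q,\ve)$ belonging to different components of $(\lambda_{k_j}M_{k_j})\cap B_{\lambda_{k_j}N}(p_{k_j},m_j)$. So ``containing one sheet near $P^*$'' does not entail containing every component. This is exactly the issue the paper's barrier construction is designed to avoid.

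\textbf{The intrinsic distance bound.} Your argument for the second assertion also diverges from the paper's and is more involved than necessary. Once $m(k_0)$ has been produced, the paper simply chooses $r_{k_0}$ with $r_{k_0}\,\de(r_{k_0})=m(k_0)$ and applies Proposition~\ref{lemma4.2} directly: for $k$ large, $\Sigma(n(k),m(k_0))\subset B_{\lambda_kM_k}(p_k,r_{k_0}/2)$, and the bound $r_{k_0}/2$ on the intrinsic distance follows at once from (\ref{eq:4.5A}). There is no need to re-run the contradiction argument from the proof of property~(C); that machinery was used to establish the chord-arc property, and here it should just be invoked as a black box. Your version would likely work, but the direct application is both shorter and clearly correct.
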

\begin{proof}
To prove (\ref{eq:4.5A}) we argue by contradiction. Suppose that for some $k_0\in \N$,
there exist sequences $\{ m_k\} _k, \{ a_k\} _k\subset \N$ so that $m_k\nearrow \infty $,
$m_k<a_k$ and for all $k$,
\begin{equation}
\label{fail}
\left( \lambda_{a_k}M_{a_k}\right)\cap B_{\lambda_{a_k}N}(p_{a_k},k_0)\not \subset \Sigma (n(a_k),m_k).
\end{equation}
Let $\Omega (k)$ be a component of $B_{\lambda_{a_k}N}(p_{a_k},m_k)-(\lambda_{a_k}M_{a_k})$
 that contains $\Sigma (n(a_k),m_k)$ in its boundary $\partial \Omega (k)$, and so that
$\partial \Omega (k)$ contains another component
$\Delta (k)$ different from $\Sigma (n(a_k),m_k)$,
with $\Delta (k)\cap B_{\lambda_{a_k}N}(p_{a_k},k_0)\neq \mbox{\O }$,
which can be done by (\ref{fail}).
Observe that the boundary of $\Omega (k)$
is a good barrier for solving Plateau problems in $\Omega (k)$ (here we are using that since
$\ve _k\to 0$, then the extrinsic balls $B_N(p_k,\ve _k)$ have
mean convex boundaries). Let $\Sigma '(k)$ be a surface of least area in $\Omega (k)$ homologous to
$\Sigma (n(a_k),m_k)$, and with $\partial \Sigma '(k)=\partial \Sigma (n(a_k),m_k)$.
As for all $k$ the surface $\Sigma '(k)$ intersects $B_{\lambda_{a_k}N}(p_{a_k},k_0)$,
then by uniform curvature estimates for stable
minimal surfaces away from their boundaries we conclude that after passing to a subsequence,
the $\Sigma '(k)$ converge as $k\to \infty $ to a nonempty (regular) minimal lamination
$\mathcal{L}'$ of $\R^3$ all whose leaves are complete,
embedded, stable minimal surfaces, and therefore all leaves of $\mathcal{L}'$ are planes.
To find the desired contradiction, note that the following properties hold.
\begin{enumerate}[(H1)]
\item The sequence $\{ \Sigma '(k)\} _k$ is locally simply connected in $\R^3$
(by uniform curvature estimates for stable minimal surfaces and by the uniform graph lemma,
see Colding and Minicozzi~\cite{cmCourant}
or P\'erez and Ros~\cite{pro2}), as well as the sequence $\{ \lambda_{a_k}M_{a_k}\} _k$.

\item For $k$ large, the surface $\lambda_{a_k}M_{a_k}$ is unstable, and thus $\Sigma'(k)$ and $\lambda_{a_k}M_{a_k}$
only intersect along $\partial \Sigma'(k)\subset \partial B_{\lambda_{a_k}N}(p_{a_k},m_k)$,
which diverges to $\infty $ as $k\to \infty $.
\end{enumerate}

By Property (H2), we deduce that the planes in $\mathcal{L}'$ are either disjoint from $\mathcal{L}$
or they are leaves of~$\mathcal{L}$. In fact, Lemma~\ref{lemma4.6} implies that
the first possibility cannot occur. Also note that as $\Sigma '(k)$ intersects $B_{\lambda_{a_k}N}(p_{a_k},k_0)$ for
all $k$, then $\mathcal{L}'$ contains a plane $\Pi $ that intersects the ball $\B (k_0)$.
Properties (H1), (H2) insure that we can apply Theorem~7 in~\cite{mr13}
(or the one-sided curvature estimates by Colding and Minicozzi~\cite{cm23})
to obtain uniform local curvature estimates for the surfaces $\lambda_{a_k}M_{a_k}$
in a fixed size neighborhood of $\Pi $.
As a consequence, $\Pi $ cannot lie in the collection $\mathcal{P}'$ defined in Lemma~\ref{lemma4.5} and thus,
Lemma~\ref{lemma4.6} gives a contradiction
(hence (\ref{eq:4.5A}) is proved).

As for the last sentence in the statement of Proposition~\ref{propos4.18}, take $r_{k_0}\in (0,\infty )$ such that
$m(k_0)=r_{k_0}\, \de (r_{k_0})$, where $\de (r)$ is the function that appears in the
first sentence of Theorem~\ref{tthm3introd}.
Applying Proposition~\ref{lemma4.2} to $R=R_1=r_{k_0}$ (recall that equation (\ref{eq:chordarc}) holds after
replacing $\wt{\de}(r)$ by $\de (r)$, see the paragraph just before Lemma~\ref{lemma4.3}),
for $k$ sufficiently large we have $\Sigma (k,m(k_0))\subset B_{\wt{M}(k)}(p_k,r_{k_0}/2)=B_{\lambda_kM_k}(p_k,r_{k_0}/2)$,
from where the last statement
of the proposition follows.
\end{proof}

\begin{remark}
  {\em
Proposition~\ref{propos4.18} still holds if the limit object of the surfaces $\l _nM_n$
is either a nonsimply connected, properly embedded minimal surface (case 4 of Theorem~\ref{tthm3introd})
or a minimal parking garage structure (case 5 of the theorem). To see why this generalization
holds, note that the arguments in the proof of
the last proposition still produce a plane $\Pi \subset \R^3$
which is either disjoint from the limit set of the sequence $\lambda_nM_n$, or it is contained in this limit set.
If case 4 of Theorem~\ref{tthm3introd} occurs for this sequence, then
then the halfspace theorem gives a contradiction. If
the $\lambda_nM_n$ converge to a minimal parking garage structure $\mathcal{L}$  of $\R^3$,
then it is clear that no plane in the complement of
$\mathcal{L}$ can exist.

Also note that the constant $m(k_0)$ in  Proposition~\ref{propos4.18} can be chosen so that it
does not depend on the homogeneously regular manifold or on the surface $M$ to which we apply it.
  }
\end{remark}

We next continue with the proof of item 6 of Theorem~\ref{tthm3introd}, provided that $\mathcal{S} \neq \mbox{\O }$.
Since the closures of the
set of flat leaves is $\mathcal{P}'$ and $\mathcal{P} '$ is a lamination of $\R^3$ with no singularities,
then there exists at least one leaf of $\mathcal{L}$ which is not flat, so the first statement of
item~6 holds. By Lemmas~\ref{lemma4.5} and~\ref{lemma4.6}, the sublamination $\mathcal{P}$ of flat leaves of $\mathcal{L}$ is nonempty,
the closure of every such planar leaf $L_1$ is a horizontal plane in the family $\mathcal{P} '$ defined in Lemma~\ref{lemma4.5}, and
hence by definition of $\mathcal{P} '$ we have that $\overline{L_1}$
intersects $\Delta (\mathcal{L})=\mathcal{S}\cup S(\mathcal{L})$.
By Lemma~\ref{lemma4.7}, the distance between any two points in $\overline{L_1}\cap \Delta (\mathcal{L})$ is at least 1. By
Lemma~\ref{lemma4.10}, $\overline{L_1}\cap \Delta (\mathcal{L})$ is either contained in $\mathcal{S}$ or in
$S(\mathcal{L})$. So in order to conclude the proof of item~6 
of Theorem~\ref{tthm3introd}
it remains to show the following property.
\begin{proposition}
\label{propos4.22}
Given of leaf $L_1$ of $\mathcal{P}$, the plane $\overline{L_1}$ intersects $\Delta (\mathcal{L})$ in at least two points.
\end{proposition}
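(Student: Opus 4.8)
The plan is to argue by contradiction: assume $\overline{L_1}\cap\Delta(\mathcal{L})$ is a single point $p$, and derive a contradiction by running, in a modified form, the flux argument of Proposition~\ref{propos4.11}.

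First I would carry out a reduction using the tools already developed. If $\overline{L_1}\cap\Delta(\mathcal{L})=\{p\}$ then, by Definition~\ref{def4.12}, $|I|(\overline{L_1})=1$ and hence $I(\overline{L_1})=n(p)=\pm1\neq0$; in particular Proposition~\ref{propos4.11} does not apply directly. By Lemma~\ref{lemma4.9} the property ``$|I|(\cdot)=1$ with orientation number $n(p)$'' propagates to all nearby planes in $\mathcal{P}'$; let $W$ be the largest closed slab containing $\overline{L_1}$ all of whose planes lie in $\mathcal{P}'$ and have these invariants. We cannot have $W=\R^3$: otherwise every leaf of $\mathcal{L}$ is flat and $\mathcal{L}$ restricts to a foliation of $\R^3-\mathcal{S}$ by parallel punctured planes that extends across $\mathcal{S}$, forcing $\mathcal{S}=\mbox{\O}$, contrary to assumption. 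Hence $W$ has a boundary plane; exactly as in the (F1)-(F2) dichotomy of Proposition~\ref{propos4.11}, after relabeling I may assume this boundary plane, still called $P_x:=\overline{L_1}$, is the bottom boundary plane of an open component of $\R^3-\bigcup_{P'\in\mathcal{P}'}P'$, still with $P_x\cap\Delta(\mathcal{L})=\{p\}$ and $|I|(P_x)=1$. By Lemma~\ref{lemma4.10}, $\Delta(\mathcal{L})\cap P_x=\mathcal{S}\cap P_x$, so $p\in\mathcal{S}$; note that this reduction also disposes of the a priori possibility $p\in S(\mathcal{L})$, since that case is pushed into this one by the slab argument (maximality of $W$ makes its boundary plane satisfy the hypotheses of Lemma~\ref{lemma4.10}).

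Next I would reproduce the local set-up of Proposition~\ref{propos4.11}. After translating $p$ to $\vec 0$ and $P_x$ to $\{x_3=0\}$, the description (D2-A) provides a non-flat leaf $L$ of $\mathcal{L}$ with $\vec 0\in\overline L$. I choose a large round disk $D\subset P_x$ with $P_x\cap\Delta(\mathcal{L})\subset D$, set $A=\{(x_1,x_2)\ |\ (x_1,x_2,0)\notin D\}$, and fix $\mu>0$ small with $(A\times[0,\mu])\cap\mathcal{P}'=A\times\{0\}$. The analogue of Claim~\ref{claim4.17} then holds by the same barrier argument together with Corollary~7.1 in~\cite{mpr10} — its proof used only that $|I|(P_x)<\infty$ — so $L^{\mu}:=\mathcal{L}\cap\{0<x_3\leq\mu\}$ is connected and coincides with $L$ over $A$; shrinking $\mu$ we may assume $\partial L^{\mu}\cap\{x_3=\mu\}\neq\mbox{\O}$, and by the intrinsic one-sided curvature estimates applied to $\lambda_nM_n$ the surface $L^{\mu}$ is locally graphical over $P_x$ away from $D$. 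In particular $L^{\mu}$ is properly embedded in the half-open slab $\{0<x_3\leq\mu\}$, its only non-proper accumulation being onto $P_x$, and — since $|I|(P_x)=1$ — that accumulation occurs through a single infinite-valued graph winding around the column at $p$; thus $L^{\mu}$ is simply connected, its only topology coming from this spiraling end, which carries no $1$-cycle.

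The contradiction must then come from a flux computation for $\nabla(x_3|_{L^{\mu}})$, and here lies the one genuinely new difficulty: because $L^{\mu}$ is simply connected, the separating family of connection loops $\Gamma_m$ used in Proposition~\ref{propos4.11} — which relied on pairing cancelling columns, impossible with a single column of index $\pm1$ — is no longer available. The plan is to replace it by an exhaustion argument on $L^{\mu}$ itself: first establish that $L^{\mu}$ is a parabolic surface (it need not be proper in a halfspace, so one argues via a capacity/harmonic-measure estimate using that each of the infinitely many winding sheets near $P_x$ is an almost-flat graph of uniformly small gradient and finite area, so the end over a neighborhood of $p$ has zero modulus); then, running the double Green's formula computation of Proposition~\ref{propos4.11} with $u=\mu-x_3$ over an increasing compact exhaustion $\{K_j\}$ of $L^{\mu}$ whose ``free'' boundary arcs $\tau_j=\partial K_j-\partial L^{\mu}$ are chosen to lie in ever-flatter slabs $\{0<x_3<\ve_j\}$, $\ve_j\to0$, one finds $F(\nabla x_3,\tau_j)\to0$ (the tangent plane along $\tau_j$ is nearly horizontal outside a short arc of length tending to zero), which by flux invariance forces $F(\nabla x_3,\sigma)\leq0$ for a fixed compact arc $\sigma\subset\partial L^{\mu}\cap\{x_3=\mu\}$, contradicting the strong maximum principle, which gives $F(\nabla x_3,\sigma)>0$. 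If the parabolicity step proves intractable, the fallback is to observe that the configuration near $p$ is precisely a minimal lamination of a punctured ball — a flat punctured disk together with a helicoid-type leaf spiraling onto it — and to invoke the local removable singularity theorem for minimal laminations to rule it out. The main obstacle, in either route, is controlling the single spiraling end of $L^{\mu}$: showing that its harmonic-measure/capacity contribution vanishes, equivalently that the $\nabla x_3$-flux escaping out that end is zero, which is exactly what Proposition~\ref{propos4.11}'s loop construction supplied for free in the multi-column case.
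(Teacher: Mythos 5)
Your reduction is the same as the paper's: assume $\overline{L_1}\cap\Delta(\mathcal{L})=\{p\}$, use Lemmas~\ref{lemma4.9} and~\ref{lemma4.10} and the slab-maximality/(F1)-(F2) discussion to replace $\overline{L_1}$ by a bottom boundary plane $P_x=\{x_3=0\}$ of a component of $\R^3-\bigcup_{P'\in\mathcal{P}'}P'$ with $p=\vec0\in\mathcal{S}$, then pass to the nonflat leaf $L$ just above $P_x$ and to $L^\mu=L\cap\{0<x_3\le\mu\}$, connected by the analogue of Claim~\ref{claim4.17}. The intended contradiction -- parabolicity of a surface accumulating on $P_x$ plus a maximum-principle/flux argument for $x_3$ -- is also the paper's.

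The gap is exactly where you flag it: you propose to prove parabolicity of all of $L^{\mu}$ by a capacity estimate based on ``almost-flat winding sheets of uniformly small gradient and finite area, so the end over a neighborhood of $p$ has zero modulus.'' That step fails. Near the column at $\vec0$ the sheets are not almost-flat of small gradient -- they form a highly-sheeted double multivalued graph whose tangent planes become vertical along a curve $\gamma$ converging to $\vec0$ (this is the content of Lemma~\ref{vert} and Corollary~\ref{lookslikehel}), and the number of windings is unbounded, so neither the gradient nor the area bound you invoke is available. Consequently the vanishing of the modulus, and hence the parabolicity of $L^\mu$, is not established; nor is simple connectedness of $L^\mu$, which you assert without proof (the paper never claims or needs it). The flux-over-exhaustion computation built on that unproved parabolicity therefore does not close. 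Your fallback -- the removable singularity theorem for minimal laminations -- also does not apply here: the leaf $L$ through the accumulation point is not stable, so Corollary~7.1 of~\cite{mpr10} (the tool the paper uses elsewhere for that purpose) is not available for the configuration $\{P_x,L\}$ near $\vec0$.

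What the paper actually does at this point is replace $L^\mu$ by an explicit proper subdomain $\Sigma(t_2)$, swept out by the horizontal arcs $\alpha_{\gamma(t)}\subset L^\mu\cap T_{\gamma(t)}L$ (using the blow-up Lemmas~\ref{lem-blowup} and~\ref{vert} to control the forming helicoid along the curve $\gamma$ of vertical tangencies, and a flux result from~\cite{cm26} to rule out the noncompact-arc alternative). Parabolicity of $\Sigma(t_2)$ is then proved not by a capacity estimate but by a concrete quasiconformal comparison: $\Sigma(t_2)$ sits inside a surface $\widetilde\Sigma(t_2)$ which is quasiconformally diffeomorphic to the ruled surface $R(t_2)$ (Lemma~\ref{lemma4.23}), which is in turn quasiconformally, and by Lemma~\ref{lemma4.24} conformally, a closed half-plane. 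The contradiction is then the minimum principle for the bounded harmonic function $x_3$ on the parabolic surface $\Sigma(t_2)$ -- not a double-Green's-formula flux identity -- since $\Sigma(t_2)$ has boundary heights bounded below by $m>0$ yet contains interior points of height arbitrarily close to $0$. So the crucial ``control of the single spiraling end'' that you correctly identify as the main obstacle is precisely what the quasiconformal half-helicoid comparison supplies, and your sketch does not provide a substitute.
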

We next prove the proposition by contradiction through a series of lemmas.
Suppose that $L_1\in \mathcal{P}$ satisfies that $\overline{L_1}\cap \Delta (\mathcal{L})$ consists
of a single point $x\in \Delta (\mathcal{L})$. If $x\in S(\mathcal{L})$, then Lemma~\ref{lemma4.10} implies that $\mathcal{L}$
restricts to a foliation of some $\ve $-neighborhood of $\overline{L_1}$.
Consider the largest open
horizontal slab or halfspace $W$ containing $\overline{L_1}$ so that $\mathcal{L}$ restricts to $W$ as a foliation by planes.
As $\mathcal{S}\neq \mbox{\O }$, then $W\neq \R^3$.
By Lemma~\ref{lemma4.9}, we can replace $L_1$ by a flat leaf in the boundary of $W$
and after this replacement, we have $|I|(\overline{L_1})=1$ and
$\overline{L_1}\cap \Delta (\mathcal{L})=\overline{L_1}\cap \mathcal{S}$. Without loss of generality,
we may assume that $\overline{L_1}$ is the top boundary plane of $W$. Arguing as in
the discussion of cases (F1) and (F2) in the proof of Proposition~\ref{propos4.11},
we can replace $\overline{L_1}$
by a bottom boundary plane $P$ of an open component $C$ of $\R^3-\cup _{P'\in \mathcal{P} '}P'$
so that the distance from
$\overline{L_1}$ to $P$ is less than the number $\mu _0
=\mu _0(\overline{L_1})$ given in Lemma~\ref{lemma4.9}
(note that $P$ may be equal
to $\overline{L_1}$).

Denote by $L$ the nonflat leaf of $\mathcal{L}$ directly above $P$. After the translation by $-x$,
we can assume that $P=\{ x_3=0\} $ and $x=\vec{0}\in \mathcal{S}$. We next analyze several aspects
of the geometry of $L$ in a neighborhood of $P$ in $C$.

Given a regular value $\mu \in (0,\ve)$ for $x_3$ restricted to $L$ (this number $\ve >0$
appears in description (D1)-(D2) above), let
\[
L^{\mu }:=L\cap \{ (x_1,x_2,x_3)\ | \ 0<x_3\leq \mu \} ,
\]
which is a connected surface with boundary for $\mu $ sufficiently small
by Claim~\ref{claim4.17}.

\begin{definition}
{\rm
Take a sequence of points $q_k\in L^{\mu }$ converging to $\vec{0}$
and numbers
$r_k\in (0,|q_k|/2]$. For each $k\in \N$,
consider the function $f_k\colon L^{\mu }\cap \B (q_k,r_k)\to \R $ given by
\begin{equation}
\label{eqw:fk}
 f_k(x)=\sqrt{|K_{L}|}(x)\cdot {\rm dist}_{\R^3}(x,\partial [L\cap \B(q_k,r_k)])
 \end{equation}
 Let $x_k\in \B(q_k,r_k)$ be a maximum of $f_k$
 (note that $f_k$ is continuous and vanishes at
$ \partial [L\cap B(q_k,r_k)]$). The sequence
$\{ x_k\} _k$ is called a {\it blow-up sequence on the scale of curvature}
if $f_k(x_k)\to \infty $ as $k\to \infty $.
}
\end{definition}

\begin{lemma}
\label{lem-blowup}
Suppose that $\{ x_k\} _k\subset L^{\mu }$ is a blow-up sequence. Then, after passing to
a subsequence, the surfaces $L(k)=\sqrt{|K_L|}(x_k)(L^{\mu }-x_k)$ converge
with multiplicity 1 to a vertical helicoid $\mathcal{H}\subset \R^3$ whose axis is the $x_3$-axis
and whose Gaussian curvature is $-1$ along this axis.
\end{lemma}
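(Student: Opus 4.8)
The plan is a blow-up on the scale of curvature, followed by the uniqueness of the helicoid, using the Colding--Minicozzi local picture (D1)-(D2) of $\mathcal{L}$ near $\vec{0}\in\mathcal{S}$ to control topology and to identify the direction of the axis. Write $\lambda_k=\sqrt{|K_L|}(x_k)$ and $d_k={\rm dist}_{\R^3}(x_k,\partial[L\cap\B(q_k,r_k)])$. Since $\{x_k\}_k$ is a blow-up sequence, $f_k(x_k)=\lambda_k d_k\to\infty$, while $d_k\le 2r_k\le|q_k|\to 0$; hence $\lambda_k\to\infty$, and also $|x_k|\ge r_k\ge d_k/2$, so $\lambda_k|x_k|\ge f_k(x_k)/2\to\infty$. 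The maximality of $x_k$ for $f_k$ gives, for each fixed $R$ and $k$ large, the bound $|K_{L(k)}|\le(1-R/(\lambda_k d_k))^{-2}$ on $\B(R)\cap L(k)$, where $L(k)=\lambda_k(L^\mu-x_k)$; this bound tends to $1$ as $k\to\infty$, while $|K_{L(k)}|(\vec{0})=1$ for all $k$. Standard curvature-bound compactness (the uniform graph lemma and Arzel\`a--Ascoli, as in Section~3 of~\cite{mpr20}) then produces, after passing to a subsequence, convergence of $L(k)$ to a complete, embedded minimal surface $\mathcal{H}\subset\R^3$ with $|K_{\mathcal{H}}|\le 1$ everywhere and $|K_{\mathcal{H}}|(\vec{0})=1$; in particular $\mathcal{H}$ is nonflat, and by Theorem~2.1 in~\cite{mr7} it is proper.

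Next I would show that the convergence $L(k)\to\mathcal{H}$ has multiplicity one. If the multiplicity were $m\ge 2$, then the normalized separation between consecutive graphical sheets of $L(k)$ over $\mathcal{H}$ converges to a positive Jacobi function on the two-sided cover of $\mathcal{H}$ --- this is exactly the argument of Lemma~3.1 in~\cite{mpr20} --- so $\mathcal{H}$ would be stable, hence, by the classification of complete stable minimal surfaces in $\R^3$ (\cite{cp1,fs1,po1}), a plane, contradicting $|K_{\mathcal{H}}|(\vec{0})=1$. Therefore the multiplicity is one, and $\mathcal{H}$ is a connected, properly embedded, nonflat minimal surface with $|K_{\mathcal{H}}|\le 1$ and $|K_{\mathcal{H}}|(\vec{0})=1$.

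It remains to prove that $\mathcal{H}$ is simply connected with vertical axis, and here the local picture (D1)-(D2) of $\mathcal{L}$ around $\vec{0}\in\mathcal{S}$, together with the property (ULSC) underlying it, enter. Inside $\B(\vec{0},\ve)$ and outside the vertical solid double cone $\mathcal{C}_{\vec{0}}$ of (D2), the leaf $L$ is a multivalued graph over the punctured horizontal disk $D(\vec{0},*)\subset P$; in particular $L\cap\B(y,\rho)$ is a union of single-valued graphical disks whenever $y$ lies in this region and $\rho$ is small. Since points of such a graph lie at height small compared with their horizontal distance to $\vec{0}$, one has ${\rm dist}(x_k,\mathcal{C}_{\vec{0}})\ge c\,|x_k|$ for a fixed $c\in(0,1)$, so after rescaling, the distance from $\B(R)$ to $\lambda_k(\mathcal{C}_{\vec{0}}-x_k)$ is at least $c\,\lambda_k|x_k|-R\to\infty$; hence for each fixed $R$ and $k$ large, $\B(R)\cap L(k)$ lies in the multivalued graph part of $L(k)$ and is a union of disks. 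Passing to the multiplicity-one limit, $\B(R)\cap\mathcal{H}$ is a union of disks for every $R$, so $\mathcal{H}$ is simply connected. By the uniqueness of the helicoid of Meeks and Rosenberg (see also~\cite{mr13}), after a rigid motion $\mathcal{H}$ is a helicoid; since the Gaussian curvature of a helicoid attains its maximum modulus exactly along its axis and $|K_{\mathcal{H}}|\le 1$ with $|K_{\mathcal{H}}|(\vec{0})=1$, the point $\vec{0}$ lies on the axis of $\mathcal{H}$ and $K_{\mathcal{H}}\equiv -1$ along it; finally, the multivalued graph structure of $L(k)$ over horizontal planes inherited from (D2) and preserved under the rescalings forces the axis of $\mathcal{H}$ to be vertical, hence to be the $x_3$-axis. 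I expect the delicate step to be precisely this last paragraph: because the blow-up centers $x_k$ approach the singular set $\mathcal{S}$, (ULSC) cannot be invoked directly at a fixed scale near $\vec{0}$, and one must exploit the helicoidal local model (D2) both to rule out topology in the rescaled limit and to pin down the direction of the axis.
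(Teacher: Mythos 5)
Your outline is correct and agrees with the paper through the compactness step, the convergence to a complete embedded minimal limit $\mathcal{H}$ with $|K_\mathcal{H}|\leq 1$ and $|K_\mathcal{H}|(\vec 0)=1$, properness, and the multiplicity-one argument via a positive Jacobi function and the classification of complete stable minimal surfaces. The two remaining steps (simple connectedness and verticality of the axis), which you yourself flag as the delicate part, are where your argument has genuine gaps.

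\textbf{Simple connectedness.} You assert $\mathrm{dist}(x_k,\mathcal{C}_{\vec 0})\ge c\,|x_k|$, i.e., that the blow-up centers lie in the multivalued-graph region far outside the cone, so that the rescaled cone recedes to infinity and $\B(R)\cap L(k)$ sits inside the graphical part. But this is the opposite of what actually happens. Outside $\mathcal{C}_{\vec 0}$ and inside $\B(\vec 0,\ve)$, the one-sided curvature estimates that underlie (D2) force $L^\mu$ to be a multivalued graph of uniformly small gradient, hence of uniformly bounded curvature; at such points $\sqrt{|K_L|}$ stays bounded and the quantity $f_k$ cannot diverge. A blow-up sequence on the scale of curvature necessarily lives inside or along the boundary of $\mathcal{C}_{\vec 0}$, close to the forming helicoidal column where $|K_L|$ is unbounded. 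So the geometric premise underpinning your disk structure fails. The paper instead derives that $L(k)\cap\B(f_k(x_k)/2)$ consists of disks directly from the lower bound $1/2$ on the injectivity radius of the $M_n$ (which passes to $L^\mu$ at the shrinking scale of $\B_k$), and then concludes simple connectedness of the limit via multiplicity one and a lifting argument for loops.

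\textbf{Verticality of the axis.} Your concluding sentence (``the multivalued graph structure of $L(k)$ ... inherited from (D2) ... forces the axis of $\mathcal{H}$ to be vertical'') is the conclusion, not an argument. After rescaling by $\lambda_k\to\infty$ about $x_k$, there is no immediate link between the limit $L'$ and the horizontal plane $P$: those objects live at completely different scales. The paper supplies the missing mechanism using the Colding--Minicozzi extension results for multivalued graphs (Theorem~II.0.21 in~\cite{cm21}): near $x_k$ there are forming pairs of multivalued graphs $G_{n,k}^1,G_{n,k}^2\subset M_n$ with arbitrarily small gradient over the plane perpendicular to the axis of $L'$, and for $n,k$ large each contains a two-valued subgraph that \emph{extends} to an almost-flat two-valued graph on a fixed scale proportional to $\ve$. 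These extended graphs collapse to a punctured disk, and since the punctured horizontal plane $P-\{\vec 0\}$ is a leaf of $\mathcal{L}$, this forces that collapsing to be onto a horizontal punctured disk, hence the axis of $L'$ is vertical. Without this extension step, one cannot rule out a tilted helicoid in the blow-up limit.
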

\begin{proof}
First observe that since the surfaces $M_n$ have injectivity radius function larger than or equal
to $1/2$, then the ball
$\B_k=\B \left( x_k,\frac{1}{2}\mbox{dist}_{\R^3}
(x_k,\partial \B (q_k,r_k))\right) $ intersects $L^{\mu }$ in disks when
$k$ is sufficiently large. Also note that the ball $\sqrt{|K_L|}(x_k)(\B_k-x_k)$ is centered
at the origin and its radius is $f_k(x_k)/2$, which tends to $\infty $ as
$k\to \infty $ because $\{ x_k\} _n$ is a blow-up sequence. Since the second fundamental
form of the surfaces $L(k)\cap \B \left( f_k(x_k)/2\right) $ is uniformly bounded, then a
subsequence of the $L(k)$ (denoted in the same way) converges to a minimal lamination
$\mathcal{L}'$ of $\R^3$ with a leaf $L'$ that is a complete embedded minimal surface that passes through
the origin with absolute Gaussian
curvature 1 at that point. Standard arguments then show that the multiplicity of the
convergence of portions of the $L(k)$ to $L'$ is one. Therefore, a lifting
argument of loops on $L'$ implies
that $L'$ is simply connected, hence $L'$ is a helicoid with maximal absolute
Gaussian curvature 1 at $\vec{0}$ and $L'$ is the only leaf of $\mathcal{L}'$. The
fact that $L'$ is a vertical helicoid with axis the $x_3$-axis
(so $L'=\mathcal{H}$) will follow from the description of the local
geometry of $L^{\mu }$ nearby $x_n$; to
see this, note that the blow-up points $x_k$ and the forming helicoids in $L^{\mu }$
on the scale of curvature near $x_k$ for $k$ large imply the existence of pairs of
highly sheeted almost-flat multivalued graphs $G_{n,k}^1,G_{n,k}^2\subset M_n$
extrinsically close to $x_k$ for $n$ sufficiently large (recall that
portions of the $M_n$ converge to $L^{\mu }$).  These multivalued graphs can be chosen to
have any fixed small gradient over the plane perpendicular to the axis of the helicoid $L'$.
For $n,k$ sufficiently large, these multivalued graphs in $M_n$ each contains a two-valued subgraph
that extends to an almost-flat two-valued graph
on a fixed scale (proportional to the number $\ve >0$ that appears in description (D1)-(D2) above)
and collapse to a punctured disk.
Since the punctured $(x_1,x_2)$-plane $P-\{\vec{0}\}$ is a leaf of the limit
minimal lamination $\mathcal{L}$, it then follows that the helicoid $L'$ must be vertical.
This completes the proof of the lemma.
\end{proof}

The next lemma implies that the same type of limit that appears in Lemma~\ref{lem-blowup}
at a blow-up sequence on the scale of curvature in $L^{\mu }$, also appears when using a
different notion of blow-up. Namely, when we rescale $L^{\mu }$ around points with heights
converging to zero where $L^{\mu }$ is vertical.

\begin{lemma}
\label{vert} Consider a sequence of points
$y_k\in L^{\mu }$ with $x_3(y_k)$
converging to zero where the tangent planes $T_{y_k}L$ to $L^{\mu }$ are vertical.
Then, $y_k$ converge to $\vec{0}$, the numbers $s_k:=\sqrt{|K_{L}|}(y_k)$ diverge to infinity and a subsequence
of the surfaces $L'(k)=s_k(L^{\mu }-y_k)$ converges on compact subsets of $\rth$
to a vertical helicoid $\mathcal{H}$ containing the $x_3$-axis and with maximal absolute
Gaussian curvature $1$ at the origin. Furthermore, the multiplicity of the convergence of the surfaces $L'(k)$
to $\mathcal{H}$ is one.
\end{lemma}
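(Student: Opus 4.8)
The plan is to reduce Lemma~\ref{vert} to Lemma~\ref{lem-blowup} by a point-selection procedure that turns the ``vertical point'' condition at $y_k$ into a genuine blow-up sequence on the scale of curvature near $\vec{0}$, and then to transfer the helicoidal limit from the selected points back to $y_k$. I would begin with the two elementary assertions. If $y_k$ did not converge to $\vec{0}$, then after a subsequence $y_k\to y_\infty\in P-\{\vec{0}\}$, a point around which $\mathcal{L}$ is a regular lamination (the standing hypotheses give $P\cap\Delta(\mathcal{L})=\{\vec{0}\}$); hence for $k$ large $L$ is, near $y_k$, a graph over $P$ with arbitrarily small gradient, contradicting that $T_{y_k}L$ is vertical. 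The same reasoning applied outside the solid double cone $\mathcal{C}_{\vec{0}}$ of description~(D2) --- where $\lambda_nM_n$, and therefore the leaves of $\mathcal{L}$, are nearly horizontal multivalued graphs --- forces $y_k\in\mathcal{C}_{\vec{0}}$ for $k$ large, so $|y_k|\le\sqrt{1+\delta^2}\,x_3(y_k)$ and $|y_k|\asymp x_3(y_k)\to 0$.

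For the blow-up I would choose, for $k$ large, a radius $r_k\le\frac12|y_k|$ comparable to the distance from $y_k$ to $\Delta(\mathcal{L})$, so that $\B(y_k,r_k)$ is disjoint from $\Delta(\mathcal{L})$ and from $\partial L$, has radius $<\ve_1$, and is contained in $\{x_3>0\}$; then, by~(ULSC) and Theorem~\ref{thm2.2} passed to the limit, $L\cap\B(y_k,r_k)$ is a union of minimal disks with boundaries on $\partial\B(y_k,r_k)$, and I set $f_k(x)=\sqrt{|K_L|}(x)\,\mathrm{dist}_{\R^3}\!\bigl(x,\partial[L\cap\B(y_k,r_k)]\bigr)$, with maximum at $x_k$. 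The heart of the matter is that $f_k(x_k)\to\infty$. If instead $f_k(x_k)\le M$ along a subsequence, then $|K_L|\le 4M^2/r_k^2$ on $L\cap\B(y_k,\frac12 r_k)$, so the surfaces $\frac1{r_k}(\mathcal{L}-y_k)$ subconverge to a minimal lamination $\mathcal{L}_\infty$ of $\R^3$ that (i) contains a leaf $L_\infty$ through $\vec{0}$ with vertical tangent plane there, (ii) contains a horizontal flat leaf at bounded distance (the rescaled limit of the punctured plane $P-\{\vec{0}\}$ --- this is where the scale $r_k$ must be chosen so that $P$, or alternatively the nearest point of $\Delta(\mathcal{L})$, remains visible in the limit), and (iii) has all leaves simply connected, since the injectivity radii of the rescaled approximating surfaces diverge and minimal surfaces carry no contractible geodesic loops; thus $L_\infty$ is complete, simply connected and embedded as a leaf, hence a plane or a helicoid, and being vertical at $\vec{0}$ it meets every horizontal plane --- in particular the flat leaf of $\mathcal{L}_\infty$ --- contradicting that distinct leaves of a lamination are disjoint. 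Hence $f_k(x_k)\to\infty$, and since $\mathrm{dist}_{\R^3}(x_k,\partial[L\cap\B(y_k,r_k)])\le r_k\to 0$ we also get $\sqrt{|K_L|}(x_k)\to\infty$.

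Now $\{x_k\}$ is, by construction, a blow-up sequence on the scale of curvature (with $q_k=y_k$ and $r_k$ as above), so Lemma~\ref{lem-blowup} yields, after a subsequence, that $\sqrt{|K_L|}(x_k)(L^\mu-x_k)$ converges with multiplicity one to a vertical helicoid $\mathcal{H}'$ with axis the $x_3$-axis and $|K_{\mathcal{H}'}|(\vec{0})=1$, with the uniform helicoidal behaviour on balls of radius tending to infinity furnished by the proof of that lemma. The point $y_k$ maps to $p_k=\sqrt{|K_L|}(x_k)(y_k-x_k)$ on the rescaled surface, where the tangent plane $T_{y_k}L$ is vertical; by the helicoidal description on large balls, $p_k$ stays a bounded horizontal distance from the axis and, after a vertical translation of $\mathcal{H}'$ (under which $\mathcal{H}'$ is invariant) and a subsequence, tends to a point of that axis, whence $\sqrt{|K_L|}(y_k)/\sqrt{|K_L|}(x_k)\to 1$; in particular $\sqrt{|K_L|}(y_k)\to\infty$, which is the second assertion of the lemma. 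Finally, writing $\sqrt{|K_L|}(y_k)(L^\mu-y_k)=\frac{\sqrt{|K_L|}(y_k)}{\sqrt{|K_L|}(x_k)}\bigl(\sqrt{|K_L|}(x_k)(L^\mu-x_k)-p_k\bigr)$ and letting $k\to\infty$ gives convergence with multiplicity one to a vertical helicoid $\mathcal{H}$; since $\vec{0}\in\mathcal{H}$ has vertical tangent plane, the axis of $\mathcal{H}$ is the $x_3$-axis and $|K_{\mathcal{H}}|(\vec{0})=1$ is its maximum, as claimed.

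I expect the main obstacle to be the non-degeneracy step $f_k(x_k)\to\infty$ --- equivalently, $\sqrt{|K_L|}(y_k)\,\mathrm{dist}(y_k,\vec{0})\to\infty$: one must rule out a bounded-curvature blow-up of $L$ at its natural scale near $\vec{0}$, and this is prevented only by the fact that $\vec{0}\in\mathcal{S}$ (rather than $\vec{0}\in S(\mathcal{L})$) forces $L$ to spiral into $P$ with a helicoidal scale that tends to zero faster than the height. The delicate part is choosing the rescaling scale $r_k$ so that the flat leaf $P$ (or the nearest point of $\Delta(\mathcal{L})$) survives in the blow-up limit while $L\cap\B(y_k,r_k)$ stays disk-type and avoids $\Delta(\mathcal{L})$ --- which requires using the local structure (D2) to compare $\mathrm{dist}(y_k,\Delta(\mathcal{L}))$ with $x_3(y_k)$ --- together with the care already noted in the transfer step when $p_k$ escapes to infinity, where one leans on the uniform helicoidal control on large balls coming from the proof of Lemma~\ref{lem-blowup}.
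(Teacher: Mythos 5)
Your proposal takes a genuinely different route from the paper's: you try to reduce Lemma~\ref{vert} to Lemma~\ref{lem-blowup} by a point-picking argument at the scale $r_k\asymp x_3(y_k)$, whereas the paper never uses Lemma~\ref{lem-blowup} here. Instead it rescales \emph{directly} around $y_k$ with the tangent-plane-deviation scale $t_k$ (the largest radius at which the Gauss map of $L^\mu$ in $\B(y_k,t_k)$ stays within $\pi/4$ of that at $y_k$), reduces to Property~(N), and handles the bounded-curvature case (N.1) by directly classifying the limit lamination, and the blow-up cases (N2.1), (N2.2) by separate contradiction arguments using a parking garage limit and the flux result in~\cite{cm26}. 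Your opening step (that $y_k\to\vec{0}$ and $y_k\in\mathcal{C}_{\vec{0}}$ via regularity of $\mathcal{L}$ away from $\Delta(\mathcal{L})$) is a clean alternative to the paper's one-sided-curvature-estimate argument and is correct.

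The gap is in the non-degeneracy step $f_k(x_k)\to\infty$, and you have correctly identified it as the main obstacle but the argument you give does not close it. From $f_k(x_k)\le M$ you get $|K_L|\le 4M^2/r_k^2$ only on $L\cap\B(y_k,\tfrac12 r_k)$, so after rescaling by $1/r_k$ you control curvature on a ball of radius about $\tfrac12$ around $\vec{0}$, nothing more. But $r_k\le\tfrac12|y_k|<x_3(y_k)$ forces the rescaled image of $P$ to sit at height $-x_3(y_k)/r_k<-1$, strictly outside this ball, so you have no right to say that the punctured plane or the nearby leaves of $\mathcal{L}$ ``survive'' as a lamination there. Moreover, $\vec{0}\in\mathcal{S}$ is a genuine singular point of $\mathcal{L}$, so the rescaled laminations $\frac1{r_k}(\mathcal{L}-y_k)$ have a singular point at $-y_k/r_k$ which stays at bounded distance from the origin; the limit is therefore \emph{not} a lamination of $\R^3$, and the leaf $L_\infty$ through $\vec{0}$ need not be complete, so the Meeks--Rosenberg/Colding--Minicozzi classification of complete simply connected embedded minimal surfaces is not available to conclude it is a plane or a helicoid. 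The heart of the non-degeneracy claim is precisely that the spiraling scale of $L$ near $\vec{0}$ goes to zero faster than the height; your lamination contradiction does not furnish that without the extra control on an expanding region that the paper supplies through its more careful choice of $t_k$ and its halfspace/cases analysis.

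The transfer step has a second gap. Writing $p_k=\sqrt{|K_L|}(x_k)(y_k-x_k)$, you need $p_k$ to lie inside the balls $\B(\vec{0},R_k)$, $R_k\asymp f_k(x_k)/2$, on which Lemma~\ref{lem-blowup} gives helicoidal convergence. But $|p_k|\le\sqrt{|K_L|}(x_k)\,(r_k-\rho_k)$ where $\rho_k=\mathrm{dist}(x_k,\partial[L\cap\B(y_k,r_k)])$, so $|p_k|/R_k\lesssim (r_k-\rho_k)/\rho_k$, which is unbounded if $x_k$ is chosen near $\partial\B(y_k,r_k)$; this is not excluded by the point-picking. When $p_k\to\infty$ the ``uniform helicoidal control on large balls'' coming from Lemma~\ref{lem-blowup} simply does not reach $p_k$, and re-centering at $p_k$ changes the curvature scale, making the conclusion $\sqrt{|K_L|}(y_k)/\sqrt{|K_L|}(x_k)\to 1$ circular. (Also, a minor slip: helicoids are invariant under screw motions, not vertical translations.) The paper avoids both issues by rescaling at $y_k$ itself, so no transfer is needed.
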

\begin{proof}
We first show that the points $y_k$ tend to the origin as $k\to \infty $. Arguing by contradiction, suppose
after choosing a subsequence that for $k$ large $y_k$ lies outside a ball $\B $ centered
at $\vec{0}$. Note that for $k$ large, the injectivity radius function of $L$ is bounded away
from zero at the $y_k$. As these points are arbitrarily close to $P$, then
the Gaussian curvature of $L$ at the $y_k$ blows up (otherwise $L$ could be written locally
as graphs over vertical disks in $T_{y_k}L$ of uniform size by the uniform graph lemma,
which would contradict that $L$ lies above $\{ x_3=0\} $)
and one obtains a contradiction to the one-sided curvature estimates of
Colding-Minicozzi (Corollary 0.4 in~\cite{cm23}). Therefore, $y_k\to \vec{0}$. Another consequence of
the one-sided curvature estimates is that
\begin{enumerate}[(J)]
\item There exists $\de >0$ such that if $\mu \in (0,\de )$, then
the tangent plane to $L^{\mu }$ at every point in $L^{\mu }\cap
\{ (x_1,x_2,x_3 ) \ | \ x_1^2+x_2^2\geq \de ^2x_3^2\} $
makes an angle less than $\pi /4$ with the horizontal.
\end{enumerate}
 For $k\in \N$ fixed, let $t_k>0$ be the largest radius such that all points in $L^{\mu }\cap \B (y_k,t_k)$ have
tangent plane making an angle less than $\pi /4$ with $T_{y_k}L$; existence of $t_k$ follows from the fact that
$L^{\mu }$ is proper in the slab $\{ 0<x_3\leq \mu \} $. Note that the following properties hold.
\begin{enumerate}[(K1)]
\item  $L^{\mu }\cap \B (y_k,t_k)\subset \{ (x_1,x_2,x_3 ) \ | \ x_1^2+x_2^2<\de ^2x_3^2\} $ (this follows from (J)),
\item $t_k\leq x_3(y_k)$ (otherwise, $\B (y_k,t_k)\cap P$ contains a disk $D\subset P-\{ \vec{0}\} $
which is the limit of a sequence of graphs inside $L^{\mu }\cap \B (y_k,t_k)$ over $D$;
this is clearly impossible by (K1)).
\end{enumerate}

By Property (K2), we have $t_k\to 0$ as $k\to \infty $.
Consider the sequence of translated and scaled surfaces
\begin{equation}
\label{eq:L(k)}
L(k)=\frac{2}{t_k}(L^{\mu }-y_k).
\end{equation}
We claim that to prove the lemma it suffices to demonstrate that
\begin{enumerate}[(N)]
\item {\it Every subsequence of the $L(k)$ has a subsequence that converges with multiplicity one
to a vertical helicoid containing the $x_3$-axis.
}
\end{enumerate}
We will prove the lemma, assuming that Property (N) holds. Let $\{ L(k_i)\} _i$ be a subsequence of the $L(k)$ that converges
with multiplicity one to a vertical helicoid $\mathcal{H}'$ containing the $x_3$-axis. Then,
$\sqrt{|K_{\mathcal{H}'}|}(\vec{0})\in (0,\infty)$ and
 \[
\lim_{i\to \infty}  \sqrt{|K_{L(k_i)}|}(\vec 0 )
 =\lim_{i\to \infty} \frac{t_{k_i}}{2} \sqrt{|K_{L}|}(y_{k_i}).
 \]
Since $\lim _{i\to \infty }t_{k_i}=0$, this implies that the numbers
$s_{k_i}:=\sqrt{|K_{L}|}(y_{k_i})$ diverge to infinity, and the sequence of surfaces
 \[
L'(k_i)=s_{k_i}(L^{\mu }-y_{k_i})=s_{k_i}\frac{t_{k_i}}{2}L(k_i).
 \]
converges with multiplicity one  to $\mathcal{H}=\sqrt{|K_{\mathcal{H}'}|}(\vec{0})
\mathcal{H}'$, and the proposition follows.
Thus, it suffices to prove Property (N); there are two cases to consider after choosing a subsequence. \vspace{.1cm}

\noindent{\underline{Case~(N.1).}} The sequence $\{ L(k)\} _k$ has
uniform local bounds of the Gaussian curvature in $\rth$.
\par
\vspace{.1cm}
In this case, standard arguments show that, after choosing a subsequence,
the $L(k)$ converge to a minimal lamination $\mathcal{L}_\infty$ of $\rth$.
Observe that if $L_1$ is a nonflat leaf of $\mathcal{L} _{\infty }$, then the multiplicity of convergence of portions
of the $L(k)$ to $L_1$ is one; in particular, $L_1$ is simply connected (since the injectivity radius
function of the $L(k)$ becomes arbitrarily large at points in any fixed compact set of $\R^3$ as $k\to
\infty $). On the other hand, if the multiplicity of convergence of portions
of the $L(k)$ to a leaf $L_2$ of $\mathcal{L}_{\infty }$ is greater than one, then $L_2$ is stable hence a plane.
By the classification of simply connected, complete embedded minimal surfaces in $\R^3$ (Meeks and Rosenberg~\cite{mr8}
and Colding and Minicozzi~\cite{cm35}), we conclude that every leaf of $\mathcal{L}_{\infty }$ is either a plane or a helicoid.
Clearly, if $\mathcal{L}_{\infty }$ contains a leaf which is a helicoid, then this is the only leaf of $\mathcal{L}_{\infty }$ and
Property (N) is proved in this case. Since the leaf of $\mathcal{L}'$ passing through $\vec{0}$ has a vertical tangent plane at $\vec{0}$
but at some point in the sphere $\partial \B (2)$ there exists a point on a leaf of $\mathcal{L}_{\infty }$ whose tangent plane makes
an angle at least $\pi /4$ (by definition of $t_k$), then $\mathcal{L}_{\infty }$ contains a leaf which is not a plane.
This finishes Case~(N.1).
\par
\vspace{.2cm}
\noindent{\underline{Case~(N.2).}}  There exists $x_{\infty }\in \R ^3$ and points $x_k\in L(k)$ converging to $x_{\infty }$ such that
$|K_{L(k)}|(x_k)\geq k$ for all $k\in \N$.
\par
\vspace{.1cm}
We will show that this case cannot occur, by dividing it into two subcases after replacing by a subsequence.
\begin{description}
\item[{\rm (N2.1)}] Suppose that $\frac{x_3(y_k)}{t_k}\to \infty $.
\item[{\rm (N2.2)}] Suppose that $\frac{x_3(y_k)}{t_k}$ converges to a number $D$ which is greater than or equal to 1 (by Property (K2) above).
\end{description}
In case (N2.1) holds, we consider the sequence of compact embedded minimal surfaces $\{ L(k)\cap \B (R_k)\} _k$, where
$R_k=\frac{x_3(y_k)}{t_k}$. For $k$ large, every component of $L(k)\cap \B (R_k)$ is a disk with boundary contained in $\partial \B (R_k)$. As the supremum of the norm of the second
 fundamental form of $L(k)\cap \B (2| x_{\infty }| )$ tends to $\infty $ as $k\to \infty $
(by assumption in this case (N.2)),
 then Theorem~0.1 in Colding and Minicozzi~\cite{cm23} and Meeks' regularity theorem~\cite{me25}
assure that after choosing a subsequence, the $L(k)$ converge as $k\to \infty $ to a limit
parking garage structure with one column.
Observe that by equation (\ref{eq:L(k)}), points of $L(k)\cap \B (1)$
correspond to points of $L^{\mu }\cap \B (y_k,\frac{t_k}{2})$, and thus, the tangent plane
to $L(k)$ at every point in $L(k)\cap \B(1)$ makes an angle less than $\pi /4$ with $T_{y_k}L$.
This property implies that the inner product of the Gauss map of $L(k)$ with
the unit normal vector to $T_{y_k}L$ is positive (up to sign)  in $L(k)\cap \B(1)$, hence
$L(k)\cap \B(1)$ is stable. Schoen's curvature estimates~\cite{sc3} now give that
the norm of the second fundamental form of $L(k)\cap \B (1)$ is uniformly bounded.
Since the tangent plane $\Pi $ to $L(k)$
at the origin is vertical, then we conclude that the planes in the limit parking garage structure are
parallel to $\Pi $. As for the line $l$ given by the
singular set of convergence of the $L(k)\cap \B (R_k)$ to the limit parking garage structure, note that by definition
of $t_k$, for large $k$ the tangent plane to $L(k)$ at some point $q_k$ in
the sphere $\partial \B (2)$ makes an angle at least $\pi /4$
with $\Pi $; this implies that $l$ is the straight line orthogonal to $\Pi $ that passes through
the limit of the $q_k$ (after passing to a subsequence).
By similar arguments as those at the end of the proof of Lemma~\ref{lem-blowup},
we can find pairs of highly sheeted almost-flat almost-vertical multivalued graphs
$G_{n,k}^1,G_{n,k}^2\subset M_n$ over portions of $\Pi $, and these multivalued graphs
in $M_n$ contain two-valued subgraphs that extend to two-valued almost-vertical
multivalued graphs on a fixed scale. By the arguments at the end of the proof of Lemma~\ref{lem-blowup},
these extended two-valued almost-vertical multivalued graphs must be almost-horizontal, which gives a contradiction.
This finishes the case (N2.1).

Finally, suppose case (N2.2) occurs. By the application of a diagonal-type
argument to the doubly indexed sequence of surfaces $\{M_n-y_k\}_{n,k\in \N}$
where $n$ is chosen to go to infinity sufficiently quickly in terms of $k$ that also goes to infinity, we can
produce a sequence
\[
\L_{n(k)}=\left\{ \frac{2}{t_k}(M_{n(k)}-y_k)\right\}_{k\in \N}
\]
such that the following properties hold.
\begin{enumerate}[(O1)]
\item The injectivity radius function of $\frac{2}{t_k}(M_{n(k)}-y_k)$ can be made arbitrarily large for $k$ large
at every point a any fixed ball in $\R^3$ (this follows from Property (Inj) just before the statement of
Proposition~\ref{lemma4.2} after rescaling by $2/t_k$).
\item There exists a (possibly empty) closed set $\mathcal{S}_{\infty }\subset \R^3$ and a minimal lamination $\mathcal{L} _{\infty }$ of
$\R^3-\mathcal{S} _{\infty }$ such that the surfaces $\frac{2}{t_k}(M_{n(k)}-y_k)-\mathcal{S} _{\infty }$ converge to $\mathcal{L} _{\infty }$
outside of some singular set of convergence $S(\mathcal{L} _{\infty })\subset \mathcal{L} _{\infty }$, and if we call $\Delta (\mathcal{L}_{\infty })=
\mathcal{S} _{\infty }\cup S(\mathcal{L} _{\infty })$, then $\Delta (\mathcal{L}_{\infty })\neq \mbox{\O }$ (this property holds by similar arguments
as those that prove the first part of item~\ref{i6} of Theorem~\ref{tthm3introd}, which are still valid since we have property
(O1)).
\item Through every point in $\Delta (\mathcal{L}_{\infty })$ there passes a plane which contains a planar leaf of $\mathcal{L} _{\infty }$
and which intersects $\Delta (\mathcal{L} _{\infty })$ in exactly one point (two or more points would produce connection loops in the
surfaces $\frac{2}{t_k}(M_{n(k)}-y_k)$ for $k$ large, in contradiction with property (O1) above).

\item By our hypotheses in Case (N2.2), we deduce that one of the leaves of $\mathcal{L}_\infty$ is contained in the plane $\{ x_3=-2D\} $.
In particular, the planes mentioned in property (O3) are horizontal.

\item $\mathcal{L} _{\infty }$ contains a sublamination $\wh{\mathcal{L}}_\infty$ which is a limit as $k\to \infty $ of the surfaces $L(k)$.

\end{enumerate}

By property (O1), it follows from our previous arguments in this paper that every nonflat leaf $Z$ of $\mathcal{L}_\infty$
is simply connected. Furthermore, the injectivity radius function of such a $Z$ at any point $z\in Z$ is equal to
the intrinsic distance from $z$ to boundary of the metric completion $\ov{Z}$ of $Z$, where the points of this metric completion
correspond to certain (singular) points in $\mathcal{S}_\infty$. Observe that such a $Z$ cannot be complete (otherwise,
by the discussion in Case (N.1), $Z$ would be a helicoid which is impossible by (O3) or (O4)).

We next show that there exists a nonflat $Z_1$ of $\mathcal{L}_\infty$ that passes through the origin.
Since the norms of the second fundamental forms of the surfaces $\frac{2}{t_k}(M_{n(k)}-y_k)$
are uniformly bounded in the ball of radius 1 centered at the origin, there is a
leaf $Z_1$ of  $\mathcal{L}_\infty$ passing through $\vec{0}$ with vertical tangent plane $T_{\vec{0}}Z_1$.
Since the flat leaves of $\mathcal{L}$ are horizontal, then $Z_1$ cannot be flat. By the last paragraph,
$Z_1$ is not complete, hence there exists $p_0\in \mathcal{S} _{\infty }$ in the metric completion of $Z_1$.
By Property (O3), the punctured horizontal plane $\Pi _0=\{ x_3=x_3(p_0)\} -\{ p_0\} $ is a leaf of
$\mathcal{L}_{\infty }$. By the same arguments and the connectedness of $Z_1$,
there is at most one other point $p_1$ in the metric completion
of $Z_1$, and in this case the plane $\Pi _1=\{ x_3=x_3(p_1)\} -\{ p_1\} $ is a leaf of $\mathcal{L}_{\infty }$
(if no such $p_1$ exists, then $Z_1$ is properly embedded in the upper open halfspace determined by
the plane $\Pi _0$). Without loss of generality, we can assume that if $\Pi _1$ exists, then
$x_3(p_1)>x_3(p_0)$.

Given $\de >0$, consider the conical region
\[
C^+(\de )=\{ (x_1,x_2,x_3)\ | \ (x_1-x_1(p_0))^2+(x_2-x_2(p_0))^2
< \de ^2(x_3-x_3(p_0))^2\}
\]
 with vertex $p_0$. Suppose that no $p_1$ exists. In this case, the injectivity radius function
Inj$_{Z_1}(x)$ at any point $x\in Z_1$ is equal to the intrinsic distance function in $Z_1$ from $x$ to $p_0$,
which in turn is at least $| x-p_0| $. Therefore, Inj$_{Z_1}$ grows at least linearly with the
extrinsic distance in $Z_1$ to $p_0$. If $p_1$ exists, the same property can be proven for $\de >0$ sufficiently small
with minor modifications, since for such $\de $, there exists $a=a(\de )>0$ such that
$C^+(\de)$ also contains $p_1$ (if $p_1$ exists) and
\begin{equation}
\label{dist}
\min \{ | x-p_0| ,| x-p_1| \} \geq a\ | x-p_0| ,
\end{equation}
for all $x\in x_3^{-1}([x_3(p_0),x_3(p_1)])-C^+(\de )$. This scale invariant lower bound on Inj$_{Z_1}$
together with the intrinsic version of the one-sided curvature estimate by Colding-Minicozzi
(Corollary~0.8 in~\cite{cm35})
imply that for $\de >0$ sufficiently small, the intersection of $Z_1$ with
$x_3^{-1}([x_3(p_0),x_3(p_1)])-C^+(\de )$ consists of two multivalued graphs whose gradient
can be made arbitrarily small (in terms of $\de $). The same scale invariant lower bound on Inj$_{Z_1}$
is sufficient to apply the arguments in page 45 of Colding-Minicozzi~\cite{cm25}; especially see
the implication that property (D) there implies property (D1). In our current setting, property (D) is the
scale invariant lower bound on Inj$_{Z_1}$, and property (D1) asserts that $Z_1-C^+(\de _1)$
consists of a pair of $\infty $-valued graphs for some $\de _1>0$ small, which can be connected by curves
of uniformly bounded length arbitrarily close to $p_0$.
The existence of such $\infty $-valued graphs over the punctured plane contradicts
Corollary~1.2 in~\cite{cm26}. This contradiction rules out Case (N2.2), which
finishes the proof of Lemma~\ref{vert}.
\end{proof}

The following corollary is an immediate consequence of Lemma~\ref{vert}.

\begin{corollary}
\label{lookslikehel}
Given $\ve_1,R>0$, there exists an $\ve_2\in(0,\ve_1)$  such that the following holds. Let
\[
\g =\{ y\in L^{\mu} \cap \B(\ve_2)\ | \  T_yL \mbox{ is vertical } \}.
\]
Then for any $y\in \g$, there exists a vertical helicoid $\mathcal{H}_y$ with
maximal absolute Gaussian curvature~$1$
at the origin\footnote{Observe that for $y_1,y_2\in \g$, the helicoids
$\mathcal{H}_{y_1}$, $\mathcal{H}_{y_2}$ coincide up to a rotation around the $x_3$-axis.}
such that the connected component of $\sqrt{|K_{L}|}(y)
[L^{\mu }-y]\cap \B(R)$ containing the origin is a normal graph $u$ over its projection
$\Omega\subset \mathcal{H}_y$, and
\[
\B(R-2\ve_1)\cap \mathcal{H}_y\subset \Omega \subset \B(R+2\ve_1)\cap
\mathcal{H}_y, \qquad \|u\|_{C^2(\Omega )}\leq \ve_1.
\]
\end{corollary}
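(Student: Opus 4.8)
The plan is to deduce the corollary from Lemma~\ref{vert} by a routine rescaling/contradiction argument. First I would suppose the statement fails for some fixed $\ve_1,R>0$: then for every $k\in\N$ there is a point $y_k\in L^{\mu}\cap\B(1/k)$ with $T_{y_k}L$ vertical for which no vertical helicoid of maximal absolute Gaussian curvature $1$ at the origin can serve as $\mathcal{H}_{y_k}$ in the conclusion of the corollary (in particular the choice $\ve_2=1/k$ fails). Since $|y_k|<1/k\to 0$ we have $x_3(y_k)\to 0$, while $T_{y_k}L$ is vertical by construction, so $\{y_k\}_k$ satisfies exactly the hypotheses of Lemma~\ref{vert}. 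Applying that lemma and passing to a subsequence, $s_k:=\sqrt{|K_L|}(y_k)\to\infty$ and the rescaled surfaces $L'(k)=s_k(L^{\mu}-y_k)$ converge on compact subsets of $\R^3$, with multiplicity one, to a vertical helicoid $\mathcal{H}\subset\R^3$ containing the $x_3$-axis and with maximal absolute Gaussian curvature $1$ at the origin; note that $\vec{0}\in\mathcal{H}$ and $\vec{0}\in L'(k)$ for every $k$, since $y_k\in L^{\mu}$.

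Next I would upgrade the multiplicity-one convergence to $C^{\infty}$ convergence on compact subsets via the uniform graph lemma (Colding and Minicozzi~\cite{cmCourant}, or P\'erez and Ros~\cite{pro2}): for $k$ large, $L'(k)$ restricted to a neighborhood of the compact set $\overline{\B}(R+2\ve_1)\cap\mathcal{H}$ is a single normal graph over a domain in $\mathcal{H}$ whose graph function has $C^2$-norm at most $\ve_1$ and vanishes at the origin. I would then record the elementary fact that, parametrizing $\mathcal{H}$ by $(t,s)\mapsto(t\cos(as),t\sin(as),s)$, one has $\B(\rho)\cap\mathcal{H}=\{t^2+s^2<\rho^2\}$, a connected topological disk containing $\vec{0}$ for every $\rho>0$. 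Combining these, for $k$ large the component $\Sigma_k$ of $L'(k)\cap\B(R)$ through the origin is precisely the normal graph $u_k$ over its projection $\Omega_k\subset\mathcal{H}$, with $\|u_k\|_{C^2(\Omega_k)}\le\ve_1$; and since $|u_k|\le\ve_1$ pointwise, every point of $\Sigma_k$ projects into $\B(R+2\ve_1)\cap\mathcal{H}$, while the connected graph over $\B(R-2\ve_1)\cap\mathcal{H}$ lands in $\B(R-\ve_1)\subset\B(R)$ and contains $\vec{0}$, hence lies in $\Sigma_k$; this yields $\B(R-2\ve_1)\cap\mathcal{H}\subset\Omega_k\subset\B(R+2\ve_1)\cap\mathcal{H}$.

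Therefore, for $k$ large along this subsequence, the helicoid $\mathcal{H}_{y_k}:=\mathcal{H}$ satisfies all the requirements in the statement for $y=y_k$ and the given $\ve_1,R$, contradicting the choice of the $y_k$; taking $\ve_2=1/k$ for such a $k$ proves the corollary. The fact that any two limit helicoids produced in this way differ only by a rotation about the $x_3$-axis accounts for the footnote in the statement. The only step requiring genuine care is the second one — converting ``multiplicity-one convergence on compact subsets'' from Lemma~\ref{vert} into the explicit $C^2$-graph statement with the nested-ball inclusions — but this is standard once the connectedness of $\B(\rho)\cap\mathcal{H}$ is noted; everything else is bookkeeping.
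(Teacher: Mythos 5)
Your proof is correct and follows exactly the route the paper has in mind: the paper itself provides no argument, stating only that the corollary is an ``immediate consequence of Lemma~\ref{vert},'' and your contradiction-plus-compactness argument (choose bad $y_k$ in $\B(1/k)$, feed them into Lemma~\ref{vert}, upgrade multiplicity-one convergence to a $C^2$ normal-graph representation, and read off the nested-ball inclusions) is the standard way to turn that sequential convergence statement into the quantified $\ve_2$ statement. The only cosmetic slips are minor: $s_k\to\infty$ already holds for the full sequence in Lemma~\ref{vert} (only the convergence of $L'(k)$ requires a subsequence), and your closing remark ``taking $\ve_2=1/k$ for such a $k$ proves the corollary'' is redundant once the contradiction is reached; neither affects the validity of the argument.
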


Three immediate consequences of Corollary~\ref{lookslikehel} when $\ve_2$ is chosen sufficiently small are:
\begin{enumerate}[(P1)]
\item The set $\g$ in Corollary~\ref{lookslikehel} can be parameterized as a
connected analytic curve $\g(t) $ where $t\in (0,t_0]$ is its positive $x_3$-coordinate,
and $\lim_{t\to 0} \g'(t)=(0,0,1)$.
\item Given $t\in (0,t_0]$, let $f(t)\in \R$ be the angle that the vertical plane $T_{\g (t)}L$
makes with the positive $x_1$-axis, that is, $(-\sin f(t),\cos f(t),0)$ is the unit normal vector
to $T_{\g (t)}L$ up to a choice of orientation. Note that $T_{\g(t)}L$
rotates infinitely often as $t\searrow 0^+$, and consequently, the angle function $f(t)$
can be considered to be a smooth function
of the height that tends to $+\infty $ as $t\searrow 0^+$ if the forming
helicoid $\mathcal{H}_{\g(t)}$ is left-handed, or to $-\infty$ if $\mathcal{H}_{\g (t)}$ is right-handed.
In the sequel we will suppose that this last possibility occurs (after a possible reflection of
$L^{\mu }$ in the $(x_1,x_3)$-plane). Also observe that $f(t)$ is determined up to an additive multiple of $2\pi $,
and so, we do not loose generality by assuming that $f(t)<0$ for each $t\in (0,t_0]$. Since for
the right-handed vertical helicoid $\mathcal{H}$ the corresponding angle function $f_{\mathcal{H}}$ is negative
linear, then we conclude that after choosing $t_0>0$ small enough,
 $f'>0$ is bounded away from zero
and $f''/f'$ is bounded from above in $(0,t_0]$.

\item For any $t\in (0,t_0]$, $L^{\mu }\cap T_{\g (t)}L$ contains a small smooth arc $\a_{\g (t)}$ passing
through $\g (t)$ that is a graph over its projection to
the horizontal line  $x_3^{-1}(t)\cap T_{\g (t)} L$. Since for $t>0$ small
the point $\g (t)$ is a point of almost-maximal curvature in a certain ball centered at $\g (t)$,
then the forming double multivalued graph around $\a _{\g (t)}$ in $L^{\mu }$
extends sideways almost horizontally in $T_{\g (t)}L$
by the extension results in Colding and Minicozzi (Theorem II.0.21 in~\cite{cm21}, note also
that $L^{\mu }-\g $ consists of stable pieces), until exiting the solid vertical cone
$\mathcal{C}_{\vec{0}}$ given by description (D2) before Lemma~\ref{lemma4.5},
 whose vertex is the singular point $x=\vec{0}\in \mathcal{S}$.
 This allows us to extend $\a _{\g (t)}$ in the vertical plane
$T_{\g (t)}L$ until it exits $\mathcal{C}_{\vec{0}}$.
Once $\a _{\g (t)}$ exits $\mathcal{C}_{\vec{0}}$, then the almost-horizontal nature
of $L^{\mu }$ outside $\mathcal{C}_{\vec{0}}$ for $\mu >0$ small that comes from
curvature estimates, insures that $\a _{\g (t)}$ can be extended in $L^{\mu }\cap T_{\g (t)}L$
as an almost-horizontal arc, until it possibly intersects the plane $\{ x_3=\mu \}$.
The number of intersection points of this extended arc $\a _{\g (t)}$ with $\{ x_3=\mu \}$
is zero, one or two. If $\a _{\g (t)}$ never
intersects $\{ x_3=\mu \} $ (respectively, if $\a _{\g (t)}$ intersects $\{ x_3=\mu \} $ exactly once),
then $\a _{\g (t)}$ defines a proper open (respectively half-open) arc in
$\{ 0<x_3\leq \mu \} \cap T_{\g (t)}L^{\mu }$. Otherwise, $\a _{\g (t)}$ is a compact
arc with its two end points at height $\mu $.
\end{enumerate}

We next show that for $t_0>0$ sufficiently small and for all $t\in (0,t_0]$,
the number of intersection points of $\a _{\g (t)}$ with $\{ x_3=\mu \}$ is two.
Arguing by contradiction, suppose that for some $t_1\in (0,t_0]$ small,
$\a _{\g (t_1)}$ is not a compact arc. Then,
there exists $t_2\in (0,t_1)$ such that for all $t\in (0,t_2]$, $\a _{\g (t)}$ is
an open proper arc in $\{ 0<x_3<\mu \} \cap T_{\g (t)}L$,
which is a graph over the horizontal line $x_3^{-1}(t)\cap T_{\g (t)}L$.
It follows that the surface
\begin{equation}
\label{Sigma}
\Sigma (t_2)=\bigcup _{t\in (0,t_2]}\a _{\g (t)}
\end{equation}
is a proper subdomain of $L^{\mu}$, $\Sigma (t_2)$ is topologically a disk with
connected boundary and when intersected with the domain $x_3^{-1}([0,\mu ])\cap
\{ x_1^2+x_2^2\geq 1\} $, is an $\infty $-valued graph.
This minimal surface $\Sigma(t_2) $
cannot exist by the flux arguments in~\cite{cm26} (specifically see Corollary~1.2 and
the paragraph just after this corollary.
Thus, we may assume that $\a (t)$ is a compact arc for
all $t\in (0,t_2]$ and $t_2>0$ sufficiently small.
Observe that $\a (t)$ is transversal to $\{ x_3=\mu \} $
at the two end points of $\a (t)$ (because $\mu $ was
a regular value of $x_3$ in $L$), for all $t\in (0,t_2]$.

By the above discussion, for $t_2>0$ sufficiently small,
$\Sigma (t_2)$ is a union of the compact arcs $\a _{\g (t)}$, $t\in (0,t_2]$. Let $\G _1(t),\G_2(t)$
be the end points of $\a _{\g(t)}$, $t\in (0,t_2]$. Hence for $i=1,2$, $t\in (0,t_2]\mapsto \G _i(t)$
is an embedded proper arc in $L\cap \{ x_3=\mu \} $ that spins infinitely often, and
$\G_1,\G_2$ are imbricated (they rotate together). The boundary of $\Sigma (t_2)$ is connected and
consists of $\a _{\g (t_2)}\cup \G _1\cup \G _2$. Consider the
piecewise smooth surface $\wh{\Sigma }(t_2)$ obtained by adding to each $\a _{\g (t)}$ the
two disjoint halflines $l_{t,1},l_{t,2}$ in $x_3^{-1}
(\mu )\cap T_{\g (t)}L$ that start at $\G _1(t),\G _2(t)$, respectively, for all $t\in (0,t_2]$.
Observe that $\Sigma (t_2)$ is a subdomain of $\wh{\Sigma }(t_2)$, that $\wh{\Sigma }(t_2)$
fails to be smooth precisely $\G _1\cup \G _2$, and that
$\wh{\Sigma }(t_2)$ fails to be embedded since for certain values
$t<t'\in (0,t_2]$, the added halflines $l_{t,1},l_{t',1}$ satisfy $l_{t,1}\subset l_{t',1}$,
and similarly for the halflines $l_{t,2},l_{t',2}$. Both problems for $\wh{\Sigma }(t_2)$
can be easily overcome (actually embeddedness is not strictly
necessary in what follows) by slightly changing the construction,
as we now explain. For each $t\in (0,t_2]$, enlarge slightly $\a _{\g (t)}$ to a
compact arc $\wh{\a }_{\g (t)}\subset L\cap T_{\g (t)}L$, so that if we call
$\wt{\G }_1(t),\wt{\G }_2(t)\in L\cap x_3^{-1}((\mu ,\mu +1])$
to the end points of $\wh{\a }_{\g (t)}$,
then the following properties hold.
\begin{enumerate}[(Q1)]
\item For $i=1,2$, the correspondence $\G _i(t)\mapsto \wt{\G }_i(t)$ defines a smooth map
 that goes to zero as $t\searrow 0$. In other words, the curve $t\in (0,t_2]\mapsto \wt{\G }_i(t)$
is asymptotic to the planar curve $t\in (0,t_2]\mapsto \G _i(t)$ as $t\searrow 0$.
\item $x_3\circ \wt{\G }_1(t)=x_3\circ \wt{\G }_2(t)$ is strictly increasing as a function of $t\in (0,t_2]$.
\end{enumerate}
Now add to each $\wh{\a }_{\g (t)}$ the two disjoint halflines $\wt{l}_{t,1},\wt{l}_{t,2}$ in $x_3^{-1}
(x_3(\wt{\G }_1(t)))\cap T_{\g (t)}L$ that start at $\wt{\G }_1(t), \wt{\G }_2(t)$,
respectively, for all $t\in (0,t_2]$.
By property (Q2) above, the piecewise smooth surface
\[
\wt{\Sigma }(t_2)=\bigcup _{t\in (0,t_2]}\left[ \wh{\a }_{\g (t)}\cup \wt{l}_{t,1}\cup \wt{l}_{t,2}\right]
\]
is embedded and fails to be smooth precisely along $\wt{\G} _1\cup \wt{\G }_2$.
Now smooth $\wt{\Sigma }(t_2)$ by rounding off the corners along
$\wt{\G} _1\cup \wt{\G }_2$ in a neighborhood of these curves that is disjoint from
$\Sigma (t_2)$, and relabel the resulting smooth embedded surface as  $\wt{\Sigma }(t_2)$.
Furthermore, the above smoothing process can be done so that the tangent spaces to
$\wt{\Sigma }(t_2)$ form an angle less than $\pi /4$ with the horizontal. Observe that
$\Sigma (t_2)$ is a proper subdomain of $\wt{\Sigma }(t_2)$, see Figure~\ref{parabhelic1}.
\begin{figure}
\begin{center}
\includegraphics[width=11.4cm]{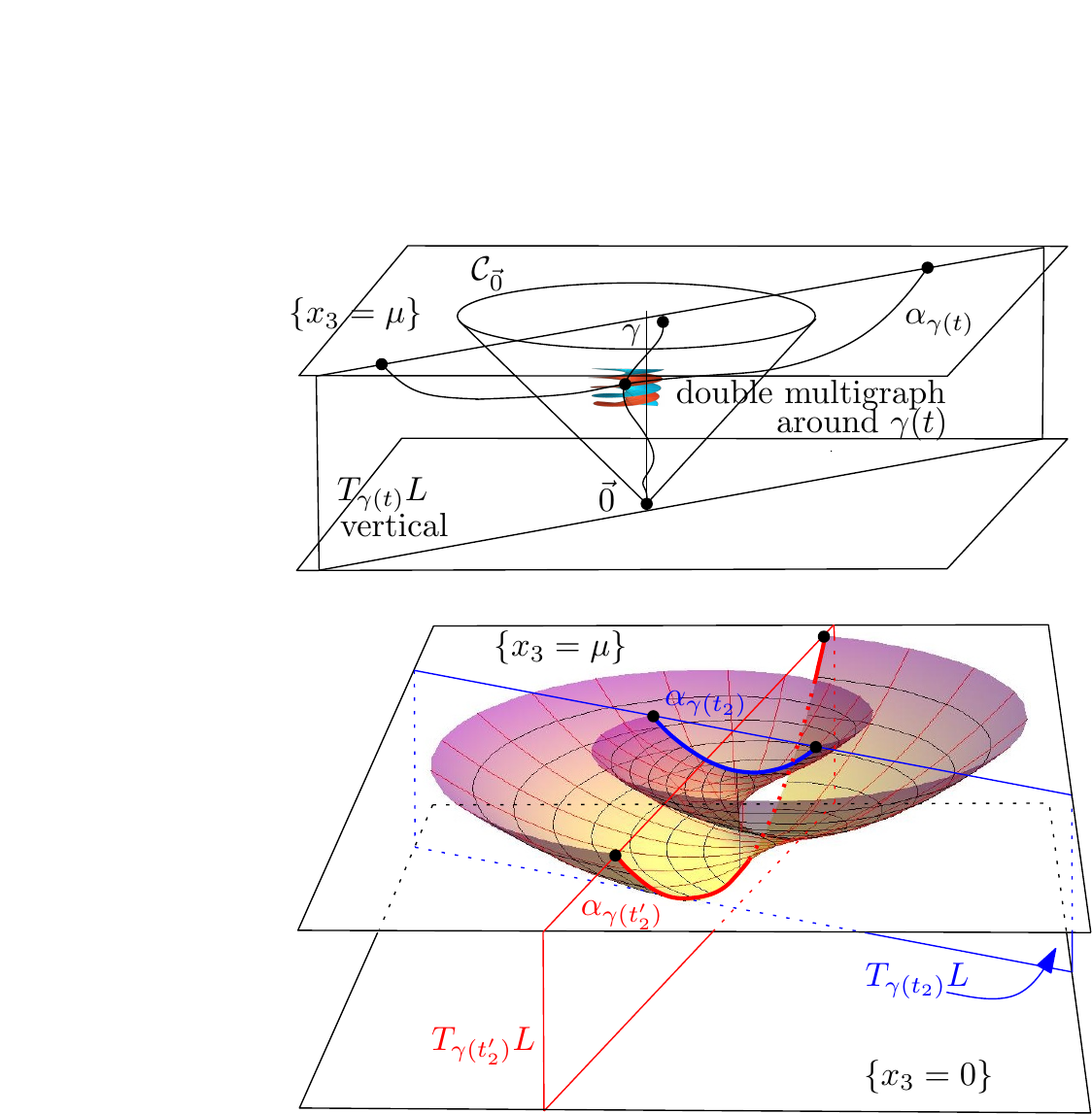}
\caption{Top: The arc $\a _{\g (t)}\subset L^{\mu }\cap T_{\g (t)}L$ starts forming in
the double multivalued graph around the point $\g (t)$, extends sideways until
exiting the solid cone $\mathcal{C}_{\vec{0}}$ and eventually intersects $\{ x_3=\mu \} $.
Bottom: Schematic representation of a compact portion of the surface $\Sigma (t_2)\subset L^{\mu }$,
foliated by
arcs $\a _{\g (t)}$ in a compact range $t\in [t_2',t_2]$, $0<t_2'<t_2$.}
\label{parabhelic1}
\end{center}
\end{figure}
We denote by $\wt{\a }_{\g(t)}\subset \wt{\Sigma }(t_2)\cap T_{\g (t)}L$
the smooth proper arc that extends $\a _{\g (t)}$. Thus, $\wt{\Sigma }(t_2)$ is foliated by
these arcs $\wt{\a }_{\g (t)}$, $t\in (0,t_2]$.

Now consider the ruled surface
\begin{equation}
\label{eq:ruled}
R(t_2)=\bigcup _{t\in (0,t_2]}\left( x_3^{-1}(t)\cap T_{\g (t)}L\right) .
\end{equation}
The vertical projection $\Pi \colon \wt{\Sigma }(t_2)\to R(t_2)$ defined by
$\Pi (x,y,z)=(x,y,t)$ if $(x,y,z)\in \wt{\a }_{\g (t)}$,
is a quasiconformal diffeomorphism; near $\g $, this property follows from the fact
that both $\wt{\Sigma }(t_2),R(t_2)$ can be rescaled around
$\g (t)$ to produce the same vertical helicoid, and away from $\g $ because
the tangent planes to both $\wt{\Sigma }(t_2),R(t_2)$ form a small angle
with the horizontal for $t_2$ sufficiently small.

The next lemma will show that the surface $R(t_2)$ is
quasiconformally diffeomorphic to a closed halfplane in $\C $.
Observe that the hypotheses of Lemma~\ref{lemma4.23} hold for $R(t_2)$,
see items (P1), (P2) above.

\begin{lemma}
\label{lemma4.23}
Let $\G \colon (0,1]\to \R^2$ be a smooth curve and $f\colon (0,1]\to (-\infty ,0)$
be a $C^2$ function. Consider the
ruled surface $R\subset \R^3$ parameterized by $X\colon \R\times (0,1]
\to \R^3$,
\begin{equation}
\label{param}
 X(\mu ,z)=(\G (z),0)+(\mu \cos f(z),\mu \sin f(z),z),\qquad  (\mu ,z)\in
\R \times (0,1].
\end{equation}
If $|\G'|$ is bounded, $\lim _{z\to 0^+}f(z)=-\infty $, $f'$ is bounded away from zero
and $f''/f'$ is bounded from above, then $R$ is quasiconformally diffeomorphic to the
closed lower half of a vertical helicoid.
\end{lemma}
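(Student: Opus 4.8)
The plan is to reduce the statement to producing a quasiconformal diffeomorphism from $R$ onto a closed halfplane of $\C$, and then to build such a map in coordinates adapted to the two structural ingredients of $R$: the angle function $f$, used to reparametrise the ``height'' $z$ by the angle swept by the rulings, and a $\sinh$--substitution along the rulings, which is the model change of variables straightening a helicoid. A closed lower half $\mathcal{H}^-$ of a vertical helicoid is conformally a closed halfplane: in the conformal parametrisation $(u,v)\mapsto(\sinh v\cos u,\sinh v\sin u,u)$ the height is $u$, so $\mathcal{H}^-=\{u\leq 0\}$ corresponds to the halfplane $\{(u,v)\mid u\leq 0\}$; since a conformal diffeomorphism is $1$--quasiconformal, it suffices to exhibit a quasiconformal diffeomorphism from $R$ onto $\{(\omega,\sigma)\in\R^2\mid\sigma\leq 0\}$.

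Since the rulings at distinct heights are disjoint horizontal lines and $f'\neq 0$, the map $X$ is a diffeomorphism of $\R\times(0,1]$ onto $R$, so $R$ is simply connected with boundary $X(\R\times\{1\})\cong\R$. As $f'$ has no zeros and $\lim_{z\to 0^+}f(z)=-\infty$, $f$ is a $C^2$--diffeomorphism of $(0,1]$ onto $(-\infty,f(1)]$; set $\sigma=f(z)-f(1)\in(-\infty,0]$, $z=z(\sigma):=f^{-1}(\sigma+f(1))$, $a(\sigma):=1/f'(z(\sigma))$, and write $e_f=(\cos f,\sin f,0)$, $e_f^{\perp}=(-\sin f,\cos f,0)$, $g=\langle\Gamma',e_f\rangle$, $h=\langle\Gamma',e_f^{\perp}\rangle$ (bounded, since $|\Gamma'|$ is bounded). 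A direct computation shows that $\Phi(\omega,\sigma)=(\mu,z)=(a(\sigma)\sinh\omega,\ z(\sigma))$ is a diffeomorphism of $\{\sigma\leq 0\}$ onto $\R\times(0,1]$ and that $\Phi^{*}X^{*}(\mathrm{Euclidean})$ equals
\[
a^2\Big[\big(\cosh\omega\,d\omega+(g-\kappa\sinh\omega)\,d\sigma\big)^2+\big(1+(h+\sinh\omega)^2\big)\,d\sigma^2\Big],\qquad \kappa:=\frac{f''}{(f')^2}.
\]
Discarding the conformal factor $a^2$ and factoring $\cosh^2\omega$ out of the bracket, the conformal class of the metric is represented on $\{\sigma\leq 0\}$ by $(d\omega+P\,d\sigma)^2+W\,d\sigma^2$ with
\[
P=\frac{g}{\cosh\omega}-\kappa\tanh\omega,\qquad W=\frac{(h+\sinh\omega)^2+1}{\cosh^2\omega}.
\]
This quadratic form has determinant $W$ and trace $1+P^2+W$, so the identity map of $\{\sigma\leq 0\}$ from it to the flat halfplane is $K$--quasiconformal with $K$ depending only on $\sup|P|$, $\sup W$, $\inf W$; as $|h|$ is bounded one checks at once that $\tfrac1C\leq W\leq C$, so everything reduces to showing $\sup|P|<\infty$.

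The term $g/\cosh\omega$ is bounded because $|\Gamma'|$ is bounded and $\cosh\omega\geq 1$; and $\kappa\tanh\omega$ is bounded once $\kappa$ is, with $\kappa=\dfrac{f''/f'}{f'}\leq C/\ve_0$ (here $f''/f'\leq C$ and $f'\geq\ve_0>0$ are the hypotheses) giving the bound from above. \emph{The main obstacle is the bound on $\kappa$ from below}: the stated hypotheses allow $f'$ to decrease arbitrarily fast on $(0,1]$ while keeping $f''/f'$ negative (hence $\leq C$), so on the family of intervals where $\kappa<-C/\ve_0$ the identity estimate above breaks down. These intervals form a discrete collection (as $\kappa$ is continuous and $\geq-C/\ve_0$ off them), and over a component $[\alpha,\beta]$ of such a set the region of $R$ with $\sigma\in[\alpha,\beta]$ is a slab of height $z(\beta)-z(\alpha)\leq 1$ between two rulings across which the pitch $a=1/f'$ contracts by the definite factor $e^{-\int_\alpha^\beta\kappa}\geq 1$; a separate estimate, again via a $\sinh$--substitution along the rulings but now combined with the reparametrisation $ds=-\kappa\,d\sigma$ on $[\alpha,\beta]$ (which tames the large coefficient $\kappa\sinh\omega$), shows this region is quasiconformally diffeomorphic to a bi--infinite conformal strip, with distortion bounded in terms of $C$, $\ve_0$ and $\sup|\Gamma'|$ only. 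Cutting $R$ along the rulings bounding these intervals, subdividing the good complementary intervals into unit--length pieces, and applying the $P$--estimate on the good pieces and the strip model on the bad ones, one produces a countable chain of surfaces each uniformly quasiconformal to a bi--infinite strip, glued successively along the cutting rulings with the free boundary $\{\sigma=0\}=\partial R$ on the first piece; since infinitely many of the pieces have modulus bounded below by a positive constant, the chain is quasiconformally diffeomorphic to $\R\times[0,\infty)=\overline{\mathbb{H}}$, which by the first paragraph completes the proof. I expect the uniform distortion bound on the ``bad'' slabs, and the verification that the piecewise maps can be chosen to agree along the cuts, to be where the bulk of the care lies.
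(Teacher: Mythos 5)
Your map coincides with the paper's $\phi$ written in different coordinates. The paper first removes $\Gamma$ with the quasiconformal shear $\psi(p,z)=(p-\Gamma(z),z)$ and then defines $\phi(X(\mu,z))=(\hat\mu\cos f,\hat\mu\sin f,f(z))$ with $\hat\mu=\mu f'(z)$, landing directly on the closed lower half of the vertical helicoid; composing this with the conformal parametrization $(\omega,\sigma)\mapsto(\sinh\omega\cos\sigma,\sinh\omega\sin\sigma,\sigma)$ of the helicoid recovers your $\Phi$ (up to the shift by $f(1)$). You carry $\Gamma$ along rather than shearing it away first, which produces the $g$, $h$ terms; these are harmless because $|\Gamma'|$ is bounded, and with $\Gamma\equiv0$ your quantity $1+P^2$ is exactly the paper's distortion function $E(\mu,z)=1+\frac{\mu^2}{1+\mu^2(f')^2}(f''/f')^2=1+\kappa^2\tanh^2\omega$, $\kappa=f''/(f')^2$.

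You have also correctly spotted a slip in the hypothesis: $\sup_\mu E(\mu,z)=1+\kappa(z)^2$, so the argument needs a two-sided bound on $\kappa$ (for which ``$|f''/f'|$ bounded'' suffices, since $f'\geq\varepsilon_0$), whereas the statement asks only for $f''/f'$ bounded from above. The paper's proof does not confront this discrepancy: it simply asserts that $E$ is ``bounded from above under our hypotheses'' and never considers a region where $\kappa$ is very negative. The lemma should be read with ``$f''/f'$ bounded'' (in absolute value), a hypothesis that the application --- item (P2) of Section~4 --- does supply, because there $f$ is modelled on the angle function of the forming helicoid, which is linear, so $f''/f'$ is close to zero. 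Your proposed cut-and-glue repair over a discrete set of bad intervals is therefore not the paper's argument and is not needed for the paper's purposes; moreover, as sketched it leaves its two central claims unverified (the uniform quasiconformal estimate on the bad slabs, and the assembly of the resulting countable chain of strips into a halfplane), so in its current form it does not yet constitute a proof of a stronger version of the lemma. The effective fix is to tighten the stated hypothesis, not to complicate the proof.
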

\begin{proof}
Consider the diffeomorphism $\psi \colon x_3^{-1}((0,1])\to x_3^{-1}((0,1])$ given by
\[
\psi (p,z)=(p-\G (z),z),\qquad (p,z)\in \R^2\times (0,1].
\]
As $|\G'|$ is bounded, then $\psi $ is quasiconformal;
this means that there exists $\ve \in (0,1)$ such that
given two unitary orthogonal vectors $a,b\in \R^3$, we have
\[
\ve \leq \frac{|\psi _*(a)|}{|\psi _*(b)|}\leq \frac{1}{\ve },\qquad \frac{\langle \psi _*(a),
\psi _*(b)\rangle }{|\psi _*(a)|\, |\psi _*(b)|}\in [-1+\ve ,1-\ve ],
\]
where $\psi _*$ denotes the differential of $\psi $ at any point of $x_3^{-1}((0,1])$.
Therefore, after composing with $\psi $, we may assume in the sequel that $\G (z)=0$ for all
$z\in (0,1]$.

Let $\mathcal{H}=\{ (x,y,z)\in  \R^3\ | \ y=x\tan z\} $ be the standard vertical helicoid. Consider a map
$\phi \colon R\to \mathcal{H}$ of the form
\[
\phi (X(\mu ,z))=\left( \wh{\mu }\cos f(z),\wh{\mu }\sin f(z),f(z)\right) , \qquad (\mu ,z)\in
\R \times (0,1],
\]
where $\wh{\mu }=\wh{\mu }(\mu ,z)$ is to be defined later. We will find a choice of
$\wh{\mu }$ for which $\phi $ is a quasiconformal diffeomorphism
from $R$ onto its image (which is the lower half of $\mathcal{H}$ obtained after intersection of $\mathcal{H}$
with $x_3^{-1}((-\infty ,f(1)])$. Observe that a global choice of an orthonormal basis for
the tangent bundle to $R$ is $\{ X_{\mu }, \frac{1}{|X_z|}X_z\} $. By definition, $\phi $
is quasiconformal if the following two properties hold.
\begin{enumerate}[(R1)]
  \item ${\displaystyle
  \frac{|\phi _*(\frac{1}{|X_z|}X_z)|}{|\phi _*(X_\mu )|}}$ is
bounded and bounded away from zero (uniformly on $M$).
  \item ${\displaystyle \frac{\langle \phi _*(X_\mu ),
  \phi _*(\frac{1}{|X_z|}X_z)\rangle ^2}
  {|\phi _*(X_\mu )|^2\, |\phi _*(\frac{1}{|X_z|}X_z)|^2}\in [0,1-\ve ]}$
  uniformly on $M$,
 for some $\ve \in (0,1)$.
\end{enumerate}
A direct computation gives
\[
\begin{array}{rcl}
\phi _*(X_\mu ) &=&\wh{\mu }_{\mu }(\cos f,\sin f,0),
\\
\phi _*(X_{z})&=&\wh{\mu }_{z}(\cos f,\sin f,0)+
\wh{\mu }f'(-\sin f,\cos f,0)+(0,0,f'(z)),
\end{array}
\]
where $\wh{\mu }_{\mu }=\frac{\partial \wh{\mu }}{\partial \mu }$,
$\wh{\mu }_{z}=\frac{\partial \wh{\mu }}{\partial z}$.
Hence
\[
\begin{array}{rcl}
|\phi _*(X_{\mu })|^2&=&(\wh{\mu }_{\mu })^2,
\\
|\phi _*(X_{z})|^2&=&(\wh{\mu }_{z})^2+[1+\wh{\mu }^2](f')^2,
\\
\langle \phi _*(X_{\mu }), \phi _*(X_{z})\rangle &=&\wh{\mu }_{\mu }\wh{\mu }_{z}.
\end{array}
\]
Thus,
\begin{equation}
\label{eq:qc1}
 \frac{|\phi _*(\frac{1}{|X_z|}X_z)|^2}{|\phi _*(X_\mu )|^2}=
\frac{(\wh{\mu }_{z})^2+[1+\wh{\mu }^2](f')^2}{[1+\mu ^2(f')^2](\wh{\mu }_{\mu })^2},
\end{equation}
\begin{equation}
\label{eq:qc2}
\frac{\langle \phi _*(X_\mu ),\phi _*(\frac{1}{|X_z|}X_z)\rangle ^2}
{|\phi _*(X_\mu )|^2\, |\phi _*(\frac{1}{|X_z|}X_z)|^2}=
\frac{(\wh{\mu }_z)^2}{(\wh{\mu }_{z})^2+[1+\wh{\mu }^2](f')^2}.
\end{equation}
To simplify the last two expressions, we will take $\wh{\mu }(\mu ,z)=\mu f'(z)$ (note that
this choice of $\wh{\mu }$ makes $\phi $ a diffeomorphism onto its image, as $f'$ does
not vanish). The right-hand-side of (\ref{eq:qc1}) transforms into
\begin{equation}
\label{eq:Q1}
E(\mu ,z):=1+\frac{\mu ^2}{1+\mu ^2(f')^2}\left( \frac{f''}{f'}\right)^2,
\end{equation}
which is greater than or equal to 1 and bounded from above  under our hypotheses
on $f'$ and $f''/f'$, thereby giving
(R1). As for the right-hand-side of (\ref{eq:qc2}), a direct computation
shows that it equals
\begin{equation}
\label{eq:Q2}
\frac{\mu ^2(f'')^2}{(f')^2[1+\mu ^2(f')^2]+\mu^2(f'')^2}=1-\frac{1}{E(\mu ,z)}.
\end{equation}
As $E(\mu ,z)$ is bounded from above, we conclude that the last expression is bounded
away from 1 (below 1), and (R2) is also proved. Therefore, $\phi $ is a
quasiconformal diffeomorphism  from $R$ onto the lower half of a vertical helicoid,
with the choice $\wh{\mu }(\mu ,z)=\mu f'(z)$.
\end{proof}

\begin{lemma}
\label{lemma4.24}
Let $\Sigma $ be a simply connected surface which is quasiconformally diffeomorphic
to a closed halfplane in $\C $. Then, $\Sigma $ is conformally diffeomorphic to a closed halfplane.
\end{lemma}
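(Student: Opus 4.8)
The plan is to derive the lemma from the measurable Riemann mapping theorem of Ahlfors and Bers together with the uniformization theorem; the quasiconformality hypothesis is exactly what rules out the conformal models of $\Sigma$ that are hyperbolic (such as a half-disk), which are homeomorphic but not conformally equivalent to a closed half-plane. Write $\ov{\Hip}=\{z\in\C\ |\ \mathrm{Im}(z)\geq 0\}$ for the closed upper half-plane. Since $\Sigma$ is a smooth surface carrying a Riemannian metric, it is a Riemann surface with boundary, so the hypothesis provides a quasiconformal diffeomorphism $h\colon\ov{\Hip}\to\Sigma$; I would let $\mu=\mu_h$ be its complex dilatation, a measurable Beltrami differential on $\ov{\Hip}$ with $\|\mu\|_\infty=:k<1$, and then reflect $\mu$ across $\R$ to a Beltrami coefficient $\wt{\mu}$ on all of $\C$ by declaring $\wt{\mu}(z)=\mu(z)$ for $\mathrm{Im}(z)\geq 0$ and $\wt{\mu}(z)=\ov{\mu(\bar z)}$ for $\mathrm{Im}(z)<0$, so that $\|\wt{\mu}\|_\infty=k<1$.

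Next, by the measurable Riemann mapping theorem I would choose a quasiconformal homeomorphism $g\colon\C\to\C$ with complex dilatation $\wt{\mu}$, uniquely determined after the normalization $g(0)=0$, $g(1)=1$ (which eliminates the affine ambiguity $z\mapsto az+b$). The symmetry $\wt{\mu}(z)=\ov{\wt{\mu}(\bar z)}$ forces $z\mapsto\ov{g(\bar z)}$ to be another normalized solution of the same Beltrami equation, whence by uniqueness $\ov{g(\bar z)}=g(z)$ for all $z\in\C$. Therefore $g$ preserves $\R$, and being orientation preserving it satisfies $g(\ov{\Hip})=\ov{\Hip}$; thus $g_0:=g|_{\ov{\Hip}}\colon\ov{\Hip}\to\ov{\Hip}$ is a quasiconformal self-diffeomorphism with complex dilatation $\mu$.

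Finally, $h$ and $g_0$ are quasiconformal maps on $\ov{\Hip}$ with the same complex dilatation $\mu$, so by the transformation rule for complex dilatations under composition the map $\Phi:=h\circ g_0^{-1}\colon\ov{\Hip}\to\Sigma$ has complex dilatation zero: the composition formula yields $|\mu|^2\,(\mu_\Phi\circ g_0)=\mu_\Phi\circ g_0$ almost everywhere, which forces $\mu_\Phi\equiv 0$ because $|\mu|\leq k<1$. A quasiconformal homeomorphism with vanishing complex dilatation is holomorphic (Weyl's lemma), and such a conformal bijection is smooth up to the boundary by Schwarz reflection, so $\Phi$ is a conformal diffeomorphism of the closed half-plane $\ov{\Hip}$ onto $\Sigma$, which is the assertion of the lemma.

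The step I expect to demand the most care is the handling of the boundary: quasiconformality and complex dilatations must be interpreted for maps of the bordered surface $\ov{\Hip}$, and the reflection device is precisely what reduces everything to the classical, boundaryless measurable Riemann mapping theorem on $\C$; one must also verify the symmetry normalization so that $g$ genuinely commutes with conjugation. An equivalent route, should the reflection bookkeeping be awkward, is to pass to Schwarz doubles: $h$ doubles to a quasiconformal diffeomorphism of the double $\wh{\Sigma}$ of $\Sigma$ onto the double of $\ov{\Hip}$, namely $\C$, and since the plane is not quasiconformally equivalent to the disk, $\wh{\Sigma}$ is conformally $\C$; the antiholomorphic involution of $\wh{\Sigma}$ fixing $\partial\Sigma$ then shows $\Sigma$ is conformally the closed half-plane.
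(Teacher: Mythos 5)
Your main argument is correct, but it takes a genuinely different route from the paper. The paper proceeds by contradiction: assuming $\Sigma$ is not conformally a closed half\mbox{-}plane, it invokes the classification of simply connected bordered Riemann surfaces to identify $\Sigma$ conformally with $\overline{\D}$ minus a nondegenerate closed boundary arc $I$, then doubles $\Sigma$ across $\partial\Sigma$; the double is conformally $(\C\cup\{\infty\})-I\cong \D$ by the Riemann mapping theorem, while the quasiconformal Schwarz reflection principle shows the same double is quasiconformally $\C$, contradicting the quasiconformal non-equivalence of $\C$ and $\D$. Your proof is instead \emph{constructive}: you lift the Beltrami coefficient $\mu$ of the given quasiconformal map $h\colon\overline{\Hip}\to\Sigma$ to a symmetric coefficient $\wt{\mu}$ on $\C$, solve the Beltrami equation by the Ahlfors--Bers measurable Riemann mapping theorem to produce a normalized $g\colon\C\to\C$ preserving $\R$ (hence restricting to $g_0\colon\overline{\Hip}\to\overline{\Hip}$ with dilatation $\mu$), and then $\Phi=h\circ g_0^{-1}$ has vanishing dilatation by the composition formula, hence is conformal by Weyl's lemma. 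The computation leading to $|\mu|^2(\mu_\Phi\circ g_0)=\mu_\Phi\circ g_0$ is correct (the unimodular factor $\overline{(g_0)_z}/(g_0)_z$ drops out), and the symmetry $\overline{g(\bar z)}=g(z)$ via uniqueness in the MRMT is handled properly. What each approach buys: the paper's is shorter and stays entirely at the level of the elementary uniformization dichotomy plus the single fact that $\C\not\sim_{\mathrm{qc}}\D$; yours avoids the classification of bordered simply connected surfaces entirely and exhibits the conformal equivalence explicitly, at the cost of invoking the full MRMT machinery (which of course subsumes the non-equivalence of $\C$ and $\D$). Notably, the ``alternative route'' you sketch at the end --- doubling $\Sigma$ to a quasiconformal copy of $\C$, deducing the double is conformally $\C$, and descending by the anticonformal involution --- is precisely the paper's argument, just phrased directly rather than by contradiction.
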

\begin{proof}
Suppose the lemma fails. Since $\Sigma $ is simply connected, then $\Sigma $ can be
conformally identified with closed unit disk $\overline{\D }=\{ z\in \C \ | \ |z|\leq 1\} $
minus a closed interval $I\subset \partial \D $ that does not reduce to a point. Let
$\Sigma ^*=(\C \cup \{ \infty \} )-I$ be the simply connected Riemann surface
obtained after gluing $\Sigma $
with a copy of itself along $\partial \Sigma $ (by the identity function on $\partial \Sigma $).
Thus by the Riemann mapping theorem, $\Sigma ^*$ is conformally diffeomorphic
to the open unit disk $\D $. On the other hand,
the quasiconformal version of the Schwarz reflection principle (see e.g.,
Theorem~3.6 in~\cite{kri1}) implies that
 $\Sigma ^*$ is quasiconformally diffeomorphic to the surface obtained
after doubling a closed halfplane along its boundary, which is $\C $.
In particular, we deduce that $\C $ is quasiconformally diffeomorphic to $\D$, which
is a contradiction (see e.g., Corollary~1 in~\cite{zakzei}).
\end{proof}

{\it Proof of Proposition~\ref{propos4.22}.}
As the minimal surface $\Sigma (t_2)$ defined in (\ref{Sigma}) can be considered to be a proper
subdomain of the surface $\wt{\Sigma }(t_2)$ defined immediately before
(\ref{eq:ruled}), and $\wt{\Sigma }(t_2)$ is conformally diffeomorphic to
a closed halfplane (by Lemmas~\ref{lemma4.23} and \ref{lemma4.24}), then $\Sigma
(t_2)$ is a parabolic surface. The restriction of the $x_3$-coordinate function to $\Sigma (t_2)$
is a bounded harmonic function with boundary values greater than or equal to
$m=\min \{ x_3(q)\ | \ q\in \a _{\g (t_2)}\} >0$. In particular, the parabolicity of
$\Sigma (t_2)$ insures that
\[
m=\min _{\partial \Sigma (t_2)}x_3 \leq x_3\leq \max _{\partial \Sigma (t_2)}x_3,
\]
which contradicts that $\Sigma (t_2)$ contains points at
height arbitrarily close to zero. Now Proposition~\ref{propos4.22} is proved.

Note that Proposition~\ref{propos4.22} finishes the proof of item~6 of Theorem~\ref{tthm3introd}
(see the paragraph just after the statement of Proposition~\ref{propos4.22}).
Therefore, the proof of Theorem~\ref{tthm3introd} is complete.

We next prove some additional information about case~6 of Theorem~\ref{tthm3introd}.
\begin{proposition}
\label{proposnew}
Suppose that $\mathcal{S}\neq \mbox{\rm \O }$ (hence item~6 of Theorem~\ref{tthm3introd} holds).
Then:
\begin{enumerate}[(A)]
\item $\Delta (\mathcal{L})=\mathcal{S}\cup S(\mathcal{L})$ is a closed set of $\rth$ which is contained in the union
of planes  $\bigcup_{L \in \mathcal{P}} \overline{L}$. Furthermore,
    every plane in $\R^3$ intersects $\mathcal{L}$.
\item There exists $R_0>0$ such that the
sequence of surfaces $\left\{ M_n\cap B_M(p_n,\frac{R_0}{\lambda_n})\right\} _n$
does not have bounded genus.
\item There exist oriented closed geodesics  $\g_n\subset \lambda_nM_n$
with uniformly bounded lengths which converge to a line segment $\g$
in the closure of some flat leaf in $\mathcal{P}$, which
joins two points of $\Delta(\mathcal{L})$, and such that the integrals of $\lambda_nM_n$ along $\g_n$
in the induced exponential $\rth$-coordinates of $\lambda_nB_N(p_n,\ve _n)$
converge to a horizontal vector orthogonal to $\g$ with length $2\, \mbox{\rm Length}(\g)$.
\end{enumerate}
\end{proposition}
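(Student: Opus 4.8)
\noindent We treat the three parts in turn.

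Part (A) will be read off the earlier lemmas. The set $\Delta(\mathcal{L})=\mathcal{S}\cup S(\mathcal{L})$ is closed in $\rth$ by its very construction (it is the set $A$ of Definition~\ref{def2.2}). By Lemma~\ref{lemma4.5} every point $p\in\Delta(\mathcal{L})$ lies on a plane $\overline{L_p}\in\mathcal{P}'$, and by Lemma~\ref{lemma4.6} together with the remark immediately following it, the elements of $\mathcal{P}'$ are precisely the closures of the flat leaves of $\mathcal{L}$, so $\overline{L_p}=\overline{L}$ for some $L\in\mathcal{P}$; hence $\Delta(\mathcal{L})\subset\bigcup_{L\in\mathcal{P}}\overline{L}$. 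Finally, the last sentence of Lemma~\ref{lemma4.6} asserts exactly that no plane of $\rth$ is disjoint from $\mathcal{L}$, which is the remaining claim of (A). So (A) is essentially a corollary of Lemmas~\ref{lemma4.5} and~\ref{lemma4.6}.

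For (B) the plan is to argue by contradiction: suppose that for every $R_0>0$ the genus of $M_n\cap B_M(p_n,\frac{R_0}{\lambda_n})$ is uniformly bounded in $n$. By the chord-arc property (Proposition~\ref{lemma4.2}), each such intrinsic ball, rescaled by $\lambda_n$, contains the component through $p_n$ of a fixed-radius extrinsic ball, and by Proposition~\ref{propos4.18} every component of $\lambda_nM_n$ meeting a fixed-radius extrinsic ball centered at $p_n$ lies inside such a component of a slightly larger extrinsic ball. Thus the assumed genus bound forces $\{\lambda_nM_n\}_n$ to have uniformly bounded genus on compact subsets of $\rth$; combined with property (ULSC), the sequence is uniformly locally simply connected and of locally bounded genus, so the fixed-genus lamination structure theory of Colding and Minicozzi (\cite{cm24,cm25}, adapted to the homogeneously regular setting as in \cite{mr13}) applies and yields that the singular set of the limit lamination is empty, i.e. $\mathcal{S}=\mbox{\rm \O}$, contradicting the standing hypothesis $\mathcal{S}\neq\mbox{\rm \O}$ of this part of the section. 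This produces the desired $R_0$. I expect the delicate point here to be invoking the correct form of the Colding--Minicozzi fixed-genus theorem to pass from local boundedness of the genus to $\mathcal{S}=\mbox{\rm \O}$; that is the technical heart of (B).

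For (C) the plan is to mimic the proof of Proposition~\ref{ass4.17}(B), with the flat leaves of $\mathcal{L}$ and the points of $\Delta(\mathcal{L})$ playing the role of the parking garage planes and columns. By Lemmas~\ref{lemma4.5}--\ref{lemma4.6} the sublamination $\mathcal{P}$ is nonempty; choose $L_1\in\mathcal{P}$. By Proposition~\ref{propos4.22} the plane $\overline{L_1}$ meets $\Delta(\mathcal{L})$ in at least two points, so I fix $p\in\overline{L_1}\cap\Delta(\mathcal{L})$ and a point $q$ of $\overline{L_1}\cap\Delta(\mathcal{L})$ closest to $p$; then the open segment from $p$ to $q$ is disjoint from $\Delta(\mathcal{L})$, and $|p-q|\geq 1$ by Lemma~\ref{lemma4.7}. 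Exactly as in the proof of Lemma~\ref{lemma4.7} (and \cite{mpr3}), using $\overline{L_1}$ as a guide and the local description (D1)--(D2) of $\lambda_nM_n$ near $p$ and $q$, one builds homotopically nontrivial simple closed curves on $\lambda_nM_n$ converging with multiplicity two to the segment $[p,q]$, with lengths converging to $2\,|p-q|$. Replacing each by a shortest closed geodesic $\gamma_n$ in its free homotopy class in $\lambda_nM_n$ keeps the lengths uniformly bounded; since $\lambda_nM_n$ has injectivity radius at least $\frac12$ away from its boundary and converges to $\mathcal{L}$ off $\Delta(\mathcal{L})$, any subsequential limit of the $\gamma_n$ must be a straight segment $\gamma$ in the closure of a flat leaf of $\mathcal{P}$ joining two points of $\Delta(\mathcal{L})$ (a bounded-length closed geodesic avoiding $\Delta(\mathcal{L})$ would be null-homotopic on the nearly-flat pieces, and near a point of $\Delta(\mathcal{L})$ it cannot stay because the local model there is a disk, respectively a stable disk with a collapsing double multivalued graph), with $\gamma_n\to\gamma$ with multiplicity two. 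Finally, along the two nearly-horizontal runs of $\gamma_n$ over $\gamma$ the unit conormal of $\lambda_nM_n$ is asymptotically horizontal and orthogonal to $\gamma$ and the two contributions add, so the conormal integrals (fluxes) of $\lambda_nM_n$ along $\gamma_n$, read in the exponential coordinates of $\lambda_nB_N(p_n,\ve_n)$, converge to a horizontal vector orthogonal to $\gamma$ of length $2\,\mbox{\rm Length}(\gamma)$; this flux computation is identical to the one used in \cite{mpr3} for Proposition~\ref{ass4.17}(B). The main obstacle in (C) will be showing that the shortest geodesic in the prescribed homotopy class does not escape to another part of the surface but really limits to a doubly covered segment between two points of $\Delta(\mathcal{L})$, which I expect to follow from minimality of its length, property (ULSC), and the structure of the limit lamination, just as in \cite{mpr3}.
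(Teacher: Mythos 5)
Parts (A) and (C) of your proposal match the paper's route. For (A), the paper derives closedness exactly as you do (closedness of $\mathcal{S}$ together with closedness of $S(\mathcal{L})$ in $\R^3-\mathcal{S}$) and invokes Lemma~\ref{lemma4.5} and Lemma~\ref{lemma4.6} for the remaining claims. For (C), the paper says item (C) ``follows from applying the arguments in the proof of Lemma~\ref{lemma4.7}'' to the plane $P$ found in item (B), referring to~\cite{mpr3} for the flux computation; your expanded version, with the minimization to an actual closed geodesic and the multiplicity-two limit, is a legitimate fleshing-out of the same argument.

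Part (B) has a genuine gap. You argue by contradiction and then invoke a ``fixed-genus lamination structure theory of Colding and Minicozzi'' from \cite{cm24,cm25} to conclude that locally bounded genus together with (ULSC) forces $\mathcal{S}=\mbox{\rm \O}$. No such theorem is available in that form: the Colding--Minicozzi results for ULSC sequences (including the genus-zero case in~\cite{cm25}) allow singular limit laminations, and the implication ``bounded genus $\Rightarrow\mathcal{S}=\mbox{\rm \O}$'' is precisely what item (B) is establishing, not something one can import as a black box. Indeed, the authors explicitly state they cannot rule out item 6 in general (they leave it as a conjecture), which tells you there is no quick external theorem that kills $\mathcal{S}\neq\mbox{\rm \O}$. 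The paper's proof of (B) is direct and constructive: take a plane $P\in\mathcal{P}'$ meeting $\mathcal{S}$; by Proposition~\ref{propos4.22} it contains at least two points of $\Delta(\mathcal{L})$, and by Proposition~\ref{propos4.11} (and $\mathcal{S}\neq\mbox{\rm \O}$) these cannot be exactly two points with opposite orientation numbers, so there exist $x_1,x_2\in P\cap\mathcal{S}$ with the \emph{same} orientation number. One then adapts the parking-garage argument of Lemma~\ref{ass2.1} around $x_1,x_2$ to exhibit unbounded genus of $(\lambda_nM_n)\cap B_{\lambda_nN}(x_1,2d_n)$, and finally uses Proposition~\ref{propos4.18} (together with the chord-arc Proposition~\ref{lemma4.2}) to transfer this extrinsic statement to the intrinsic balls $M_n\cap B_M(p_n,R_0/\lambda_n)$. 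Your appeal to Proposition~\ref{propos4.18} and Proposition~\ref{lemma4.2} is the right way to pass between extrinsic and intrinsic statements, but the heart of (B) is the same-handedness argument, which your proposal does not supply.
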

\begin{proof}
$\Delta (\mathcal{L})$ is closed in $\R^3$ since $\mathcal{S} $ is closed in
$\R^3$ and $S(\mathcal{L})$ is closed in $\R^3-\mathcal{S} $. Lemma~\ref{lemma4.5}
implies that $\Delta (\mathcal{L})$ is contained in $\bigcup _{L\in \mathcal{P}}\overline{L}$.
Every plane in $\R^3$ intersects $\mathcal{L}$ by Lemma~\ref{lemma4.6}, and so
item~(A) of the proposition holds.

We next prove item~(B).
Consider a plane $P\in \mathcal{P}'$ such that $P\cap \mathcal{S}\neq \mbox{\O }$. By
Proposition~\ref{propos4.22}, $P$ intersects $\Delta (\mathcal{L} )$ in at least two points.
If this intersection consists of exactly two points with opposite
orientation numbers, then Proposition~\ref{propos4.11} implies that $\mathcal{L}$ is a foliation of $\R^3$, which
contradicts that $\mathcal{S} \neq \mbox{\O }$. Therefore, there exist
two points $x_1,x_2$ in $P\cap \mathcal{S}$ with the same orientation
number. In this setting, we can adapt the arguments in the proof of Lemma~\ref{ass2.1} to conclude that
$B_{\lambda_nN}(x_1,2d_n)\cap (\lambda_nM_n)$ has unbounded genus for $n$ large enough, where $d_n$ is the extrinsic
distance in $\lambda_nN$ from $x_1$ to $x_2$ (note that $d_n$ converges as $n\to \infty $ to $| x_1-x_2| $).
Finally, take $k_0\in \N$ so that $B_{\lambda_nN}(x_1,2d_n)\cap (\lambda_nM_n)$ is contained in $B_{\lambda_nN}(p_n,k_0)$,
for all $n\in \N$.
By Proposition~\ref{propos4.18}, each point in $B_{\lambda_nN}(x_1,2d_n)\cap (\lambda_nM_n)$ is at an intrinsic distance
not greater than some fixed number $R_0$ (depending on $k_0$) from $p_n$, for all $n$ sufficiently large. After coming
back to the original scale, this implies that the surfaces $M_n\cap B_M(p_n,\frac{R_0}{\lambda_n})$ do not have bounded
genus. This finishes the proof of
item~(B) of the proposition.

Finally, item (C) follows from applying
to the plane $P$ that appears in the previous paragraph
the arguments in the proof of Lemma~\ref{lemma4.7}.
This concludes the proof of
the proposition.
\end{proof}

\begin{remark}
{\rm The techniques used to prove Theorem~\ref{tthm3introd} have
other consequences. For example, suppose $\{M_n \}_n$ is a sequence
of compact embedded minimal surfaces in $\rth$ with $\vec{0} \in M_n$ whose
boundaries lie in the boundaries of balls $\B(R_n)$, where $R_n
\rightarrow \infty$. Suppose that  there exists some $\varepsilon >
0$ such that for any ball $\B $ in $\R^3$ of radius $\varepsilon$, for
$n$ sufficiently large, $M_n \cap \B $ consists of disks,
and such that for some fixed compact set $C$, there exists a $d > 0$
such that for $n$ large, the injectivity radius function of $M_n$ is at
most $d$ at some point of $M_n \cap C$. Then the proof of
Theorem~\ref{tthm3introd} shows that, after replacing by a
subsequence, the $M_n$
converge on compact subsets of $\R^3$ to one of the following cases:
\begin{enumerate}[(4.29.a)]
\item A properly embedded, nonsimply connected minimal surface $M_{\infty }$ in
$\R^3$. In this case, the convergence of the surfaces $M_n$ to $M_{\infty }$
is smooth of multiplicity one on compact sets of $\R^3$.
\item A minimal parking garage structure of $\R^3$ with at least two columns.
\item A singular minimal lamination $\mathcal{L}$ of $\R^3$ with properties
similar to the minimal lamination described in item~{6} of
Theorem~\ref{tthm3introd} and in Proposition~\ref{proposnew}.
\end{enumerate}
}
\end{remark}

\begin{remark}
{\rm
In~\cite{mpr11}, we will apply Theorem~\ref{tthm3introd} under slightly weaker
hypotheses for the embedded minimal surface $M$ appearing in it, namely $M$
is not assumed to be complete, but instead we will suppose that $M$ satisfies the following condition.

Suppose $M$ is an embedded minimal surface, not necessarily complete and
possibly with boundary,
in a homogeneously regular three-manifold $N$. Observe that the injectivity radius $I_M(p)
\in (0,\infty ]$ at any interior point $p\in M$ still makes sense,
although the exponential map $\exp _p$ is no longer
defined in the whole $T_pM$.
We endow $M$ with the structure of a metric space with respect to the intrinsic distance $d_M$,
and let $\overline{M}$ be the metric completion of $(M,d_M)$. In the sequel we will identify $M$
with its isometrically embedded image in $\overline{M}$. Given an interior point $p\in M$,
we define $d_M(p,\partial M)>0$ to be the distance in $\overline{M}$ from
$p$ to $\partial M=\overline{M}-\Int(M)$.
Consider the continuous function
$f\colon \mbox{Int}(M)\to (0,\infty )$ given by
\[
f(p)=\frac{\min \{ 1,d_M(p,\partial M)\} }{I_M(p)}.
\]
Suppose that $f$ is unbounded. Then, the conclusions in Theorem~\ref{tthm3introd} hold, i.e.,
exist points $p_n\in \Int (M)$ and positive numbers $\ve _n=nI_M(p_n)\to 0$ such that
items 1,\ldots , 6 of Theorem~\ref{tthm3introd} hold.

To prove this version of Theorem~\ref{tthm3introd} in the case that either $M$ is incomplete
or $\partial M\neq \mbox{\O }$, then
one must replace the points $q_n\in M$ with $I_M(q_n)\leq \frac{1}{n}$ that appeared in the
first paragraph of Section~\ref{sec4} by points $q_n\in \mbox{Int}(M)$
such that $f(q_n)\geq n$, and
then change the function $h_n$ defined in (\ref{eq:hn}) by the expression
\[
h_n(x)=\frac{d_M(x,\partial B_M(q_n,\frac{1}{2}d_M(q_n,\partial M))}{I_M(x)},\quad
x\in \overline{B}_M(q_n,\textstyle{ \frac{1}{2}}d_M(q_n,\partial M)).
\]
From this point on, the above proof of Theorem~\ref{tthm3introd} works without changes.
}
\end{remark}

\section{Applications.} \label{secap}

\begin{definition}
  {\rm
  Given a complete embedded minimal surface $M$ with injectivity radius zero
  in a homogeneously regular
  three-manifold, a {\it local picture of $M$ on the scale of
  topology} is one of the blow-up limits that can occur when we apply
  Theorem~\ref{tthm3introd} to $M$, namely a nonsimply connected
properly embedded minimal surface $M_{\infty }
  \subset \R^3$ as in item~4 of that theorem, a minimal parking garage
structure in $\R^3$ with at least two columns as in item~5 or a minimal
lamination $\mathcal{L}$ of $\R^3-\mathcal{S}$ as in item~6,  obtained as
  a limit of $M$ under blow-up around points of almost-minimal injectivity
  radius.

  Similarly, given a complete embedded minimal surface $M$ with unbounded second fundamental form
  in a homogeneously regular three-manifold,  a {\it local picture of $M$ on the scale of
  curvature} is a nonflat properly embedded minimal surface $M_{\infty }
  \subset \R^3$ of bounded Gaussian curvature, obtained as a limit of $M$ under blow-up around
  points of almost-maximal second fundamental form, in the sense of Theorem~1.1 in~\cite{mpr20}.
  }
\end{definition}

An immediate consequence of Theorem~\ref{tthm3introd} in the Introduction
and of the uniqueness of the helicoid~\cite{mr8}
is the following statement.
\begin{corollary}
  Let $M$ be a complete embedded minimal surface with injectivity radius zero in a
  homogeneously regular  three-manifold. If a properly embedded minimal surface
$M_{\infty }\subset \R^3$ is a local picture of $M$
on the scale of topology (i.e., $M_{\infty }$ arises as a blow-up limit of $M$ as in
item~4 of Theorem~\ref{tthm3introd}) and $M_{\infty }$
does not have bounded Gaussian curvature,
  then every local picture of $M_{\infty }$ on the scale of curvature is a helicoid.
\end{corollary}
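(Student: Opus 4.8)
The plan is to feed $M_{\infty}$ into the Local Picture Theorem on the Scale of Curvature (Theorem~1.1 in~\cite{mpr20}) and to show that the blow-up limit it produces is automatically simply connected, so that the uniqueness of the helicoid~\cite{mr8} identifies it. Since, by hypothesis, $M_{\infty}$ does not have bounded Gaussian curvature, Theorem~1.1 in~\cite{mpr20} applies and any local picture $\Sigma$ of $M_{\infty}$ on the scale of curvature is obtained, as a limit on compact subsets of $\R^3$, of the homothetically rescaled surfaces $\mu_n(M_{\infty}-q_n)$, where $q_n\in M_{\infty}$ are points of almost-maximal second fundamental form and $\mu_n=\sqrt{|K_{M_{\infty}}|(q_n)}$; by construction $\Sigma$ is a complete, nonflat, properly embedded minimal surface in $\R^3$ with $\vec{0}\in\Sigma$, $|K_{\Sigma}|\leq 1$ and $|K_{\Sigma}|(\vec{0})=1$, and $\mu_n\to\infty$ precisely because $M_{\infty}$ has unbounded Gaussian curvature.

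First I would use the hypothesis that $M_{\infty}$ arises as in item~4 of Theorem~\ref{tthm3introd}, which gives $I_{M_{\infty}}\geq 1$ everywhere on $M_{\infty}$. Rescaling multiplies the injectivity radius by the homothety factor, so the injectivity radius function of $\mu_n(M_{\infty}-q_n)$ is at least $\mu_n$ at every one of its points, and hence diverges to $+\infty$ uniformly on compact subsets of $\R^3$. Passing to the limit, I would conclude $I_{\Sigma}\equiv+\infty$. The cleanest justification of this step is the continuity of the injectivity radius function under smooth convergence of the induced metrics, exactly as used in the proof of Proposition~\ref{propos4.2} to obtain~(\ref{eq:a}) (cf.\ Ehrlich~\cite{ehr1} and Sakai~\cite{sa2}); alternatively, one argues directly that a geodesic loop in $\Sigma$ of finite length $\ell$ would, by smooth convergence on the compact length scale $\ell$, produce almost-geodesic loops of length close to $\ell$ in $\mu_n(M_{\infty}-q_n)$, contradicting the divergence of their injectivity radii.

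Once $I_{\Sigma}\equiv+\infty$ is established, the exponential map $\exp_{\vec{0}}\colon T_{\vec{0}}\Sigma\to\Sigma$ is a diffeomorphism: it is globally defined and surjective by the Hopf--Rinow theorem (as $\Sigma$ is complete), and it is a local diffeomorphism on every metric ball since the injectivity radius is infinite. Hence $\Sigma$ is diffeomorphic to $\R^2$, in particular simply connected. Being also complete, properly embedded and nonflat, $\Sigma$ is then a helicoid by the uniqueness of the helicoid of Meeks and Rosenberg~\cite{mr8}. Since $\Sigma$ was an arbitrary local picture of $M_{\infty}$ on the scale of curvature, this proves the corollary. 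The only delicate point is the passage to the limit of the injectivity radius bound in the second paragraph (essentially the same argument as the one yielding~(\ref{eq:a}), now with the constant lower bound $1$ for $I_{M_{\infty}}$ in place of the bound $1/(1+\tfrac{t_m}{n-t_m})$ there, and with a rescaling factor $\mu_n\to\infty$ that upgrades the conclusion from $I_{\Sigma}\geq 1$ to $I_{\Sigma}\equiv+\infty$); the remainder is bookkeeping of the normalizations in Theorem~1.1 of~\cite{mpr20} and Theorem~\ref{tthm3introd} together with an invocation of~\cite{mr8}.
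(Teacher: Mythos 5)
Your proof is correct and follows exactly the route the paper intends: item~4 of Theorem~\ref{tthm3introd} gives $I_{M_\infty}\geq 1$, so the curvature blow-up factors $\mu_n\to\infty$ force the injectivity radii of the rescaled surfaces to diverge, the multiplicity-one limit $\Sigma$ is therefore simply connected, and the uniqueness of the helicoid~\cite{mr8} finishes the argument. The paper gives no written proof (it calls the corollary an immediate consequence), and what you have supplied is precisely the implicit argument, with the passage to the limit of the injectivity radius bound handled the same way the authors handle~(\ref{eq:a}) in Proposition~\ref{propos4.2}.
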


\subsection{The set of local pictures on the scale of topology.}

Given $0<a\leq b$, consider the set $\mathcal{B}_{a,b}$ of all complete
embedded minimal surfaces $M\subset \R^3$ with $|K_M|\leq b$ and
$|K_M|(p)\geq a$ at some point $p\in \overline{\B }(1)$. Since complete embedded nonflat minimal
surfaces in $\rth$ of bounded absolute Gaussian curvature are proper~\cite{mr7}
 and properly embedded nonflat minimal surfaces in $\rth$ are connected~\cite{hm10},
 then the surfaces in $\mathcal{B}_{a,b}$ are connected and properly embedded.
 The topology of uniform $C^k$-convergence on compact subsets
of $\R^3$ is metrizable on the set $\mathcal{B}_{a,b}$ (see Section~5 of~\cite{mpr20}
for a proof of this fact in a slightly different context, for $a=b=1$).
Sequential compactness (hence compactness)
of $\mathcal{B}_{a,b}$ follows immediately from uniform local curvature and area estimates
(area estimates come from the existence of a tubular neighborhood of fixed radius,
see~\cite{mr7}). By the regular neighborhood
theorem in~\cite{mr7} or~\cite{sor1}, the surfaces in $\mathcal{B}_{a,b}$
all have cubical area growth, i.e.,
\[
R^{-3}\mbox{ Area}(M\cap \B (R))\leq C
\]
for all surfaces $M\in \mathcal{B}_{a,b}$ and for all $R>1$, where $C=C(b)>0$ depends
on the uniform bound of the curvature.

The next corollary follows directly from the above observations,
the Local Picture Theorem on the Scale of Curvature (Theorem~1.1 in~\cite{mpr20})
and the Local Picture Theorem on the Scale of Topology (Theorem~\ref{tthm3introd}
in this paper).

\begin{corollary} \label{cor5.1}
Suppose $M$ is a complete, embedded minimal surface with injectivity
radius zero in a homogeneously regular three-manifold, and
suppose $M$ does not have a local picture on the scale of curvature
which is a helicoid. Then, there exist positive constants $a\leq b$ depending only on $M$,
such that every local picture of $M$ on the scale of
topology lies in $\mathcal{B}_{a,b}$ (in particular, every such local picture of $M$ on the
scale of topology arises from item~4 in Theorem~\ref{tthm3introd}). Furthermore, the set
\[
\mathcal{B}(M)=\{ \mbox{local pictures of $M$ on the scale of topology} \}
\]
is a closed subset of $\mathcal{B}_{a,b}$ (thus $\mathcal{B}(M)$ is compact),
and there is a constant $C=C(M)$ such that every local picture on the scale of topology has
area growth at most $C R^3$.
\end{corollary}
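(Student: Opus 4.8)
The plan is to deduce the corollary from Theorem~\ref{tthm3introd}, the Local Picture Theorem on the Scale of Curvature (Theorem~1.1 in~\cite{mpr20}), the corollary stated immediately above, the one-sided curvature estimates of Colding--Minicozzi~\cite{cm23}, the uniqueness of the helicoid~\cite{mr8}, and the regular neighborhood theorem of~\cite{mr7,sor1}.

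\textbf{Step 1: every local picture of $M$ on the scale of topology arises from item~{4} of Theorem~\ref{tthm3introd} and has bounded Gaussian curvature.} Let $M_\infty$ be such a local picture, say $M_\infty=\lim_n\lambda_nM_n$ for a blow-up sequence of $M$ on the scale of topology. If this limit were a minimal parking garage structure (item~{5}) or a singular minimal lamination (item~{6}), then the surfaces $\lambda_nM_n$ would carry a forming helicoid, either along a vertical column of $S(\mathcal{L})$ (case~{5}) or near a point of $\Delta(\mathcal{L})$ lying on a nonflat leaf (case~{6}); that is, there would exist points $q_n\in\lambda_nM_n$ staying in a fixed ball with $|K_{\lambda_nM_n}|(q_n)\to\infty$ such that a rescaling of $\lambda_nM_n$ by $\sqrt{|K_{\lambda_nM_n}|}$ at a point of almost-maximal curvature near $q_n$ converges to a vertical helicoid. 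This is the mechanism of Lemmas~\ref{lem-blowup} and~\ref{vert} (and of the very definition of a parking garage structure), and it uses only the one-sided curvature estimates of~\cite{cm23} and the uniform lower bound $1/2$ on the injectivity radius of the $\lambda_nM_n$. A diagonal argument over the two successive blow-ups --- first on the scale of topology, then on the scale of curvature --- then exhibits this helicoid as a limit of rescalings of $M$ itself around points of almost-maximal second fundamental form, i.e.\ as a local picture of $M$ on the scale of curvature, contradicting the hypothesis. Hence $M_\infty$ must be as in item~{4}: a connected, properly embedded minimal surface in $\rth$ with $\vec{0}\in M_\infty$, $I_{M_\infty}\geq1$ and $I_{M_\infty}(\vec{0})=1$, and in particular $M_\infty$ is nonflat since a plane has infinite injectivity radius. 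If, moreover, $M_\infty$ had unbounded Gaussian curvature, then by the corollary stated immediately above every local picture of $M_\infty$ on the scale of curvature would be a helicoid; since $M_\infty$ is itself a blow-up limit of $M$, a further diagonal argument would again produce a helicoid among the local pictures of $M$ on the scale of curvature, a contradiction. Thus every local picture of $M$ on the scale of topology is a connected, properly embedded, nonflat minimal surface in $\rth$ of bounded Gaussian curvature.

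\textbf{Step 2: the uniform bounds $a\le b$.} For the lower bound, observe that $K_{M_\infty}\le0$, so by Klingenberg's lemma the normalization $I_{M_\infty}(\vec{0})=1$ produces a geodesic loop $\beta\colon[0,2]\to M_\infty$ of length~$2$ based at $\vec{0}$, smooth on $(0,2)$. Every point of $\beta$ lies within intrinsic distance $1$ of $\vec{0}$, and extrinsic distance never exceeds intrinsic distance, so $\beta\subset\overline{\B}(1)$. Since $\beta$ is a geodesic of the minimal surface $M_\infty$, its acceleration as a curve in $\rth$ is normal to $M_\infty$ and has norm at most $\sqrt{|K_{M_\infty}|}$ along $\beta$. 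If $\sup_\beta|K_{M_\infty}|<1$, then $\beta$ is a $C^1$ curve of length~$2$ in $\rth$ with $\beta(2)=\beta(0)=\vec{0}$ whose curvature is bounded by some $\kappa_0<1$, which forces $|\beta(2)-\beta(0)|\ge 2-2\kappa_0>0$, a contradiction. Hence $\sup_{\overline{\B}(1)}|K_{M_\infty}|\ge1$, so we may take $a=1$ in the definition of $\mathcal{B}_{a,b}$. For the upper bound, suppose there were local pictures $M_\infty^{(j)}$ of $M$ on the scale of topology with $\sup|K_{M_\infty^{(j)}}|\to\infty$. Choose points $x_j\in M_\infty^{(j)}$ of almost-maximal (suitably normalized) curvature and rescale $M_\infty^{(j)}$ by $\sqrt{|K_{M_\infty^{(j)}}|(x_j)}\to\infty$; the uniform bound $I_{M_\infty^{(j)}}\ge1$ turns the relevant intrinsic neighborhoods of $x_j$ into disks of intrinsic radius tending to infinity, so that, after passing to a subsequence, the rescaled surfaces converge to a complete, embedded, simply connected, nonflat minimal surface in $\rth$, which by the uniqueness of the helicoid~\cite{mr8} is a helicoid. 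As before, a diagonal argument over $j$ and over the blow-up sequences defining the $M_\infty^{(j)}$ realizes this helicoid as a local picture of $M$ on the scale of curvature, a contradiction. Therefore $b:=\sup_{M_\infty}\bigl(\sup_{M_\infty}|K_{M_\infty}|\bigr)<\infty$, the outer supremum ranging over all local pictures of $M$ on the scale of topology, and combining Step~1 with the two bounds, every such local picture lies in $\mathcal{B}_{1,b}$; in particular it arises from item~{4}.

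\textbf{Step 3: compactness of $\mathcal{B}(M)$ and the area-growth bound.} Since $\mathcal{B}_{1,b}$ is compact in the metrizable topology of $C^k$-convergence on compact subsets of $\rth$, it is enough to show that $\mathcal{B}(M)$ is closed in $\mathcal{B}_{1,b}$. If $M_\infty^{(j)}\in\mathcal{B}(M)$ converge to $M_\infty\in\mathcal{B}_{1,b}$, write $M_\infty^{(j)}=\lim_n\lambda_n^{(j)}M_n^{(j)}$ as a blow-up of $M$ on the scale of topology around points $p_n^{(j)}\in M$ of almost-minimal injectivity radius; choosing $n(j)$ large, the surfaces $\lambda_{n(j)}^{(j)}M_{n(j)}^{(j)}$ converge to $M_\infty$, the points $p_{n(j)}^{(j)}$ satisfy $I_M(p_{n(j)}^{(j)})\to0$, and the corresponding rescaled surfaces converge to $M_\infty$ with the normalization of item~{4}, so $M_\infty\in\mathcal{B}(M)$. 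Hence $\mathcal{B}(M)$ is closed in $\mathcal{B}_{1,b}$, and therefore compact. Finally, by the regular neighborhood theorem of~\cite{mr7,sor1}, every complete embedded minimal surface in $\rth$ with $|K|\le b$ admits a tubular neighborhood of a fixed radius and hence has area growth at most $CR^3$ with $C=C(b)$ depending only on $b$, and therefore only on $M$; since $\mathcal{B}(M)\subset\mathcal{B}_{1,b}$, this holds for every local picture of $M$ on the scale of topology. The principal obstacle throughout is the family of diagonal constructions: one must verify that the diagonally chosen blow-up points are genuine points of almost-maximal second fundamental form, respectively of almost-minimal injectivity radius, in $M$ itself, in the precise senses required by Theorem~1.1 in~\cite{mpr20} and by Theorem~\ref{tthm3introd}, so that the iterated blow-ups (and the limit appearing in Step~3) are again local pictures of $M$ of the relevant kind.
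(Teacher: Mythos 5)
Your argument is correct and fills in, with welcome concrete detail, the route the paper intends when it asserts that the corollary ``follows directly'' from the preceding observations on $\mathcal{B}_{a,b}$, the Local Picture Theorem on the Scale of Curvature (Theorem~1.1 in~\cite{mpr20}), and Theorem~\ref{tthm3introd}; in particular, the geodesic-loop estimate yielding $a=1$ is a nice explicit touch that the paper leaves implicit. You correctly flag the diagonal constructions as the only step requiring genuine verification (that the iterated blow-up points satisfy the normalizations of items~1--3 of Theorem~\ref{tthm3introd} and the analogous ones in~\cite{mpr20}); these go through once one chooses $n(j)\ge j$ so that the relevant extrinsic balls and injectivity-radius bounds nest correctly, which is the implicit content behind the paper's one-line deduction.
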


\begin{remark}
{\rm
With the notation of Theorem~\ref{tthm3introd} in this paper,
if $M$ has finite genus or if the sequence $\{ \lambda_nM_n\} _n$ has
uniformly bounded genus in fixed size intrinsic metric balls,
then item~{6} of that theorem does not occur, since 
item~(B) of Proposition~\ref{proposnew}
does not occur.
This fact will play a crucial role in our forthcoming paper~\cite{mpr8}, when
proving a bound on the number of ends for a complete, embedded minimal surface
of finite topology in $\R^3$, that only depends on its genus. Also
in~\cite{mpr11}, we will apply Theorem~\ref{tthm3introd} to give a
general structure theorem for singular minimal laminations of $\rth$
with a countable number of singularities. }
\end{remark}

\subsection{Complete embedded minimal surfaces in $\R^3$ with zero flux.}

Recall that a nonflat minimal immersion $f\colon M \to \rth$ has zero
flux if the integral of the unit conormal vector around any closed curve on $M$
is zero. By the Weierstrass representation, a nonflat minimal immersion
$f\colon M\to \rth$ has nonzero
flux if and only if $f\colon M \to \rth$ is the unique isometric
minimal immersion of $M$ into $\rth$ up to rigid motions.

The results described in the next corollary to Theorem~\ref{tthm3introd}
overlap somewhat with the rigidity
results for complete embedded constant mean curvature surfaces by
Meeks and Tinaglia described in~\cite{mt2}.

\begin{corollary}
\label{cor5.4}
  Let $M\subset \R^3$ be a complete, embedded minimal surface with zero flux.
  Suppose that $M$ is not a plane or a helicoid.
  Then, $M$ has infinite genus and one of the following two possibilities hold:
\begin{enumerate}
\item $M$ is properly embedded in $\rth$ with positive injectivity radius and one end.
\item $M$ has injectivity radius zero and every local picture of $M$ on the scale of topology
is a properly embedded minimal surface with one end, infinite genus and zero flux.
\end{enumerate}
\end{corollary}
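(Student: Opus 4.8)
The plan is to distinguish two cases according to whether the injectivity radius of $M$ is positive or zero, and in both to reduce the statement to the following assertion: \emph{a complete, properly embedded minimal surface $\Sigma\subset\R^3$ with zero flux, positive injectivity radius, which is not a plane or a helicoid, has exactly one end and infinite genus.} When $I_M>0$ the Minimal Lamination Closure Theorem \cite{mr13} shows $M$ is properly embedded, so $M$ itself is such a $\Sigma$ and we land in possibility~(1). When the injectivity radius of $M$ is zero, I would apply Theorem~\ref{tthm3introd} to a sequence of points of almost-minimal injectivity radius; the first task is to observe that the zero-flux hypothesis forces item~4 and excludes items~5 and~6. The key point is that the conormal period (the flux) scales linearly under homothety: a closed curve $\g_n$ in $\lambda_nM_n=\lambda_n\cdot(\text{a piece of }M)$ corresponds to a closed curve $\wt\g_n$ in $M$, and $\oint_{\g_n}\eta\,ds=\lambda_n\oint_{\wt\g_n}\eta\,ds=\vec0$ by hypothesis; since $N=\R^3$, the conormal period computed in exponential coordinates is exactly this flux. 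Now item~(B) of Proposition~\ref{ass4.17} (case~5) produces closed geodesics in $\lambda_nM_n$ whose conormal periods converge to a horizontal vector of length $2\neq0$, and item~(C) of Proposition~\ref{proposnew} (case~6) produces closed geodesics whose conormal periods converge to a vector of length $2\,\mbox{\rm Length}(\g)\ge2$, using Lemma~\ref{lemma4.7} since $\g$ joins two points of $\Delta(\mathcal{L})$ at distance at least $1$. Both contradict $\oint_{\g_n}\eta\,ds=\vec0$, so only item~4 occurs.

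Granting this, every local picture of $M$ on the scale of topology is a connected, properly embedded minimal surface $M_\infty\subset\R^3$ with $I_{M_\infty}\ge1$ and $I_{M_\infty}(\vec0)=1$. It has zero flux because $H_1(M_\infty)$ is generated by loops in compact subsets, each of which is a multiplicity-one $C^k$ limit of zero-flux loops in $\lambda_nM_n$, and the conormal period depends continuously on the loop. It is not a plane, since a plane has infinite injectivity radius, and it is not a helicoid, since a complete simply connected minimal surface in $\R^3$ has $K\le0$, hence no conjugate points, hence infinite injectivity radius, so $I_{M_\infty}(\vec0)=1$ forces $M_\infty$ to be not simply connected. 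Thus $M_\infty$ satisfies the hypotheses of the reduced assertion; once $M_\infty$ is shown to have infinite genus it follows that $M$ does too, because the multiplicity-one $C^k$ convergence reproduces arbitrarily high-genus compact subsurfaces of $M_\infty$ inside $M$. So the corollary reduces entirely to the boxed assertion about $\Sigma$, applied either to $M$ (case~1) or to every local picture $M_\infty$ (case~2).

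To prove the reduced assertion, I would first establish infinite genus. If $\Sigma$ had finite genus, it would have either finite topology or infinitely many ends. A properly embedded minimal surface of finite topology and zero flux must be a plane or a helicoid: with more than one end it has finite total curvature (Collin) and, being embedded with zero flux, must be a plane; with one end it is a plane, a helicoid, or a helicoid with handles asymptotic to the helicoid (uniqueness of the helicoid \cite{mr8}, together with Bernstein--Breiner and Meeks--Tinaglia \cite{mt2}), and of these only the helicoid has zero flux. A properly embedded minimal surface of finite genus with infinitely many ends is a Riemann minimal example (Meeks--P\'erez--Ros; compare \cite{mpr8}), which has nonzero flux. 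Either way the hypotheses are contradicted, so $\Sigma$ has infinite genus. Next, for the one end: assume $\Sigma$ has at least two ends, rotate so its limit tangent plane at infinity is horizontal, and pick a regular value $c$ of $x_3|_\Sigma$ for which $\{x_3=c\}$ cuts off a ``top-end'' region along a nonempty \emph{compact} curve $\g$. Orienting $\g$ as the boundary of that region and letting $\eta$ be the outward unit conormal, along $\g$ one has $\nabla_\Sigma x_3=-|\nabla_\Sigma x_3|\,\eta$, so the vertical component of $\oint_\g\eta\,ds$ equals $-\oint_\g|\nabla_\Sigma x_3|\,ds<0$, contradicting zero flux. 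Such a compact separating curve clearly exists when $\Sigma$ has at least three ends, since then a middle end has finite total curvature (Meeks--P\'erez--Ros) and hence compact cross-sections; the surviving case is exactly two ends.

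The main obstacle is precisely this two-ended case: ruling out a properly embedded minimal surface with exactly two ends, infinite genus, positive injectivity radius, and zero flux. Here I would invoke the rigidity results of Meeks and Tinaglia \cite{mt2}, or argue directly that positive injectivity radius together with the structure theory of ends of properly embedded minimal surfaces prevents either end from being ``helicoidal'', so that both ends have compact cross-sections and the flux computation of the previous paragraph applies (note that by Schoen's theorem a two-ended properly embedded minimal surface of finite total curvature is a catenoid, which has nonzero flux). Everything else---the linear scaling of flux, the elimination of cases~5 and~6, the passage of zero flux to the blow-up limit, and the transfer of infinite genus back to $M$---is routine given the machinery already in place.
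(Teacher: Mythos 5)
Your reduction of the corollary to the two cases, your exclusion of items~5 and~6 of Theorem~\ref{tthm3introd} via the linear scaling of conormal periods and items~(B) of Proposition~\ref{ass4.17} and~(C) of Proposition~\ref{proposnew}, and your observations that $M_\infty$ inherits zero flux and cannot be a plane or a helicoid, all match the paper's argument. The divergence, and the problem, is in the ``reduced assertion'' that a properly embedded $\Sigma\subset\R^3$ with zero flux, not a plane or helicoid, has one end and infinite genus.

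There are two genuine gaps. First, for ``one end'' the paper simply cites the main theorem of Choi, Meeks and White~\cite{cmw1}, which says a properly embedded minimal surface in $\R^3$ with more than one end has nonzero flux; this disposes of the question in one line, for any number of ends and any genus. You instead try to produce a compact separating cycle with nonzero vertical flux, and you yourself flag that the two-ended, infinite-genus case does not fall to this method: positive injectivity radius does not obviously force both ends to have compact horizontal cross-sections, and the Meeks--Tinaglia reference \cite{mt2} concerns nonzero constant mean curvature and does not apply. As written this case is unresolved, and it is exactly the case that Choi--Meeks--White handles. Second, in your infinite-genus argument you assert that among plane, helicoid, and helicoid-with-handles ``only the helicoid has zero flux,'' but this is not a citation -- the paper devotes the bulk of its proof to establishing precisely that a helicoid with handles has nonzero flux, via the analysis of the level sets $\Gamma_t=M\cap\{x_3=t\}$: for $|t|$ large each $\Gamma_t$ is a proper arc; nonsimple connectivity forces a lowest $T$ where $\Gamma_T$ is not an arc; $\Gamma_T$ then contains a loop bounding a horizontal disk on one side, and the maximum principle makes the conormal along that loop lie strictly on one hemisphere, yielding a nonzero (vertical) flux. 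Without that argument, or an explicit reference, this step is a hole. Note also that your order of operations (infinite genus first, then one end) makes the helicoid-with-handles claim load-bearing in a way it is not in the paper, where Bernstein--Breiner is invoked only after Choi--Meeks--White has already supplied the one-end conclusion.
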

\begin{proof}
First suppose that $M$ is properly embedded in $\R^3$. As $M$ has zero flux, then the main result in
Choi, Meeks and White~\cite{cmw1} insures that $M$ has one end.
We claim that $M$ has infinite genus. Otherwise,
by classification of properly embedded minimal surfaces with finite genus and one end,
then $M$ is a helicoid with handles, in particular, $M$ is asymptotic to the helicoid
(Bernstein and Breiner~\cite{bb2}).  We next show that in this case, $M$ has nonzero flux,
which contradicts the hypothesis: as $M$ is a helicoid with
handles, then after rotation we can assume that $M$ is asymptotic to a vertical helicoid whose
axis is the $x_3$-axis. Consider the intersection $\G_t$ of $M$ with the horizontal plane
$\{ x_3=t\} $. For every $t\in \R$, $\G_t$ is a proper 1-dimensional analytic set with two ends
which are asymptotic to the ends of a straight line; this result can be deduced from
the analytic results described in either~\cite{bb2} or \cite{mpe3}.
For $|t|$ large, $\G_t$ consists of a connected, proper planar arc asymptotic
to a straight line. However, since $M$ is not simply connected (because $M$ is not a plane or helicoid,
Meeks and Rosenberg~\cite{mr8}), there exists a lowest plane $\{ x_3=T\} $ such that $\G_T$ is not a
proper arc. In particular, $\G _T$ is a limit of proper arcs $\G_t$, $t\nearrow T$. By the maximum
principle, $\G _T$ is a connected, 1-dimensional analytic set asymptotic to a straight line. Since
$\G_T$ is not an arc, then $x_3\colon M\to \R$ has a critical point of negative index on $\G _T$.
As $\G_T$ only has two ends, then standard topological arguments imply that $\G_T$ contains a
piecewise smooth loop that bounds a horizontal disk on one side of $M$. By the maximum principle,
the unit conormal vector of $M$ along this loop lies in the closed upper (or lower) hemisphere
and it is not everywhere horizontal. Thus, the flux of $M$ along this loop is not zero, which
is a contradiction. Therefore, $M$ has infinite genus provided that it is proper.

Finally, suppose that $M$ satisfies the hypotheses of Corollary~\ref{cor5.4}.
If the injectivity radius of $M$ is positive, then $M$ is proper by Theorem~2 in~\cite{mr13}
and so, Corollary~\ref{cor5.4} holds by the arguments in the last paragraph.
Otherwise, the injectivity radius of $M$ is zero and thus, Theorem~\ref{tthm3introd}
applies. Note that cases 5 and 6 of Theorem~\ref{tthm3introd} cannot occur since in
those cases the flux of the approximating surface $\lambda_nM_n$
(with the notation of Theorem~\ref{tthm3introd}) is not zero
by item~(B) of Proposition~\ref{ass4.17} and item~(C) of
 Proposition~\ref{proposnew},
but $M$ has zero flux. Hence every local picture of $M$ on the scale of topology
is a properly embedded minimal surface $M_{\infty}\subset \R^3$.
Note that $M_{\infty }$ has zero
flux since $M$ has zero flux. By arguments in the first paragraph of this proof,
$M_{\infty }$ has
infinite genus. Finally, observe this last property together with a lifting argument
shows that $M$ also has infinite genus in this case.
\end{proof}

\begin{remark}
{\rm
If $M\subset \R^3$ is a complete embedded minimal surface that admits an intrinsic isometry
$I\colon M \to M$ which does not extend to an ambient isometry of $\rth$,
then $M$ must have zero flux (because the only isometric minimal immersions from $M$ into $\R^3$
are associated minimal surfaces to $M$ by Calabi~\cite{ca1}). Since the associated surfaces to
a helicoid which are not congruent to it are not embedded, then such an $M$ cannot admit a local
picture on the scale of curvature which is a helicoid. Therefore, either $M$ has positive injectivity
radius (so it is properly embedded in $\R^3$ by Theorem~2 in~\cite{mr13}) and its
Gaussian curvature is bounded (otherwise one could blow-up $M$ on the scale of curvature to
produce a limit helicoid by Theorem~1.1 in~\cite{mpr20}, which is a contradiction), or
$M$ has injectivity radius zero and Corollaries~\ref{cor5.1} and~\ref{cor5.4} imply that  every local picture of
$M$ on the scale of topology is a nonsimply connected,
properly embedded minimal surface with bounded Gaussian curvature
and zero flux.
The authors believe that this observation could
play an important role in proving
the classical conjecture that intrinsic isometries of complete embedded
minimal surfaces in $\rth$ always extend to ambient isometries, and more generally, to prove that a
complete embedded minimal surface in $\rth$ does not admit another noncongruent isometric minimal embedding
into $\rth$.
}
\end{remark}

\vspace{.2cm}

\center{William H. Meeks, III at profmeeks@gmail.com\\
Mathematics Department, University of Massachusetts, Amherst, MA 01003}
\center{Joaqu\'\i n P\'{e}rez at jperez@ugr.es\\
Department of Geometry and Topology, Institute of Mathematics (IEMath-GR),\\ 
University of Granada, Granada, Spain}
\center{Antonio Ros at aros@ugr.es\\
Department of Geometry and Topology, Institute of Mathematics (IEMath-GR), \\
University of Granada, Granada, Spain}

\bibliographystyle{plain}
\bibliography{bill}
\end{document}